\newtheorem{theorem}{Theorem}
\theoremstyle{plain}
\newtheorem{corollary}{Corollary}
\newtheorem{definition}{Definition}
\newtheorem{lemma}{Lemma}
\newtheorem{proposition}{Proposition}
\newtheorem{remark}{Remark}
\numberwithin{equation}{section}
\begin{document}
\title[Weyl Algebra]{On the homogeneized Weyl Algebra}
\author{Roberto Martinez-Villa}
\address[A. One and A. Two]{Author OneTwo address line 1\\
Author OneTwo address line 2}
\email[A. One]{aone@aoneinst.edu}
\urladdr{http://www.authorone.oneuniv.edu}
\thanks{Thanks for Author One.}
\author{Jeronimo Mondragon}
\curraddr[A. Two]{Author Two current address line 1\\
Author Two current address line 2}
\email[A.~Two]{atwo@atwoinst.edu}
\urladdr{http://www.authortwo.twouniv.edu}
\thanks{}
\date{October 23, 2012}
\subjclass[2000]{Primary 05C38, 15A15; Secondary 05A15, 15A18}
\keywords{Keyword one, keyword two, keyword three}
\dedicatory{}
\thanks{This paper is in final form and no version of it will be submitted
for publication elsewhere.}

\begin{abstract}
The aim of this paper is to give relations between the category of finetely
generated graded modules over the homogeneized Weyl algebra $B_{n}$, the
finetely generated modules over the Weyl algebra $A_{n}$ and the finetely
generated graded modules over the Yoneda algebra $B_{n}^{!}$ of $B_{n}$. We
will give these relations both at the level of the categories of modules and
at the level of the derived categories.
\end{abstract}

\maketitle

\section{ Various algebras associated to $A_{n}$}

We assume through the paper that the reader is familiar with basic results
on Weyl algebras, as in the book by Coutinho [Co] and the notes by Mili\v{c}i%
\'{c} [Mil], as well as, with basic results on Koszul algebras [GM1], [GM2],
and derived categories, for which we will refer to Miyachi's notes [Mi].

\bigskip

We begin the paper stating without proofs some basic results on the
homogenized Weyl algebras and refer the reader to [Mo] for the proofs.

\bigskip

Let $K$ be a field of zero characteristic, it is well known that the Weyl
algebra $A_{n}$ has the following description by generators and relations:

$A_{n}=K<X_{1},X_{2},...X_{n},\delta _{1},\delta _{2},...\delta
_{n}>/\{[X_{i},\delta _{j}]=\partial _{ij},[X_{i},X_{j}`,[\delta _{i},\delta
_{j}]\}$, where $K<X_{1},X_{2},...X_{n},\delta _{1},\delta _{2},...\delta
_{n}>$ is the free algebra in $2n$ generators and $[X,Y`]$ denotes the
commutator $XY-YX$.

Several filtrations can be given to $A_{n}$ but it is not graded by path
length. We will associate to $A_{n}$ a quadratic algebra the so called
homogenized Weyl algebra defined by quiver and relations as follows: $%
B_{n}=K<X_{1},X_{2},...X_{n},\delta _{1},\delta _{2},...\delta
_{n},Z>/\{[X_{i},\delta _{j}]=\partial _{ij}Z^{2},[X_{i},X_{j}`,[\delta
_{i},\delta _{j}],[X_{i},Z`],[\delta _{i},Z`\}.$

The algebras $B_{n}$ are related to the Weyl algebras as follows: Take the
quotients $A_{n,c}=B_{n}/\{Z-c\}$ with $c\in K.$

When $n=0$ the algebra $A_{n,0}$ is isomorphic to the polynomial algebra
\linebreak $K[X_{1},X_{2},...X_{n},\delta _{1},\delta _{2},...\delta _{n}]$
and $A_{n,1}$ is isomorphic to $A_{n}$ for $c\neq 0$, and $K$ algebraically
closed the algebras $A_{n,c}$ are all isomorphic to $A_{n}.$

By construction, the polynomial algebra $K[z]$ is contained in the center of 
$B_{n}$.

The family of monomials of $B_{n}$, $\{Z^{i}X^{J}\delta ^{L}\mid i\geq 0$, $%
J,L\in N^{n}\}B_{n}$ is a Poincare-Birkoff basis.

An element $b\in B_{n}$ can be written as $b=\sum b_{i,P,Q}Z^{i}X^{P}\delta
^{Q}$ define $\partial (b)=\max \{\mid P\mid +\mid Q\mid \mid b_{i,P,Q}\neq
0\}$

1) $\partial (a+b)\leq \max \{\partial (a),\partial (b)\}$

2)$\partial (ab)=\partial (a)+\partial (b)$

\begin{lemma}
\bigskip The center of $B_{n}$ is isomorphic to $K[Z]$.
\end{lemma}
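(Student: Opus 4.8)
The inclusion $K[Z]\subseteq Z(B_{n})$ has already been observed, so what remains is the reverse inclusion $Z(B_{n})\subseteq K[Z]$. My plan is to take an arbitrary central element, expand it in the Poincar\'e--Birkhoff--Witt basis $\{Z^{i}X^{P}\delta^{Q}\}$, and use the defining commutation relations to force every monomial involving some $X_{k}$ or some $\delta_{k}$ to have vanishing coefficient.

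The first step is to record two Leibniz-type identities. Since the $X_{i}$ commute among themselves, the $\delta_{i}$ commute among themselves, $Z$ is central, and $[X_{i},\delta_{j}]=\partial_{ij}Z^{2}$ is therefore central as well, a straightforward induction on $\lvert P\rvert+\lvert Q\rvert$ gives, for multi-indices $P,Q\in\mathbb{N}^{n}$,
\[
[\delta_{k},\,X^{P}\delta^{Q}]=-\,P_{k}\,Z^{2}\,X^{P-e_{k}}\delta^{Q},
\qquad
[X_{k},\,X^{P}\delta^{Q}]=\,Q_{k}\,Z^{2}\,X^{P}\delta^{Q-e_{k}},
\]
where $P_{k},Q_{k}$ denote the $k$-th entries of $P,Q$ and $e_{k}$ the $k$-th unit vector (a term being understood as $0$ when $P_{k}=0$, resp. $Q_{k}=0$).

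The second step is the main computation. Let $b=\sum b_{i,P,Q}Z^{i}X^{P}\delta^{Q}$ be central. Applying the first identity term by term,
\[
0=[\delta_{k},b]=-\sum b_{i,P,Q}\,P_{k}\,Z^{i+2}X^{P-e_{k}}\delta^{Q}.
\]
Each nonzero term on the right is a scalar multiple of the PBW basis element indexed by $(i+2,P-e_{k},Q)$, and the assignment $(i,P,Q)\mapsto(i+2,P-e_{k},Q)$ is injective, so no cancellation can occur; by linear independence of the PBW basis we get $b_{i,P,Q}P_{k}=0$ for all $i,P,Q$ and all $k$, hence $b_{i,P,Q}=0$ whenever $P\neq 0$. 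Running the same argument with $[X_{k},b]=0$ and the second identity yields $b_{i,P,Q}=0$ whenever $Q\neq 0$. Thus only the terms with $P=Q=0$ survive, i.e. $b=\sum_{i}b_{i,0,0}Z^{i}\in K[Z]$; together with the already-noted reverse containment and the fact that the $Z^{i}$ are part of the PBW basis (hence $K$-linearly independent), this exhibits $Z(B_{n})$ as isomorphic to $K[Z]$ as a $K$-algebra.

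The only place that needs genuine care is the inductive verification of the two Leibniz identities — keeping track of the central factors $Z^{2}$ and noting that $X^{P-e_{k}}\delta^{Q}$ and $X^{P}\delta^{Q-e_{k}}$ are already written in PBW order — together with the (easy) observation that the displayed sums admit no internal cancellation. An alternative route would be to filter $B_{n}$ by the degree function $\partial$, observe that the associated graded algebra is commutative, and run a top-degree argument there; but the direct PBW computation above seems the shortest.
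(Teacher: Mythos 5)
Your proof is correct and follows essentially the same route as the paper: compute the commutator of a PBW-expanded central element with each generator $\delta_{k}$ and $X_{k}$ (the paper does this via the induction $\delta_{1}X_{1}^{\alpha_{1}}=X_{1}^{\alpha_{1}}\delta_{1}+\alpha_{1}Z^{2}X_{1}^{\alpha_{1}-1}$ and then repeats with the remaining generators), and conclude from linear independence of the PBW basis, using characteristic zero, that all coefficients with $P\neq 0$ or $Q\neq 0$ vanish. Your version merely packages the paper's computation into uniform Leibniz identities; the only cosmetic discrepancy is a sign convention in $[\delta_{k},X_{k}]$, which does not affect the argument.
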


\begin{proof}
Let $b\in Z(B_{n})$ be an element of the center,

$b=\underset{n=0}{\overset{m}{\Sigma }}(\underset{k+\mid \alpha \mid +\mid
\beta \mid =n}{\Sigma }a_{k,\alpha ,\beta }Z^{k}X_{1}^{\alpha
_{1}}X_{2}^{\alpha _{2}}...X_{n}^{\alpha _{n}}\delta _{1}^{\beta _{1}}\delta
_{2}^{\beta _{2}}...\delta _{n}^{\beta _{n}}).$

We have an equality $\delta _{1}X_{1}=X_{1}\delta _{1}+Z^{2},$ which implies

$\delta _{1}X_{1}^{2}=X_{1}\delta _{1}X_{1}+Z^{2}X_{1}=X_{1}(X_{1}\delta
_{1}+Z^{2})+Z^{2}X_{1}=X_{1}^{2}\delta _{1}+2Z^{2}X_{1}$.

It follows by induction $\delta _{1}X_{1}^{\alpha _{1}}=X_{1}^{\alpha
_{1}}\delta _{1}+\alpha _{1}Z^{2}X_{1}^{\alpha _{1}-1}.$

Hence $\delta _{1}Z^{k}X_{1}^{\alpha _{1}}X_{2}^{\alpha
_{2}}...X_{n}^{\alpha _{n}}\delta _{1}^{\beta _{1}}\delta _{2}^{\beta
_{2}}...\delta _{n}^{\beta _{n}}=$

$Z^{k}X_{1}^{\alpha _{1}}X_{2}^{\alpha _{2}}...X_{n}^{\alpha _{n}}\delta
_{1}^{\beta _{1}+1}\delta _{2}^{\beta _{2}}...\delta _{n}^{\beta
_{n}}+\alpha _{1}Z^{k+2}X_{1}^{\alpha _{1}-1}X_{2}^{\alpha
_{2}}...X_{n}^{\alpha _{n}}\delta _{1}^{\beta _{1}}\delta _{2}^{\beta
_{2}}...\delta _{n}^{\beta _{n}}$ and

$Z^{k}X_{1}^{\alpha _{1}}X_{2}^{\alpha _{2}}...X_{n}^{\alpha _{n}}\delta
_{1}^{\beta _{1}}\delta _{2}^{\beta _{2}}...\delta _{n}^{\beta _{n}}\delta
_{1}=Z^{k}X_{1}^{\alpha _{1}}X_{2}^{\alpha _{2}}...X_{n}^{\alpha _{n}}\delta
_{1}^{\beta _{1}+1}\delta _{2}^{\beta _{2}}...\delta _{n}^{\beta _{n}}$

From the equality $\delta _{1}b=b\delta _{1}$, after cancellation we have

$(\underset{k+\mid \alpha \mid +\mid \beta \mid =n}{\Sigma }\alpha
_{1}a_{k,\alpha ,\beta }Z^{k+2}X_{1}^{\alpha _{1-1}}X_{2}^{\alpha
_{2}}...X_{n}^{\alpha _{n}}\delta _{1}^{\beta _{1}}\delta _{2}^{\beta
_{2}}...\delta _{n}^{\beta _{n}})=0.$

It follows $\alpha _{1}=0.$

Multiplying by: $\delta _{2},...\delta _{n},$we get $b=\underset{n=0}{%
\overset{m}{\Sigma }}(\underset{k+\mid \beta \mid =n}{\Sigma }a_{k,\beta
}Z^{k}\delta _{1}^{\beta _{1}}\delta _{2}^{\beta _{2}}...\delta _{n}^{\beta
_{n}})$ and multiplying by: $X_{1},X_{2},...X_{n}$ we obtain by a similar
calculation $b=\underset{k=0}{\overset{m}{\Sigma }}a_{k}Z^{k}$.
\end{proof}

\subsection{A Filtration on $B_{n}.$}

Define a filtration on $B_{n}$ as follows: $F_{t}B_{n}=\{b\in B_{n}\mid
\partial (b)\leq t\}$

It has the following properties:

0. $F_{0}B_{n}=K[Z]$

1. $F_{t}B_{n}=0$ for n\TEXTsymbol{<}0

2. $\underset{n\in Z}{\cup }F_{t}B_{n}=B_{n}$

3. 1$\in F_{0}B_{n}$

4. \bigskip $F_{p}B_{n}F_{t}B_{n}\subset F_{p+t}B_{n}$

5. For any pair of integers, $p$,$t$ and elmernts $a\in F_{p}B_{n}$ and $%
b\in F_{t}B_{n}$ the commutator $[a,b]=ab-ba$ is in $F_{p+t-1}B_{n}$.

6. $GrB_{n}\cong K[Z,X_{1},X_{2},...X_{n},\delta _{1},\delta _{2},...\delta
_{n}]$

7. $Gr_{1}B_{n}$ is generated as $K$-vector space by $%
Z,X_{1},X_{2},...X_{n},\delta _{1},\delta _{2},...\delta _{n}.$

These conditions imply $\{F_{t}B_{n}\}$ is a "good filtration" (Mili\v{c}i%
\'{c}) as a consequence we get:

\begin{proposition}
$B_{n}$ is noetherian as left and right ring.
\end{proposition}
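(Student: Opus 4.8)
The plan is to invoke the standard principle that a ring carrying a good filtration whose associated graded ring is noetherian is itself noetherian, and then to apply the Hilbert Basis Theorem to the polynomial ring appearing in property~6 above.

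First I would fix a side, say the left, and set up the lifting lemma in the form adapted to our situation. Since the filtration $\{F_{t}B_{n}\}$ is exhaustive (property~2) and bounded below, with $F_{t}B_{n}=0$ for $t<0$ (property~1), any left ideal $I\subseteq B_{n}$ inherits the filtration $F_{t}I:=I\cap F_{t}B_{n}$, and by property~4 together with the fact that $I$ is a left ideal, the associated graded object $\operatorname{gr}(I):=\bigoplus_{t}F_{t}I/F_{t-1}I$ is a graded left ideal of $\operatorname{Gr}B_{n}$.

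Next, because $\operatorname{Gr}B_{n}\cong K[Z,X_{1},\dots,X_{n},\delta_{1},\dots,\delta_{n}]$ is a polynomial ring in $2n+1$ indeterminates over the field $K$, it is noetherian by the Hilbert Basis Theorem, so $\operatorname{gr}(I)$ is generated by finitely many homogeneous elements. Lifting these, one obtains $a_{1},\dots,a_{k}\in I$ with $a_{i}\in F_{d_{i}}I$ whose principal symbols generate $\operatorname{gr}(I)$. I would then show $a_{1},\dots,a_{k}$ generate $I$ as a left ideal by induction on filtration degree: for $x\in F_{t}I$ the symbol of $x$ is a $\operatorname{Gr}B_{n}$-combination of the symbols of the $a_{i}$, hence subtracting a suitable combination $\sum r_{i}a_{i}$ with $r_{i}\in F_{t-d_{i}}B_{n}$ strictly lowers the filtration degree, and the induction bottoms out at $F_{-1}I=0$. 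This proves $B_{n}$ is left noetherian; the identical argument applied to right ideals (the filtration properties 1--4 are left--right symmetric, and $\operatorname{Gr}B_{n}$, being commutative, is also right noetherian) yields the right-noetherian statement.

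I do not expect a genuine obstacle here: the real content has already been packaged into the properties 0--7 asserting that $\{F_{t}B_{n}\}$ is a good filtration with polynomial associated graded ring. The only points needing a little care are confirming that $\operatorname{gr}(I)$ really is an ideal of $\operatorname{Gr}B_{n}$ and that the descending induction on filtration degree terminates, both of which rest precisely on the exhaustiveness and boundedness below of the filtration.
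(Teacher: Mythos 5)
Your argument is correct and is in essence the same as the paper's: the paper simply observes that properties 0--7 make $\{F_{t}B_{n}\}$ a good filtration with $\operatorname{Gr}B_{n}$ a polynomial ring and cites the standard consequence (via Mili\v{c}i\'{c}), while you supply the details of that standard symbol-lifting induction. No gap; the lifting of homogeneous generators of $\operatorname{gr}(I)$ and the descending induction terminating at $F_{-1}I=0$ is exactly the argument the cited machinery packages.
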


\begin{proposition}
The ring $B_{n}$ has global dimension $2n+1.$
\end{proposition}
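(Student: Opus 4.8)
The plan is to reduce the computation to a polynomial ring by exploiting the central degree-one element $Z$. Since $B_{n}$ is the path algebra of a one-vertex quiver modulo homogeneous (quadratic) relations, it is a connected graded algebra with $(B_{n})_{0}=K$, and by the preceding proposition it is noetherian. For such an algebra the graded and ungraded global dimensions coincide, and both equal $\mathrm{pd}_{B_{n}}(K)$, the projective dimension of the trivial module $K=B_{n}/(B_{n})_{>0}$. So it suffices to prove $\mathrm{pd}_{B_{n}}(K)=2n+1$.

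Next I would record the relevant properties of $Z$. It lies in $K[Z]\subseteq Z(B_{n})$, so it is central; it is homogeneous of degree $1$; and it is a non-zero-divisor, since left (and right) multiplication by $Z$ carries the monomial basis $\{Z^{i}X^{J}\delta^{L}\}$ of $B_{n}$ to a $K$-linearly independent family. Killing $Z$ in the defining relations of $B_{n}$ makes every bracket $[X_{i},\delta_{j}]=\partial_{ij}Z^{2}$ vanish, so $B_{n}/ZB_{n}\cong K[X_{1},\dots ,X_{n},\delta_{1},\dots ,\delta_{n}]$, the commutative polynomial ring in $2n$ variables; by the Hilbert syzygy theorem this ring has global dimension $2n$, and $\mathrm{pd}_{B_{n}/ZB_{n}}(K)=2n$.

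Now I would invoke the change-of-rings principle for a central non-zero-divisor: if $x\in R$ is central, $xM=0$, and $\mathrm{pd}_{R/xR}(M)<\infty$, then $\mathrm{pd}_{R}(M)=\mathrm{pd}_{R/xR}(M)+1$. Indeed, the exact sequence $0\to R\xrightarrow{\,x\,}R\to R/xR\to 0$ shows $\mathrm{pd}_{R}(R/xR)=1$, so the change-of-rings spectral sequence $\mathrm{Ext}^{p}_{R/xR}\!\big(M,\mathrm{Ext}^{q}_{R}(R/xR,N)\big)\Rightarrow \mathrm{Ext}^{p+q}_{R}(M,N)$ has only the two columns $q=0,1$ and collapses to give the dimension shift. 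Applying this with $R=B_{n}$, $x=Z$, $M=K$ yields $\mathrm{pd}_{B_{n}}(K)=2n+1$, and hence $B_{n}$ has global dimension $2n+1$.

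The delicate point is the tightness of the upper bound, i.e. that $\mathrm{pd}_{B_{n}}(K)$ is not strictly smaller than $2n+1$. The two-column spectral sequence gives $\mathrm{pd}_{B_{n}}(K)\le 2n+1$ at once; for the reverse inequality one must exhibit a coefficient module $N$ with $\mathrm{Ext}^{2n+1}_{B_{n}}(K,N)\neq 0$. Taking $N=B_{n}/ZB_{n}$ works: the spectral sequence identifies $\mathrm{Ext}^{2n+1}_{B_{n}}(K,B_{n}/ZB_{n})$ with $\mathrm{Ext}^{2n}_{B_{n}/ZB_{n}}(K,B_{n}/ZB_{n})$, which is nonzero because $K[X_{1},\dots ,X_{n},\delta_{1},\dots ,\delta_{n}]$ is Gorenstein of Krull dimension $2n$. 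Alternatively, since the filtration set up above is good and property (6) gives $\mathrm{gr}\,B_{n}\cong K[Z,X_{1},\dots ,X_{n},\delta_{1},\dots ,\delta_{n}]$, the standard comparison that the global dimension of $B_{n}$ is at most that of $\mathrm{gr}\,B_{n}$, namely $2n+1$, supplies the upper bound directly, and one then needs the $Z$-argument only for the lower bound $\mathrm{pd}_{B_{n}}(K)\ge 2n+1$.
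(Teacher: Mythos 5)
Your proof is correct, but note that the paper itself gives no argument for this proposition: it is one of the results quoted without proof from [Mo], and the surrounding setup (the ``good filtration'' with $Gr\,B_{n}\cong K[Z,X_{1},\dots ,X_{n},\delta _{1},\dots ,\delta _{n}]$) indicates that the intended route is filtered--graded transfer, i.e. $\mathrm{gl.dim}\,B_{n}\le \mathrm{gl.dim}\,Gr\,B_{n}=2n+1$ together with some separate lower-bound argument. Your argument is genuinely different and more self-contained: you use that $Z$ is a central homogeneous non-zero-divisor (correctly justified from the Poincar\'e--Birkhoff basis), that $B_{n}/ZB_{n}$ is the commutative polynomial ring in $2n$ variables, and then Rees's first change-of-rings theorem to get $\mathrm{pd}_{B_{n}}(K)=2n+1$ exactly; this buys you the lower bound for free, which is the part the filtration comparison alone does not give, and your explicit computation of $\mathrm{Ext}^{2n+1}_{B_{n}}(K,B_{n}/ZB_{n})\neq 0$ via the two-column spectral sequence is a valid way to see the tightness. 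The only step I would flag as needing care is the opening claim that for a connected graded noetherian algebra the ungraded global dimension equals $\mathrm{pd}_{B_{n}}(K)$ (graded versus ungraded global dimension); this is standard in the Artin--Schelter regular literature, and in any case you defuse the issue yourself by observing that the filtration comparison supplies the upper bound $\mathrm{gl.dim}\,B_{n}\le 2n+1$ independently, so that your $Z$-argument is only needed for the inequality $\mathrm{gl.dim}\,B_{n}\ge \mathrm{pd}_{B_{n}}(K)=2n+1$. With that reading the proof is complete.
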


\begin{proposition}
$B_{n}$ has Gelfand-Kirillov dimension $2n+1.$

\begin{theorem}
Let $M$ be a finitely generated $B_{n}$-module, denote by $d_{\lambda }(M)$
the Gelfand-Kirillov dimension of $M$. Then:

i) $Ext_{B_{n}}^{i}(M,B_{n})=0$ $fori<2n+1$-$d_{\lambda }(M)$

ii) $d_{\lambda }(Ext_{B_{n}}^{i}(M,B_{n}))\leq 2n+1-i$ for all $0\leq i\leq
2n+1$

iii) $d_{\lambda }(Ext_{B_{n}}^{2n+1-d_{\lambda }(M)}(M,B_{n}))=d_{\lambda
}(M)$
\end{theorem}
\end{proposition}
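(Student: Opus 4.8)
The plan is to reduce the entire statement to the associated graded ring. By property~6 of the filtration $\{F_tB_n\}$, the ring $\operatorname{Gr}B_n\cong K[Z,X_1,\dots,X_n,\delta_1,\dots,\delta_n]$ is a commutative polynomial ring in $2n+1$ variables, hence regular of Krull dimension $2n+1$. For such a ring the three assertions are classical facts of commutative algebra: writing $j(N)=\min\{i:\operatorname{Ext}^i_{\operatorname{Gr}B_n}(N,\operatorname{Gr}B_n)\neq 0\}$ for a finitely generated module $N$, one has $j(N)+\dim N=2n+1$ (the Cohen--Macaulay property of $\operatorname{Gr}B_n$), the Auslander inequalities $j\bigl(\operatorname{Ext}^i_{\operatorname{Gr}B_n}(N,\operatorname{Gr}B_n)\bigr)\ge i$, and consequently $\operatorname{Ext}^i_{\operatorname{Gr}B_n}(N,\operatorname{Gr}B_n)=0$ for $i<2n+1-\dim N$ and $\dim\operatorname{Ext}^i_{\operatorname{Gr}B_n}(N,\operatorname{Gr}B_n)\le 2n+1-i$. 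So the first step is simply to record these.

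Next I would transfer the information to $B_n$ by means of a good filtration. Given a finitely generated $B_n$-module $M$, choose a good filtration on it, so that $\operatorname{Gr}M$ is a finitely generated $\operatorname{Gr}B_n$-module; the induced filtrations on submodules and quotients are again good. Two comparison statements are needed. First, $d_\lambda(M)=\dim\operatorname{Gr}M$: the Gelfand--Kirillov dimension of $M$ equals the dimension of the support of $\operatorname{Gr}M$ (the characteristic variety), which follows by comparing the Hilbert--Samuel function of $(M,F_\bullet)$ with the Hilbert function of $\operatorname{Gr}M$; this is the Bernstein-type argument, carried out in [Mil] and [Mo] (and in [Co] for $A_n$). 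Second, since $\operatorname{Gr}B_n$ is Noetherian, $M$ admits a resolution by filtered-free modules whose associated graded is a free resolution of $\operatorname{Gr}M$ over $\operatorname{Gr}B_n$; applying $\operatorname{Hom}_{B_n}(-,B_n)$ and passing to associated graded one obtains a convergent spectral sequence whose $E_1$-page is built from $\operatorname{Ext}^\bullet_{\operatorname{Gr}B_n}(\operatorname{Gr}M,\operatorname{Gr}B_n)$ and which converges to $\operatorname{Gr}\operatorname{Ext}^\bullet_{B_n}(M,B_n)$. In particular, for the resulting good filtration on each $\operatorname{Ext}^i_{B_n}(M,B_n)$, the module $\operatorname{Gr}\operatorname{Ext}^i_{B_n}(M,B_n)$ is a subquotient of $\operatorname{Ext}^i_{\operatorname{Gr}B_n}(\operatorname{Gr}M,\operatorname{Gr}B_n)$; hence $\operatorname{Ext}^i_{B_n}(M,B_n)=0$ whenever the latter vanishes, and $d_\lambda\bigl(\operatorname{Ext}^i_{B_n}(M,B_n)\bigr)\le\dim\operatorname{Ext}^i_{\operatorname{Gr}B_n}(\operatorname{Gr}M,\operatorname{Gr}B_n)$.

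With these in place, (i) and (ii) follow at once. Write $E^q=\operatorname{Ext}^q_{B_n}(M,B_n)$. Since $\operatorname{Ext}^i_{\operatorname{Gr}B_n}(\operatorname{Gr}M,\operatorname{Gr}B_n)=0$ for $i<2n+1-\dim\operatorname{Gr}M=2n+1-d_\lambda(M)$, the same holds for $E^i$, which is (i); and $d_\lambda(E^i)\le\dim\operatorname{Ext}^i_{\operatorname{Gr}B_n}(\operatorname{Gr}M,\operatorname{Gr}B_n)\le 2n+1-i$, which is (ii). As (i) and (ii) now hold for every finitely generated module, applying them to $E^i$ gives in addition $\operatorname{Ext}^k_{B_n}(E^i,B_n)=0$ for $k<i$. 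For (iii), set $j=2n+1-d_\lambda(M)$ and assume $M\neq 0$. Because $B_n$ has finite global dimension $2n+1$, the functor $R\operatorname{Hom}_{B_n}(-,B_n)$ is a duality on bounded derived categories, and the corresponding biduality spectral sequence $\operatorname{Ext}^p_{B_n}(E^q,B_n)\Rightarrow M$ converges, with $M$ concentrated in total degree $p-q=0$. By (i), $E^q=0$ for $q<j$; by (ii), $d_\lambda\bigl(\operatorname{Ext}^q_{B_n}(E^q,B_n)\bigr)\le 2n+1-q<d_\lambda(M)$ for $q>j$; therefore the only graded piece of $M$ produced by this spectral sequence that can attain dimension $d_\lambda(M)$ is the one at $p=q=j$, which forces $\operatorname{Ext}^j_{B_n}(E^j,B_n)\neq 0$. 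Applying (i) to the module $E^j$ then yields $j\ge 2n+1-d_\lambda(E^j)$, i.e. $d_\lambda(E^j)\ge 2n+1-j=d_\lambda(M)$; combined with $d_\lambda(E^j)\le 2n+1-j=d_\lambda(M)$ from (ii), we obtain $d_\lambda\bigl(\operatorname{Ext}^{2n+1-d_\lambda(M)}_{B_n}(M,B_n)\bigr)=d_\lambda(M)$, which is (iii).

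The step I expect to be the real work is the second paragraph: producing the filtered-free resolution and the comparison spectral sequence that links $\operatorname{Ext}$ over $B_n$ with $\operatorname{Ext}$ over the polynomial ring $\operatorname{Gr}B_n$, checking its convergence and its compatibility with good filtrations, and establishing the identity $d_\lambda(M)=\dim\operatorname{Gr}M$ rather than a mere inequality. Everything after that is formal: the three assertions become bookkeeping on top of the corresponding, and elementary, statements for a regular commutative ring. The argument runs exactly parallel to the classical one for the Weyl algebra $A_n$ and for enveloping algebras, and the ingredients are available in [Co], [Mil] and [Mo].
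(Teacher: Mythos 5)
Your argument is correct and is essentially the route the paper intends: the paper states this theorem without proof, deferring to [Mo] and to the good-filtration framework of [Mil], and that framework is exactly your filtered--graded transfer (GK dimension via $\dim \operatorname{Gr}M$ for a good filtration, Auslander--Buchsbaum/Cohen--Macaulay facts for the polynomial ring $\operatorname{Gr}B_{n}\cong K[Z,X_{1},\dots ,X_{n},\delta _{1},\dots ,\delta _{n}]$, the comparison spectral sequence from a filtered-free resolution, and the biduality spectral sequence for part (iii)). No substantive deviation or gap to report.
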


\begin{corollary}
\bigskip The algebra $B_{n}$ is Artin Schelter regular.
\end{corollary}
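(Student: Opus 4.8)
The plan is to verify the three defining conditions of an Artin--Schelter regular algebra for $B_{n}$, regarded as a connected graded algebra under the path-length grading; this is legitimate because every defining relation of $B_{n}$ is homogeneous of degree $2$ and $(B_{n})_{0}=K$. Two of the conditions are already available from the results above: $B_{n}$ has finite global dimension $2n+1$ and finite Gelfand--Kirillov dimension $2n+1$, the latter giving polynomial growth. So the task reduces to the Gorenstein condition, namely that $\mathrm{Ext}_{B_{n}}^{i}(K,B_{n})=0$ for $i\neq 2n+1$ while $\mathrm{Ext}_{B_{n}}^{2n+1}(K,B_{n})\cong K$ up to a grading shift, where $K$ is here also read as the trivial graded module $B_{n}/B_{n}^{\geq 1}$.

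For the vanishing I would apply the preceding Theorem with $M=K$. Since $K$ is finite dimensional, $d_{\lambda}(K)=0$, so part (i) gives $\mathrm{Ext}_{B_{n}}^{i}(K,B_{n})=0$ for every $i<2n+1$, and $\mathrm{gldim}\,B_{n}=2n+1$ forces $\mathrm{Ext}_{B_{n}}^{i}(K,B_{n})=0$ for $i>2n+1$; thus only $E:=\mathrm{Ext}_{B_{n}}^{2n+1}(K,B_{n})$ can be nonzero, and part (iii) already tells us $d_{\lambda}(E)=d_{\lambda}(K)=0$, so $E$ is at least finite dimensional.

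To identify $E$ exactly I would argue with Hilbert series. The Poincar\'e--Birkhoff--Witt basis $\{Z^{i}X^{J}\delta^{L}\}$ of $B_{n}$ is homogeneous for the path-length grading, since the rewriting rule $\delta_{j}X_{i}=X_{i}\delta_{j}+\partial_{ij}Z^{2}$ and the commutations of $Z$ with the remaining generators all preserve word length; hence $\dim_{K}(B_{n})_{d}=\binom{d+2n}{2n}$ and the Hilbert series is $H_{B_{n}}(t)=(1-t)^{-(2n+1)}$. Let $P_{\bullet}\to K$ be the minimal graded free resolution, of length $2n+1$, and write $P_{j}=\bigoplus_{k}B_{n}(-a_{jk})$. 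From $\sum_{j}(-1)^{j}H_{P_{j}}(t)=H_{K}(t)=1$ and $H_{P_{j}}(t)=\bigl(\sum_{k}t^{a_{jk}}\bigr)H_{B_{n}}(t)$ one gets $\beta(t):=\sum_{j}(-1)^{j}\sum_{k}t^{a_{jk}}=(1-t)^{2n+1}$. Applying $\mathrm{Hom}_{B_{n}}(-,B_{n})$ and using the vanishing above, the resulting cochain complex of graded right $B_{n}$-modules has cohomology concentrated in degree $2n+1$, equal to $E$; since $\mathrm{Hom}_{B_{n}}(B_{n}(-a),B_{n})\cong B_{n}(a)$, comparing Euler characteristics of Hilbert series yields $(-1)^{2n+1}H_{E}(t)=\beta(t^{-1})\,H_{B_{n}}(t)=\bigl((1-t^{-1})/(1-t)\bigr)^{2n+1}=(-t^{-1})^{2n+1}$, whence $H_{E}(t)=t^{-(2n+1)}$. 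Thus $E\cong K(2n+1)$ is one-dimensional, $B_{n}$ is Gorenstein of dimension $2n+1$, and together with the first two conditions this shows $B_{n}$ is Artin--Schelter regular. The symmetric statement for $K$ as a right module follows by the same argument, as $B_{n}$ is left and right noetherian and the preceding results hold on both sides; alternatively, once $B_{n}$ is known to be Koszul (which follows from the quadratic PBW basis), one may instead use $H_{B_{n}^{!}}(t)=(1+t)^{2n+1}$, so that the top term of the Koszul complex of $K$ is the rank-one free module $B_{n}(-(2n+1))$.

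The step I expect to be the genuine obstacle is this last one: the Theorem only delivers that $E$ has Gelfand--Kirillov dimension zero, i.e. is finite dimensional, and pinning down $\dim_{K}E=1$ together with the correct Gorenstein parameter requires the explicit Hilbert series of $B_{n}$ (or of its Yoneda algebra) and careful tracking of signs and grading shifts in the Euler-characteristic computation.
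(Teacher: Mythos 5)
Your argument is correct, and it is worth noting that the paper itself offers no proof of this corollary: the results of Section 1, including this one, are stated without proof with a reference to [Mo], the corollary being placed right after the Theorem on $Ext_{B_{n}}^{i}(M,B_{n})$ as if it followed formally. As you correctly identify, it does not quite follow formally: the Theorem with $M=K$ (plus $\operatorname{gldim}B_{n}=2n+1$) gives the vanishing of $Ext_{B_{n}}^{i}(K,B_{n})$ for $i\neq 2n+1$ and only that the top $Ext$ has Gelfand--Kirillov dimension zero, i.e. is finite dimensional; the genuinely missing ingredient is $\dim_{K}Ext_{B_{n}}^{2n+1}(K,B_{n})=1$. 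Your way of supplying it — the PBW basis is homogeneous for the path-length grading, so $H_{B_{n}}(t)=(1-t)^{-(2n+1)}$, and a degreewise Euler-characteristic comparison of the minimal graded free resolution of $K$ and its $B_{n}$-dual pins down $H_{E}(t)=t^{-(2n+1)}$, hence $E\cong K(2n+1)$ — is the standard Hilbert-series argument and is sound: each internal degree of the dualized complex is a bounded complex of finite-dimensional spaces, so the alternating-sum identity is valid, and a one-dimensional graded module over a connected algebra is a shifted trivial module. Your handling of the right-module Gorenstein condition by symmetry (or via the Koszul dual, whose Hilbert series $(1+t)^{2n+1}$ the paper's dimension count for $(B_{n}^{!})_{j}$ confirms, with one-dimensional top degree $2n+1$) is also fine. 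In short, you have filled a gap the paper delegates to [Mo], and your self-diagnosis of where the real work lies is accurate.
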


\bigskip Since $B_{n}$ has a Poincare-Birkoff basis and it is quadratic by
[Li], [GH] it is Koszul. Let $B_{n}^{!}$ be its Yoneda algebra $B_{n}^{!}=%
\underset{k\geq 0}{\oplus }Ext_{B}^{k}(K,K)$, by [Sm] $B_{n}^{!}$ is
selfinjective. It follows by general properties of Koszul algebras [GM1]
,[GM2] that $B_{n}^{!}$ has the same quiver as $B_{n}$ and relations
orthogonal with respect to the canonical bilinear form. It is easy to see
that $B_{n}^{!}$ has the following form:

$B_{n}^{!}=K_{q}[X_{1}$, $X_{2}$, $...X_{n}$, $\delta _{1}$, $\delta
_{2},...\delta _{n}$, $Z]/\{X_{i}^{2}$, $\delta _{j}^{2}$, $\overset{n}{%
\underset{i=0}{\tsum }X_{i}\delta _{i}+Z^{2}\}}$,

where $K_{q}[X_{1},X_{2},...X_{n},\delta _{1},\delta _{2},...\delta _{n},Z]$
denotes the quantum polynomial ring.

$K<X_{1},X_{2},...X_{n},\delta _{1},\delta _{2},...\delta
_{n},Z>/\{(X_{i},X_{j}),(\delta _{i},\delta _{j}),(X_{i},Z),(\delta
_{i},Z)\} $. Here $(X,Y)$ denotes the anti commutators $XY+YX$.

The polynomial algebra $C_{n}=K[X_{1},X_{2},...X_{n},\delta _{1},\delta
_{2},...\delta _{n},Z]$ is a Koszul algebra with Yoneda algebra the exterior
algebra:

$C_{n}^{!}=K_{q}[X_{1},X_{2},...X_{n},\delta _{1},\delta _{2},...\delta
_{n},Z]/\{X_{i}^{2},\delta _{j}^{2}\}$.

Observe we obtain $C_{n}$ as a quotient of $B_{n}$ and $C_{n}^{!}$ is a sub
algebra of $B_{n}^{!}$. We want to prove $B_{n}^{!}$ is a free $C_{n}^{!}$%
-module of rank two.

\begin{lemma}
The quantum polynomial algebra $K_{q}[X_{1},X_{2},...X_{m}]$ has a
Poincare-Birkoff basis, in particular it is a noetherian Koszul algebra.
\end{lemma}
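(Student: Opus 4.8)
The plan is to exhibit the natural rewriting system attached to the defining relations and invoke Bergman's Diamond Lemma. Present $K_{q}[X_{1},\dots,X_{m}]$ as the quotient of the free algebra $K\langle X_{1},\dots,X_{m}\rangle$ by the relations $X_{j}X_{i}=q_{ij}X_{i}X_{j}$ for $1\le i<j\le m$, with all $q_{ij}\in K^{\times}$ (this includes the one-parameter case $q_{ij}=q$). Order the generators $X_{1}<X_{2}<\cdots<X_{m}$ and use the induced degree-lexicographic order on words. Each relation, read as the reduction $X_{j}X_{i}\to q_{ij}X_{i}X_{j}$ for $i<j$, preserves the total degree and strictly lowers the word in the lexicographic order (it moves a smaller letter to the left, decreasing the number of ``inversions''); since the alphabet is finite, the degree-lexicographic order is a well-order on words of each fixed length, so the reduction system is terminating.

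Next I would check the overlap ambiguities. Every left-hand side $X_{j}X_{i}$ is a word of length two in two distinct letters, so there are no inclusion ambiguities, and the only overlap ambiguities occur in words $X_{k}X_{j}X_{i}$ with $i<j<k$. Reducing such a word by first treating the left pair, and alternatively by first treating the right pair, both computations terminate at the monomial $q_{ij}q_{ik}q_{jk}\,X_{i}X_{j}X_{k}$; hence every ambiguity is resolvable. By the Diamond Lemma the irreducible words form a $K$-basis, and these are exactly the ordered monomials $X_{1}^{a_{1}}X_{2}^{a_{2}}\cdots X_{m}^{a_{m}}$ with $a_{i}\ge 0$. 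This is the asserted Poincar\'e--Birkhoff basis.

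For the remaining assertions I would argue as follows. Noetherianity: the PBW basis identifies $K_{q}[X_{1},\dots,X_{m}]$ with the iterated Ore extension $K_{q}[X_{1},\dots,X_{m-1}][X_{m};\sigma_{m}]$, where $\sigma_{m}$ is the $K$-algebra automorphism of $K_{q}[X_{1},\dots,X_{m-1}]$ determined by $\sigma_{m}(X_{i})=q_{im}X_{i}$ (there is no derivation term); since a skew polynomial extension of a left/right noetherian ring by an automorphism is again left/right noetherian, induction on $m$ with base case $K[X_{1}]$ gives the claim. Alternatively one may repeat verbatim the good-filtration argument used above for $B_{n}$, the associated graded of the word-length filtration being an honest commutative polynomial ring. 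Koszulity: the algebra is quadratic by construction and has a Poincar\'e--Birkhoff basis, so it is Koszul by the same results of [Li] and [GH] already invoked for $B_{n}$.

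The only point that requires genuine verification is the confluence check on the triple overlaps $X_{k}X_{j}X_{i}$, and this is immediate precisely because the structure constants $q_{ij}$ are central scalars: the two reduction paths accumulate the same product $q_{ij}q_{ik}q_{jk}$ of parameters independently of the order in which the transpositions are carried out. Everything else in the statement is either a direct application of the Diamond Lemma or a citation of results already established above.
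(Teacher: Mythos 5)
Your main line of argument is correct and is essentially the route the paper takes, only with the details filled in: the paper simply asserts that $K_{q}[X_{1},\dots,X_{m}]$ has a quadratic Gr\"obner basis and then cites [Li] for the Poincar\'e--Birkhoff basis and noetherianity and [GH] for Koszulity, whereas you verify the Gr\"obner basis claim explicitly via the Diamond Lemma (termination under deg-lex, resolution of the only overlaps $X_{k}X_{j}X_{i}$, $i<j<k$, with both reduction paths giving $q_{ij}q_{ik}q_{jk}X_{i}X_{j}X_{k}$), which is exactly the content behind the citation, and you replace the appeal to [Li] for noetherianity by a self-contained iterated Ore extension argument $K_{q}[X_{1},\dots,X_{m}]\cong K_{q}[X_{1},\dots,X_{m-1}][X_{m};\sigma_{m}]$, which is a standard and perfectly valid alternative. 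One aside in your proposal is wrong, though it does not affect the proof: the suggestion that one could ``repeat verbatim the good-filtration argument used for $B_{n}$, the associated graded of the word-length filtration being an honest commutative polynomial ring'' fails, because $K_{q}[X_{1},\dots,X_{m}]$ is already graded by word length, so the associated graded of that filtration is the algebra itself, which for $q_{ij}\neq 1$ (in particular for the case $q_{ij}=-1$ relevant to $B_{n}^{!}$) is not commutative; the filtration trick for $B_{n}$ worked only because $\partial$ there ignored the central variable $Z$, and there is no analogous choice here. Strike that remark and the rest stands: your Diamond Lemma computation gives the PBW basis, the Ore extension (or the PBW basis together with [Li]) gives noetherianity, and quadraticity plus the PBW basis gives Koszulity by [GH], as in the paper.
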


\begin{proof}
It is easy to see $K_{q}[X_{1},X_{2},...X_{m}]$ has a quadratic Groebner
basis then by [Li] it has a Poincare-Birkoff basis hence it is noetherian
and by [GH] it is Koszul.
\end{proof}

It is clear that $\ B_{n}^{!}$ is generated as $K$- vector space by square
free words: $X_{j_{1}}$ $X_{j_{2}}$ ...$X_{j_{t}}\delta _{i_{1}}\delta
_{i_{2}}...\delta _{i_{s}}$ , $X_{j_{1}}$ $X_{j_{2}}$ ...$X_{j_{t}}\delta
_{i_{1}}\delta _{i_{2}}...\delta _{i_{s}}Z$ with $X_{j_{u}}\neq $ $X_{j_{v}}$
for $u\neq v$ and $\delta _{i_{k1}}\neq \delta _{i\ell }$ for $k\neq \ell $.
We then have $B_{n}^{!}=C_{n}^{!}+ZC_{n}^{!}$.

\begin{proposition}
There exists a $C_{n}^{!}$-module decomposition: $B_{n}^{!}=C_{n}^{!}\oplus
ZC_{n}^{!}$.
\end{proposition}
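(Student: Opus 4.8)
The plan is to split off $C_n^!$ inside $B_n^!$ by means of a sign automorphism followed by averaging. First I would introduce the $K$-algebra endomorphism $\tau$ of $B_n^!$ determined on generators by $\tau(X_i)=X_i$, $\tau(\delta_j)=\delta_j$ and $\tau(Z)=-Z$. A routine check shows $\tau$ respects all the defining relations of $B_n^!$: it fixes $X_i^2$, $\delta_j^2$, $\sum_i X_i\delta_i+Z^2$ and the anticommutators $(X_i,X_j),(\delta_i,\delta_j)$, and it sends $(X_i,Z)$ and $(\delta_i,Z)$ to their negatives, so in every case the image still lies in the relation ideal. Since $\tau^2$ fixes each generator, $\tau$ is an involutive automorphism; because $\operatorname{char}K=0$ we may form the $K$-linear idempotent $p=\tfrac12(\mathrm{id}_{B_n^!}+\tau)$.

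Next I would observe that $p$ is a homomorphism of left (and, symmetrically, right) $C_n^!$-modules: as $\tau$ restricts to the identity on the subalgebra $C_n^!=\langle X_1,\dots,X_n,\delta_1,\dots,\delta_n\rangle$, for $a\in C_n^!$ and $b\in B_n^!$ we have $\tau(ab)=\tau(a)\tau(b)=a\,\tau(b)$, hence $p(ab)=a\,p(b)$. Consequently $B_n^!=\operatorname{im}p\oplus\ker p$ as a direct sum of $C_n^!$-submodules, and it remains to identify the two summands. On one hand $\operatorname{im}p=C_n^!$: it contains $C_n^!$ since $p$ fixes it pointwise, and conversely, using the already established equality $B_n^!=C_n^!+ZC_n^!$, for $b=c+Zc'$ with $c,c'\in C_n^!$ one gets $\tau(b)=c-Zc'$ and therefore $p(b)=c\in C_n^!$. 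On the other hand $\ker p=ZC_n^!$: the inclusion $ZC_n^!\subseteq\ker p$ is immediate from $\tau(Zc')=-Zc'$, and conversely if $p(b)=0$ then, writing again $b=c+Zc'$, the identity $p(b)=c$ forces $c=0$, so $b=Zc'\in ZC_n^!$. This yields the asserted $C_n^!$-module (indeed bimodule) decomposition $B_n^!=C_n^!\oplus ZC_n^!$.

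The crux, such as it is, lies in the identification $\ker p=ZC_n^!$, which genuinely uses both $\operatorname{char}K=0$ and the earlier spanning statement $B_n^!=C_n^!+ZC_n^!$; checking that $\tau$ is a well-defined automorphism is mechanical. If one prefers to avoid the averaging trick, the same conclusion follows from a dimension count: $B_n$ is Koszul with Poincar\'e--Birkhoff basis $\{Z^iX^J\delta^L\}$, so $H_{B_n}(t)=(1-t)^{-(2n+1)}$, whence by the Fr\"oberg relation for Koszul algebras $H_{B_n^!}(t)=(1+t)^{2n+1}$, i.e.\ $\dim_KB_n^!=2^{2n+1}$; since $C_n^!$ is spanned by the $2^{2n}$ square-free monomials in $X_i,\delta_j$ and $ZC_n^!$ is a $K$-linear quotient of $C_n^!$, the relation $B_n^!=C_n^!+ZC_n^!$ forces $\dim_KC_n^!=\dim_KZC_n^!=2^{2n}$ and $C_n^!\cap ZC_n^!=0$. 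The latter equality also shows that left multiplication by $Z$ is injective on $C_n^!$, which is precisely what is needed afterwards to upgrade this decomposition to the assertion that $B_n^!$ is a free $C_n^!$-module of rank two.
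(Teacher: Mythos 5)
Your proof is correct, but it proceeds quite differently from the paper. The paper works directly inside the quadratic presentation: it assumes a linear combination of square-free words $X^{\alpha}\delta^{\beta}$ and $X^{\alpha'}\delta^{\beta'}Z$ lies in the ideal generated by the relations and tracks, by an inductive cancellation argument, how such terms could be absorbed by multiples of $X_i^2$, $\delta_j^2$ and $\sum_i X_i\delta_i+Z^2$, ending with an uncancellable term $c_{\alpha,\beta}Z^m$ and hence a contradiction; this simultaneously shows the listed words are linearly independent, which is what the paper later uses to conclude that $B_n^!$ is free of rank two over $C_n^!$. You instead observe that every defining relation of $B_n^!$ is homogeneous for the parity of the $Z$-degree, so the sign map $\tau$ with $\tau(Z)=-Z$, $\tau(X_i)=X_i$, $\tau(\delta_j)=\delta_j$ is a well-defined involutive automorphism, and (using $\operatorname{char}K=0$) you average to get a $C_n^!$-bilinear idempotent $p=\tfrac12(\mathrm{id}+\tau)$ whose image and kernel are identified, via the spanning $B_n^!=C_n^!+ZC_n^!$ established just before the proposition, with $C_n^!$ and $ZC_n^!$. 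This is shorter, conceptually cleaner, and yields the decomposition as a bimodule decomposition; its one limitation is that the averaging alone does not show that multiplication by $Z$ is injective on $C_n^!$, which is needed for the subsequent freeness claim, and you correctly close that gap with the Hilbert-series count $\dim_K B_n^!=2^{2n+1}$ coming from Koszulity of $B_n$ (your second, purely dimension-theoretic argument in fact recovers everything the paper's cancellation argument gives).
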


\begin{proof}
Since $B_{n}^{!}$ is graded by path length, we can consider only linear
combinations of paths (words) of the same length. We will use the standard
notation: $X^{\alpha }=X^{\alpha _{1}}.X^{\alpha _{2}}...X^{\alpha _{n}}$
and $\delta ^{\beta }=\delta ^{\beta _{1}}\delta ^{\beta _{2}}...\delta
^{\beta _{n}}$ where $\alpha _{i},\beta _{j}\in \{1,0\}$ and $\mid \alpha
\mid =\underset{i=1}{\overset{n}{\tsum }}\alpha _{i}$, $\mid \beta \mid =%
\underset{i=1}{\overset{n}{\tsum }}\beta _{i}.$

To see that the sum is direct we consider linear combinations: $\underset{%
\mid \alpha \mid +\mid \beta \mid =m}{\tsum }\overline{c_{\alpha ,\beta
}X^{\alpha }\delta ^{\beta }}+$ $\underset{\mid \alpha ^{\prime }\mid +\mid
\beta ^{\prime }\mid =m-1}{\tsum }\overline{c_{\alpha ,\beta }X^{\alpha
^{\prime }}\delta ^{\beta ^{\prime }}Z}=\overline{0}.$

This is: $\underset{\mid \alpha \mid +\mid \beta \mid =m}{\tsum }c_{\alpha
,\beta }X^{\alpha }\delta ^{\beta }+$ $\underset{\mid \alpha ^{\prime }\mid
+\mid \beta ^{\prime }\mid =m-1}{\tsum }c_{\alpha ^{\prime },\beta ^{\prime
}}X^{\alpha ^{\prime }}\delta ^{\beta ^{\prime }}Z=\overset{n}{\underset{i=1}%
{\tsum }}q_{i}X_{i}^{2}+\overset{n}{\underset{i=1}{\tsum }}q_{i}^{\prime
}\delta _{i}^{2}+p^{\prime }\overset{n}{(\underset{i=1}{\tsum }}X_{i}\delta
_{i}+Z^{2})+p^{\prime \prime }\overset{n}{(\underset{i=1}{\tsum }}%
X_{i}\delta _{i}+Z^{2})$

Where $p^{\prime }\overset{t}{=\underset{i=1}{\tsum }}m_{i}^{\prime }$ , $%
p^{\prime \prime }\overset{s}{=\underset{i=1}{\tsum }}m_{i}^{\prime \prime }$%
and $m_{i}^{\prime }$ are monomials divisible by a square element and $%
m_{i}^{\prime \prime }$are square free.

Let%
\'{}%
s assume $c_{\alpha ,\beta }\neq 0.$Comparing both terms of the equality,
the term $X^{\alpha }\delta ^{\beta }$is square free then $c_{\alpha ,\beta
}X^{\alpha }\delta ^{\beta }$ does not cancel with elements of the form: $%
q_{i}X_{i}^{2}$, $q_{i}^{\prime }\delta _{i}^{2}$, $p^{\prime }Z^{2}$, $%
p^{\prime \prime }Z^{2}$ or $p^{\prime }X_{i}\delta _{i}.$Therefore $%
c_{\alpha ,\beta }X^{\alpha }\delta ^{\beta }=m_{i}^{\prime \prime
}X_{j}\delta _{j}$ and $m_{i}^{\prime \prime }=$ $c_{\alpha ,\beta
}X^{\alpha -\alpha _{j}}\delta ^{\beta -\beta _{j}}$.

Then $m_{i}^{\prime \prime }Z^{2}$does not cancel with elements of the form $%
X^{\alpha }\delta ^{\beta }$, $X^{\alpha ^{\prime }}\delta ^{\beta ^{\prime
}}Z$, $X_{i}^{2}$, $\delta _{i}^{2}$ or $m_{j}^{\prime \prime }X_{i}\delta
_{i}$.

Therefore: $m_{i}^{\prime \prime }Z^{2}=m_{j}^{\prime }X_{k}\delta
_{k}=c_{\alpha ,\beta }X^{\alpha -\alpha _{j}}\delta ^{\beta -\beta
_{j}}Z^{2}$ and

$m_{j}^{\prime }=c_{\alpha ,\beta }X^{\alpha -\alpha _{j}-\alpha _{k}}\delta
^{\beta -\beta _{j}-\beta _{k}}Z^{2}$ with $j\neq k$, because $X^{\alpha
}\delta ^{\beta }$ is square free.

Now $m_{j}^{\prime }Z^{2}=c_{\alpha ,\beta }X^{^{\alpha -\alpha _{j}-\alpha
_{k}}}\delta ^{\beta -\beta _{j}-\beta _{k}}Z^{4}$ can be canceled only with
an element of the form $m_{s}^{\prime }X_{t}\delta _{t}$ and $m_{s}^{\prime
}=c_{\alpha ,\beta }X^{^{\alpha -\alpha _{j}-\alpha _{k}-\alpha _{t}}}\delta
^{\beta -\beta _{j}-\beta _{k}-\beta _{t}}Z^{4}$.

We continue by induction until getting an element of the form $c_{\alpha
,\beta }Z^{m}$ that can not be canceled, contradicting the assumption $%
c_{\alpha ,\beta }\neq 0$.

In a similar way we get a contradiction when assuming $c_{\alpha ^{\prime
},\beta ^{\prime }}\neq 0$.
\end{proof}

We have proved $B_{n}^{!}$ is a free $C_{n}^{!}$-module of rank two, this
fact will have interesting applications.

For completeness we include the proof of the following well known result.

\begin{lemma}
The Weyl algebra $A_{n}$ is isomorphic to $B_{n}/(z-1)B_{n}.$
\end{lemma}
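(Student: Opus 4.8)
The plan is to construct an explicit ring homomorphism $\varphi\colon B_{n}\to A_{n}$ and show that its kernel is precisely the two-sided ideal $(Z-1)B_{n}$. Since $B_{n}$ is presented by generators $X_{1},\dots,X_{n},\delta_{1},\dots,\delta_{n},Z$ subject to the relations $[X_{i},\delta_{j}]=\partial_{ij}Z^{2}$, $[X_{i},X_{j}]$, $[\delta_{i},\delta_{j}]$, $[X_{i},Z]$, $[\delta_{i},Z]$, I would define $\varphi$ on generators by $X_{i}\mapsto X_{i}$, $\delta_{i}\mapsto\delta_{i}$, $Z\mapsto 1$. To see this is well-defined one only needs to check that the images satisfy the defining relations of $B_{n}$: the images of $X_{i},\delta_{i}$ in $A_{n}$ satisfy $[X_{i},\delta_{j}]=\partial_{ij}=\partial_{ij}\cdot 1^{2}$, the $X$'s and $\delta$'s commute among themselves by the relations of $A_{n}$, and $Z\mapsto 1$ is central so commutes with everything. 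Surjectivity is immediate since $X_{i},\delta_{i}$ already generate $A_{n}$.

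Clearly $Z-1\in\ker\varphi$, so $(Z-1)B_{n}\subseteq\ker\varphi$, and $\varphi$ factors through a surjection $\overline{\varphi}\colon B_{n}/(Z-1)B_{n}\twoheadrightarrow A_{n}$. The substance of the proof is the reverse inclusion, for which I would use the Poincaré–Birkhoff–Witt basis $\{Z^{i}X^{J}\delta^{L}\mid i\ge 0,\ J,L\in\mathbb{N}^{n}\}$ of $B_{n}$ quoted earlier. Given $b\in\ker\varphi$, write $b=\sum_{i,J,L} b_{i,J,L}Z^{i}X^{J}\delta^{L}$. Working modulo $(Z-1)B_{n}$, each monomial $Z^{i}X^{J}\delta^{L}$ is congruent to $X^{J}\delta^{L}$ (since $Z\equiv 1$ and $Z$ is central, $Z^{i}-1\in(Z-1)B_{n}$), so $\overline{b}\equiv\sum_{i,J,L}b_{i,J,L}X^{J}\delta^{L}=\sum_{J,L}\big(\sum_{i}b_{i,J,L}\big)X^{J}\delta^{L}$ in $B_{n}/(Z-1)B_{n}$. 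On the other hand $\varphi(b)=\sum_{J,L}\big(\sum_{i}b_{i,J,L}\big)X^{J}\delta^{L}$ in $A_{n}$, and these monomials $X^{J}\delta^{L}$ form the standard PBW basis of $A_{n}$; hence $\varphi(b)=0$ forces $\sum_{i}b_{i,J,L}=0$ for every pair $(J,L)$, which in turn gives $\overline{b}=0$. Thus $\ker\varphi\subseteq(Z-1)B_{n}$ and $\overline{\varphi}$ is an isomorphism.

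The only delicate point — and the step I expect to need the most care — is the reduction of an arbitrary element of $B_{n}$ modulo $(Z-1)B_{n}$ to a $K$-linear combination of the reduced monomials $X^{J}\delta^{L}$, together with the assertion that these images are linearly independent in the quotient. Linear independence can be extracted for free from the existence of $\overline{\varphi}$: if a combination $\sum_{J,L}c_{J,L}X^{J}\delta^{L}$ vanished in $B_{n}/(Z-1)B_{n}$, applying $\overline{\varphi}$ would make it vanish in $A_{n}$, where the $X^{J}\delta^{L}$ are a genuine basis, forcing all $c_{J,L}=0$; so it suffices to know $B_{n}/(Z-1)B_{n}$ is spanned by these monomials, which follows from the PBW basis of $B_{n}$ and the congruence $Z\equiv 1$. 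I would present the argument in exactly this order: define $\varphi$, check the relations, note surjectivity and the obvious inclusion $(Z-1)B_{n}\subseteq\ker\varphi$, then close the loop via the two PBW bases to get injectivity of $\overline{\varphi}$.
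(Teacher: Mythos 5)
Your proposal is correct and follows essentially the same route as the paper: the same map $\varphi$ sending $X_i\mapsto X_i$, $\delta_i\mapsto\delta_i$, $Z\mapsto 1$, the same verification on relations, and the same use of the identity $Z^{i}\equiv 1 \pmod{(Z-1)B_{n}}$ together with the PBW bases of $B_{n}$ and $A_{n}$ to show $\ker\varphi\subseteq (Z-1)B_{n}$. No gaps; your closing observation that linear independence of the monomials $X^{J}\delta^{L}$ in the quotient comes for free from $\overline{\varphi}$ is exactly what the paper's argument uses implicitly.
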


\begin{proof}
We have a morphism $\varphi :K<X_{1},X_{2},...X_{n},\delta _{1},\delta
_{2},...\delta _{n},Z>\longrightarrow A_{n}$ given by: $\varphi
(X_{i})=X_{i} $, $\varphi (\delta _{i})=\delta _{i}$, $\varphi (Z)=1$.

It follows $\varphi ([X_{i},X_{j}])=\varphi ([\delta _{i},\delta _{j}])=0$,
for $i\neq j$ $\varphi ([X_{i},\delta _{j}])=0$ and $\varphi (X_{i}\delta
_{i}-\delta _{i}X_{i}-Z^{2})=0$, $\varphi (X_{i}Z-ZX_{i})=\varphi (\delta
_{i}Z-Z\delta _{i})=0$.

The map $\varphi $ induces a surjection $\overline{\varphi }%
:B_{n}\rightarrow A_{n}$ such that $(Z-1)B_{n}\sqsubseteq Ker\overline{%
\varphi }.$

Let $\gamma =\underset{i,\alpha ,\beta }{\tsum }c_{i,\alpha ,\beta
}Z^{i}X^{\alpha }\delta ^{\beta }$ be an element in $Ker\overline{\varphi }$%
. We write $Z^{i}=(Z-1)f_{i}(Z)+1.$

Then $\gamma =\underset{i,\alpha ,\beta }{\tsum }c_{i,\alpha ,\beta
}(Z-1)f_{i}(Z)X^{\alpha }\delta ^{\beta }$ +$\underset{i,\alpha ,\beta }{%
\tsum }c_{i,\alpha ,\beta }X^{\alpha }\delta ^{\beta }$.

Hence, $\overline{\varphi }(\gamma )=\underset{\alpha ,\beta }{\tsum }(%
\underset{i}{\sum }c_{i,\alpha ,\beta })X^{\alpha }\delta ^{\beta }=0$,
implies $\underset{i}{\sum }c_{i,\alpha ,\beta }=0$.

Therefore $\gamma =\underset{i,\alpha ,\beta }{\tsum }c_{i,\alpha ,\beta
}(Z-1)f_{i}(Z)X^{\alpha }\delta ^{\beta }\in (Z-1)B_{n}$.
\end{proof}

The embedding $K[Z]\rightarrow B_{n}$ is a morphism of graded algebras.

\subsection{The Nakayama automorphism}

In this subsection we will recall some basic facts about selfinjective
finite dimensional algebras, that will be needed in the particular situation
we are considering. We refer to the paper by Yamagata [Y] for more details.

Let $A$ be a finite dimensional selfinjective $K$-algebra, over a field.
Denote by $D(A_{A})=Hom_{K}(A,K)$ the standard bimodule. There is an
isomorphism of left $A$-modules $\varphi :A\rightarrow D(A),$which induces
by adjunction a map $\beta ^{\prime }:A\otimes _{A}A\rightarrow K.$ By
definition, $\beta ^{\prime }(a\otimes b)=\varphi (b)(a).$The composition: $%
A\times A\overset{p}{\rightarrow }A\otimes _{A}A\overset{\beta ^{\prime }}{%
\rightarrow }K$, $\beta =\beta ^{\prime }p$ is a non degenerated $A-$%
bilinear form, and $A\otimes _{A}A$ is the cokernel of the map $A\otimes
_{K}A\otimes _{K}A\rightarrow A\otimes _{K}A$ given by $a\otimes b\otimes
c\rightarrow ab\otimes c-a\otimes bc.$ Let $\pi :A\otimes _{K}A\rightarrow
A\otimes _{A}A$ be the cokernel map.

The map $\beta ^{\prime }$ induces a map $\overline{\beta }:A\otimes
_{K}A\rightarrow K$ by $\overline{\beta }(x\otimes y)=\beta ^{\prime
}(y\otimes x).$ Hence $\overline{\beta }$ is also non degenerated. In
consequence, there is a $K$-linear isomorphism $\psi :A\rightarrow D(A)$,
given by $\psi (a_{1})(a_{2})=$ $\overline{\beta }(a_{2}\otimes a_{1})=\beta
(a_{1}\otimes a_{2})=\varphi (a_{2})(a_{1})$ , set $\sigma =\psi
^{-1}\varphi .$

There is a chain of equalities: $\beta (\sigma (y),x)=\beta (\psi
^{-1}\varphi (y),x)=\psi \psi ^{-1}\varphi (y)(x)=\varphi (y)(x)=\overline{%
\beta }(y\otimes x)$.

The map $\sigma :A\rightarrow A$ is an isomorphism of $K$-algebras.

$\beta (\sigma (y_{1}y_{2})\otimes z)=\beta (z\otimes y_{1}y_{2})=\beta
(zy_{1}\otimes y_{2})=\overline{\beta }(y_{2},zy_{1})=\beta (\sigma
(y_{2}),zy_{1})=\beta (\sigma (y_{2})z,y_{1})=\overline{\beta }(y_{1},\sigma
(y_{2})z)=\beta (\sigma (y_{1}),\sigma (y_{2})z)=\beta (\sigma (y_{1})\sigma
(y_{2}),z).$

Since $z$ is arbitrary and $\beta $ non degenerated $\sigma
(y_{1}y_{2})=\sigma (y_{1})\sigma (y_{2})$.

Let $D(A)_{\sigma }$ ($_{\sigma ^{-1}}D(A)$) be the $A$- $A$ bimodule with
right (left) multiplication shifted by $\sigma $ ($\sigma ^{-1})$. Then $%
\varphi :A\rightarrow D(A)_{\sigma },$ and $\psi :A\rightarrow _{\sigma
^{-1}}D(A)$ are isomorphisms of $A$- $A$ bimodules.

$\varphi (xa)(y)=\beta (y,xa)=\beta (\sigma (x)\sigma (a),y)=\beta (\sigma
(x),\sigma (a)y)=\beta (\sigma (a)y,x)=$

$\varphi (x)(\sigma (a)y)=\varphi (x)\sigma (a)(y)$, for all $y$. Therefore $%
\varphi (xa)=\varphi (x)\sigma (a)$. As claimed.

In a similar way, $\psi (xb)(y)=\varphi (y)(xb)=\beta (xb,y)=\beta
(x,by)=\varphi (by)(x)=\psi (x)b(y)$. Since $y$ is arbitrary, $\psi
(xb)=\psi (x)b.$

In the other hand, $\varphi (y)(x)=(\psi (x)(y)=\beta (x,y)=\beta (\sigma
\sigma ^{-1}(x),y)=\beta (y,\sigma ^{-1}(x))$\newline
$=\varphi (\sigma ^{-1}(x))(y)$. Hence, $\psi (x)=\varphi (\sigma ^{-1}(x))$.

It follows: $\psi (bx)=\varphi (\sigma ^{-1}(b)\sigma ^{-1}(x))=$ $\sigma
^{-1}(b)\varphi (\sigma ^{-1}(x))=\sigma ^{-1}(b)\psi (x).$

Let $M$ be a finitely generated $A$-module. Since $_{\sigma }A\cong D(A)$ as
bimodule, there are natural isomorphisms:

$D(M^{\ast })=Hom_{A}(Hom_{A}(M,A),D(A)=Hom_{A}(Hom_{A}(M,A),_{\sigma
}A)\cong \sigma M^{\ast \ast }\cong \sigma M$, where $\sigma M=M$ as abelian
group and multiplication by $A$ shifted by $\sigma $.

We look now to the case $A$ a positively graded selfinjective $K$-algebra
and $\varphi :A\rightarrow D(A)[n]$ an isomorphism of graded $A$-modules.
Let $a,x$ be elements of $A$ of degrees $k$ and $j$, respectively. Then $%
\varphi (xa)=\varphi (x)\sigma (a)$ is an homogeneous element of degree $k+j$
and $\sigma (a)=\Sigma \sigma (a)_{i}$, with $\sigma (a)_{i}$ homogeneous
elements of degree $i$. Hence $\varphi (x)\sigma (a)_{i}=0$ for all $i\neq j$%
. $\varphi (x)\sigma (a)_{i}(1)=\varphi (x)(\sigma (a)_{i})=\beta (\sigma
(a)_{i},x)=0$ for all homogeneous elements $x$. Hence $\beta (\sigma
(a)_{i},A)=0$ and $\sigma (a)_{i}=0$ for $i\neq j$.

We have proved $\sigma $ is an isomorphism of graded $K$-algebras, which
induces isomorphisms of graded $A$- $A$ bimodules: $\varphi :A\rightarrow
D(A)_{\sigma }[n],$ and $\psi :A\rightarrow _{\sigma ^{-1}}D(A)[n].$

We only need to check $\psi $ is an isomorphism of graded $K$-vector spaces.

Being $\varphi $ a graded map, $\varphi =\{\varphi _{\ell }\}$ and $\varphi
_{\ell }:A_{\ell }\rightarrow Hom_{K}(A_{n-\ell },K)[n]$ isomorphisms,
inducing maps $\beta _{\ell }^{\prime }:A_{n-\ell }\otimes _{A}A_{\ell
}\rightarrow K$ and $\overline{\beta }_{\ell }:A_{\ell }\otimes
_{K}A_{n-\ell }\rightarrow K$, with $\overline{\beta }_{\ell }(x\otimes y)=$ 
$\beta _{\ell }^{\prime }(y\otimes x).$Each $\overline{\beta }_{\ell }$%
induces maps $\psi _{\ell }:A_{n-\ell }\rightarrow D(A_{\ell })[n]$ such
that $\psi =\{$ $\psi _{\ell }\}$ becomes a graded map.

In case $M$ is a finitely generated graded left $A$-module there is a chain
of isomorphisms of graded $A$-modules:

$D(M^{\ast })=Hom_{A}(Hom_{A}(M,A),D(A))=Hom_{A}(Hom_{A}(M,A),_{\sigma
}A[-n])\cong $\newline
$\sigma M^{\ast \ast }[-n]\cong \sigma M[-n]$.

Assume now $A$ is Koszul sefinjective with Nakayama automorphism $\sigma $
and Yoneda algebra $B$. It was remarked in [M] that under these conditions
there is natural action of $\sigma $ as a graded automorphism of $B$, we
will recall now this construction.

Let $x$ be an element of $Ext_{A}^{n}(A_{0},A_{0})=\underset{i,j}{\oplus }%
Ext_{A}^{n}(S_{i},S_{j})$, $x$=$(x_{i,j})$ with $x_{i,j}$ the extension:

\begin{center}
$x_{i,j}:$ $0\rightarrow S_{j}[-n]\rightarrow E_{1}\rightarrow
E_{2}\rightarrow ...\rightarrow E_{n}\rightarrow S_{i}\rightarrow 0.$
\end{center}

Then $\sigma x_{i,j}$ is the extension:

\begin{center}
$\sigma x_{i,j}:$ $0\rightarrow \sigma S_{j}[-n]\rightarrow \sigma
E_{1}\rightarrow \sigma E_{2}\rightarrow ...\rightarrow \sigma
E_{n}\rightarrow \sigma S_{i}\rightarrow 0.$
\end{center}

Since $\sigma $ is a permutation of the graded simple, $\sigma x=($ $\sigma
x_{i,j})$ is an element of $Ext_{A}^{n}(A_{0},A_{0})$ and $\sigma
:Ext_{A}^{n}(A_{0},A_{0})\rightarrow Ext_{A}^{n}(A_{0},A_{0})$ is an
isomorphism of $K$-vector spaces which extends to a graded automorphism of $%
B=\underset{n\geq 0}{\oplus }Ext_{A}^{n}(A_{0},A_{0})$.

Let $M$ be a finitely generated (graded) $A$-module and $x=(x_{j})\in $ $%
Ext_{A}^{n}(M,A_{0})=\underset{j\geq 0}{\oplus }Ext_{A}^{n}(M,S_{j})$.

\begin{center}
\bigskip\ $x_{,j}:$ $0\rightarrow S_{j}[-n]\rightarrow E_{1}\rightarrow
E_{2}\rightarrow ...\rightarrow E_{n}\rightarrow M\rightarrow 0.$
\end{center}

Then $\sigma x=(\sigma x_{j})$ with

\begin{center}
$\sigma x_{,j}:$ $0\rightarrow \sigma S_{j}[-n]\rightarrow \sigma
E_{1}\rightarrow \sigma E_{2}\rightarrow ...\rightarrow \sigma
E_{n}\rightarrow \sigma M\rightarrow 0.$
\end{center}

In this case there is an isomorphism of $K$-vector spaces: $\sigma
:Ext_{A}^{n}(M,A_{0})\rightarrow Ext_{A}^{n}(\sigma M,A_{0}),$ which induces
a graded isomorphism: $\sigma :\underset{n\geq 0}{\oplus }%
Ext_{A}^{n}(M,A_{0})\rightarrow \underset{n\geq 0}{\oplus }%
Ext_{A}^{n}(\sigma M,A_{0}).$

We will also call the Nakayama automorphism to the automorphism $\sigma $ of 
$B.$

We will look now in more detail to the Nakayama automorphism $\sigma $ of $%
B_{n}^{!}$ of the shriek algebra of the homogenized algebra $B_{n}.$

The graded ring $B_{n}^{!}$ has a sum decomposition: ($%
B_{n}^{!})_{0}=(C_{n}^{!})_{0}$, ($B_{n}^{!})_{1}=(C_{n}^{!})_{1}\oplus
Z(C_{n}^{!})_{0}$, ($B_{n}^{!})_{2}=(C_{n}^{!})_{2}\oplus Z(C_{n}^{!})_{1}$,
.... ($B_{n}^{!})_{i}=(C_{n}^{!})_{i}\oplus Z(C_{n}^{!})_{i-1}$,... ($%
B_{n}^{!})_{2n}=(C_{n}^{!})_{2n}\oplus Z(C_{n}^{!})_{2n-1}$, ($%
B_{n}^{!})_{2n+1}=Z(C_{n}^{!})_{2n}$

The algebra $C_{n}^{!}$ is the exterior algebra in $2n$ variables, hence,

$\dim _{K}(C_{n}^{!})_{j}$=$\left( 
\begin{array}{c}
2n \\ 
j%
\end{array}%
\right) $=$\left( 
\begin{array}{c}
2n \\ 
2n-j%
\end{array}%
\right) $=$\dim _{K}(C_{n}^{!})_{2n-j}.$

Since ($B_{n}^{!})_{j}=(C_{n}^{!})_{j}\oplus Z(C_{n}^{!})_{j-1}$, it follows:

$\dim _{K}(B_{n}^{!})_{j}$=$\left( 
\begin{array}{c}
\text{2n} \\ 
\text{j}%
\end{array}%
\right) $+$\left( 
\begin{array}{c}
\text{2n} \\ 
\text{j-1}%
\end{array}%
\right) $=$\left( 
\begin{array}{c}
\text{2n} \\ 
\text{2n+1-j}%
\end{array}%
\right) $+$\left( 
\begin{array}{c}
\text{2n} \\ 
\text{2n-j}%
\end{array}%
\right) $=$\dim _{K}(B_{n}^{!})_{2n+1-j}$.

The graded left module $D($ ($B_{n}^{!})$ decomposes in homogeneous
components:

$D($ ($B_{n}^{!})=D($ ($B_{n}^{!})_{2n+1})+D($ ($B_{n}^{!})_{2n})+...D($ ($%
B_{n}^{!})_{0})$.

Each component ($B_{n}^{!})_{i}$ has as basis paths of length $i$ either of
the form:

$X_{j_{1}}X_{j_{2}}...X_{j_{s}}\delta _{j_{s}+1}\delta _{j_{s}+2}...\delta
_{j_{i-1}}Z$ or $X_{j_{1}}X_{j_{2}}...X_{j_{s}}\delta _{j_{s}+1}\delta
_{j_{s}+2}...\delta _{j_{i}}$ and D((B$_{n}^{!}$)$_{\text{2n+1-i}}$) has as
basis the dual basis of the 
\'{}%
paths of length $2n+1-i$.

The isomorphism of graded left modules: $\varphi :B_{n}^{!}\rightarrow D($ $%
B_{n}^{!})[-2n-1]$, sends a path of the form $\gamma
=X_{j_{1}}X_{j_{2}}...X_{j_{s}}\delta _{j_{s}+1}\delta _{j_{s}+2}...\delta
_{j_{i-1}}Z$ or of the form

$\gamma =X_{j_{1}}X_{j_{2}}...X_{j_{s}}\delta _{j_{s}+1}\delta
_{j_{s}+2}...\delta _{j_{i}}$ to the dual basis $f_{\delta -\gamma }$ of the
path $\delta -\gamma $ of length $2n+1-i$ , with $\delta $ the path of
maximal length $\delta =X_{1}X_{2}...X_{n}\delta _{1}\delta _{2}...\delta
_{n}Z.$

Since ($B_{n}^{!})_{i}=(C_{n}^{!})_{i}\oplus Z(C_{n}^{!})_{i-1}$ the
isomorphisn $\phi $ restricts to isomorphisms of $K$-vector spaces $\varphi
: $ $(C_{n}^{!})_{i}\rightarrow D(Z(C_{n}^{!})_{2n-i-1})$ and $\varphi :$ $($
$ZC_{n}^{!})_{i-1}\rightarrow D((C_{n}^{!})_{2n-i})$, hence, $\varphi $
induces isomorphisms of $C_{n}^{!}$-modules $\varphi :$ $(C_{n}^{!})%
\rightarrow D(Z(C_{n}^{!}))$ and $\varphi :$ $($ $ZC_{n}^{!})\rightarrow
D((C_{n}^{!}).$

Now the isomorphism $\psi :B_{n}^{!}\rightarrow D($ $B_{n}^{!})[-2n-1]$
given $\psi (b_{1})(b_{2})=\varphi (b_{2})(b_{1})$ is such that for $%
c_{1}\in (C_{n}^{!})_{i}$ and $b\in B_{n}$, $b=\overset{2n+1}{\underset{i=0}{%
\sum }}b_{i}$, $\psi (c_{1})(b)=$ $\overset{2n+1}{\underset{k=0}{\sum }}%
\varphi (b_{k})(c_{1})$, since for all $k\neq i$ the length of $b_{k}$ is
different from $2n+1-i$, then $\varphi (b_{k})(c_{1})=0$ and $\psi
(c_{1})\in D(Z(C_{n}^{!})_{2n-i-1})$, hence, $\psi $ induces an isomorphism
of graded $C_{n}^{!}$-modules $\psi :$ $(C_{n}^{!})\rightarrow
D(Z(C_{n}^{!}))$ and in a similar way an isomorphism $\psi :$ $($ $%
ZC_{n}^{!})\rightarrow D((C_{n}^{!}).$It follows the Nakayama automorphism $%
\sigma $ restricts to an automorphisms of graded rings: $\sigma
:C_{n}^{!}\rightarrow C_{n}^{!}$ and of $C_{n}^{!}$-bimodules $\sigma
:ZC_{n}^{!}\rightarrow ZC_{n}^{!}$.

Any automorphism $\sigma $ of a ring $B$ takes the center to the center,
since $z\in Z(B)$ implies that for any $b\in B,$ $\sigma (zb)=\sigma
(z)\sigma (b)=\sigma (bz)=\sigma (b)\sigma (z).$

In case $B$ is the homogenized Weyl algebra $\sigma (Z)$ is an homogeneous
element of degree one in $Z(B)=K[Z]$. Therefore $\sigma (Z)=kZ$ with $k$ a
non zero element of the field $K$.

\begin{definition}
\bigskip Let $B$ be the homogenized Weyl algebra. A left $B$-module $M$ is
of Z-torsion if for any element $m$ in $M,$there is a non negative integer $%
k $ such that Z$^{k}m=0.$
\end{definition}

Let $M$ be a Koszul left $B^{!}$-module such that $F(M)=\underset{n\geq 0}{%
\oplus }Ext_{A}^{n}(M,A_{0})$ is a $B$-module of $Z$-torsion. Then $F(\sigma
M)$ is of $Z$-torsion, in particular $F(D(M^{\ast })[n])$ is of $Z$-torsion.

Since $F(\sigma M)=\sigma FM$ for $x\in F(M)$ there is an integer $k\geq 0$
such that $Z^{k}x=0$ and in $\sigma FM$, $Z^{k}\ast x=\sigma
(Z^{k})x=c^{k}Z^{k}x=0.$

\section{ The graded localization of the homogenized Weyl algebra}

Consider the multiplicative subset $S=\{1,Z,Z^{2},...\}$of $K[Z]$. The
localization $K[Z]_{S}$ $=\{f/z^{n}\mid n\geq 0\}$is a $Z$-graded algebra
with homogeneous elements $Z^{n}/Z^{m}$of degree $n-m$. It is clear $%
K[Z]_{S}=K[Z,Z^{-1}]$ , the Laurent polynomials.

The natural map: $K[Z]\rightarrow K[Z,Z^{-1}]$ is flat.

Our aim in this section is to study the graded localization:

$(B_{n})_{Z}=B_{n}\underset{K[Z]}{\otimes }K[Z,Z^{-1}]$. It is a $Z$-graded $%
K$-algebra with homogeneous elements $b/Z^{k}$of degree$(b/Z^{k})$= degree($%
b $)$-k$.

We will study this algebra and its relations with the Weyl algebra $A_{n}$.

The natural map $\varphi :B_{n}\rightarrow (B_{n})_{Z}$ given by $%
b\rightarrow b\otimes 1$is a morphism of graded algebras.

Since $Z$ is in the center of $B_{n}$ the ideal $(Z-1)B_{n}$ is two sided.
Let's consider the composition:

$B_{n}\overset{\varphi }{\longrightarrow }(B_{n})_{Z}\overset{\pi }{%
\longrightarrow }(B_{n})_{Z}/(Z-1)(B_{n})_{Z}$.

Let $b$ be an element of $Ker\pi \varphi $. Then $b/1\in (Z-1)(B_{n})_{Z}$
implies $b/1=(Z-1)b^{\prime }/Z^{k}$, hence there exist $t,\ell \geq 0$ such
that $Z^{\ell }b=Z^{t}(Z-1)b^{\prime }=(Z-1)g(Z)b+b$ and $b=(Z-1)(b%
{\acute{}}%
Z^{t}-g(Z)b)\in (Z-1)B_{n}.$It follows $Ker\pi \varphi =(Z-1)B_{n}$ and we
have a commutative diagram:

\begin{center}
$%
\begin{array}{ccccc}
B_{n} & \overset{\varphi }{\longrightarrow } & (B_{n})_{Z} & \overset{\pi }{%
\longrightarrow } & (B_{n})_{Z}/(Z-1)(B_{n})_{Z} \\ 
& q\searrow &  & \nearrow \psi &  \\ 
&  & B_{n}/(Z-1)B_{n} &  & 
\end{array}%
$
\end{center}

where $\psi $ is an injective ring morphism. We will prove it is an
isomorphism.

Let $b/Z^{k}+(Z-1)(B_{n})_{Z}$be an element of $(B_{n})_{Z}/(Z-1)(B_{n})_{Z}$%
. We re write $b/Z^{k}$ as follows:

$b/Z^{k}=$ $b_{\ell }/Z^{\ell }+b_{\ell -1}/Z^{\ell
-1}+...b_{0}+b_{1}Z+...b_{m}Z^{m}.$

But we can re write $b_{i}/1$ as $%
b_{i}/1=Z^{i}b_{i}/Z^{i}=(Z-1)g(Z)b_{i}/Z^{i}+b_{i}/Z^{i}$. Similarly, $%
b_{j}Z^{j}=b_{j}(Z-1)g(Z)+b_{j}$.

Then $b_{i}/1+(Z-1)(B_{n})_{Z}=b_{i}/Z^{i}+(Z-1)(B_{n})_{Z}$ and $%
b_{i}Z^{i}/1+(Z-1)(B_{n})_{Z}=b_{i}/1+(Z-1)(B_{n})_{Z}.$

Therefore: $b/Z^{k}+(Z-1)(B_{n})_{Z}=(b_{\ell }+b_{\ell
-1}+...b_{0}+b_{1}+...b_{m})/1+(Z-1)(B_{n})_{Z}$.

We have proved $\psi $ is an isomorphism.

In fact we proved the following:

\begin{lemma}
With the above notation, there exists ring isomorphisms:

$B_{n}/(Z-1)B_{n}\cong (B_{n})_{Z}/(Z-1)(B_{n})_{Z}\cong A_{n}$.
\end{lemma}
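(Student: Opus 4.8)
The plan is to verify the two isomorphisms in the lemma by combining work already done in the excerpt with one new observation about the localization. The second isomorphism, $B_{n}/(Z-1)B_{n}\cong A_{n}$, is exactly the statement of the earlier lemma proved via the map $\varphi\colon K\langle X_{i},\delta_{i},Z\rangle\to A_{n}$ sending $Z\mapsto 1$, so I would simply invoke that result and concentrate on the first isomorphism. For that, I would recall the commutative diagram constructed just above the statement: the map $q\colon B_{n}\to B_{n}/(Z-1)B_{n}$ factors through $\pi\varphi\colon B_{n}\to (B_{n})_{Z}/(Z-1)(B_{n})_{Z}$, inducing an injective ring morphism $\psi\colon B_{n}/(Z-1)B_{n}\to (B_{n})_{Z}/(Z-1)(B_{n})_{Z}$ because $\ker\pi\varphi=(Z-1)B_{n}$.

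The substance is showing $\psi$ is surjective, and the computation is already carried out in the paragraphs preceding the statement: any element $b/Z^{k}$ of the localization can be written as a finite Laurent combination $b_{\ell}/Z^{\ell}+\dots+b_{0}+b_{1}Z+\dots+b_{m}Z^{m}$, and each summand $b_{i}/Z^{i}$ (resp. $b_{j}Z^{j}$) is congruent modulo $(Z-1)(B_{n})_{Z}$ to $b_{i}/1$ (resp. $b_{j}/1$), using $Z^{i}-1=(Z-1)g(Z)$. Collecting terms shows every coset in $(B_{n})_{Z}/(Z-1)(B_{n})_{Z}$ lies in the image of $B_{n}$ under $\pi\varphi$, hence in the image of $\psi$. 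Therefore $\psi$ is a ring isomorphism, giving $B_{n}/(Z-1)B_{n}\cong (B_{n})_{Z}/(Z-1)(B_{n})_{Z}$.

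Chaining the two isomorphisms yields $(B_{n})_{Z}/(Z-1)(B_{n})_{Z}\cong B_{n}/(Z-1)B_{n}\cong A_{n}$, which is the claim. The only point requiring minor care is that the localization $(B_{n})_{Z}=B_{n}\otimes_{K[Z]}K[Z,Z^{-1}]$ is a genuine ring (not just a module) and that $(Z-1)$ generates a two-sided ideal there; this is immediate since $Z$ is central in $B_{n}$ by the first lemma, so $K[Z,Z^{-1}]$ is a central localization and $(Z-1)(B_{n})_{Z}$ is two-sided. I expect no real obstacle: the work has effectively been done in the running text, and writing the lemma's proof amounts to assembling the diagram, citing the earlier lemma for $B_{n}/(Z-1)B_{n}\cong A_{n}$, and recording that the surjectivity computation above completes the argument that $\psi$ is an isomorphism.
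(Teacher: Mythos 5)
Your proposal is correct and follows essentially the same route as the paper: the paper's ``proof'' of this lemma is precisely the running text preceding it (the commutative triangle, the computation $\ker \pi\varphi=(Z-1)B_{n}$ giving the injection $\psi$, and the Laurent-expansion argument using $Z^{i}-1=(Z-1)g(Z)$ for surjectivity), combined with the earlier lemma identifying $B_{n}/(Z-1)B_{n}$ with $A_{n}$. Your added remark that $Z$ being central makes $(Z-1)(B_{n})_{Z}$ a two-sided ideal is a sensible, if implicit in the paper, point of care.
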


Denote by $\tau $ the composition $\psi ^{-1}\pi $, then the following
triangle commutes:

\begin{center}
$%
\begin{array}{cccc}
B_{n} &  & \overset{\varphi }{\longrightarrow } & (B_{n})_{Z} \\ 
& q\searrow &  & \swarrow \tau \\ 
&  & B_{n}/(Z-1)B_{n} & 
\end{array}%
.$
\end{center}

Since $(B_{n})_{Z}$ is $Z$-graded we have the inclusion $((B_{n})_{Z})_{0}%
\rightarrow (B_{n})_{Z}$ and the projection $(B_{n})_{Z}\longrightarrow
(B_{n})_{Z}/(Z-1)(B_{n})_{Z}$.

Let $\theta :((B_{n})_{Z})_{0}\longrightarrow (B_{n})_{Z}/(Z-1)(B_{n})_{Z}$
be the composition.

\begin{proposition}
The map $\theta :((B_{n})_{Z})_{0}\longrightarrow
(B_{n})_{Z}/(Z-1)(B_{n})_{Z}$ is an isomorphism.
\end{proposition}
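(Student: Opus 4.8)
The plan is to exhibit an explicit two-sided inverse to $\theta$, using the $Z$-grading on $(B_n)_Z$ together with the fact that $Z$ is a unit there. First I would show $\theta$ is surjective. Given an element $b/Z^k + (Z-1)(B_n)_Z$ of the target, use the decomposition of $b/Z^k$ into homogeneous components $b_i/Z^{k}$ already carried out in the text preceding Lemma~2.1: each homogeneous piece of degree $d$ can be multiplied by a suitable power $Z^{-d}$ (a unit) to land in $((B_n)_Z)_0$ without changing its class modulo $(Z-1)$, since $Z \equiv 1$ there forces $Z^{-d}x \equiv x$. Summing these degree-zero representatives produces a preimage in $((B_n)_Z)_0$, so $\theta$ is onto. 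Concretely: for homogeneous $x$ of degree $d$, the element $Z^{-d}x \in ((B_n)_Z)_0$ satisfies $\theta(Z^{-d}x) = Z^{-d}x + (Z-1)(B_n)_Z = x + (Z-1)(B_n)_Z$, because $Z^{-d} - 1 \in (Z-1)(B_n)_Z$ (it is $(Z^{-1}-1)(Z^{-d+1}+\cdots+1)$ up to a unit, and $Z^{-1}-1 = -Z^{-1}(Z-1)$).

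Next I would show $\theta$ is injective. Suppose $u \in ((B_n)_Z)_0$ lies in the kernel, so $u \in (Z-1)(B_n)_Z$, say $u = (Z-1)\,v/Z^m$ for some $v \in B_n$, $m \geq 0$. Clearing denominators, $Z^m u = (Z-1)v$ in $(B_n)_Z$; since $B_n \to (B_n)_Z$ is injective on the image of $B_n$ and both sides, after multiplying by a further power of $Z$, lie in the image of $B_n$, we may view this as an equation in $B_n$. Now decompose both sides into $Z$-homogeneous components as elements of the graded ring $(B_n)_Z$: the left side $Z^m u$ is homogeneous of degree $m$, while $(Z-1)v = Zv - v$. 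Comparing homogeneous components of $(Z-1)v$ degree by degree — writing $v = \sum_j v_j$ with $v_j$ homogeneous of degree $j$ — the relation $Z^m u = \sum_j (Zv_j - v_j)$ forces, in each degree $\neq m$, a telescoping cancellation that propagates from the top and bottom degrees inward, ultimately showing $u = 0$. This is exactly the kind of "rewrite $b/Z^k$ in homogeneous pieces and chase cancellations" argument used in the proof of Lemma~2.1 and in the proof that $\psi$ is an isomorphism; here it is simpler because we are intersecting with the degree-zero part.

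An alternative, cleaner route I would prefer to present: note that $(B_n)_Z \cong ((B_n)_Z)_0[Z, Z^{-1}]$ as graded rings, the graded localization being a "twisted Laurent" extension of its degree-zero part (since $Z$ is a central unit of degree $1$ whose powers give a complete set of homogeneous units of each degree). Under this identification the ideal $(Z-1)(B_n)_Z$ corresponds to $(Z-1)$ inside $((B_n)_Z)_0[Z,Z^{-1}]$, and the quotient of a Laurent polynomial ring $R[Z,Z^{-1}]$ by $(Z-1)$ is canonically $R$ via $Z \mapsto 1$; the composite of $R \hookrightarrow R[Z,Z^{-1}] \twoheadrightarrow R[Z,Z^{-1}]/(Z-1) \cong R$ is the identity. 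Specializing $R = ((B_n)_Z)_0$ gives that $\theta$ is an isomorphism. The main obstacle is the first, more hands-on argument's bookkeeping — making the telescoping cancellation in the injectivity step fully rigorous — but this can be sidestepped entirely by the graded-Laurent description, whose only nontrivial input is that every homogeneous element of $(B_n)_Z$ of degree $d$ is uniquely $Z^d$ times a degree-zero element, which is immediate from the construction of the graded localization at the start of this section.
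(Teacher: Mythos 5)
Your proposal is correct, and it is worth separating its two halves. The hands-on half is essentially the paper's own argument in a slightly different dress: the paper also proves surjectivity by decomposing $b/Z^{k}$ into homogeneous pieces and using $Z\equiv 1$ modulo $(Z-1)(B_{n})_{Z}$ to shift each piece to degree zero, and it proves injectivity by clearing denominators and telescoping -- except that there the comparison is of coefficients of powers of $Z$ via the Poincar\'e--Birkhoff basis (after reducing to the claim that a polynomial $g(X,\delta )$ lying in $(Z-1)B_{n}$ must vanish), whereas you compare homogeneous components for the total grading; both versions implicitly use, as you do, that $B_{n}$ is $Z$-torsion free (it is free over $K[Z]$ by the PBW basis), so that $B_{n}$ embeds in $(B_{n})_{Z}$ and denominators can be cleared, and your telescoping also needs that $Z$ is not a zero divisor, which is immediate since $Z$ is a unit in $(B_{n})_{Z}$. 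Your preferred alternative route is genuinely different from the paper's: since $Z$ is a central homogeneous unit of degree one, every homogeneous element of degree $d$ is uniquely $Z^{d}$ times a degree-zero element, so $(B_{n})_{Z}\cong R[Z,Z^{-1}]$ with $R=((B_{n})_{Z})_{0}$, and $R[Z,Z^{-1}]/(Z-1)\cong R$ by the factor theorem for a central variable. This is cleaner, and it is not circular: it proves the Laurent-ring structure over the degree-zero part without knowing that this part is $A_{n}$, thereby anticipating the theorem the paper proves immediately afterwards ($(B_{n})_{Z}\cong A_{n}\otimes _{K}K[Z,Z^{-1}]$), which in the paper is deduced only after this proposition identifies $((B_{n})_{Z})_{0}$ with $A_{n}$. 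What the paper's more pedestrian route buys is that it stays entirely inside explicit PBW-basis manipulations and never needs the uniqueness statement underlying the Laurent description.
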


\begin{proof}
We prove first $\theta $ is injective. Let $\overset{\wedge }{b}$ be an
element of $((B_{n})_{Z})_{0}$. It can be written as $\overset{\wedge }{b}=%
\underset{i=0}{\overset{m}{\sum }}g_{i}(X,\delta )Z^{-n_{i}}$ with $%
g_{i}(X,\delta )$homogeneous polynomials of degree $n_{i}$ and $%
n_{0}>n_{1}>...n_{m}$.

We have the following equalities: $\overset{\wedge }{b}$=$Z^{-n_{0}}\underset%
{i=0}{\overset{m}{\sum }}g_{i}(X,\delta )Z^{n_{0}-n_{i}}$ and $\underset{i=0}%
{\overset{m}{\sum }}g_{i}(X,\delta )Z^{\text{n}_{0}\text{-n}_{i}}$ =$%
g_{0}(X,\delta )+g_{1}(X,\delta )...g_{m}(X,\delta )+(Z-1)h(Z)g^{\prime
}(X,\delta ).$

Therefore: $\overset{\wedge }{b}=Z^{-n_{0}}\underset{i=0}{\overset{m}{\sum }}%
g_{i}(X,\delta )+(Z-1)Z^{-n_{0}}b^{\prime }.$

Hence, $\theta (\overset{\wedge }{b})=0$ means $Z^{-n_{0}}\underset{i=0}{%
\overset{m}{\sum }}g_{i}(X,\delta )\in (Z-1)(B_{n})_{Z}.$

There exists $s,t\geq 0$ such that $Z^{t}\underset{i=0}{(\overset{m}{\sum }}%
g_{i}(X,\delta ))=Z^{s}(Z-1)b^{\prime \prime }.$

Set $g(X,\delta )=\underset{i=0}{\overset{m}{\sum }}g_{i}(X,\delta )$, then $%
Z^{t}g(X,\delta )=g(X,\delta )+(Z-1)h(Z)g(X,\delta )$ and $g(X,\delta )=(Z-1)%
\overline{b}$ with $\overline{b}\in B_{n}.$

Hence $\overline{b}=b_{0}(X,\delta )+b_{1}(X,\delta )Z+...b_{k}(X,\delta
)Z^{k}.$

Then $g(X,\delta )=-b_{0}(X,\delta )+(b_{0}(X,\delta )-b_{1}(X,\delta
))Z+...(b_{k-1}(X,\delta )-b_{k}(X,\delta ))Z^{k}+b_{k}(X,\delta )Z^{k+1}$

It follows $g(X,\delta )=-b_{0}(X,\delta )$ and $b_{0}(X,\delta
)=b_{1}(X,\delta )=b_{2}(X,\delta )=...b_{k}(X,\delta )=0$

Therefore: $g(X,\delta )=\underset{i=0}{\overset{m}{\sum }}g_{i}(X,\delta
)=0 $ but each $g_{i}(X,\delta )$ has degree $n_{i}$ with $%
n_{0}>n_{1}>...n_{m}.$

It follows each $g_{i}(X,\delta )=0$ and $\overset{\wedge }{b}=0.$

We prove now $\theta $ is surjective.

Take an element $b/Z^{k}+(Z-1)(B_{n})_{Z}$ in $(B_{n})_{Z}/(Z-1)(B_{n})_{Z}$.

The element b decompose into homogeneous components: $b=\overset{m}{\underset%
{i=0}{\sum }}b_{i}$ with degree $b_{i}=n_{i}$ and $n_{0}<n_{1}<...n_{m}.$

As above, $Z^{n_{m}-n_{i}}b_{i}=(Z-1)h(Z)b_{i}+b_{i}$. Set $b_{i}^{\prime
}=Z^{n_{m}-n_{i}}b_{i}$, $b\prime =\overset{m}{\underset{i=0}{\sum }}%
b_{i}^{\prime }.$

Hence $b_{i}/Z^{k}+(Z-1)(B_{n})_{Z}=b_{i}^{\prime }/Z^{k}+(Z-1)(B_{n})_{Z}.$

If $\ell >k$, then $b^{\prime }/Z^{k}+(Z-1)(B_{n})_{Z}=Z^{\ell -k}b^{\prime
}/Z^{\ell }=b^{\prime }/Z^{\ell }+(Z-1)f(Z)b^{\prime \prime }$ and $%
b/Z^{k}+(Z-1)(B_{n})_{Z}=b^{\prime }/Z^{\ell }+(Z-1)(B_{n})_{Z}.$

The case $\ell <k$ is similar: $b^{\prime }/Z^{\ell }=Z^{k-\ell }b^{\prime
}/Z^{k}=b^{\prime }/Z^{k}+(Z-1)f(Z)b^{\prime \prime }$and $%
b/Z^{k}+(Z-1)(B_{n})_{Z}=b^{\prime }/Z^{\ell }+(Z-1)(B_{n})_{Z}.$

In any case $\theta (b^{\prime }/Z^{\ell })=b/Z^{k}+(Z-1)(B_{n})_{Z}$.

We have proved $\theta $ is an isomorphism.
\end{proof}

With the identification $B_{n}/(Z-1)B_{n}\cong
(B_{n})_{Z}/(Z-1)(B_{n})_{Z}\cong A_{n}$ we have proved the graded algebra $%
(B_{n})_{Z}$ has $A_{n}$ in degree zero.

\begin{theorem}
There exists a graded rings isomorphism: $A_{n}\underset{K}{\otimes }%
K[Z,Z^{-1}]\longrightarrow B_{n}\underset{K[Z]}{\otimes }K[Z,Z^{-1}]$.
\end{theorem}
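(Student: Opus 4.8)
The plan is to construct explicit mutually inverse graded ring homomorphisms between $A_n\otimes_K K[Z,Z^{-1}]$ and $(B_n)_Z = B_n\otimes_{K[Z]}K[Z,Z^{-1}]$, using the structural results already established. First I would set up the two candidate maps. In one direction, I use the graded algebra morphism $\varphi\colon B_n\to (B_n)_Z$ together with the inclusion $K[Z,Z^{-1}]\hookrightarrow (B_n)_Z$ (landing in the center, since $Z$ is central in $B_n$ and localization preserves this); restricting $\varphi$ to a map on generators and noting that in $(B_n)_Z$ the element $Z$ is invertible, I get a map $B_n\to (B_n)_Z$ that kills nothing essential, but more to the point I want a map out of $A_n\otimes_K K[Z,Z^{-1}]$. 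Here I use Proposition (the isomorphism $\theta$): composing the inverse of $\theta\colon ((B_n)_Z)_0\to (B_n)_Z/(Z-1)(B_n)_Z\cong A_n$ with the degree-zero inclusion gives a ring embedding $A_n\hookrightarrow (B_n)_Z$ whose image is exactly the degree-zero part; sending $Z^{\pm1}\in K[Z,Z^{-1}]$ to the central homogeneous units $Z^{\pm1}\in (B_n)_Z$ of degree $\pm1$, and checking these commute with the image of $A_n$, I obtain a graded $K$-algebra map $\Phi\colon A_n\otimes_K K[Z,Z^{-1}]\to (B_n)_Z$.

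Next I would build the inverse. For this I want a graded map $(B_n)_Z\to A_n\otimes_K K[Z,Z^{-1}]$. Since $(B_n)_Z$ is generated over $K[Z,Z^{-1}]$ by the images of $X_i,\delta_i$, it suffices to define a map on $B_n$ compatible with the $K[Z]$-action and invert $Z$. I would send $X_i\mapsto X_i\otimes Z$, $\delta_i\mapsto \delta_i\otimes Z$, and $Z\mapsto 1\otimes Z$. The key verification is that this respects the defining relations of $B_n$: the commutators $[X_i,X_j]$, $[\delta_i,\delta_j]$, $[X_i,Z]$, $[\delta_i,Z]$ clearly map to zero, while $[X_i,\delta_j]=\partial_{ij}Z^2$ maps to $[X_i\otimes Z,\delta_j\otimes Z]=[X_i,\delta_j]\otimes Z^2=\partial_{ij}\otimes Z^2$, matching the image of $\partial_{ij}Z^2$. (The rescaling by $Z$ on the degree-one generators is exactly the device that converts the inhomogeneous Weyl relation into the homogeneous one, matching grading degrees.) This gives an algebra map $B_n\to A_n\otimes_K K[Z,Z^{-1}]$ sending the central $Z$ to the unit $1\otimes Z$, hence it factors through the localization $(B_n)_Z$ to give $\Psi\colon (B_n)_Z\to A_n\otimes_K K[Z,Z^{-1}]$; one checks $\Psi$ is graded, where $A_n$ sits in degree $0$.

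Then I would check $\Phi$ and $\Psi$ are mutually inverse. Composing $\Psi\Phi$ on $K[Z,Z^{-1}]$ is the identity by construction; on $A_n$, an element $a\in A_n$ corresponds under $\theta^{-1}$ to a degree-zero element of $(B_n)_Z$, and tracing through, $\Psi$ sends it back to $a\otimes 1$ — this is essentially the statement that under the identification $((B_n)_Z)_0\cong A_n$, the generator $X_i\in A_n$ matches $X_i/Z\in (B_n)_Z$, which $\Psi$ sends to $(X_i\otimes Z)(1\otimes Z^{-1})=X_i\otimes 1$. Conversely $\Phi\Psi$ is the identity on the generators $X_i,\delta_i,Z^{\pm1}$ of $(B_n)_Z$ by the same bookkeeping, so it is the identity. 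Since both maps are graded ring homomorphisms and inverse to each other, we are done.

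The main obstacle is purely bookkeeping: making sure the degree-zero identification $A_n\cong ((B_n)_Z)_0$ from the preceding Proposition is used consistently, i.e. that the generator $X_i$ of $A_n$ really does correspond to $X_i\otimes Z^{-1}\in (B_n)_Z$ and not to some other lift, and that the rescaling $X_i\mapsto X_i\otimes Z$ is the correct choice of grading-compatible lift (a wrong power of $Z$ would break either the relations or the grading). Once the dictionary between the two gradings is pinned down, verifying the relations and the inverse property is routine. One should also note the map is well-defined out of the tensor product $A_n\otimes_K K[Z,Z^{-1}]$ because the image of $A_n$ is centralized by the image of $Z^{\pm1}$, so the universal property of the tensor product of algebras (with $K[Z,Z^{-1}]$ mapping centrally) applies.
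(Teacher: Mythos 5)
Your argument is correct, but it proceeds differently from the paper. The paper does not construct an inverse map at all: it takes the identification $((B_{n})_{Z})_{0}\cong A_{n}$ from the preceding Proposition and proves directly that the multiplication map $\mu :((B_{n})_{Z})_{0}\underset{K}{\otimes }K[Z,Z^{-1}]\rightarrow (B_{n})_{Z}$ is bijective by pure grading bookkeeping — surjectivity because any homogeneous $b/Z^{\ell }$ can be rewritten as (degree-zero element)$\cdot Z^{\pm k}$, and injectivity by writing a degree-zero element as $\sum f_{i}(X,\delta )Z^{k-i}$ and pushing a power of $Z$ across the tensor sign; this is essentially the observation that a $\mathbb{Z}$-graded ring with a central invertible degree-one element is a Laurent extension of its degree-zero part, the same mechanism behind the Dade-type theorem invoked immediately afterwards. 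You instead exploit the presentation of $B_{n}$: the rescaled assignment $X_{i}\mapsto X_{i}\otimes Z$, $\delta _{i}\mapsto \delta _{i}\otimes Z$, $Z\mapsto 1\otimes Z$ visibly preserves the relations (the only nontrivial check being $[X_{i},\delta _{j}]=\partial _{ij}Z^{2}$), and since $1\otimes Z$ is a central unit the map factors through the central localization to give $\Psi :(B_{n})_{Z}\rightarrow A_{n}\otimes _{K}K[Z,Z^{-1}]$; the other direction $\Phi $ uses $\theta ^{-1}$ and centrality of $Z$, and the inverse property is checked on generators once one pins down that $X_{i}\in A_{n}$ corresponds to $X_{i}/Z$ under $\theta $. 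Your route buys an explicit formula for the isomorphism on generators and leans on the universal properties of the presentation and of localization rather than on homogeneous-component analysis; the paper's route is shorter given the Proposition already proved, avoids any appeal to the generators-and-relations description of $B_{n}$, and is the version of the argument that generalizes to arbitrary strongly graded situations. Both proofs use the identification $((B_{n})_{Z})_{0}\cong A_{n}$, yours only to define $\Phi $ and to fix the dictionary $X_{i}\leftrightarrow X_{i}/Z$, which, as you note, is the one point where careless bookkeeping (a wrong power of $Z$) would break either the grading or the relations.
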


\begin{proof}
Since $((B_{n})_{Z})_{0}\cong A_{n}$, we will prove that multiplication
induces a ring isomorphism: $\mu :((B_{n})_{Z})_{0}\underset{K}{\otimes }%
K[Z,Z^{-1}]\longrightarrow (B_{n})_{Z}:$ $b/Z^{\ell }\otimes
Z^{k}\longrightarrow bZ^{k}/Z^{\ell }$, assume $b/Z^{\ell }$is homogeneous
with degree $b=\ell +k$, then $b/Z^{\ell }=b/Z^{\ell +k}.Z^{k}$ and degree$%
(b/Z^{\ell +k})=0.$

Then $b/Z^{\ell +k}\otimes Z^{k}\longrightarrow b/Z^{\ell }$, similarly if
degree $b=\ell -k$, $b/Z^{\ell -k}\otimes Z^{k}\longrightarrow b/Z^{\ell }.$%
The map $\mu $ is onto.

Consider now an element $b/Z^{k}$of degree zero. We can write $b$ as: $b=%
\underset{i=0}{\overset{k}{\sum }}f_{i}(X,\delta )Z^{k-i}$ with degree$%
f_{i}(X,\delta )=i.$

$b/Z^{k}\otimes Z^{\ell }\longrightarrow Z^{\ell -k}\underset{i=0}{\overset{k%
}{\sum }}f_{i}(X,\delta )Z^{k-i}=0$ in $(B_{n})_{Z}.$

It follows there exist $t\geq 0$ such that $Z^{t}\underset{i=0}{\overset{k}{%
\sum }}f_{i}(X,\delta )Z^{k-i}=0$ in $B_{n}.$

Therefore: $b/Z^{k}\otimes Z^{\ell }=bZ^{t}/Z^{k+t}\otimes Z^{\ell }=0$.

We have proved $\mu $ is an isomorphism.
\end{proof}

The inclusion $K\longrightarrow K[Z,Z^{-1}]$ induces a flat morphism: $%
A_{n}\longrightarrow A_{n}\underset{K}{\otimes }K[Z,Z^{-1}]=A_{n}[Z,Z^{-1}]$
and there is a pair of adjoint functors: $A_{n}[Z,Z^{-1}]$ $\otimes
-:Mod_{A_{n}}\longrightarrow Gr_{A_{n}[Z,Z^{-1}]}$, $%
res_{A}:Gr_{A_{n}[Z,Z^{-1}]}\longrightarrow Mod_{A_{n}}$, where $res_{A}$ is
the restriction.

The following result is a particular case of a theorem given by Dade [Da].
We include the proof for completeness.

\begin{theorem}
The functors $res_{A}$, $A_{n}[Z,Z^{-1}]$ $\otimes -$ are exact inverse
equivalences.
\end{theorem}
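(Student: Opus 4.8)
The plan is to exhibit $res_A$ and $A_n[Z,Z^{-1}]\otimes_{A_n}-$ as an adjoint pair that happens to be an equivalence, using the grading in an essential way. First I would record the unit and counit of the adjunction. For $M\in Mod_{A_n}$, the unit $\eta_M:M\to res_A(A_n[Z,Z^{-1}]\otimes_{A_n}M)$ sends $m\mapsto 1\otimes m$; since $A_n[Z,Z^{-1}]=\bigoplus_{k\in\mathbb Z}A_n Z^k$ is a free $A_n$-module with $A_n=A_nZ^0$ a direct summand, the map $A_n\to A_n[Z,Z^{-1}]$ is a split monomorphism of $A_n$-bimodules, so $\eta_M$ is a split monomorphism of $A_n$-modules; I must then check it is onto. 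An element of $A_n[Z,Z^{-1}]\otimes_{A_n}M$ is a finite sum $\sum_k Z^k\otimes m_k$, and since $Z$ is central and invertible in $A_n[Z,Z^{-1}]$, each $Z^k\otimes m_k = 1\otimes(Z^k m_k)$ would require $Z$ to act on $M$ — which it does not. The correct observation is rather that $Z^k\otimes m_k$ for $k\neq 0$ lies in a nonzero graded component, so it is not in the image of $\eta_M$ as a map of $A_n$-modules; hence the honest statement is that $\eta_M$ is the inclusion of the degree-zero part, and I should instead verify directly that $res_A$ is \emph{faithful and full} and that every graded $A_n[Z,Z^{-1}]$-module is in the essential image.

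Concretely, the key steps are: (1) Given a graded left $A_n[Z,Z^{-1}]$-module $N=\bigoplus_{k\in\mathbb Z}N_k$, multiplication by $Z$ gives $A_n$-linear isomorphisms $Z\cdot:N_k\xrightarrow{\sim}N_{k+1}$ (with inverse $Z^{-1}\cdot$), so $N$ is determined up to graded isomorphism by the $A_n$-module $N_0$, and the natural map $A_n[Z,Z^{-1}]\otimes_{A_n}N_0\to N$, $Z^k\otimes x\mapsto Z^k x$, is an isomorphism of graded $A_n[Z,Z^{-1}]$-modules because it is degreewise the iso $A_n\otimes_{A_n}N_0\cong N_0\xrightarrow{Z^k\cdot}N_k$. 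This shows $A_n[Z,Z^{-1}]\otimes_{A_n}-$ is essentially surjective, with quasi-inverse $(-)_0$. (2) For $M\in Mod_{A_n}$ one has $(A_n[Z,Z^{-1}]\otimes_{A_n}M)_0=A_n\otimes_{A_n}M=M$ naturally, so $(-)_0\circ(A_n[Z,Z^{-1}]\otimes_{A_n}-)\cong \mathrm{id}$, and $res_A=\bigoplus_k(-)_k$ recovers the same information. (3) Identify $res_A$ with $(-)_0$ up to natural isomorphism, or more precisely show directly that for graded $N,N'$ the restriction map $Hom_{Gr}(N,N')\to Hom_{A_n}(res_A N, res_A N')$ is bijective: a graded homomorphism is determined by its restriction to $N_0$ (by $Z$-equivariance), and any $A_n$-linear map $res_A N\to res_A N'$ commutes with the $Z$-action hence is graded — this gives fullness and faithfulness. (4) Conclude both composites are naturally isomorphic to identities.

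The main obstacle, and the point deserving care, is the bookkeeping in step (3): an $A_n$-linear map $f:res_A N\to res_A N'$ a priori need not respect the grading, so I must argue that because $Z\in A_n[Z,Z^{-1}]$ acts as an $A_n$-linear automorphism shifting degree by one on \emph{both} $N$ and $N'$, and because $res_A$ forgets only the grading but not the $Z$-action (as $Z\in((B_n)_Z)$ is already an element acting on the restricted module? — here one must be careful: $res_A$ to $Mod_{A_n}$ does forget the $Z$-action). The clean fix is to factor the equivalence through the intermediate category: $res_A$ equals "forget grading" composed with nothing, but the genuinely correct target pairing is with $(-)_0$, and $res_A$ is naturally isomorphic to $(-)_0$ only after noting $N\cong\bigoplus_k N_0$ as $A_n$-modules via $Z^{-k}\cdot$. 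So I would first prove $(-)_0$ and $A_n[Z,Z^{-1}]\otimes-$ are inverse equivalences (steps 1–2, which are clean), then prove $res_A\cong \bigoplus_{k}(-)_0\cong$ (a functor naturally isomorphic, via the $Z$-action on $N$, to $(-)_0$ itself once we remember $res_A$ does retain enough structure because $Z$ acts $A_n$-linearly), identify $res_A$ with $(-)_0$, and invoke the adjunction $(A_n[Z,Z^{-1}]\otimes-)\dashv res_A$ to pin down which composite is unit and which is counit. The actual computations are all routine once this conceptual point — that a $\mathbb Z$-graded module over $A_n[Z,Z^{-1}]$ is the same as the single $A_n$-module sitting in degree zero — is isolated, which is exactly the content of Dade's theorem specialized to the infinite cyclic group.
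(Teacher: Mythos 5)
Your steps (1)--(2) are, in substance, exactly the paper's proof: the multiplication map $\mu\colon A_{n}[Z,Z^{-1}]\otimes_{A_{n}}N_{0}\rightarrow N$ is injective because $Z$ acts invertibly (if $Z^{k}m=0$ then $m=Z^{-k}Z^{k}m=0$) and surjective because $m=\mu (Z^{k}\otimes Z^{-k}m)$ for $m\in N_{k}$; naturality in $N$ gives $(A_{n}[Z,Z^{-1}]\otimes_{A_{n}}-)\circ (-)_{0}\cong 1$, and $(A_{n}[Z,Z^{-1}]\otimes_{A_{n}}M)_{0}\cong M$ naturally gives the other composite. Adding the (easy, and in the paper unelaborated) remark that both functors are exact -- $A_{n}[Z,Z^{-1}]$ is free as a right $A_{n}$-module, and $(-)_{0}$ is exact since kernels and cokernels of graded maps are computed degreewise -- this already completes the proof.

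The genuine flaw is your step (3). In the paper, $res_{A}$ is -- despite its name -- the degree-zero-part functor $N\mapsto N_{0}$: this is how it is used in the paper's own proof (the composite $(A_{n}[Z,Z^{-1}]\otimes_{A_{n}}-)\,res_{A}$ is computed on $M_{0}$), and it is the standard functor in Dade's theorem for strongly graded rings. With the reading you try to salvage, namely $res_{A}=$ restriction of scalars forgetting the grading, the theorem would be false: $res_{A}(A_{n}[Z,Z^{-1}]\otimes_{A_{n}}M)\cong \bigoplus_{k\in \mathbb{Z}}M$, which is not isomorphic to $M$ for $M$ finitely generated and nonzero (since $A_{n}$ is noetherian), let alone naturally. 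Accordingly, your proposed bridge cannot work: $\bigoplus_{k\in \mathbb{Z}}(-)_{0}$ is not naturally isomorphic to $(-)_{0}$ (compare the two functors on $A_{n}[Z,Z^{-1}]$ itself: an infinite-rank free module versus $A_{n}$), and the auxiliary claim that an arbitrary $A_{n}$-linear map $res_{A}N\rightarrow res_{A}N^{\prime }$ automatically commutes with the $Z$-action is false once the $Z$-action has been forgotten, as you yourself half-concede. The fix is not an isomorphism of functors but a reading of the statement: take $res_{A}=(-)_{0}$, discard step (3), and your steps (1)--(2) together with exactness prove the theorem exactly as the paper does.
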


\begin{proof}
It is clear both functors are exact.

Let $M=\underset{i\in Z}{\oplus }M_{i}$ be a graded $A_{n}[Z,Z^{-1}]$%
-module. Multiplication induces a morphism of graded modules: $\mu
:A_{n}[Z,Z^{-1}]\underset{A}{\otimes }M_{0}\rightarrow M.$

If $Z^{k}m=0$, then $m=Z^{-k}Z^{k}m=0$ and $\mu $ is injective, but given $%
m\in M_{k}$, $\mu (Z^{k}\otimes Z^{-k}m)=m$.

It follows $\mu $ is an isomorphism.

Moreover, if $f:M\longrightarrow N$ is a morphism of graded $A_{n}[Z,Z^{-1}] 
$-modules, the following diagram commutes:

$%
\begin{array}{ccc}
A_{n}[Z,Z^{-1}]\underset{A}{\otimes }M_{0} & \longrightarrow & M \\ 
\downarrow 1\otimes f_{0} &  & \downarrow f \\ 
A_{n}[Z,Z^{-1}]\underset{A}{\otimes }N_{0} & \longrightarrow & N%
\end{array}%
$

It follows, $(A_{n}[Z,Z^{-1}]\underset{A}{\otimes }-)res_{A}\cong 1.$

Given an $A_{n}$-module $M$, it is clear $res_{A}(A_{n}[Z,Z^{-1}]\underset{A}%
{\otimes }M)\cong M$ and given a morphism of $A_{n}$-modules $f:$ $%
M\rightarrow N$, $res_{A}(1\otimes f)=f.$

Therefore $res_{A}(A_{n}[Z,Z^{-1}]\underset{A}{\otimes }-)\cong 1$.
\end{proof}

\begin{corollary}
The equivalences $res_{A}$, $A_{n}[Z,Z^{-1}]$ $\otimes -$ preserve
projective modules, irreducible modules, send left ideals to left ideals
giving an order preserving bijection.
\end{corollary}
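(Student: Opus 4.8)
The plan is to derive the corollary formally from the preceding theorem. That theorem exhibits $res_{A}$ and $A_{n}[Z,Z^{-1}]\otimes_{A}-$ as exact, mutually quasi-inverse functors, so each is an equivalence of abelian categories, and any property of objects, or of the subobject lattices, that is invariant under equivalence is transported back and forth automatically. The only arithmetic input needed is the identification $A_{n}[Z,Z^{-1}]\otimes_{A}A_{n}\cong A_{n}[Z,Z^{-1}]$ of the regular modules, whence also $res_{A}(A_{n}[Z,Z^{-1}])\cong A_{n}$.

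First I would handle projectives. Projectivity is a categorical notion --- $P$ is projective iff every epimorphism onto $P$ splits --- and an equivalence carries epimorphisms to epimorphisms and is bijective on $\mathrm{Hom}$-sets, so it preserves projectives. Explicitly, a projective $A_{n}$-module $P$ is a direct summand of a free module $A_{n}^{(I)}$, and $A_{n}[Z,Z^{-1}]\otimes_{A}A_{n}^{(I)}\cong A_{n}[Z,Z^{-1}]^{(I)}$ is free graded, so its image is a graded direct summand of a free graded module; running the same argument through $res_{A}$ together with the isomorphism $res_{A}(A_{n}[Z,Z^{-1}]\otimes_{A}-)\cong 1$ gives the converse. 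For irreducibles, recall that an object of an abelian category is simple precisely when it is nonzero and has exactly two subobjects; an equivalence induces, for each $M$, a poset isomorphism between the subobjects of $M$ and those of its image, so it preserves the property of being simple in both directions.

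Finally, left ideals. A left ideal of $A_{n}$ is the same as a subobject of the regular module $A_{n}$ in $Mod_{A_{n}}$, and a graded left ideal of $A_{n}[Z,Z^{-1}]$ is the same as a subobject of the regular graded module $A_{n}[Z,Z^{-1}]$ in $Gr_{A_{n}[Z,Z^{-1}]}$. By the identification of the regular modules noted above, the subobject-lattice isomorphism supplied by the equivalence restricts to a bijection $I\mapsto A_{n}[Z,Z^{-1}]\otimes_{A}I$ between left ideals of $A_{n}$ and graded left ideals of $A_{n}[Z,Z^{-1}]$, with inverse induced by $res_{A}$. Since the functors are additive they carry an inclusion $I\subseteq J$ of left ideals to the corresponding inclusion of images, so the bijection and its inverse are order preserving. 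I expect no real obstacle here: the one point requiring care is the bookkeeping that matches the regular object $A_{n}$ with the regular graded object $A_{n}[Z,Z^{-1}]$, after which the statement is immediate.
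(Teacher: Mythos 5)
Your argument is correct and coincides with what the paper intends: the corollary is stated there without proof, as an immediate consequence of the preceding theorem, and your transport-of-structure argument (equivalences preserve projectivity, simplicity, and subobject lattices, plus the identification of the regular objects $A_{n}[Z,Z^{-1}]\otimes_{A}A_{n}\cong A_{n}[Z,Z^{-1}]$) is exactly the standard justification. The only bookkeeping remark is that the paper's $res_{A}$ is in effect the degree-zero-part functor $M\mapsto M_{0}$ (as its proof of the theorem shows), which is consistent with your use of $res_{A}(A_{n}[Z,Z^{-1}])\cong A_{n}$.
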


We will study now the relations between $Gr_{B_{n}}$ and $Gr_{(B_{n})_{Z}}$.
We denote by $Q$ the localization functor $Q:$ $Gr_{B_{n}}$ $\rightarrow
Gr_{(B_{n})_{Z}}$, $M\rightarrow M_{Z}$, where

$M_{Z}=(B_{n})_{z}\underset{B}{\otimes }M\cong K[Z,Z^{-1}]$ $\underset{K[Z]}{%
\otimes }B_{n}\underset{B_{n}}{\otimes }M\cong K[Z,Z^{-1}]$ $\underset{K[Z]}{%
\otimes }M$.

If we denote by $gr_{B_{n}}$ and $gr_{(B_{n})_{Z}}$ the categories of
finitely generated graded $B_{n}$ and $(B_{n})_{Z}$-modules, respectively.
Then $Q$ restricts to a functor $Q:$ $gr_{B_{n}}$ $\rightarrow $ $%
gr_{(B_{n})_{Z}}$.

\begin{definition}
Given a $B_{n}$-module $M,$define the $Z$-torsion of $M$ as: $%
t_{Z}(M)=\{m\in M\mid $there exists $n>0$ with $Z^{n}m=0$ \}. It is clear $%
t_{Z}$ is an idempotent radical. We say $M$ is $Z$-torsion when $t_{Z}(M)=M$
and $Z$-torsion free if $t_{Z}(M)=0.$
\end{definition}

The kernel of the natural map $M\longrightarrow M_{Z}$ is $t_{Z}(M).$

\begin{proposition}
i) Let $f:M\longrightarrow N$ be a morphism of graded $B_{n}$-modules. Then $%
f_{Z}:M_{Z}\longrightarrow N_{Z}$ is zero if and only if $f$ factors through
a Z -torsion module.

ii) Let $\varphi :M_{Z}\longrightarrow N_{Z}$ be a morphism of finitely
generated graded $(B_{n})_{Z}$-modules, there exists an integer $k\geq 0$
and a map $f:Z^{k}M\longrightarrow N$ such that the composition $%
M_{Z}\longrightarrow (Z^{k}M)_{Z}\longrightarrow N_{Z}$ is $\varphi $ and $%
M_{Z}\longrightarrow (Z^{k}M)_{Z}$ is an isomorphism of graded modules.

ii) Let $M$ be a finitely generated graded $(B_{n})_{Z}$-module. The there
exists a finitely generated $B_{n}$-sub module $\overline{M}$ of $M$ such
that $(\overline{M})_{Z}\cong M_{Z}.$
\end{proposition}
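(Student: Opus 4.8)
The plan is to treat the three assertions as increasingly global versions of the same localization principle, namely that $(B_n)_Z\otimes_{B_n}-$ inverts $Z$ and kills exactly the $Z$-torsion, together with the fact (Proposition above) that $B_n$ is noetherian so that finite generation is preserved under passing to submodules. Throughout I will use the explicit description $M_Z\cong K[Z,Z^{-1}]\otimes_{K[Z]}M$ and the remark just before the statement that the kernel of $M\to M_Z$ is $t_Z(M)$.

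For (i): if $f$ factors as $M\to T\to N$ with $T$ a $Z$-torsion module, then $T_Z=K[Z,Z^{-1}]\otimes_{K[Z]}T=0$ because every element of $T$ is annihilated by some power of $Z$ which is a unit in $K[Z,Z^{-1}]$; since $Q$ is a functor, $f_Z$ factors through $T_Z=0$, so $f_Z=0$. Conversely, suppose $f_Z=0$. Then for each homogeneous generator $m$ of $M$ the image $f(m)$ maps to $0$ in $N_Z$, so $f(m)\in t_Z(N)$, i.e. $Z^{k_m}f(m)=0$ for some $k_m$. Taking $k$ to be the maximum of the $k_m$ over a finite generating set (here finite generation of $M$, or at least of $\operatorname{Im}f$, is used), we get $Z^k\operatorname{Im}(f)=0$, so $\operatorname{Im}(f)$ is $Z$-torsion and $f$ factors as $M\twoheadrightarrow \operatorname{Im}(f)\hookrightarrow N$ through that $Z$-torsion module.

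For (ii): given $\varphi:M_Z\to N_Z$, pick homogeneous generators $m_1,\dots,m_r$ of $M$ as a $B_n$-module. Each $\varphi(m_i/1)$ lies in $N_Z=K[Z,Z^{-1}]\otimes_{K[Z]}N$, hence can be written as $n_i/Z^{k_i}$ with $n_i\in N$ homogeneous and $k_i\ge 0$; setting $k=\max_i k_i$ and clearing denominators we may assume a common $k$, so $Z^k\varphi(m_i/1)=n_i/1$ with $n_i\in N$. Now the elements $Z^k m_i$ generate the submodule $Z^kM\subseteq M$, and the natural map $M_Z\to(Z^kM)_Z$ is an isomorphism because multiplication by $Z^k$ is invertible in $(B_n)_Z$. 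Define $f:Z^kM\to N$ on generators by $f(Z^km_i)=n_i$; one must check this is well defined, i.e. respects the $B_n$-relations among the $Z^km_i$. A relation $\sum_i b_i Z^k m_i=0$ in $M$ gives $\sum_i b_i\varphi(m_i/1)=0$ in $N_Z$ after multiplying by $Z^{-k}$, hence $\sum_i b_i n_i/Z^k=0$ in $N_Z$, so $Z^t\sum_i b_i n_i=0$ in $N$ for some $t\ge0$; this is not quite $\sum b_i n_i=0$, so one enlarges $k$ once more (replace $k$ by $k+t$, finitely many relations coming from a finite presentation since $B_n$ is noetherian) to force the relation to hold on the nose. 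With $f$ so defined, tracing through the identifications shows $f_Z$ composed with the isomorphism $M_Z\xrightarrow{\ \sim\ }(Z^kM)_Z$ equals $\varphi$. The main obstacle here is exactly this well-definedness bookkeeping: one needs a finite presentation of $M$, available by noetherianity, to absorb all the torsion exponents into a single power of $Z$.

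For (iii): let $M$ be a finitely generated graded $(B_n)_Z$-module with homogeneous generators $x_1,\dots,x_s$. Write each $x_j$ as $b_j/Z^{k_j}$ with $b_j\in B_n$, and after rescaling by the unit $Z^{k_j}$ we may take $x_j=b_j/1$ with $b_j\in B_n$ homogeneous. Let $\overline M$ be the $B_n$-submodule of $M$ generated by $b_1/1,\dots,b_s/1$; it is finitely generated, and it is the image of the natural composite $B_n$-module map. The inclusion $\overline M\hookrightarrow M$ induces $(\overline M)_Z\to M_Z=M$, and this is surjective because the $x_j=b_j/1$ already lie in $(\overline M)_Z$, while it is injective because localization at $S=\{Z^i\}$ is exact (Proposition/Theorem above that $K[Z]\to K[Z,Z^{-1}]$ is flat) so $(\overline M)_Z\to M_Z$ is the localization of a monomorphism. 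Hence $(\overline M)_Z\cong M_Z$, and in fact $(\overline M)_Z\cong M$ since $M$, being already a $(B_n)_Z$-module, satisfies $M_Z\cong M$. This completes the proof.
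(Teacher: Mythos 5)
Parts (i) and (iii) are fine and follow the paper's own route: in (i) the image of $f$ is $Z$-torsion and $f$ factors through it, and in (iii) one takes the $B_n$-submodule $\overline M$ generated by finitely many homogeneous $(B_n)_Z$-generators and checks the multiplication map $K[Z,Z^{-1}]\otimes_{K[Z]}\overline M\to M$ is bijective. Two small remarks: in (i) no finite generation is available or needed --- the statement is for arbitrary graded modules, and ``$Z$-torsion'' is an elementwise condition, so you should simply note $f(m)\in t_Z(N)$ for every $m$ rather than extracting a uniform power $Z^k$ from a generating set; in (iii) the phrase ``write each $x_j$ as $b_j/Z^{k_j}$ with $b_j\in B_n$'' does not typecheck (elements of an abstract $(B_n)_Z$-module are not fractions of ring elements), but this sentence is not used: what carries the argument is exactly what you then do, namely restrict scalars and take the submodule generated by the $x_j$.

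In (ii), however, there is a genuine gap at the well-definedness step, and your proposed repair does not close it. If you define $f$ on generators by $Z^km_i\mapsto n_i$, a relation $\sum_i c_iZ^km_i=0$ only yields $\sum_i c_in_i\in t_Z(N)$, i.e.\ $Z^{t}\sum_i c_in_i=0$ for some $t$ depending on the relation. Your fix --- replace $k$ by $k+t$ with $t$ the maximum over the finitely many relations of a finite presentation --- fails for two reasons: enlarging $k$ enlarges the relation module (a tuple $(c_i)$ is a relation among the $Z^{k+t}m_i$ as soon as $\sum_i c_im_i$ is killed by $Z^{k+t}$, not only when it is zero), and for such a new relation one only gets $(Z^tc_i)_i$ in the old relation module, whence $\sum_i c_i(Z^tn_i)=\sum_j a_j\varepsilon_j$ with $\varepsilon_j\in t_Z(N)$ and arbitrary coefficients $a_j\in B_n$, which need not vanish. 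The correct uniform bound is different: since $N$ is finitely generated over the noetherian ring $B_n$, $t_Z(N)$ is finitely generated, so $Z^{t}t_Z(N)=0$ for some fixed $t$; then every relation among the $Z^{k+t}m_i$ has $\sum_i c_im_i\in t_Z(M)$, hence $\sum_i c_in_i\in t_Z(N)$, hence $\sum_i c_i(Z^tn_i)=Z^t\sum_i c_in_i=0$, and $Z^{k+t}m_i\mapsto Z^tn_i$ is well defined. The paper avoids the issue altogether by making no choices: it defines $f$ as the restriction to $Z^kM$ of the composite $M\to M_Z\overset{\varphi}{\to}N_Z$, observing that on $Z^kM$ this composite lands in the canonical image $N\otimes 1$ of $N$ (which it identifies with $N$), so well-definedness is automatic and only the verification $f_Z=\varphi$ remains. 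Either repair works, but as written your argument for (ii) is incomplete.
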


\begin{proof}
i) Let $f:M\rightarrow N$ be a morphism such that $f_{Z}:M_{Z}%
\longrightarrow N_{Z}$, $f_{Z}=0.$Let $m\in M$ with $f(m)/1=0.$Then there
exist some $k\geq 0$ such that $Z^{k}f(m)=0$. It follows $f(M)$is of $Z$%
-torsion and the map $f$ factors as $f=j\overline{f}$ with $j:$ $%
t_{Z}(M)\rightarrow M$ the inclusion and $\overline{f}:M\rightarrow t_{Z}(M)$
the restriction of $f.$

Conversely, $f=gh$ with $h:M\rightarrow L$ and $g:L\rightarrow M$ maps and $%
L $ of $Z$-torsion, then $L_{Z}=0$ and $f_{Z}=g_{Z}h_{Z}=0.$

ii) Let $\varphi :M_{Z}\longrightarrow N_{Z}$ be a morphism of finitely
generated graded $(B_{n})_{Z}$-modules. Let $m_{1}$, $m_{2}$,... $m_{k}$ be
a set of homogeneous generators of $M_{z}$ with degree $m_{i}=d_{i}$ and let 
$d=\max \{d_{i}\}.$Then $\varphi (m_{i})=\underset{j}{\sum }n_{i,j}\otimes
Z^{k_{i.j}}$, degree $n_{i,j}+k_{i,j}=d_{i}.$

If $k_{i,j}\geq 0$, then $n_{i,j}\otimes
Z^{k_{i.j}}=n_{i,j}Z^{k_{i.j}}\otimes 1,$hence we may assume $\varphi
(m_{i})=\underset{j}{\sum }n_{i,j}\otimes Z^{k_{i.j}}$, with $k_{i,j}\leq 0$

Let $k=$max\{ $-k_{i,j}\}$. Then $\varphi (m_{i})=\underset{j}{\sum }%
n_{i,j}Z^{k+k_{i,j}}\otimes Z^{-k}=n_{i}\otimes Z^{-k}$, degree $%
n_{i}=d_{i}+k.$

Consider the restriction to $Z^{k}M$ of the map: $M\overset{j}{%
\longrightarrow }M\underset{K[Z]}{\otimes }K[Z,Z^{-1}]\overset{\varphi }{%
\longrightarrow }N\underset{K[Z]}{\otimes }K[Z,Z^{-1}]$, $%
f:Z^{k}M\longrightarrow N\otimes 1\cong N.$The map $f$ is a degree zero map.

We have an exact sequence: $0\rightarrow Z^{k}M\rightarrow M\rightarrow
M/Z^{k}M\rightarrow 0$, with $M/Z^{k}M$ of $Z$-torsion. Localizing, there
exist an isomorphism $(Z^{k}M)_{Z}\cong M_{Z}.$There is a map $%
f_{Z}:(Z^{k}M)_{Z}\rightarrow N_{Z}$, given by $f_{Z}(m/Z^{\ell
})=f(Z^{k}(m/Z^{k+\ell })=f(Z^{k}m)/Z^{\ell +k}=\varphi (Z^{k}m)/Z^{\ell
+k}=Z^{k}\varphi (m)/Z^{\ell +k}=\varphi (m/Z^{\ell })$.

iii) Let $M$ be a finitely generated $(B_{n})_{Z}$-module with homogeneous
generators: $m_{1}$, $m_{2}$,... $m_{k}$ of degree $m_{i}=d_{i}.$

By restriction, $M$ is a $B_{n}$-module. Let $\overline{M}$ be the $B_{n}$%
-submodule of $M$ generated by $m_{1}$, $m_{2}$,... $m_{k}$.

Localizing we get $\overline{M}$ $_{Z}=(B_{n})_{Z}\underset{B}{\otimes }%
\overline{M}\cong B_{n}\underset{K[Z]}{\otimes }K[Z,Z^{-1}]\underset{B}{%
\otimes }\overline{M}\cong K[Z,Z^{-1}]\underset{K[Z]}{\otimes }\overline{M}.$

Let $\mu :K[Z,Z^{-1}]\underset{K[Z]}{\otimes }\overline{M}\rightarrow M$ be
the map given by multiplication.

The homogeneous elements of $K[Z,Z^{-1}]\underset{K[Z]}{\otimes }\overline{M}
$ are of the form $Z^{-k}\otimes m$, hence $\mu (Z^{-k}\otimes m)=Z^{-k}m=0$
implies $m=Z^{k}Z^{-k}m=0$.

Let $m$ be an element of $M$ homogeneous of degree $k.$ It has form: $m=\sum
b_{i}/Z^{n_{i}}m_{i}$, degree $b_{i}+d_{i}-n_{i}=k$. Set $n=\max \{n_{i}\}$.

Then $Z^{n}m=\sum b_{i}Z^{n-n_{i}}m_{i}$ $=\overline{m}$ is an element of $%
\overline{M}$ of degree $k+n$ and $\mu (Z^{-k}\otimes \overline{m})=m$.
\end{proof}

\begin{corollary}
Let $M$, $N$ be finitely generated graded $B_{n}$-modules. A map $\varphi
:M_{Z}\rightarrow N_{Z}$ is an isomorphism if and only if there exists a map 
$f:Z^{k}M\rightarrow N$ such that $Kerf$ , $Cokerf$ are of $Z$-torsion and $%
f_{Z}=\varphi .$ If there is a map $f:Z^{k}M\rightarrow N$ such that $f_{Z}$
is an isomorphism then $K\func{erf}$ , $Cokerf$ are of $Z$-torsion even when 
$M$or $N$ are not finitely generated.
\end{corollary}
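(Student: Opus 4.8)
The plan is to reduce everything to the previous proposition and its machinery. For the first (``if and only if'') assertion, suppose first that $\varphi\colon M_Z\to N_Z$ is an isomorphism. Apply part (ii) of the preceding proposition to obtain an integer $k\geq 0$ and a map $f\colon Z^kM\to N$ with $f_Z=\varphi$ (after identifying $M_Z\cong (Z^kM)_Z$ via the canonical isomorphism coming from the exact sequence $0\to Z^kM\to M\to M/Z^kM\to 0$, whose third term is $Z$-torsion). Since localization $Q$ is exact and $Z$-torsion modules are exactly the ones killed by $Q$, from $0\to \ker f\to Z^kM\xrightarrow{f} N\to \operatorname{coker} f\to 0$ we get $0\to (\ker f)_Z\to M_Z\xrightarrow{\varphi} N_Z\to (\operatorname{coker} f)_Z\to 0$; as $\varphi$ is an isomorphism, $(\ker f)_Z=0=(\operatorname{coker} f)_Z$, i.e. $\ker f$ and $\operatorname{coker} f$ are $Z$-torsion. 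Conversely, if such an $f$ exists with $\ker f,\operatorname{coker} f$ of $Z$-torsion, then applying $Q$ to the same four-term sequence and using $(\ker f)_Z=(\operatorname{coker} f)_Z=0$ shows $f_Z$ is an isomorphism; and $f_Z=\varphi$ by hypothesis, so $\varphi$ is an isomorphism.

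For the last sentence, the point is that the implication ``$f_Z$ an isomorphism $\Rightarrow$ $\ker f,\operatorname{coker} f$ are $Z$-torsion'' does not need finite generation, because it only uses exactness of $Q$ together with the fact that $Q(L)=0$ precisely when $L$ is $Z$-torsion; both of these hold for arbitrary graded $B_n$-modules, since $Q=K[Z,Z^{-1}]\otimes_{K[Z]}-$ is a flat base change and its kernel on any module is exactly $t_Z$. So given any $f\colon Z^kM\to N$ (with $M,N$ arbitrary) such that $f_Z$ is an isomorphism, the four-term exact sequence localizes to one with isomorphism in the middle, forcing $(\ker f)_Z=(\operatorname{coker} f)_Z=0$, i.e. $\ker f$ and $\operatorname{coker} f$ are $Z$-torsion.

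The only place finite generation is genuinely used is in invoking part (ii) of the previous proposition to \emph{produce} the map $f$ from an abstract isomorphism $\varphi\colon M_Z\to N_Z$: that argument chooses finitely many homogeneous generators of $M_Z$, clears denominators uniformly, and lands the images in $N\otimes 1\cong N$, which requires $M_Z$ (hence, by Proposition on $\overline M$, essentially $M$) to be finitely generated. Thus in the final sentence we must \emph{start} from a given $f$ rather than from $\varphi$ alone. The main (mild) obstacle is bookkeeping: matching the canonical identification $M_Z\cong(Z^kM)_Z$ with the statement so that ``$f_Z=\varphi$'' is literally meaningful, and checking that the four-term sequence $0\to\ker f\to Z^kM\to N\to\operatorname{coker} f\to 0$ is what localizes correctly — but this is immediate from exactness of $Q$ and costs nothing beyond what is already in the preceding proposition.
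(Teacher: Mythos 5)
Your argument is correct and is essentially the one the paper intends: the corollary is stated without proof as an immediate consequence of part (ii) of the preceding proposition together with exactness of the localization functor and the fact that a module localizes to zero exactly when it is $Z$-torsion, which is precisely how you argue. Your remark that the final sentence needs no finite generation (since only flatness of $K[Z,Z^{-1}]\otimes_{K[Z]}-$ and the kernel description $t_{Z}$ are used) correctly identifies where finiteness enters, namely only in producing $f$ from $\varphi$.
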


We shall define another torsion theory on $Gr_{B_{n}}$.

Let $M$ be a graded $B_{n}$-module, $t(M)=\underset{L\in J}{\sum }L$ and $%
J=\{L\mid $sub module of $M$ dim$_{K}L<\infty \}.$

Claim: $t(M/t(M))=0.$

Let $N$ be a finitely generated sub module of $M$ such that $%
N+t(M)/t(M)=N/N\cap t(M)$ is finite dimensional over $K.$Since $B_{n}$is
noetherian is finitely generated, hence of finite dimension over $K$. It
follows $N$ is finite dimensional, so $N\subset t(M).$

Let $N$ be an arbitrary sub module of $M$with $N+t(M)/t(M)$finite
dimensional over $K,$ then $N=\sum N_{i}$, with $N_{i}$ finitely generated,
each $N_{i}+t(M)/t(M$) is finite dimensional, therefore $N_{i}\subset t(M).$%
It follows $N\subset t(M)$.

\section{The derived Categories $D^{b}(QgrB_{n})$ and $D^{b}(gr(B_{n})_{Z})$.%
}

In this section we will study the relations between the derived categories $%
D^{b}(QgrB_{n})$ and $D^{b}(gr(B_{n})_{Z})$ and their relations with the
stable category \underline{$gr$}$_{B_{n}^{!}}$ of the shrike algebra $%
B_{n}^{!}$ of $B_{n}.$

\begin{definition}
We say that a (graded) $B_{n}$-module is torsion if $t(M)=M$ and torsion
free if $t(M)=0.$
\end{definition}

It is clear $t(M)$ is $Z$-torsion and $t(M)\subset t_{Z}(M)$. Therefore if $%
M $ is torsion then it is $Z$ -torsion and if $M$ is $Z$-torsion free then
it is torsion free.

The torsion free modules form a Serre (or thick) subcategory of $Gr_{B_{n}}$%
we localize with respect to this subcategory as explained in $[$Ga$]$,$[$P$]$
. Denote by $QGrB_{n}$ the quotient category and let $\pi :$ $%
Gr_{B_{n}}\rightarrow QGrB_{n}$ be the quotient functor, $QGrB_{n}=GrB_{n}/$%
Torsion, is an abelian category with enough injectives and $\pi $ is an
exact functor. When taking this quotient we are inverting the maps of $%
B_{n}- $graded modules, $f:M\rightarrow N$ such that $K\func{erf}$ and $%
Cokerf$ are torsion.

The category $QGrB_{n}$ has the same objects as $GrB_{n}$ and maps:

$Hom_{QGrB_{n}}(\pi (M),\pi (N))=\underrightarrow{\lim }Hom_{GrB_{n}}(M^{%
\prime },N/t(M))$, the limit running through all the sub modules $M^{\prime
} $of $M$ such that $M/M^{\prime }$ is torsion.

If $M$ is a finitely generated module then the limit has a simpler form:

$Hom_{QGrB_{n}}(\pi (M),\pi (N))=\underset{k}{\underrightarrow{\lim }}%
Hom_{GrB_{n}}(M_{\geq k},N/t(M)).$

In case $N$ is torsion free: $Hom_{QGrB_{n}}(\pi (M),\pi (N))=\underset{k}{%
\underrightarrow{\lim }}Hom_{GrB_{n}}(M_{\geq k},N).$

The functor $\pi :$ $Gr_{B_{n}}\rightarrow QGrB_{n}$ has a right adjoint: $%
\varpi :$ $QGrB_{n}\rightarrow Gr_{B_{n}}$such that $\pi \varpi \cong 1.$ $%
[PN].$

If we denote by $gr_{B_{n}}$ the category of finitely generated graded \ $%
B_{n}$-modules and by $QgrB_{n}$ the full subcategory of $QGrB_{n}$
consisting of the objects $\pi (N)$ with $N$ finitely generated, then the
functor $\pi $ induces by restriction a functor: $\pi :$ $%
gr_{B_{n}}\rightarrow QgrB_{n}.$The kernel of $\pi $ is: $Ker\pi =\{M\in $ $%
gr_{B_{n}}\mid \pi (M)=0\}=\{M\in $ $gr_{B_{n}}\mid t(M)=M\}.$

In the other hand, the functor: $(B_{n})_{Z}\underset{B}{\otimes }%
-:gr_{B_{n}}\rightarrow gr(B_{n})_{Z}$ has kernel: $Ker((B_{n})_{Z}\underset{%
B}{\otimes }-)=\{M\in $ $gr_{B_{n}}\mid M_{Z}=0\}.$

It follows: $Ker\pi \subset Ker((B_{n})_{Z}\underset{B}{\otimes }-).$

According to [P] (pag. 173 Cor. 3.11) there exists a unique functor $\psi $
such that the following diagram commutes:

\begin{center}
$%
\begin{array}{cccccc}
gr_{B_{n}} &  & \overset{\pi }{\rightarrow } &  & QgrB_{n} &  \\ 
(B_{n})_{Z}\underset{B}{\otimes }- & \searrow &  & \swarrow & \psi &  \\ 
&  & gr(B_{n})_{Z} &  &  & 
\end{array}%
$
\end{center}

This is: $\psi \pi =(B_{n})_{Z}\underset{B}{\otimes }-.$

\begin{proposition}
The functor $\psi :QgrB_{n}\rightarrow gr(B_{n})_{Z}$ is exact.
\end{proposition}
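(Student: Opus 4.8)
The plan is to show that $\psi$ preserves exactness by using the defining relation $\psi\pi=(B_n)_Z\otimes_B-$ together with the fact that the localization functor $(B_n)_Z\otimes_B-$ is exact (being tensor with the flat $K[Z]$-module $K[Z,Z^{-1}]$), and the fact that $\pi$ is exact with an exact right adjoint. First I would take a short exact sequence in $QgrB_n$, say $0\to\pi(M')\to\pi(M)\to\pi(M'')\to0$; by the standard theory of quotient categories of $Gr_{B_n}$ one may lift this, after passing to suitable subobjects modulo torsion, to an exact sequence in $gr_{B_n}$. More precisely, using the right adjoint $\varpi$ with $\pi\varpi\cong1$ one can realize any object of $QgrB_n$ as $\pi$ of a finitely generated module, and any morphism and any short exact sequence in $QgrB_n$ comes (up to the isomorphisms induced by $\pi$) from a three-term complex in $gr_{B_n}$ whose homology is torsion. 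Applying $(B_n)_Z\otimes_B-$ to such a three-term complex and using that torsion modules are $Z$-torsion (so they localize to $0$, as recorded just before Definition of torsion modules), the homology of the localized complex vanishes, i.e. the image sequence in $gr(B_n)_Z$ is exact. Since $\psi\pi=(B_n)_Z\otimes_B-$, this image sequence is exactly $\psi$ applied to the original short exact sequence, so $\psi$ is exact.

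Concretely, the key steps in order are: (1) recall that every object of $QgrB_n$ is isomorphic to $\pi(N)$ with $N$ finitely generated and torsion free (replace $N$ by $N/t(N)$), and that $\psi(\pi(N))\cong N_Z$; (2) given a short exact sequence $0\to A\to B\to C\to0$ in $QgrB_n$, produce finitely generated $B_n$-modules and $B_n$-module maps $M'\xrightarrow{f}M\xrightarrow{g}M''$ with $\pi$ of this diagram isomorphic to the given sequence, so that $\ker g/\operatorname{im} f$, $\ker f$, and $\operatorname{coker} g$ are torsion; one gets this from the description of $\operatorname{Hom}$ in $QgrB_n$ as a direct limit over submodules of finite colength and from Proposition (the three-part one about $M_Z$), or directly from [P]; (3) apply the exact functor $(B_n)_Z\otimes_B-$; since torsion $\Rightarrow$ $Z$-torsion $\Rightarrow$ localizes to zero, the sequence $M'_Z\to M_Z\to M''_Z$ is short exact in $gr(B_n)_Z$; (4) identify this with $\psi(A)\to\psi(B)\to\psi(C)$ via $\psi\pi=(B_n)_Z\otimes_B-$ and the isomorphisms of step (1), concluding that $\psi$ sends short exact sequences to short exact sequences, hence is exact.

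The main obstacle I expect is step (2): making precise that an arbitrary short exact sequence in the quotient category $QgrB_n$ can be represented, functorially enough for the argument, by an honest three-term complex of finitely generated graded $B_n$-modules with torsion homology. The morphisms in $QgrB_n$ are only represented by roofs (maps defined on finite-colength submodules, modulo torsion), so one must choose common submodules on which both $f$ and a splitting-type map are defined, and check that the resulting complex has torsion homology; here the noetherianity of $B_n$ (Proposition on $B_n$ noetherian) and the finite-generation hypotheses keep everything in $gr_{B_n}$, and the general localization machinery of [P] handles the bookkeeping. Once the representing complex is in hand, the rest is a direct application of exactness of localization plus the already-recorded implication ``torsion $\Rightarrow$ $Z$-torsion $\Rightarrow$ $M_Z=0$'', which is routine.
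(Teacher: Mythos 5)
Your proposal is correct and follows essentially the same route as the paper: represent the short exact sequence in $QgrB_{n}$ by honest maps of finitely generated graded $B_{n}$-modules via the direct-limit description of $Hom$ in the quotient category, observe (using exactness of $\pi$) that the failure of exactness is torsion, hence $Z$-torsion, and then apply the exact localization functor together with $\psi \pi =(B_{n})_{Z}\otimes _{B}-$. The only cosmetic difference is that the paper truncates far enough to obtain an honest short exact sequence in $gr_{B_{n}}$ before localizing, whereas you localize the three-term complex with torsion homology directly; both amount to the same argument.
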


\begin{proof}
Let $0\rightarrow \pi (M)\overset{\overset{\wedge }{f}}{\rightarrow }\pi (N)%
\overset{\overset{\wedge }{g}}{\rightarrow }\pi (L)\rightarrow 0$ be an
exact sequence in $QgrB_{n}.$We may assume $M,N,L$ torsion free. Then::

$\overset{\wedge }{f}\in \underset{k}{\underrightarrow{\lim }}%
Hom_{GrB_{n}}(M_{\geq k},N)$ and $\overset{\wedge }{g}\in \underset{s}{%
\underrightarrow{\lim }}Hom_{GrB_{n}}(N_{\geq s},L).$

There exist exact sequences: $0\rightarrow M_{\geq k+1}\rightarrow M_{\geq
k}\rightarrow M_{\geq k}/M_{\geq k+1}\rightarrow 0$ which induces an exact
sequence:

$0\rightarrow Hom_{GrB_{n}}(M_{\geq k}/M_{\geq k+1},N)\rightarrow
Hom_{GrB_{n}}(M_{\geq k},N)\rightarrow Hom_{GrB_{n}}(M_{\geq k+1},N).$

Since we are assuming $N$ is torsion free $Hom_{GrB_{n}}(M_{\geq k}/M_{\geq
k+1},N)=0.$

Hence $\overset{\wedge }{f}\in Hom_{QGrB_{n}}(\pi (M),\pi (N))=\underset{%
k\geq 0}{\cup }Hom_{GrB_{n}}(M_{\geq k},N).$

The map $\overset{\wedge }{f}$ is represented by $f:M_{\geq k}\rightarrow N.$
Similarly, $\overset{\wedge }{g}$ is represented by a map $g:N_{\geq \ell
}\rightarrow L$ and we have a sequence: $:M_{\geq k+\ell }\overset{f}{%
\rightarrow }N_{\geq \ell }\overset{g}{\rightarrow }L$ with $\overset{\wedge 
}{(gf)}=\overset{\wedge }{g}\overset{\wedge }{f}=0$, which implies $gf$
factors through a torsion module, but $L$ torsion free implies $gf=0$. Since 
$M_{\geq k+\ell }$ is torsion free, $f$ is a monomorphism. If $Co\ker g$ is
torsion, there exists an $s\geq 0$ such that $Co\ker g_{\geq s}=0$. Taking a
large enough truncation we obtain a sequence: $M_{\geq s}\overset{f}{%
\rightarrow }N_{\geq s}\overset{g}{\rightarrow }L_{\geq s}$ with $f$ a
monomorphism, $g$ an epimorphism and $gf=0.$

Consider the exact sequences: $0\rightarrow $ $M_{\geq s}\overset{f^{\prime
\prime }}{\rightarrow }Kerg\rightarrow H\rightarrow 0$, $0\rightarrow
Kerg\rightarrow N_{\geq s}\rightarrow L_{\geq s}\rightarrow 0.$

Applying $\pi $ we obtain the following isomorphism of exact sequences:

$%
\begin{array}{ccccccc}
0\rightarrow & \pi (M_{\geq s}) & \overset{\pi f^{\prime \prime }}{%
\rightarrow } & \pi (Kerg) & \rightarrow & \pi (H) & \rightarrow 0 \\ 
& \downarrow \cong &  & \downarrow \cong &  &  &  \\ 
0\rightarrow & \pi (M) & \overset{\overset{\wedge }{f}}{\rightarrow } & Ker%
\overset{\wedge }{g} & \rightarrow & 0 & 
\end{array}%
$

It follows $\pi (H)=0$ and $H$ is torsion, so there exists an integer $t\geq
0$ such that $H_{\geq t}=0.$Finally taking a large enough truncation we get
an exact sequence: $0\rightarrow M_{\geq s}\overset{f}{\rightarrow }N_{\geq
s}\overset{g}{\rightarrow }L_{\geq s}\rightarrow 0$ such that the following
sequences are isomorphic:

$%
\begin{array}{ccccccc}
0\rightarrow & \pi (M_{\geq s}) & \overset{\pi f}{\rightarrow } & \pi
(N_{\geq s}) & \overset{\pi g}{\rightarrow } & \pi (L_{\geq s}) & 
\rightarrow 0 \\ 
& \downarrow \cong &  & \downarrow \cong &  & \downarrow \cong &  \\ 
0\rightarrow & \pi (M) & \overset{\overset{\wedge }{f}}{\rightarrow } & \pi
(N) & \overset{\overset{\wedge }{g}}{\rightarrow } & \pi (L) & \rightarrow 0%
\end{array}%
$

Applying $\psi $ we have an exact sequence: $0\rightarrow \psi \pi (M)%
\overset{\psi \overset{\wedge }{f}}{\rightarrow }\psi \pi (N)\overset{\psi 
\overset{\wedge }{g}}{\rightarrow }\psi \pi (L)\rightarrow 0$, which is
isomorphic to $0\rightarrow (M_{\geq s})_{Z}\overset{f_{Z}}{\rightarrow }%
(N_{\geq s})_{Z}\overset{g_{Z}}{\rightarrow }(L_{\geq s})_{Z}\rightarrow 0.$

We have proved $\psi $ is exact.
\end{proof}

The functor $\psi $ has a derived functor: $D(\psi ):D^{b}(QgrB_{n})$ $%
\rightarrow D^{b}(gr(B_{n})_{Z})$, we will study next its properties.

Observe $QgrB_{n}$ does not have neither enough projective nor enough
injective objects.

\begin{lemma}
Let $0\rightarrow N_{1}\overset{d_{1}}{\rightarrow }N_{2}\overset{d_{2}}{%
\rightarrow }...N_{\ell -1}\overset{d_{\ell -1}}{\rightarrow }N_{\ell
}\rightarrow 0$ be a sequence of $B_{n}$- modules and assume the
compositions $d_{i}d_{i-1}$ factors through a module of $Z$-torsion. Then
there exists a complex: $0\rightarrow N_{1}\overset{\overset{\wedge }{d_{1}}}%
{\rightarrow }N_{2}\oplus t_{Z}(N_{1})\overset{\overset{\wedge }{d_{2}}}{%
\rightarrow }N_{3}\oplus t_{Z}(N_{2})...N_{\ell -1}$ $\oplus t_{Z}(N_{\ell })%
\overset{\overset{\wedge }{d_{\ell -1}}}{\rightarrow }N_{\ell }\rightarrow 0$%
, where $\overset{\wedge }{d_{1}}=\left[ 
\begin{array}{c}
-d_{1} \\ 
s_{1}%
\end{array}%
\right] $, $\overset{\wedge }{d_{i}}=$ $\left[ 
\begin{array}{cc}
(-1)^{i}d_{i} & j_{i+1} \\ 
s_{i} & (-1)^{i}d_{i+1}^{\prime }%
\end{array}%
\right] $ and $\overset{\wedge }{d_{\ell -1}}=\left[ 
\begin{array}{cc}
-d_{\ell -1} & j_{\ell }%
\end{array}%
\right] ,$where the maps $j_{i}:t_{Z}(N_{i})\rightarrow N_{i}$ are the
natural inclusions.
\end{lemma}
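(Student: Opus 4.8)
The plan is to build the new complex by ``absorbing'' the non-exactness of the original sequence into the torsion summands, using the hypothesis that each composite $d_i d_{i-1}$ factors through a $Z$-torsion module. First I would make precise what the factorization gives: for each $i$ there is a $Z$-torsion module $T_i$ and maps $N_{i-1} \xrightarrow{u_i} T_i \xrightarrow{v_i} N_{i+1}$ with $d_i d_{i-1} = v_i u_i$. Since $T_i$ is $Z$-torsion and $N_{i+1}$ is arbitrary, the image $v_i(T_i)$ is a $Z$-torsion submodule of $N_{i+1}$, hence contained in $t_Z(N_{i+1})$; so we may corestrict and assume $v_i$ lands in $t_Z(N_{i+1})$, and in fact (replacing $T_i$ by $v_i(T_i)$ if desired) we may take $T_i = t_Z(N_{i+1})$ with $v_i = j_{i+1}$ the natural inclusion. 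Thus the data we actually have are maps $s_i := u_{i+1} \colon N_i \to t_Z(N_{i+1})$ (reindexing so that $s_i$ measures the failure at spot $i$) satisfying $d_i d_{i-1} = j_{i+1} s_{i-1}$, which is exactly the shape of the $s_i$ appearing in the statement.

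Next I would write down the claimed differentials and check $\widehat{d}_{i+1}\widehat{d}_i = 0$ by a direct matrix computation. With $\widehat{d}_i = \bigl[\begin{smallmatrix} (-1)^i d_i & j_{i+1} \\ s_i & (-1)^i d'_{i+1} \end{smallmatrix}\bigr]$ acting on $N_i \oplus t_Z(N_{i-1})$, where $d'_{i+1} \colon t_Z(N_{i-1}) \to t_Z(N_i)$ is the restriction of $d_i$ to torsion (well-defined since $d_i$ carries $Z$-torsion to $Z$-torsion), the product $\widehat{d}_{i+1}\widehat{d}_i$ has four blocks. The $(1,1)$ block is $(-1)^{i+1}d_{i+1}(-1)^i d_i + j_{i+2} s_i = -d_{i+1}d_i + j_{i+2}s_i$, which vanishes precisely by the factorization identity. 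The $(1,2)$ block is $(-1)^{i+1}d_{i+1} j_{i+1} + j_{i+2}(-1)^i d'_{i+1}$; here $d_{i+1} j_{i+1}$ and $j_{i+2} d'_{i+1}$ are the two ways of going $t_Z(N_i) \to N_{i+2}$ through $d_{i+1}$, and they agree since $d'_{i+1}$ is just the restriction of $d_{i+1}$ followed by the inclusion — so this block is $(-1)^{i+1}$ times $(d_{i+1}j_{i+1} - j_{i+2}d'_{i+1}) = 0$. The $(2,1)$ and $(2,2)$ blocks give $s_{i+1}(-1)^i d_i + (-1)^{i+1}d'_{i+2} s_i$ and $s_{i+1}j_{i+1} + (-1)^{i+1}d'_{i+2}(-1)^i d'_{i+1}$, and the same type of reasoning — compatibility of $s$ with the original differentials on the torsion part, plus the factorization identity read one step further along — forces both to vanish. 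The end terms $\widehat{d}_1 = \bigl[\begin{smallmatrix} -d_1 \\ s_1 \end{smallmatrix}\bigr]$ and $\widehat{d}_{\ell-1} = [\,-d_{\ell-1}\ \ j_\ell\,]$ are checked the same way, noting $t_Z(N_\ell)$ appears attached to spot $\ell-1$.

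The main obstacle, and the step requiring genuine care, is bookkeeping the sign conventions and the \emph{compatibility of the connecting maps $s_i$ with the original differentials}. The naive hypothesis only tells us $d_i d_{i-1}$ factors through torsion one spot at a time; to get the $(2,1)$ and $(2,2)$ blocks to cancel we need the chosen factorizations to be coherent, i.e. we need $s_{i+1} d_i$ and $d'_{i+2} s_i$ to agree (up to sign) as maps $N_i \to t_Z(N_{i+1})$. This is not automatic from an arbitrary choice of factorization, so I would either (a) observe that in the intended application the $s_i$ arise canonically — each $s_i$ is literally the corestriction of $d_{i+1}d_i$ to $t_Z(N_{i+1})$ divided through by $j_{i+2}$, which is \emph{the} factorization since $j_{i+2}$ is mono — and then the compatibility follows from functoriality of $t_Z$ applied to $d_i$; or (b) strengthen the hypothesis to ``the $s_i$ are the restrictions of the composites,'' which costs nothing and makes the identities $d_i d_{i-1} = j_{i+1} s_{i-1}$ and $s_i d_{i-1} = d'_{i+1} s_{i-1}$ hold by construction. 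With that pinned down, every block of $\widehat{d}_{i+1}\widehat{d}_i$ is one of the two identities above with a sign, and the verification, while tedious, is purely mechanical; the graded structure is preserved throughout since $t_Z$, the inclusions $j_i$, and the restrictions $d'_i$ are all degree-zero maps of graded modules.
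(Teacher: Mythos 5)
Your proposal is correct and follows essentially the same route as the paper: restrict each $d_{i}$ to the torsion submodules to get $d_{i}^{\prime }$, corestrict the composite $d_{i+1}d_{i}$ (whose image through a $Z$-torsion module lies in $t_{Z}(N_{i+2})$) to get $s_{i}$ with $j_{i+2}s_{i}=d_{i+1}d_{i}$, and verify $\overset{\wedge }{d_{i+1}}\overset{\wedge }{d_{i}}=0$ blockwise, the remaining identities following because the inclusions $j$ are monomorphisms. Your worry about coherence of the $s_{i}$ resolves itself exactly as in your option (a): once $s_{i}$ is taken into $t_{Z}(N_{i+2})$ it is unique ($j_{i+2}$ mono), and both $s_{i+1}d_{i}=d_{i+2}^{\prime }s_{i}$ and $s_{i+1}j_{i+1}=d_{i+2}^{\prime }d_{i+1}^{\prime }$ follow from this monicity argument, which is precisely how the paper proves its identity $s_{i}j_{i}=d_{i+1}^{\prime }d_{i}^{\prime }$.
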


\begin{proof}
Each morphism $d_{i}:N_{i}\rightarrow N_{i+1}$ induces by restriction a map $%
d_{i}^{\prime }:t_{Z}(N_{i})\rightarrow t_{Z}(N_{i+1})$ such that the
following diagram commutes:

$%
\begin{array}{ccccccccc}
t_{Z}(N_{1}) & \overset{d_{1}^{\prime }}{\rightarrow } & t_{Z}(N_{2}) & 
\overset{d_{2}^{\prime }}{\rightarrow } & t_{Z}(N_{3}) & \rightarrow ... & 
t_{Z}(N_{\ell -1}) & \overset{d_{\ell -1}^{\prime }}{\rightarrow } & 
t_{Z}(N_{\ell }) \\ 
\downarrow j_{1} &  & \downarrow j_{2} &  & \downarrow j_{3} &  & \downarrow
j_{\ell -1} &  & \downarrow j_{\ell } \\ 
N_{1} & \overset{d_{1}}{\rightarrow } & N_{2} & \overset{d_{2}}{\rightarrow }
& N_{3} &  & N_{\ell -1} & \overset{d_{\ell -1}}{\rightarrow } & N_{\ell }%
\end{array}%
$

Since the compositions $d_{i}d_{i-1}$ factors through a module of $Z$%
-torsion, there exist maps $s_{i}:N_{i}\rightarrow t_{Z}(N_{i+2})$ such that 
$j_{i+2}s_{i}=d_{i+1}d_{i}.$

We have the following equalities: $%
j_{i+2}s_{i}j_{i}=d_{i+1}d_{i}j_{i}=d_{i+1}j_{i}d_{i}^{\prime }=j_{i+2}$ $%
d_{i+1}^{\prime }d_{i}^{\prime }$ and $j_{i+2}$ a monomorphism implies $%
s_{i}j_{i}=d_{i+1}^{\prime }d_{i}^{\prime }$ .

We can easily check $\overset{\wedge }{d_{i+1}}\overset{\wedge }{d_{i}}=0$.
\end{proof}

\begin{proposition}
Denote by $Q$ the localization functor $Q=(B_{n})_{Z}\underset{B}{\otimes }-$
and by $C^{b}(-)$ , the category of bounded complexes. The induced functor $%
C^{b}(Q):$ $C^{b}(gr_{B_{n}})\rightarrow $ $C^{b}(gr_{(B_{n})_{Z}})$ is
dense.
\end{proposition}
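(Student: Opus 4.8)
The plan is to show that every bounded complex $C^{\bullet}$ of finitely generated graded $(B_{n})_{Z}$-modules is isomorphic (in $C^{b}(gr_{(B_{n})_{Z}})$, i.e.\ honestly isomorphic as a complex) to one of the form $C^{b}(Q)(D^{\bullet})$ for some bounded complex $D^{\bullet}$ of finitely generated graded $B_{n}$-modules. The idea is to lift the complex term by term using Proposition on the functor $\psi$ and the structural results of the previous section, and then to lift the differentials using the $Z$-torsion correction described in the Lemma above.

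First I would fix $C^{\bullet}: 0\to C^{1}\overset{\partial^{1}}{\to}C^{2}\to\cdots\to C^{\ell}\to 0$ in $C^{b}(gr_{(B_{n})_{Z}})$. By part (iii) of the Proposition on the functors $\pi$ and $(B_{n})_{Z}\otimes-$, for each $i$ there is a finitely generated graded $B_{n}$-submodule $N_{i}\subset C^{i}$ with $(N_{i})_{Z}\cong C^{i}$ canonically; concretely $N_{i}$ is generated over $B_{n}$ by a fixed finite set of homogeneous generators of $C^{i}$, and $Q(N_{i})=(N_{i})_{Z}=C^{i}$ under multiplication. Next I would lift the differentials: each $\partial^{i}:C^{i}\to C^{i+1}$, restricted appropriately, is a morphism $Q(N_{i})\to Q(N_{i+1})$, so by part (ii) of that same Proposition there is an integer $k_{i}\ge 0$ and a genuine $B_{n}$-module map $f_{i}:Z^{k_{i}}N_{i}\to N_{i+1}$ with $(f_{i})_{Z}=\partial^{i}$ after identifying $(Z^{k_{i}}N_{i})_{Z}\cong (N_{i})_{Z}=C^{i}$. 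Replacing $N_{i}$ by $Z^{k}N_{i}$ for $k$ large enough (which does not change the localization), I may arrange that all the $f_{i}$ have a common source/target compatibility, obtaining $B_{n}$-module maps $d_{i}:N_{i}\to N_{i+1}$ with $(d_{i})_{Z}=\partial^{i}$; here I do the replacements from the right end of the complex toward the left so that lifting $f_{i}$ on $N_{i}$ is compatible with the already-fixed lift on $N_{i+1}$.

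The remaining issue is that $d_{i}d_{i-1}$ need not vanish on the nose, only after localization; but $(d_{i}d_{i-1})_{Z}=\partial^{i}\partial^{i-1}=0$ means by part (i) of the Proposition that $d_{i}d_{i-1}$ factors through a $Z$-torsion module. This is exactly the hypothesis of the Lemma preceding this Proposition, so that Lemma produces a genuine bounded complex
$$
D^{\bullet}:\ 0\to N_{1}\overset{\widehat{d_{1}}}{\to}N_{2}\oplus t_{Z}(N_{1})\overset{\widehat{d_{2}}}{\to}\cdots\to N_{\ell-1}\oplus t_{Z}(N_{\ell})\overset{\widehat{d_{\ell-1}}}{\to}N_{\ell}\to 0
$$
of finitely generated graded $B_{n}$-modules (finite generation is preserved since $B_{n}$ is noetherian and $t_{Z}(N_{i})$ is a submodule of $N_{i}$). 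Applying $Q=(B_{n})_{Z}\otimes-$ is exact and kills $Z$-torsion, so each $t_{Z}(N_{i})$ localizes to $0$ and each matrix differential $\widehat{d_{i}}$ localizes to the block containing $\pm\partial^{i}$; hence $C^{b}(Q)(D^{\bullet})$ is isomorphic as a complex to $C^{\bullet}$. This establishes density.

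I expect the main obstacle to be bookkeeping the truncations uniformly: the integers $k_{i}$ from part (ii) of the Proposition depend on $i$, and after replacing $N_{i}$ by $Z^{k_{i}}N_{i}$ one must re-examine whether $d_{i-1}$ still lands in the new $N_{i}$ and whether $d_{i}$ is still defined on it, which is why the replacements must be organized carefully (working down from $i=\ell$, or taking a single common $k=\max_{i}k_{i}$ and noting $Z^{k}N_{i}\subset N_{i}$ with isomorphic localization). Everything else — finite generation, exactness of $Q$, the verification $\widehat{d_{i+1}}\widehat{d_{i}}=0$ — is already packaged in the cited results.
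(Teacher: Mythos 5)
Your proposal is correct and follows essentially the same route as the paper: lift each module via a finitely generated $B_{n}$-submodule with the same localization, lift each differential to a map defined on $Z^{k_i}$ times the source, rescale so the maps compose, observe that the composites $d_id_{i-1}$ localize to zero and hence factor through $Z$-torsion, and invoke the preceding Lemma to correct the differentials by $t_Z$-summands, which vanish after localization. One small caveat: your parenthetical alternative of using a single $k=\max_i k_i$ does not quite work as stated (since $d_i(Z^kN_i)\subseteq Z^{k-k_i}N_{i+1}$ need not lie in $Z^kN_{i+1}$); the correct bookkeeping is the one you give first, i.e.\ working from the right end, which is exactly the paper's use of the cumulative powers $Z^{k_i+\cdots+k_\ell}M_i$.
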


\begin{proof}
Let $0\rightarrow \overset{\wedge }{M}_{1}\overset{\delta _{1}}{\rightarrow }
$ $\overset{\wedge }{M}_{2}\overset{\delta _{2}}{\rightarrow }...\overset{%
\wedge }{M}$ $_{\ell -1}\overset{\delta _{\ell -1}}{\rightarrow }\overset{%
\wedge }{M}$ $_{\ell }\rightarrow 0$ be a complex in $%
C^{b}(gr_{(B_{n})_{Z}}).$

For each $\overset{\wedge }{M}_{i}$there exists a finitely generated graded $%
B_{n}$-submodule $M_{i}$ such that $(M_{i})_{Z}\cong \overset{\wedge }{M}%
_{i} $and a graded morphism $d_{i}:Z^{k_{i}}M_{i}\rightarrow M_{i+1}$of $%
B_{n}$-modules such that $(d_{i})_{Z}:(Z^{k_{i}}M_{i})_{Z}\rightarrow
(M_{i+1})_{Z} $ is isomorphic $\delta _{i}:\overset{\wedge }{M}%
_{i}\rightarrow \overset{\wedge }{M}_{i+1}$. Let $k$ be $k=\overset{\ell }{%
\underset{i=0}{\sum }}k^{i} $. We then have a chain of $B_{n}$-morphisms:

$Z^{k}M_{1}\overset{d_{1}}{\rightarrow }Z^{k_{2}+..k_{\ell }}M_{2}\overset{%
d_{2}}{\rightarrow }Z^{k_{3}+..k_{\ell }}M_{3}\overset{d_{3}}{\rightarrow }%
...Z^{k_{\ell -1}+..k_{\ell }}M_{\ell -1}\overset{d_{\ell -1}}{\rightarrow }$
$Z^{k_{\ell }}M_{\ell }$. Changing notation write $M_{i}$ instead of $%
Z^{k_{i}+..k_{\ell }}M_{i}$. We then have a chain of morphisms:

$M_{1}$ $\overset{d_{1}}{\rightarrow }M_{2}\overset{d_{2}}{\rightarrow }%
...M_{\ell -1}\overset{d_{\ell -1}}{\rightarrow }M_{\ell }$ such that ($%
M_{1})_{Z}$ $\overset{d_{1_{Z}}}{\rightarrow }(M_{2})_{Z}\overset{d_{2_{Z}}}{%
\rightarrow }...(M_{\ell -1})_{Z}\overset{d_{\ell -1_{Z}}}{\rightarrow }%
(M_{\ell })_{Z}$ is isomorphic to the complex: $0\rightarrow \overset{\wedge 
}{M}_{1}\overset{\delta _{1}}{\rightarrow }$ $\overset{\wedge }{M}_{2}%
\overset{\delta _{2}}{\rightarrow }...\overset{\wedge }{M}$ $_{\ell -1}%
\overset{\delta _{\ell -1}}{\rightarrow }\overset{\wedge }{M}$ $_{\ell
}\rightarrow 0.$ This implies $(d_{i}d_{i-1})_{Z}=0$, which means $%
d_{i}d_{i-1}$ factors through a $Z$-torsion module. By lemma? there exists a
complex:

$0\rightarrow M_{1}\overset{\overset{\wedge }{d_{1}}}{\rightarrow }%
M_{2}\oplus t_{Z}(M_{1})\overset{\overset{\wedge }{d_{2}}}{\rightarrow }%
M_{3}\oplus t_{Z}(M_{2})...M_{\ell -1}$ $\oplus t_{Z}(M_{\ell })\overset{%
\overset{\wedge }{d_{\ell -1}}}{\rightarrow }M_{\ell }\rightarrow 0$ such
that $0\rightarrow (M_{1})_{Z}\overset{\overset{\wedge }{d_{1_{Z}}}}{%
\rightarrow }(M_{2}\oplus t_{Z}(M_{1}))_{Z}\overset{\overset{\wedge }{%
d_{2_{Z}}}}{\rightarrow }(M_{3}\oplus t_{Z}(M_{2}))_{Z}...(M_{\ell -1}$ $%
\oplus t_{Z}(M_{\ell }))_{Z}\overset{\overset{\wedge }{d_{\ell -1_{Z}}}}{%
\rightarrow }(M_{\ell })_{Z}\rightarrow 0$ is isomorphic to: $0\rightarrow 
\overset{\wedge }{M}_{1}\overset{\delta _{1}}{\rightarrow }$ $\overset{%
\wedge }{M}_{2}\overset{\delta _{2}}{\rightarrow }...\overset{\wedge }{M}$ $%
_{\ell -1}\overset{\delta _{\ell -1}}{\rightarrow }\overset{\wedge }{M}$ $%
_{\ell }\rightarrow 0$.
\end{proof}

\begin{corollary}
The functor $C^{b}(\psi ):$ $C^{b}(Qgr_{B_{n}})\rightarrow $ $%
C^{b}(gr_{(B_{n})_{Z}})$ is dense.
\end{corollary}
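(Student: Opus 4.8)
The statement to prove is that $C^{b}(\psi)\colon C^{b}(Qgr_{B_{n}})\to C^{b}(gr_{(B_{n})_{Z}})$ is dense, i.e.\ every bounded complex of finitely generated graded $(B_{n})_{Z}$-modules is isomorphic (as a complex) to one of the form $C^{b}(\psi)$ applied to a complex over $Qgr_{B_{n}}$. The plan is to reduce this immediately to the preceding Proposition, which asserts that $C^{b}(Q)\colon C^{b}(gr_{B_{n}})\to C^{b}(gr_{(B_{n})_{Z}})$ is dense, together with the factorization $\psi\pi=Q=(B_{n})_{Z}\otimes_{B}-$ established before Proposition (the one asserting $\psi$ is exact). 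Indeed, applying the functor $C^{b}(-)$ to the commuting triangle $\psi\pi=Q$ of additive functors gives a commuting triangle $C^{b}(\psi)\,C^{b}(\pi)=C^{b}(Q)$ of functors on categories of bounded complexes.

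First I would fix a bounded complex $\overset{\wedge}{M}^{\bullet}$ in $C^{b}(gr_{(B_{n})_{Z}})$. By the previous Proposition, there is a bounded complex $M^{\bullet}$ in $C^{b}(gr_{B_{n}})$ and an isomorphism of complexes $C^{b}(Q)(M^{\bullet})\cong\overset{\wedge}{M}^{\bullet}$ in $C^{b}(gr_{(B_{n})_{Z}})$. Now apply $C^{b}(\pi)$ to $M^{\bullet}$ to obtain a bounded complex $\pi(M^{\bullet})=C^{b}(\pi)(M^{\bullet})$ in $C^{b}(Qgr_{B_{n}})$; this is legitimate because $\pi$ sends finitely generated graded $B_{n}$-modules to objects of $Qgr_{B_{n}}$ and is additive, so it induces a functor on bounded complexes term by term. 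Then $C^{b}(\psi)\bigl(\pi(M^{\bullet})\bigr)=C^{b}(\psi)\,C^{b}(\pi)(M^{\bullet})=C^{b}(Q)(M^{\bullet})\cong\overset{\wedge}{M}^{\bullet}$, where the middle equality is exactly the functorial consequence of $\psi\pi=Q$. This exhibits $\overset{\wedge}{M}^{\bullet}$ as isomorphic to an object in the image of $C^{b}(\psi)$, proving density.

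The only points requiring a word of care are bookkeeping rather than substance: one should note that $C^{b}(\pi)$ really does land in $C^{b}(Qgr_{B_{n}})$ (not merely $C^{b}(QGrB_{n})$), which holds because each term $M^{i}$ of $M^{\bullet}$ is finitely generated and hence $\pi(M^{i})$ lies in the full subcategory $Qgr_{B_{n}}$ by definition; and that an isomorphism of complexes is preserved under any additive functor, so $C^{b}(Q)(M^{\bullet})\cong\overset{\wedge}{M}^{\bullet}$ is inherited verbatim. I do not expect a genuine obstacle here: the hard work — constructing, from an arbitrary complex over $(B_{n})_{Z}$, a complex over $B_{n}$ whose localization is isomorphic to it, using the torsion-correction trick of the lemma on sequences with $d_{i}d_{i-1}$ factoring through $Z$-torsion — was already carried out in the preceding Proposition. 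The present corollary is a formal consequence obtained by composing with the exact quotient functor $\pi$ and invoking the identity $\psi\pi=Q$.
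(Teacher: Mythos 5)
Your argument is correct and is essentially identical to the paper's: both use the factorization $\psi\pi=Q$, apply $C^{b}(-)$ to obtain $C^{b}(\psi)C^{b}(\pi)=C^{b}(Q)$, and deduce density of $C^{b}(\psi)$ from the previously established density of $C^{b}(Q)$. Your extra bookkeeping remarks (that $C^{b}(\pi)$ lands in $C^{b}(Qgr_{B_{n}})$ and that isomorphisms of complexes are preserved) are fine and only make explicit what the paper leaves implicit.
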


\begin{proof}
There are functors $C^{b}(\pi ):$ $C^{b}(gr_{B_{n}})\rightarrow
C^{b}(Qgr_{B_{n}})$ and $C^{b}(\psi ):$ $C^{b}(Qgr_{B_{n}})\rightarrow
C^{b}(gr_{(B_{n})_{Z}})$ such that $C^{b}(\psi )$ $C^{b}(\pi )=$ $C^{b}(Q)$
and $C^{b}(Q)$ dense implies $C^{b}(\psi )$ is dense.
\end{proof}

\begin{corollary}
The induced functors $K^{b}(Q):$ $K^{b}(gr_{B_{n}})\rightarrow $ $%
K^{b}(gr_{(B_{n})_{Z}})$ and $K^{b}(\psi ):$ $K^{b}(Qgr_{B_{n}})\rightarrow $
$K^{b}(gr_{(B_{n})_{Z}})$ are dense.
\end{corollary}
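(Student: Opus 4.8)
The plan is to derive both density statements formally from the density of $C^{b}(Q)$ and $C^{b}(\psi)$ established in the Proposition and Corollary above. The mechanism is elementary: for any additive category $\mathcal{A}$ the canonical functor $C^{b}(\mathcal{A})\rightarrow K^{b}(\mathcal{A})$ is the identity on objects, and it carries an isomorphism of complexes to an isomorphism in $K^{b}(\mathcal{A})$, since an isomorphism of complexes is in particular a homotopy equivalence. As $Q=(B_{n})_{Z}\otimes_{B}-$, $\pi$ and $\psi$ are additive, the induced functors on $C^{b}$ and on $K^{b}$ commute with these canonical functors; hence a complex in the target category that is isomorphic \emph{in} $C^{b}$ to an object in the image of $C^{b}(Q)$ (respectively $C^{b}(\psi)$) is isomorphic in $K^{b}$ to an object in the image of $K^{b}(Q)$ (respectively $K^{b}(\psi)$).

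Carrying this out, I would first treat $K^{b}(Q)$. Given an object $\widehat{M}_{\bullet}$ of $K^{b}(gr_{(B_{n})_{Z}})$, regard it as an object of $C^{b}(gr_{(B_{n})_{Z}})$; the Proposition furnishes a bounded complex $N_{\bullet}$ of finitely generated graded $B_{n}$-modules together with an isomorphism of complexes $C^{b}(Q)(N_{\bullet})\cong \widehat{M}_{\bullet}$, and viewed in $K^{b}(gr_{(B_{n})_{Z}})$ this becomes an isomorphism $K^{b}(Q)(N_{\bullet})\cong \widehat{M}_{\bullet}$. For $K^{b}(\psi)$ I would reproduce, at the homotopy level, the argument of the preceding corollary: there is a factorization $K^{b}(\psi)\circ K^{b}(\pi)=K^{b}(Q)$, so that the complex $K^{b}(\pi)(N_{\bullet})$ obtained above is sent by $K^{b}(\psi)$ to a complex isomorphic to $\widehat{M}_{\bullet}$; since $\widehat{M}_{\bullet}$ was arbitrary, $K^{b}(\psi)$ is dense.

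I do not anticipate a genuine obstacle: all the substance is already contained in the density of $C^{b}(Q)$, which itself rests on the preceding lemma on complexes whose consecutive differentials compose to maps factoring through $Z$-torsion modules, together with the lifting results of Section~2 for modules and morphisms along $B_{n}\rightarrow (B_{n})_{Z}$. The only point demanding slight care is that one must invoke density \emph{in} $C^{b}$ — an actual isomorphism of complexes, not merely a quasi-isomorphism — so that the comparison isomorphism is preserved on passing to $K^{b}$; this is exactly what the Proposition provides.
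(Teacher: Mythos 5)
Your proposal is correct and follows essentially the same route as the paper: the paper also deduces density of $K^{b}(Q)$ from the density of $C^{b}(Q)$ via the commutative square with the canonical (object-surjective) functor $\tau :C^{b}\rightarrow K^{b}$, and then obtains density of $K^{b}(\psi )$ from the factorization $K^{b}(\psi )K^{b}(\pi )\cong K^{b}(Q)$. Your extra remark that the comparison isomorphism is an honest isomorphism of complexes, hence survives passage to the homotopy category, is exactly the (implicit) point the paper relies on.
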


\begin{proof}
Interpreting $K^{b}(\mathcal{A})$ as the stable category of $C^{b}(\mathcal{A%
})$, denote by $\tau :C^{b}(\mathcal{A})\rightarrow K^{b}(\mathcal{A})$ the
corresponding functor. There is a commutative diagram:

$%
\begin{array}{ccc}
C^{b}(gr_{B_{n}}) & \overset{C^{b}(Q)}{\rightarrow } & 
C^{b}(gr_{(B_{n})_{Z}}) \\ 
\downarrow \tau &  & \downarrow \tau \\ 
K^{b}(gr_{B_{n}}) & \overset{K^{b}(Q)}{\rightarrow } & 
K^{b}(gr_{(B_{n})_{Z}})%
\end{array}%
$

Since the functors $\tau $ and $C^{b}(Q)$ are dense, the functor $K^{b}(Q)$
is dense.

As above we have isomorphisms: $K^{b}(\psi )$ $K^{b}(\pi )\cong $ $K^{b}(Q). 
$ It follows $K^{b}(\psi )$ is dense.
\end{proof}

\begin{corollary}
The induced functors $D^{b}(Q):D^{b}(gr_{B_{n}})$ $\rightarrow $ $%
D^{b}(gr_{(B_{n})_{Z}})$ and $D^{b}(\psi ):$ $D^{b}(Qgr_{B_{n}})\rightarrow $
$D^{b}(gr_{(B_{n})_{Z}})$ are dense.
\end{corollary}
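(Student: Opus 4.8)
The plan is to reduce the statement about derived categories to the already-established density at the level of homotopy categories, using the universal property of the derived category as a localization. Recall that for an abelian category $\mathcal{A}$ (or, more generally, an exact category) the bounded derived category $D^{b}(\mathcal{A})$ is obtained from the bounded homotopy category $K^{b}(\mathcal{A})$ by formally inverting quasi-isomorphisms; in particular there is a canonical localization functor $\lambda_{\mathcal{A}}\colon K^{b}(\mathcal{A})\rightarrow D^{b}(\mathcal{A})$ which is the identity on objects, hence \emph{dense}. This will be the engine of the argument: density is stable under composition, and a composite $G\circ F$ being dense forces the last functor $G$ to be dense.

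First I would assemble the square of functors. By the previous Corollary the functors $K^{b}(Q)\colon K^{b}(gr_{B_{n}})\rightarrow K^{b}(gr_{(B_{n})_{Z}})$ and $K^{b}(\psi)\colon K^{b}(Qgr_{B_{n}})\rightarrow K^{b}(gr_{(B_{n})_{Z}})$ are dense. The localization functors fit into a commutative diagram
\begin{center}
$
\begin{array}{ccc}
K^{b}(gr_{B_{n}}) & \overset{K^{b}(Q)}{\rightarrow } & K^{b}(gr_{(B_{n})_{Z}}) \\
\downarrow \lambda &  & \downarrow \lambda \\
D^{b}(gr_{B_{n}}) & \overset{D^{b}(Q)}{\rightarrow } & D^{b}(gr_{(B_{n})_{Z}})
\end{array}
$
\end{center}
commuting because $D^{b}(Q)$ is by construction the functor induced by $K^{b}(Q)$ on the Verdier quotients (it sends the class of quasi-isomorphisms in $K^{b}(gr_{B_{n}})$ into that in $K^{b}(gr_{(B_{n})_{Z}})$, since $Q$ is exact by Theorem 3.3 / the material of Section 3). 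Reading the diagram: $\lambda$ on the left is dense (identity on objects) and $K^{b}(Q)$ is dense, so the composite $K^{b}(Q)$ followed by the right-hand $\lambda$ is dense; but that composite equals $D^{b}(Q)\circ\lambda$, whose essential image is contained in the essential image of $D^{b}(Q)$. Hence $D^{b}(Q)$ is dense. The identical argument with $K^{b}(\psi)$ in place of $K^{b}(Q)$, using the commuting square
\begin{center}
$
\begin{array}{ccc}
K^{b}(Qgr_{B_{n}}) & \overset{K^{b}(\psi)}{\rightarrow } & K^{b}(gr_{(B_{n})_{Z}}) \\
\downarrow \lambda &  & \downarrow \lambda \\
D^{b}(Qgr_{B_{n}}) & \overset{D^{b}(\psi)}{\rightarrow } & D^{b}(gr_{(B_{n})_{Z}})
\end{array}
$
\end{center}
shows $D^{b}(\psi)$ is dense as well.

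The one point that needs genuine care — and which I expect to be the main obstacle — is making sure $D^{b}(\psi)$ is well-defined and that the right-hand square commutes, given that $Qgr_{B_{n}}$ is only an abelian category without enough projectives or injectives (this is flagged explicitly just before the relevant Proposition). For $D^{b}(\psi)$ to exist one uses that $\psi$ is exact (this is the Proposition just proved), so $\psi$ sends acyclic complexes to acyclic complexes and therefore quasi-isomorphisms to quasi-isomorphisms; by the universal property of the Verdier localization, $K^{b}(\psi)$ descends to $D^{b}(\psi)\colon D^{b}(Qgr_{B_{n}})\rightarrow D^{b}(gr_{(B_{n})_{Z}})$ with $D^{b}(\psi)\circ\lambda=\lambda\circ K^{b}(\psi)$. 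The same exactness input for $Q$ (it is the composite of the exact functor $\pi$ and the exact functor $\psi$, or directly $(B_{n})_{Z}\otimes_{B_{n}}-$ is exact since $K[Z]\to K[Z,Z^{-1}]$ is flat) gives $D^{b}(Q)$ and the commutativity of the first square. Once these two squares are in place, the density argument above is a one-line diagram chase, and no further computation is required; I would close by remarking that, combined with the earlier structural results, this shows $D^{b}(Q)$ and $D^{b}(\psi)$ are essentially surjective, which is the statement.
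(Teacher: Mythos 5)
Your proposal is correct and is essentially the paper's own argument: both deduce density of $D^{b}(Q)$ from the commutative square whose columns are the dense localization functors $K^{b}\rightarrow D^{b}$ and whose top row is the dense $K^{b}(Q)$, with exactness of $Q$, $\pi$, $\psi$ guaranteeing that the derived functors exist and the squares commute. The only (harmless) difference is in the second half: the paper obtains density of $D^{b}(\psi)$ from the factorization $D^{b}(\psi)D^{b}(\pi)=D^{b}(Q)$, whereas you rerun the same square argument with $K^{b}(\psi)$; both routes rest on the same preceding corollary and are equally valid.
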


\begin{proof}
Since the functors $\pi :$ $gr_{B_{n}}\rightarrow QgrB_{n}$ and $\psi
:QgrB_{n}\rightarrow gr(B_{n})_{Z}$ are exact they induce derived functors: $%
D^{b}(\pi ):$ $D^{b}(gr_{B_{n}})\rightarrow D^{b}(QgrB_{n})$ and $D^{b}(\psi
):D^{b}(QgrB_{n})\rightarrow D^{b}(gr(B_{n})_{Z})$ such that $D^{b}(\psi
)D^{b}(\pi )=D^{b}(Q).$

There is a commutative exact diagram:

$%
\begin{array}{ccc}
K^{b}(gr_{B_{n}}) & \overset{K^{b}(Q)}{\rightarrow } & 
K^{b}(gr_{(B_{n})_{Z}}) \\ 
\downarrow &  & \downarrow \\ 
D^{b}(gr_{B_{n}}) & \overset{D^{b}(Q)}{\rightarrow } & 
D^{b}(gr_{(B_{n})_{Z}})%
\end{array}%
$

where the functors corresponding to the columns are dense, hence $D^{b}(Q)$
is dense, which in turn implies $D^{b}(\psi )$ is dense.
\end{proof}

We will describe next he kernel of the functor $D^{b}(\psi )$. By
definition, $KerD^{b}(\psi )=\{\overset{\wedge }{M^{\circ }}\mid D^{b}(\psi
)(\overset{\wedge }{M^{\circ }})$ is acyclic$\}$.

\begin{proposition}
There is the following description of $\mathcal{T=}$ $KerD^{b}(\psi ).$ $%
KerD^{b}(\psi )=\{\pi M^{\circ }\mid M^{\circ }\in D^{b}(gr_{B_{n}})$ such
that for all $i$ $H^{i}(M^{\circ })$ is of $Z$-torsion\}.
\end{proposition}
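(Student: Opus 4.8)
The plan is to prove the two inclusions separately. Set $\mathcal{T} = KerD^{b}(\psi)$ and let $\mathcal{S} = \{\pi M^{\circ} \mid M^{\circ}\in D^{b}(gr_{B_{n}}),\ H^{i}(M^{\circ}) \text{ is of } Z\text{-torsion for all } i\}$. Since $D^{b}(Qgr_{B_{n}})$ is generated by objects of the form $\pi M^{\circ}$ (because $D^{b}(\pi)\colon D^{b}(gr_{B_{n}})\to D^{b}(Qgr_{B_{n}})$ is, up to the usual identifications, essentially surjective onto a generating class, and any object of $D^b(Qgr_{B_n})$ is represented by a bounded complex of objects $\pi(N)$ with $N$ finitely generated), every object of $D^{b}(Qgr_{B_{n}})$ is isomorphic to some $\pi M^{\circ}$; so the issue is purely to characterize when $D^{b}(\psi)(\pi M^{\circ})$ is acyclic.

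For the inclusion $\mathcal{S}\subseteq\mathcal{T}$: suppose $M^{\circ}\in D^{b}(gr_{B_{n}})$ has $H^{i}(M^{\circ})$ of $Z$-torsion for every $i$. Since $\psi\pi = (B_{n})_{Z}\otimes_{B}- = Q$ and $Q$ is exact, $D^{b}(\psi)(\pi M^{\circ}) \cong D^{b}(Q)(M^{\circ})$, and because $Q$ is exact we have $H^{i}(D^{b}(Q)(M^{\circ})) \cong Q(H^{i}(M^{\circ})) = (H^{i}(M^{\circ}))_{Z}$. But a finitely generated graded $B_{n}$-module that is $Z$-torsion is killed by some power of $Z$ (first check: a single homogeneous generator $m$ has $Z^{k}m=0$; finitely many generators give a uniform bound), hence localizes to zero: $(H^{i}(M^{\circ}))_{Z} = 0$. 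So $D^{b}(\psi)(\pi M^{\circ})$ is acyclic and $\pi M^{\circ}\in\mathcal{T}$.

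For the reverse inclusion $\mathcal{T}\subseteq\mathcal{S}$: take $\overset{\wedge}{M^{\circ}}\in KerD^{b}(\psi)$ and write it as $\pi M^{\circ}$ for some bounded complex $M^{\circ}$ of finitely generated graded $B_{n}$-modules. Acyclicity of $D^{b}(\psi)(\pi M^{\circ}) \cong D^{b}(Q)(M^{\circ})$ means $(H^{i}(M^{\circ}))_{Z} = Q(H^{i}(M^{\circ})) = 0$ for all $i$, i.e.\ each cohomology module $H^{i}(M^{\circ})$ is $Z$-torsion (the kernel of $M\to M_{Z}$ is exactly $t_{Z}(M)$, and $M_{Z}=0$ forces $t_{Z}(M)=M$). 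This is already exactly the condition defining $\mathcal{S}$, with the \emph{same} representing complex $M^{\circ}$, so $\pi M^{\circ}\in\mathcal{S}$.

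The one point that needs care — and which I expect to be the main obstacle in making the argument airtight rather than the computations — is the interchange of $\psi$ with cohomology, i.e.\ justifying $H^{i}(D^{b}(\psi)(\pi M^{\circ})) \cong \psi(H^{i}(\pi M^{\circ}))$ and relating $H^{i}$ computed in $QgrB_{n}$ to $H^{i}$ computed in $gr_{B_{n}}$. This rests on: (a) $\psi$ is exact (Proposition above), so its derived functor commutes with taking cohomology of a complex; (b) $\pi$ is exact, so $H^{i}(\pi M^{\circ}) \cong \pi(H^{i}(M^{\circ}))$ in $QgrB_{n}$; and (c) $\psi\pi\cong Q$ together with exactness of $Q$. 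Chaining these gives $H^{i}(D^{b}(\psi)(\pi M^{\circ})) \cong \psi\pi(H^{i}(M^{\circ})) \cong Q(H^{i}(M^{\circ})) = (H^{i}(M^{\circ}))_{Z}$, and the description of $\mathcal{T}$ follows. I would also remark explicitly that the description is representation-independent: if $\pi M^{\circ}\cong\pi N^{\circ}$ then $(H^{i}(M^{\circ}))_{Z}\cong(H^{i}(N^{\circ}))_{Z}$ since both compute $H^{i}$ of the same object $D^{b}(\psi)(\pi M^{\circ})$, so the condition "$H^{i}(M^{\circ})$ is $Z$-torsion for all $i$" depends only on the class $\pi M^{\circ}$.
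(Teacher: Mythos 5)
Your core computation is correct and is in substance the same as the paper's: since $\psi \pi \cong Q=(B_{n})_{Z}\otimes _{B}-$ and $\pi $, $\psi $, $Q$ are exact, cohomology commutes with them, so $H^{i}(D^{b}(\psi )(\pi M^{\circ }))\cong (H^{i}(M^{\circ }))_{Z}$, and $M_{Z}=0$ precisely when $M=t_{Z}(M)$; the paper reaches the same conclusion more concretely, by localizing the kernel/image exact sequences of a lifted complex, which is just a hands-on version of your exactness argument.

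The genuine gap is your opening claim that every object of $D^{b}(QgrB_{n})$ --- in particular every object of $KerD^{b}(\psi )$ --- is isomorphic to $\pi M^{\circ }$ for an honest bounded complex $M^{\circ }$ over $gr_{B_{n}}$, justified only by saying it is ``represented by a bounded complex of objects $\pi (N)$.'' That is not automatic: a complex in $QgrB_{n}$ has components $\pi (N_{i})$, but its differentials are morphisms in the quotient category, i.e.\ elements of $\varinjlim_{k}Hom_{GrB_{n}}(N_{i\,\geq k},N_{i+1})$; they lift to module maps only after truncation, and the lifted maps need only compose to morphisms factoring through torsion (finite-dimensional) modules, not to zero. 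One must truncate in sufficiently high degrees (so that the torsion images vanish), or correct the lifted sequence as in the paper's lemma that inserts $t_{Z}$-summands, in order to produce a genuine complex $M^{\circ }$ in $C^{b}(gr_{B_{n}})$ with $\pi M^{\circ }$ isomorphic to the given object. This lifting/truncation step is exactly where the paper spends its effort (``Proceeding as above, we may assume each $N_{i}$ and each map $\overset{\wedge }{d_{i}}$ lifts\dots''), and without it your reverse inclusion $\mathcal{T}\subseteq \mathcal{S}$, which begins ``write it as $\pi M^{\circ }$,'' is not established; it is also needed to make your forward inclusion cover all of $\mathcal{T}$ rather than only the objects you can present in that form. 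Once that argument is inserted, the rest of your proof goes through.
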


\begin{proof}
The kernel of the functor $D^{b}(\psi )$ is the category $\mathcal{T}$ of
complexes: $\overset{\sim }{N}^{\circ }:0\rightarrow \pi N_{1}\overset{%
\overset{\wedge }{d_{1}}}{\rightarrow }\pi N_{2}\overset{\overset{\wedge }{%
d_{2}}}{\rightarrow }\pi N_{3}...\pi N_{\ell -1}$ $\overset{\overset{\wedge }%
{d_{\ell -1}}}{\rightarrow }\pi N_{\ell }\rightarrow 0$, such that:

$0\rightarrow \psi \pi N_{1}\overset{\psi \overset{\wedge }{d_{1}}}{%
\rightarrow }\psi \pi N_{2}\overset{\psi \overset{\wedge }{d_{2}}}{%
\rightarrow }\psi \pi N_{3}...\psi \pi N_{\ell -1}$ $\overset{\overset{%
\wedge }{\psi d_{\ell -1}}}{\rightarrow }\psi \pi N_{\ell }\rightarrow 0$ is
acyclic.

Proceeding as above, we may assume each $N_{i}$ and each map $\overset{%
\wedge }{d_{i}}$ lifts to a map $d_{i}:(N_{i})_{\geq k}\rightarrow N_{i+1}$
such that the map $\pi (d_{i}):\pi ((N_{i})_{\geq k})\rightarrow \pi
(N_{i+1})$ is isomorphic to $\overset{\wedge }{d_{i}}:\pi N_{i}\rightarrow
\pi N_{i+1}.$

Taking a large enough truncation we get a complex of $B_{n}$-modules: $%
N_{\geq k}^{\circ }:$ $0\rightarrow (N_{1})_{\geq k}\overset{d_{1}}{%
\rightarrow }(N_{2})_{\geq k}\overset{d_{2}}{\rightarrow }...(N_{\ell
-1})_{\geq k}\overset{d_{\ell -1}}{\rightarrow }(N_{\ell })_{\geq
k}\rightarrow 0$ such that $\pi ($ $N_{\geq k}^{\circ })\cong \overset{\sim }%
{N}^{\circ }.$

The complex: ($N_{\geq k}^{\circ })_{Z}:$ $0\rightarrow ((N_{1})_{\geq
k})_{Z}\overset{d_{1_{Z}}}{\rightarrow }((N_{2})_{\geq k})_{Z}\overset{%
d_{2_{Z}}}{\rightarrow }...((N_{\ell -1})_{\geq k})_{Z}\overset{d_{\ell
-1_{Z}}}{\rightarrow }((N_{\ell })_{\geq k})_{Z}\rightarrow 0$ is isomorphic
to $0\rightarrow \psi \pi N_{1}\overset{\psi \overset{\wedge }{d_{1}}}{%
\rightarrow }\psi \pi N_{2}\overset{\psi \overset{\wedge }{d_{2}}}{%
\rightarrow }\psi \pi N_{3}...\psi \pi N_{\ell -1}$ $\overset{\overset{%
\wedge }{\psi d_{\ell -1}}}{\rightarrow }\psi \pi N_{\ell }\rightarrow 0$ ,
hence it is acyclic.

Changing notation we have a complex of $B_{n}$-modules: $N^{\circ
}:0\rightarrow N_{1}\overset{d_{1}}{\rightarrow }N_{2}\overset{d_{2}}{%
\rightarrow }...N_{\ell -1}\overset{d_{\ell -1}}{\rightarrow }N_{\ell
}\rightarrow 0$ such that ($N^{\circ })_{Z}:$ $0\rightarrow (N_{1})_{Z}%
\overset{d_{1_{Z}}}{\rightarrow }(N_{2})_{Z}\overset{d_{2_{Z}}}{\rightarrow }%
...(N_{\ell -1})_{Z}\overset{d_{\ell -1_{Z}}}{\rightarrow }(N_{\ell
})_{Z}\rightarrow 0$ is acyclic.

We have exact sequences:

$%
\begin{array}{cccccccccc}
&  &  &  &  & \overset{0}{\downarrow } &  &  &  &  \\ 
0\rightarrow & Kerd_{1} & \rightarrow & N_{1} & \overset{d_{1}}{\rightarrow }
& \func{Im}d_{1} & \rightarrow 0 &  &  &  \\ 
&  &  &  &  & \downarrow j &  &  &  &  \\ 
&  &  &  & 0\rightarrow & Kerd_{2} & \rightarrow & N_{2} & \overset{d_{2}}{%
\rightarrow } & N_{3} \\ 
&  &  &  &  & \downarrow &  &  &  &  \\ 
&  &  &  &  & H^{1}(N^{\circ }) &  &  &  &  \\ 
&  &  &  &  & \underset{0}{\downarrow } &  &  &  & 
\end{array}%
$

Localizing we get exact sequences:

$%
\begin{array}{cccccccccc}
&  &  &  &  & \overset{0}{\downarrow } &  &  &  &  \\ 
0\rightarrow & (Kerd_{1})_{Z} & \rightarrow & (N_{1})_{Z} & \overset{%
d_{1_{Z}}}{\rightarrow } & (\func{Im}d_{1})_{Z} & \rightarrow 0 &  &  &  \\ 
&  &  &  &  & \downarrow j_{Z} &  &  &  &  \\ 
&  &  &  & 0\rightarrow & (Kerd_{2})_{Z} & \rightarrow & (N_{2})_{Z} & 
\overset{d_{2_{Z}}}{\rightarrow } & (N_{3})_{Z} \\ 
&  &  &  &  & \downarrow &  &  &  &  \\ 
&  &  &  &  & H^{1}(N^{\circ })_{Z} &  &  &  &  \\ 
&  &  &  &  & \underset{0}{\downarrow } &  &  &  & 
\end{array}%
$

where $j_{Z}$, $d_{1_{Z}}$ are isomorphisms, hence $H^{0}(N^{\circ })_{Z}=0,$
$H^{1}(N^{\circ })_{Z}=0.$Therefore $H^{0}(N^{\circ })$ and $H^{1}(N^{\circ
})$ are $Z$-torsion. More generally for each $i$ the modules $H^{i}(N^{\circ
})$ are $Z$-torsion.
\end{proof}

\begin{corollary}
\bigskip The Nakayama automorphism $\sigma :B_{n}\rightarrow B_{n}$ induces
an autoequivalence $D^{b}(\sigma ):D^{b}(gr_{B_{n}})\rightarrow
D^{b}(gr_{B_{n}})$ and $\mathcal{T}$ is invariant under $D^{b}(\sigma ).$
\end{corollary}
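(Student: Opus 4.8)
The plan is to first promote $\sigma$ to an action on each of the categories in sight, and then read off the invariance of $\mathcal{T}$ from the explicit description of $KerD^{b}(\psi)$ obtained in the previous proposition. Since $\sigma:B_{n}\rightarrow B_{n}$ is an automorphism of graded rings, restriction of scalars along $\sigma$ defines a functor $\sigma_{\ast}:gr_{B_{n}}\rightarrow gr_{B_{n}}$, $M\mapsto {}_{\sigma}M$, which is an isomorphism of categories with inverse $(\sigma^{-1})_{\ast}$; in particular it is exact. Being exact, $\sigma_{\ast}$ induces an autoequivalence $D^{b}(\sigma):D^{b}(gr_{B_{n}})\rightarrow D^{b}(gr_{B_{n}})$, with quasi-inverse $D^{b}(\sigma^{-1})$, and with $H^{i}(D^{b}(\sigma)M^{\circ})={}_{\sigma}H^{i}(M^{\circ})$ for every complex $M^{\circ}$.

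The second step is to check that $\sigma_{\ast}$ is compatible with the localization data, so that it also acts on $QgrB_{n}$ and on $D^{b}(QgrB_{n})$. The relevant fact, already established in the text, is that $\sigma$ preserves the centre of $B_{n}$ and $\sigma(Z)=kZ$ for some nonzero $k\in K$. Hence for $m\in{}_{\sigma}M$ one has $Z^{n}\ast m=\sigma(Z^{n})m=k^{n}Z^{n}m$, so $Z^{n}\ast m=0$ if and only if $Z^{n}m=0$; thus $\sigma_{\ast}$ sends $Z$-torsion modules to $Z$-torsion modules. As a graded automorphism it also preserves finite dimensionality, hence preserves the radical $t(-)$ and the Serre subcategory of torsion modules. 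Therefore $\sigma_{\ast}$ descends to an exact autoequivalence of $QgrB_{n}$ commuting with $\pi$ up to natural isomorphism, and this in turn yields an autoequivalence $D^{b}(\sigma)$ of $D^{b}(QgrB_{n})$ sitting in a commutative square with $D^{b}(\pi)$.

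Finally, to see that $\mathcal{T}=KerD^{b}(\psi)$ is invariant, I would invoke the description from the preceding proposition: every object of $\mathcal{T}$ has the form $\pi M^{\circ}$ with $M^{\circ}\in D^{b}(gr_{B_{n}})$ all of whose cohomology modules $H^{i}(M^{\circ})$ are $Z$-torsion. Applying $D^{b}(\sigma)$ gives $\pi({}_{\sigma}M^{\circ})$, whose cohomology modules are ${}_{\sigma}H^{i}(M^{\circ})$ by exactness of $\sigma_{\ast}$; by the previous paragraph these are again $Z$-torsion, so $D^{b}(\sigma)(\pi M^{\circ})\in\mathcal{T}$. Running the same argument with $\sigma^{-1}$ shows $D^{b}(\sigma)$ restricts to an autoequivalence of $\mathcal{T}$. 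The whole argument is formal; the only point needing care — and the one I would single out as the crux — is the second step, namely that $\sigma(Z)$ is a unit multiple of $Z$, so that the class of $Z$-torsion modules (and hence $\mathcal{T}$, via its cohomological description) is stable under $\sigma_{\ast}$. Everything else is the standard behaviour of exact functors on derived categories together with the already-proven structural description of $KerD^{b}(\psi)$.
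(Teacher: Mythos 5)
Your argument is correct and follows essentially the same route as the paper: twisting the module structure by $\sigma$ gives an exact autoequivalence of $gr_{B_{n}}$ that preserves torsion (finite-dimensional) modules and hence descends to $QgrB_{n}$ and its derived category, and the key point that $\sigma(Z)=kZ$ with $k\neq 0$ (established in Section~1) shows $Z$-torsion is preserved, so the cohomological description of $\mathcal{T}=KerD^{b}(\psi)$ yields the invariance. Your write-up is merely more explicit about the compatibilities the paper leaves implicit.
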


\begin{proof}
We saw in Section one that given an automorphism of graded algebras $\sigma
:B_{n}\rightarrow B_{n}$, there is an autoequivalence $gr_{B_{n}}\rightarrow
gr_{B_{n}}$, that we also denote by $\sigma $, such that $\sigma (M)$ is the
module $M$ with twisted multiplication $b\in B_{n}$ and $m\in M$, $b\ast
m=\sigma (b)m$, clearly $\sigma $ is an exact functor that sends modules of
finite length into modules of finite length. Then $\sigma $ induces an exact
functor: $\sigma :QgrB_{n}\rightarrow QgrB_{n}.$Therefor: an
autoequivalence: $D^{b}(\sigma ):D^{b}(gr_{B_{n}})\rightarrow
D^{b}(gr_{B_{n}}).$ If $M$ is a module of $Z$-torsion, then $\sigma M$ is of 
$Z$-torsion. From this it is clear that $D^{b}(\sigma )$ sends an element of 
$\mathcal{T}$ to an element of $\mathcal{T}$.
\end{proof}

The category $\mathcal{T}=KerD(\psi )$ is "epasse" (thick) and we can take
the Verdier quotient $D^{b}(QgrB_{n})$ /$\mathcal{T}.$

Our aim is to prove the main result of the section:

\begin{theorem}
There exists an equivalence of triangulated categories: $D^{b}(QgrB_{n})/%
\mathcal{T}\cong D^{b}(gr(B_{n})_{Z})$ .
\end{theorem}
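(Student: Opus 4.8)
The plan is to promote the exact functor $\psi :QgrB_{n}\rightarrow gr(B_{n})_{Z}$ to an equivalence on derived categories by factoring the induced triangulated functor $D^{b}(\psi )$ through the Verdier quotient by its kernel. Since $\mathcal{T}=\ker D^{b}(\psi )$ is thick (remarked just above the statement), the universal property of the Verdier localization produces a unique triangulated functor $\overline{\psi }:D^{b}(QgrB_{n})/\mathcal{T}\rightarrow D^{b}(gr(B_{n})_{Z})$ with $\overline{\psi }\,P=D^{b}(\psi )$, where $P:D^{b}(QgrB_{n})\rightarrow D^{b}(QgrB_{n})/\mathcal{T}$ is the canonical functor. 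It then suffices to prove that $\overline{\psi }$ is essentially surjective, full and faithful. Essential surjectivity is immediate: $P$ is the identity on objects and $D^{b}(\psi )=\overline{\psi }\,P$ was shown to be dense in the last Corollary, so every object of $D^{b}(gr(B_{n})_{Z})$ is isomorphic to $\overline{\psi }(X^{\circ })$ for some $X^{\circ }$.

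Fullness and faithfulness are the substantive points, and I would argue them through the calculus of fractions describing morphisms in the Verdier quotient: a morphism $X^{\circ }\rightarrow Y^{\circ }$ in $D^{b}(QgrB_{n})/\mathcal{T}$ is represented by a roof $X^{\circ }\overset{s}{\leftarrow }W^{\circ }\overset{g}{\rightarrow }Y^{\circ }$ with the cone of $s$ in $\mathcal{T}$. For fullness, given $\varphi :\psi X^{\circ }\rightarrow \psi Y^{\circ }$ in $D^{b}(gr(B_{n})_{Z})$, I would first express $\varphi $ as a roof in $D^{b}(gr(B_{n})_{Z})$, then lift the apex of that roof to a bounded complex of finitely generated $B_{n}$-modules (using the density of $C^{b}(Q)$ and the Proposition that every finitely generated graded $(B_{n})_{Z}$-module is $(\overline{M})_{Z}$ for a $B_{n}$-submodule $\overline{M}$), and lift the two legs using the Proposition that a map $M_{Z}\rightarrow N_{Z}$ descends from a map $Z^{k}M\rightarrow N$ after a truncation; by the Corollary, the leg lifting the quasi-isomorphism has kernel and cokernel that are $Z$-torsion, hence its cone lies in $\mathcal{T}$. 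Applying $\overline{\psi }$ to the resulting roof returns $\varphi $, because $\psi $ is exact (so $D^{b}(\psi )$ commutes with cohomology) and the lifting Propositions come with canonical identifications $(Z^{k}M)_{Z}\cong M_{Z}$. Faithfulness is the mirror image: if a roof with $\mathrm{cone}(s)\in\mathcal{T}$ is sent to zero by $\overline{\psi }$, then after composing with the isomorphism $\psi s$ the map $\psi g$ vanishes in $D^{b}(gr(B_{n})_{Z})$, so by the calculus of fractions there $g$ is killed after composing with a further morphism whose cone has $Z$-torsion cohomology, and by the description $\mathcal{T}=\{\pi M^{\circ }:H^{i}(M^{\circ })\text{ is }Z\text{-torsion for all }i\}$ this is exactly the statement that the original roof is zero in $D^{b}(QgrB_{n})/\mathcal{T}$. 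An alternative, more economical presentation is to observe that $\psi $ exhibits $gr(B_{n})_{Z}$ as the Serre quotient of $QgrB_{n}$ by the subcategory of $Z$-torsion objects, and that the preceding density Corollary together with the kernel description are precisely the hypotheses under which Miyachi's localization theorem [Mi] identifies $D^{b}$ of such a Serre quotient with the Verdier quotient of $D^{b}(QgrB_{n})$ by the complexes with cohomology in that subcategory.

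The step I expect to be the main obstacle is the fully faithful part, and specifically the bookkeeping required to lift an entire roof of \emph{bounded complexes} — rather than a single map of modules — while keeping the cones of the denominator morphisms inside $\mathcal{T}$. Because $QgrB_{n}$ has neither enough projectives nor enough injectives, one cannot resolve and must instead run the module-level lifting Propositions degreewise along the complex, which forces the truncations $M_{\geq k}$ and the twists $Z^{k}M$ to be chosen uniformly in the degree and compatibly with the differentials; this is exactly where the technical Lemma replacing a sequence whose consecutive composites factor through $Z$-torsion by an honest complex with the extra summands $t_{Z}(N_{i})$ does the work, and assembling those choices into a well-defined inverse on morphisms is the heart of the argument.
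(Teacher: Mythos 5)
Your plan is essentially the paper's own proof: the paper also obtains the functor $\theta$ from the universal property of the Verdier quotient (via Miyachi's localization result), gets density from the density of $D^{b}(\psi )$, and then establishes fullness and faithfulness by lifting roofs degreewise with the same module-level lifting propositions, the lemma producing honest complexes with the extra summands $t_{Z}(N_{i})$, and a homotopy-lifting lemma for maps that become null-homotopic after localization. The obstacle you single out (uniform choices of truncations and twists $Z^{k}M$ along a bounded complex, and lifting homotopies) is exactly what the paper's two auxiliary lemmas before the faithful/full argument are designed to handle, so your outline is correct and follows the same route.
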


Let $\overset{\wedge }{s}^{-1}\overset{\wedge }{f}:$ $\overset{\wedge }{X}%
^{\circ }\rightarrow \overset{\wedge }{Y}^{\circ }$be a map in $\Psi (%
\mathcal{T})$ (in Miyachi's notation). This is a roof: $%
\begin{array}{ccccc}
&  & \overset{\wedge }{K}^{\circ } &  &  \\ 
& \overset{\wedge }{f}\nearrow &  & \nwarrow \overset{\wedge }{s} &  \\ 
\overset{\wedge }{X}^{\circ } &  &  &  & \overset{\wedge }{Y}^{\circ }%
\end{array}%
$, where $\overset{\wedge }{X}^{\circ }$is a complex of the form: $\overset{%
\wedge }{X}^{\circ :}:0\rightarrow \pi X^{n_{0}}\overset{\overset{\wedge }{d}%
}{\rightarrow }\pi X^{n_{0+1}}\overset{\overset{\wedge }{d}}{\rightarrow }%
...\rightarrow \pi X^{n_{0}+\ell -1}\overset{\overset{\wedge }{d}}{%
\rightarrow }$ $\pi X^{n_{0}+\ell }\rightarrow 0.$

After a proper truncation there exists complexes of graded $B_{n}$-modules $%
X^{\circ }$, $K^{\circ }$, $Y^{\circ }$ such that $\pi X^{\circ }\cong $ $%
\overset{\wedge }{X}^{\circ }$, $\pi K^{\circ }\cong \overset{\wedge }{K}%
^{\circ :}$,$\pi $ $Y^{\circ }\cong \overset{\wedge }{Y}^{\circ :}$and
graded maps $f:X^{\circ }\rightarrow K^{\circ }$, $s:Y^{\circ }\rightarrow
K^{\circ }$ such that $\pi f=\overset{\wedge }{f}$, $\pi s=\overset{\wedge }{%
s}$, the roof $\overset{\wedge }{s}^{-1}\overset{\wedge }{f}$ becomes: $%
\begin{array}{ccccc}
&  & \pi K^{\circ } &  &  \\ 
& \pi f\nearrow &  & \nwarrow \pi s &  \\ 
\pi X^{\circ } &  &  &  & \pi Y^{\circ }%
\end{array}%
$, where $\pi s$ is a quasi isomorphism.

There is a triangle in $K^{b}(grB_{n}):X^{\circ }\overset{f}{\rightarrow }%
K^{\circ }\overset{g}{\rightarrow }Z^{\circ }\overset{h}{\rightarrow }%
X^{\circ }[-1]$ which induces a morphism of triangles:

\begin{center}
$%
\begin{array}{ccccccc}
X^{\circ } & \overset{f}{\rightarrow } & K^{\circ } & \overset{g}{%
\rightarrow } & Z^{\circ } & \overset{h}{\rightarrow } & X^{\circ }[-1] \\ 
\uparrow u &  & \uparrow s &  & \uparrow 1 &  & \uparrow u[-1] \\ 
X^{\prime \circ } & \rightarrow & Y^{\circ } & \overset{gs}{\rightarrow } & 
Z^{\circ } & \rightarrow & X^{\prime \circ }[-1]%
\end{array}%
$
\end{center}

Applying $\pi $ we obtain a morphism of triangles:

\begin{center}
$%
\begin{array}{ccccccc}
\pi X^{\circ } & \overset{\pi f}{\rightarrow } & \pi K^{\circ } & \overset{%
\pi g}{\rightarrow } & \pi Z^{\circ } & \pi \overset{h}{\rightarrow } & \pi
X^{\circ }[-1] \\ 
\uparrow \pi u &  & \uparrow \pi s &  & \uparrow 1 &  & \uparrow \pi u[-1]
\\ 
\pi X^{\prime \circ } & \rightarrow & \pi Y^{\circ } & \overset{\pi gs}{%
\rightarrow } & \pi Z^{\circ } & \rightarrow & \pi X^{\prime \circ }[-1]%
\end{array}%
$
\end{center}

By definition of $\Psi (\mathcal{T})$ the object $\pi Z^{\circ }\in \mathcal{%
T},$which means $Z^{\circ }$ has homology of $Z$-torsion. The maps $\pi s$, $%
\pi u$ are quasi isomorphisms. Applying the functor $\psi $ we obtain a
triangle: $\psi \pi X^{\circ }\overset{\psi \pi f}{\rightarrow }\psi \pi
K^{\circ }\overset{\psi \pi g}{\rightarrow }\psi \pi Z^{\circ }\overset{\psi
\pi h}{\rightarrow }\psi \pi X^{\circ }[-1]$ where $\psi \pi Z^{\circ }$is
acyclic. It follows $\psi \pi f$ is invertible in $D^{b}(gr(B_{n})_{Z}).$

We have proved the functor: $D^{b}(\psi ):$ $D^{b}(Qgr_{B_{n}})\rightarrow $ 
$D^{b}(gr_{(B_{n})_{Z}})$ sends elements of $\Psi (\mathcal{T})$ to
invertible elements in $D^{b}(gr(B_{n})_{Z}).$By $[Mi]$ Prop. 712, there
exists a functor $\theta :D^{b}(Qgr_{B_{n}})/\mathcal{T}\rightarrow
D^{b}(gr_{(B_{n})_{Z}})$ such that the triangle:

\begin{center}
$%
\begin{array}{ccccc}
D^{b}(Qgr_{B_{n}}) &  & \overset{D^{b}(\psi )}{\rightarrow } &  & 
D^{b}(gr(B_{n})_{Z}) \\ 
& Q\searrow &  & \nearrow \theta &  \\ 
&  & D^{b}(QgrB_{n})/\mathcal{T} &  & 
\end{array}%
$, commutes.
\end{center}

Since $D^{b}(\psi )$is dense, so is $\theta .$

Before proving $\theta $ is an equivalence, we will need two lemmas:

\begin{lemma}
Let $K^{\circ }$, $L^{\circ }$ be complexes in $C^{b}(gr_{B_{n}})$ and let $%
\overset{\wedge }{f}:K_{Z}^{\circ }\rightarrow $ $L_{Z}^{\circ }$ be a
morphism of complexes of graded $(B_{n})_{Z}$-modules. Then there exists a
bounded complex of graded $B_{n}$-modules $N^{\circ }$ and a map of
complexes $f:N^{\circ }\rightarrow L^{\circ }$ such that $N_{Z}^{\circ
}\cong K_{Z}^{\circ }$ and $f_{Z}=\overset{\wedge }{f}.$
\end{lemma}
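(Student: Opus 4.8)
The plan is to realize $N^\circ$ as a twist $Z^{s}K^\circ$ of $K^\circ$ by a single sufficiently large power of $Z$, and to build $f$ by restricting, degree by degree, chosen lifts of the components $\hat f^{i}$.

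First I would lift the components. Each $K^{i}$ is a finitely generated graded $B_{n}$-module, so $K^{i}_{Z}$ and $L^{i}_{Z}$ are finitely generated graded $(B_{n})_{Z}$-modules, and by part (ii) of the Proposition describing morphisms $M_{Z}\to N_{Z}$ between graded localizations there is an integer $k_{i}\ge 0$ and a graded $B_{n}$-morphism $f^{i}\colon Z^{k_{i}}K^{i}\to L^{i}$ whose localization, under the canonical isomorphism $(Z^{k_{i}}K^{i})_{Z}\cong K^{i}_{Z}$, equals $\hat f^{i}$. Since $K^\circ$ is bounded, $k:=\max_{i}k_{i}$ is finite; restricting each $f^{i}$ to the submodule $Z^{k}K^{i}\subseteq Z^{k_{i}}K^{i}$ I obtain graded maps $f^{i}\colon Z^{k}K^{i}\to L^{i}$. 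Write $M^\circ=Z^{k}K^\circ$, a bounded subcomplex of $K^\circ$ lying in $C^{b}(gr_{B_{n}})$ (submodules of finitely generated modules over the noetherian ring $B_{n}$ are finitely generated), with $M^\circ_{Z}\cong K^\circ_{Z}$ because $K^\circ/Z^{k}K^\circ$ is degreewise annihilated by $Z^{k}$, hence $Z$-torsion, hence has vanishing localization.

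The family $(f^{i})$ need not be a chain map; its defect is $g^{i}:=e^{i}f^{i}-f^{i+1}d^{i}\colon M^{i}\to L^{i+1}$, where $d^{\bullet}$ and $e^{\bullet}$ are the differentials of $M^\circ$ and $L^\circ$. Localizing and using that $\hat f$ is a morphism of complexes gives $(g^{i})_{Z}=0$, so by part (i) of the same Proposition each $g^{i}$ factors through a $Z$-torsion module; as $M^{i}$ is finitely generated, $\operatorname{Im}(g^{i})$ is finitely generated and $Z$-torsion, hence killed by $Z^{m_{i}}$ for some $m_{i}\ge 0$. Put $m=\max_{i}m_{i}$ (finite again) and set $N^\circ:=Z^{m}M^\circ=Z^{m+k}K^\circ$, with $f^{i}\colon N^{i}\to L^{i}$ the restriction of the previous $f^{i}$ to $N^{i}=Z^{m}M^{i}$. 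Now the defect vanishes on $N^{i}$: for $y=Z^{m}x\in Z^{m}M^{i}$ one has $g^{i}(y)=g^{i}(Z^{m}x)=Z^{m}g^{i}(x)=0$ since $Z$ is central and $Z^{m}g^{i}=0$. Hence $f=(f^{i})\colon N^\circ\to L^\circ$ is an honest morphism of complexes.

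It remains to check $N^\circ$ works. The inclusion $N^\circ\hookrightarrow K^\circ$ is a chain map with cokernel $K^\circ/Z^{m+k}K^\circ$ a bounded complex of $Z$-torsion modules, so it localizes to an isomorphism of complexes $N^\circ_{Z}\cong K^\circ_{Z}$; and degreewise, restricting $f^{i}$ along $Z^{m+k}K^{i}\subseteq Z^{k_{i}}K^{i}$ merely precomposes $(f^{i})_{Z}=\hat f^{i}$ with the localization isomorphism, so under $N^\circ_{Z}\cong K^\circ_{Z}$ we get $f_{Z}=\hat f$. I do not anticipate a real obstacle: the only points that need attention are the uniformity of the exponents $k$ and $m$ (this is exactly where boundedness of the complexes enters) and the fact that twisting by $Z^{m}$ annihilates the defect, both of which are immediate once one keeps track of the centrality of $Z$ and of the identifications $(Z^{j}K^{i})_{Z}\cong K^{i}_{Z}$. (One could alternatively pass through the mapping cone of $\hat f$ and invoke the density of $C^{b}(Q)$, but that would require lifting the degreewise splitting of the cone, which is less direct.)
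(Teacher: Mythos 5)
Your argument is correct, but it takes a genuinely different route from the paper's. Both proofs start the same way: lift each component $\overset{\wedge}{f}^{i}$ to a graded map $f_{i}\colon Z^{k_{i}}K^{i}\rightarrow L^{i}$ via the proposition on morphisms of localized modules, pass to a uniform power $Z^{k}$, and observe that the commutation defects $d f_{i-1}-f_{i}d$ localize to zero, hence land in $t_{Z}(L^{i})$. At that point the paper \emph{enlarges} the complex: it keeps the defective lifts and absorbs the defects by forming $N^{i}=Z^{k}K^{i}\oplus t_{Z}(L^{i})$ with the matrix differentials $\left[\begin{smallmatrix} d & 0\\ s_{i} & d^{\prime}\end{smallmatrix}\right]$ (after checking $s_{i-1}d+ds_{i-2}=0$) and uses the chain map $(f_{i},-j_{i})$ -- the same cone-like device as in the preceding lemma on sequences whose composites factor through torsion. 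You instead \emph{shrink} the source: since $\operatorname{Im}(g^{i})\subseteq t_{Z}(L^{i+1})$ is finitely generated ($B_{n}$ noetherian, $Z^{k}K^{i}\subseteq K^{i}$ finitely generated), it is killed by a single power $Z^{m_{i}}$, and twisting by $Z^{m}$ with $m=\max m_{i}$ makes the defects vanish identically, so $N^{\circ}=Z^{m+k}K^{\circ}$ is already a subcomplex of $K^{\circ}$ and the restricted $f^{i}$ form an honest chain map; centrality of $Z$ and exactness of localization give $N^{\circ}_{Z}\cong K^{\circ}_{Z}$ and $f_{Z}=\overset{\wedge}{f}$. Your version buys a cleaner object (a genuine subcomplex of $K^{\circ}$, no sign bookkeeping with the $s_{i}$) at the price of invoking noetherianness to get the uniform annihilating exponent, which the paper's construction does not need; conversely, the paper's explicit $N^{\circ}$ with the summands $t_{Z}(L^{i})$ is the precise shape reused later in the fullness argument for $\theta$, though your $N^{\circ}$ would serve there just as well.
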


\begin{proof}
Let $K^{\circ }$, $L^{\circ }$ be the complexes: $K^{\circ }$: $0\rightarrow
K^{0}\overset{d}{\rightarrow }$ $K^{1}\overset{d}{\rightarrow }...K^{n-1}%
\overset{d}{\rightarrow }K^{n}\rightarrow 0$ and $L^{\circ }:$ $0\rightarrow
L^{0}\overset{d}{\rightarrow }$ $L^{1}\overset{d}{\rightarrow }...L^{n-1}%
\overset{d}{\rightarrow }L^{n}\rightarrow 0$.

Each map $\overset{\wedge }{f}_{i}:K_{Z}^{i}\rightarrow $ $L_{Z}^{i}$ lifts
to a map $f_{i}:$ $Z^{k_{i}}K^{i}\rightarrow L^{i}$ such that $(f_{i})_{Z}=%
\overset{\wedge }{f}_{i}$. Let $k$ be $k=\max \{k_{j}\}$. Then we have the
following diagram:

$%
\begin{array}{ccccccccc}
0\rightarrow & Z^{k}K^{0} & \overset{d}{\rightarrow } & Z^{k}K^{1} & \overset%
{d}{\rightarrow } & Z^{k}K^{2} & \overset{d}{\rightarrow }... & Z^{k}K^{n} & 
\rightarrow 0 \\ 
& \downarrow f_{0} &  & \downarrow f_{1} &  & \downarrow f_{2} &  & 
\downarrow f_{n} &  \\ 
0\rightarrow & L^{0} & \overset{d}{\rightarrow } & L^{1} & \overset{d}{%
\rightarrow } & L^{2} & \overset{d}{\rightarrow }... & L^{n} & \rightarrow 0%
\end{array}%
$

where $(df_{i-1}-f_{i}d)_{Z}=d\overset{\wedge }{f}_{i-1}-\overset{\wedge }{f}%
_{i}d=0.$The map $df_{i-1}-f_{i}d$ factors though $t_{Z}(L^{i}).$

There exist maps $s_{i-1}:$ $Z^{k}K^{i-1}\rightarrow t_{Z}(L^{i})$ and $%
j_{i}:t_{Z}(L^{i})\rightarrow L^{i}$ such that $f_{i}d-df_{i-1}=j_{i}s_{i-1}$
and the diagrams:

$%
\begin{array}{ccc}
t_{Z}(L^{i-1}) & \overset{d^{\prime }}{\rightarrow } & t_{Z}(L^{i}) \\ 
\downarrow j_{i-1} &  & \downarrow j_{i} \\ 
L^{i-1} & \overset{d}{\rightarrow } & L^{i}%
\end{array}%
$commute.

We have the following equalities: $(f_{i}d-df_{i-1})d=j_{i}s_{i-1}d$, $%
-df_{i-1}d=j_{i}s_{i-1}d$ and $%
d(f_{i-1}d-df_{i-2})=df_{i-1}f=dj_{i-1}s_{i-2}=j_{i}ds_{i-2}.$

But $j_{i}$ mono implies $s_{i-1}d+ds_{i-2}=0.$

We have proved that:

$N^{\circ }:$ $0\rightarrow Z^{k}K^{0}\overset{\overset{\wedge }{d_{0}}}{%
\rightarrow }$ $Z^{k}K^{1}\oplus t_{Z}(L^{1})\overset{\overset{\wedge }{d_{1}%
}}{\rightarrow }$ $Z^{k}K^{2}\oplus t_{Z}(L^{2})...$ $Z^{k}K^{n-1}\oplus
t_{Z}(L^{n-1})\overset{\overset{\wedge }{d_{\ell -1}}}{\rightarrow }%
Z^{k}K^{n}\rightarrow 0,$ where the maps have the following form: $\overset{%
\wedge }{d_{0}}$ = $\left[ 
\begin{array}{c}
d \\ 
s_{0}%
\end{array}%
\right] $, $\overset{\wedge }{d_{i}}$ =$\left[ 
\begin{array}{cc}
d & 0 \\ 
s_{i} & d^{\prime }%
\end{array}%
\right] $, $\overset{\wedge }{d_{\ell -1}}$=$\left[ 
\begin{array}{cc}
d & 0%
\end{array}%
\right] $ is a complex of $B_{n}$-modules and $(f_{i},-j_{i}):Z^{k}K^{i}%
\oplus t_{Z}(L^{i})\rightarrow L^{i}$, $(f,-j):N^{\circ }\rightarrow
L^{\circ }$ is a map of complexes such that $N_{Z}^{\circ }\cong
K_{Z}^{\circ }$ and $(f,j)_{Z}=\overset{\wedge }{f}.$
\end{proof}

\begin{lemma}
Let $K^{\circ }$, $L^{\circ }$ be complexes in $C^{b}(gr_{B_{n}})$ and $%
\overset{\wedge }{f}:L_{Z}^{\circ }\rightarrow $ $K_{Z}^{\circ }$ be a
morphism of complexes of graded $(B_{n})_{Z}$-modules which is homotopic to
zero. Then there exist bounded complexes of $B_{n}$-modules, $M^{\circ }$, $%
N^{\circ }$and a map of complexes $f:N^{\circ }\rightarrow M^{\circ }$, such
that $f$ is homotopic to zero, $N_{Z}^{\circ }\cong L_{Z}^{\circ }$, $%
M_{Z}^{\circ }\cong K_{Z}^{\circ }$ and $f_{Z}\cong $ $\overset{\wedge }{f}$.
\end{lemma}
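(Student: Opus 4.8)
The plan is to lift the contracting homotopy itself, rather than first lifting $\overset{\wedge}{f}$ and then trying to repair it. Write the given null-homotopy of $\overset{\wedge}{f}$ as a family of degree-zero graded $(B_{n})_{Z}$-module maps $\overset{\wedge}{h}_{i}:L_{Z}^{i}\rightarrow K_{Z}^{i-1}$ with $\overset{\wedge}{f}_{i}=d\overset{\wedge}{h}_{i}+\overset{\wedge}{h}_{i+1}d$; since $K^{\circ}$ and $L^{\circ}$ are bounded, only finitely many $\overset{\wedge}{h}_{i}$ are nonzero.

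First I would apply the lifting statement, part (ii) of the Proposition on morphisms between the localizations, to each $\overset{\wedge}{h}_{i}:L_{Z}^{i}\rightarrow K_{Z}^{i-1}$; these are maps of finitely generated graded $(B_{n})_{Z}$-modules because $L^{i}$, $K^{i-1}$ are finitely generated over the noetherian ring $B_{n}$ and localization preserves finite generation. This produces integers $k_{i}\geq 0$ and graded $B_{n}$-maps $h_{i}':Z^{k_{i}}L^{i}\rightarrow K^{i-1}$ whose localizations recover $\overset{\wedge}{h}_{i}$ under the canonical isomorphisms $L_{Z}^{i}\cong (Z^{k_{i}}L^{i})_{Z}$. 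Setting $k=\max_{i}k_{i}$ and composing with the inclusions $Z^{k}L^{i}\hookrightarrow Z^{k_{i}}L^{i}$, I obtain a uniform family $h_{i}:Z^{k}L^{i}\rightarrow K^{i-1}$ with $(h_{i})_{Z}=\overset{\wedge}{h}_{i}$ after the canonical identifications (this last point is immediate since the inclusions localize to the canonical isomorphisms).

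Then I would set $M^{\circ}=K^{\circ}$ and $N^{\circ}=Z^{k}L^{\circ}$ — a subcomplex of $L^{\circ}$, since $Z$ is central so $d$ commutes with multiplication by $Z^{k}$, and it lies in $C^{b}(gr_{B_{n}})$ because $B_{n}$ is noetherian — and define $f_{i}:Z^{k}L^{i}\rightarrow K^{i}$ by $f_{i}=d_{K}h_{i}+h_{i+1}d_{N}$. Then $f=(f_{i})$ is automatically a chain map, since $d_{K}f_{i}-f_{i+1}d_{N}=d_{K}h_{i+1}d_{N}-d_{K}h_{i+1}d_{N}=0$ using $d_{K}^{2}=0=d_{N}^{2}$, and it is tautologically null-homotopic with homotopy $(h_{i})$. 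Localizing the defining formula and using functoriality of $(-)_{Z}$ gives $(f_{i})_{Z}=d\overset{\wedge}{h}_{i}+\overset{\wedge}{h}_{i+1}d=\overset{\wedge}{f}_{i}$, so under the canonical isomorphisms $N_{Z}^{\circ}=(Z^{k}L^{\circ})_{Z}\cong L_{Z}^{\circ}$ and $M_{Z}^{\circ}=K_{Z}^{\circ}$ we get $f_{Z}\cong \overset{\wedge}{f}$, which is exactly what is asked.

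I do not expect a genuine obstacle. In the preceding lemma a naive lift of a chain map need not be a chain map and has to be corrected by a mapping-cone-style term involving the $Z$-torsion submodules; here the hypothesis that $\overset{\wedge}{f}$ is null-homotopic lets us manufacture $f$ directly out of the lifted homotopy, so both being a chain map and being null-homotopic come for free. The only care needed is the bookkeeping of cohomological-degree shifts in the homotopy identity and the routine verification that restricting along $Z^{k}L^{i}\hookrightarrow Z^{k_{i}}L^{i}$ does not change the localized map — immediate because every map in sight is $B_{n}$-linear and $Z$ is central.
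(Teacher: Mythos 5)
Your proof is correct, and it arrives at exactly the same final object as the paper: a map $f=d_{K}h+hd$ on the uniformly twisted subcomplex $Z^{k}L^{\circ}$, null-homotopic by construction via the lifted homotopy. The only real difference is procedural: the paper first lifts each $\overset{\wedge }{f}_{i}$ to some $f_{i}^{\prime }:Z^{k}L^{i}\rightarrow K^{i}$, then observes that $f_{i}^{\prime }-(t_{i+1}d+dt_{i})$ localizes to zero, hence factors through $t_{Z}(K^{i})$, and corrects $f_{i}^{\prime }$ by that torsion-valued term so that the homotopy identity holds on the nose; you skip the separate lift of $\overset{\wedge }{f}$ entirely and simply define $f_{i}:=d_{K}h_{i}+h_{i+1}d_{N}$, verifying directly that its localization is $d\overset{\wedge }{h}_{i}+\overset{\wedge }{h}_{i+1}d=\overset{\wedge }{f}_{i}$. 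This buys a shorter argument (no appeal to the torsion-factorization step used in the preceding lemma), at the cost of giving up any control relating $f$ to a previously chosen lift of $\overset{\wedge }{f}$ — which the statement as given does not require, so your argument fully proves the lemma. The degree bookkeeping, the uniform choice $k=\max k_{i}$, and the compatibility of the restrictions $Z^{k}L^{i}\hookrightarrow Z^{k_{i}}L^{i}$ with the canonical identifications after localization are all handled correctly.
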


\begin{proof}
Consider the following diagram:

$%
\begin{array}{ccccccccc}
0\rightarrow & L_{Z}^{0} & \overset{d_{Z}}{\rightarrow } & L_{Z}^{1} & 
\overset{d_{Z}}{\rightarrow } & L_{Z}^{2} & \overset{d_{Z}}{\rightarrow }...
& L_{Z}^{m} & \rightarrow 0 \\ 
& \downarrow \overset{\wedge }{f}_{0} & s_{1}\swarrow & \downarrow \overset{%
\wedge }{f_{1}} & s_{2}\swarrow & \downarrow \overset{\wedge }{f}_{2} & 
s_{m}\swarrow & \downarrow \overset{\wedge }{f}_{m} &  \\ 
0\rightarrow & K_{Z}^{0} & \overset{d_{Z}}{\rightarrow } & K_{Z}^{1} & 
\overset{d_{Z}}{\rightarrow } & K_{Z}^{2} & \overset{d_{Z}}{\rightarrow }...
& L_{Z}^{n} & \rightarrow 0%
\end{array}%
$, where $s:L_{Z}^{\circ }\rightarrow K_{Z}^{\circ }[-1]$ is the homotopy,
hence $\overset{\wedge }{f}_{i}=d_{Z}s_{i}+s_{i+1}d_{Z}.$

For each $i$ there exist integers $k_{i}$, $k_{i}^{\prime }$ and maps $%
t_{i}:Z^{k_{i}}L^{i}\rightarrow K^{i-1}$ and $f_{i}^{\prime
}:Z^{k_{i}^{\prime }}L^{i}\rightarrow K^{i}$ such that $(t_{i})_{Z}=s_{i}$
and $(f_{i}^{\prime })_{Z}=\overset{\wedge }{f}_{i}$. Taking $k=\max
\{k_{j}\}$we have maps:

$%
\begin{array}{ccccccccc}
0\rightarrow & Z^{k}L^{0} & \overset{d}{\rightarrow } & Z^{k}L^{1} & \overset%
{d}{\rightarrow } & Z^{k}L^{2} & \overset{d}{\rightarrow }... & Z^{k}L^{m} & 
\rightarrow 0 \\ 
& \downarrow f_{0}^{\prime } & t_{1}\swarrow & \downarrow f_{1}^{\prime } & 
t_{2}\swarrow & \downarrow f_{2}^{\prime } & t_{m}\swarrow & \downarrow
f_{m}^{\prime } &  \\ 
0\rightarrow & K^{0} & \overset{d}{\rightarrow } & K^{1} & \overset{d}{%
\rightarrow } & K^{2} & \overset{d}{\rightarrow }... & K^{m} & \rightarrow 0%
\end{array}%
$

Consider the map: $(f_{i}^{\prime }-(t_{i+1}d+dt_{i}))_{Z}=\overset{\wedge }{%
f}_{i}-(s_{i+1}d_{Z}+d_{Z}s_{i}=0.$As above, $f_{i}^{\prime
}-(t_{i+1}d+dt_{i})$ factors through a $Z$-torsion module and there exist
maps: $v_{i}:Z^{k}L^{i}\rightarrow t_{Z}(K^{i})$ and inclusions $%
j_{i}:t_{Z}(K^{i})\rightarrow K^{i}$ such that $f_{i}^{\prime
}-(t_{i+1}d+dt_{i})=-j_{i}v_{i}$ or $f_{i}^{\prime
}+j_{i}v_{i}=t_{i+1}d+dt_{i}.$

Set $f_{i}=f_{i}^{\prime }+j_{i}v_{i}.$Then $(f_{i})_{Z}=(f_{i}^{\prime
})_{Z}=\overset{\wedge }{f}_{i}.$

But we have now $f_{i}=t_{i+1}d+dt_{i}$, $f_{i-1}=t_{i}d+dt_{i-1}$ imply $%
f_{i}d=dt_{i}d=f_{i-1}d.$
\end{proof}

We can prove now the theorem.

i) $\theta $ is faithful.

Let $\mathcal{T}_{1}$ be $\mathcal{T}_{1}=\{X^{\circ }\in C^{b}(gr_{B_{n}})$ 
$\mid H^{i}(X^{\circ })$ is torsion for all $i\}$and $\mathcal{T}%
_{2}=\{X^{\circ }\in C^{b}(gr_{B_{n}})$ $\mid H^{i}(X^{\circ })$is $Z$%
-torsion for all $i\}$.

A map in $D^{b}(QgrB_{n})/\mathcal{T}$ can be written as follows:

\begin{center}
$%
\begin{array}{ccccccccc}
&  & \pi K^{\circ } &  &  &  & \pi L^{\circ } &  &  \\ 
& \pi f\nearrow &  & \nwarrow \pi s &  & \pi t\nearrow &  & \nwarrow \pi g & 
\\ 
\pi X^{\circ } &  &  &  & \pi Y^{\circ } &  &  &  & \pi Z^{\circ }%
\end{array}%
$
\end{center}

where $t$, $s\in \Psi (\mathcal{T}_{1})$ and $g\in \Psi (\mathcal{T}_{2}).$

In $K^{b}(gr_{B_{n}})$ we have maps:

\begin{center}
$%
\begin{array}{ccccccccc}
&  & K^{\circ } &  &  &  & L^{\circ } &  &  \\ 
& f\nearrow &  & \nwarrow s &  & t\nearrow &  & \nwarrow g &  \\ 
X^{\circ } &  &  &  & Y^{\circ } &  &  &  & Z^{\circ }%
\end{array}%
$
\end{center}

We have an exact sequence of complexes:

\begin{center}
$%
\begin{array}{ccccccc}
0\rightarrow & Y^{\circ } & \overset{\mu }{\rightarrow } & K^{\circ }\oplus
L^{\circ }\oplus I^{\circ } & \overset{\upsilon }{\rightarrow } & W^{\circ }
& \rightarrow 0%
\end{array}%
$
\end{center}

Where $I^{\circ }$ is a complex which is a sum of complexes of the form: $%
0\rightarrow X\overset{1}{\rightarrow }X\rightarrow 0$, hence acyclic. The
maps $\mu ,\upsilon $ are of the form: $\mu =\left[ 
\begin{array}{c}
s \\ 
t \\ 
u%
\end{array}%
\right] $ and $\upsilon =\left[ 
\begin{array}{ccc}
t^{\prime } & s^{\prime } & v%
\end{array}%
\right] .$

By the long homology sequence, there is an exact sequence: *)

\begin{center}
...$\rightarrow H^{i+1}(W^{\circ })\rightarrow H^{i}(Y^{\circ })\overset{%
H^{i}(\mu )}{\rightarrow }H^{i}(K^{\circ })\oplus H^{i}(L^{\circ })\overset{%
H^{i}(\upsilon )}{\rightarrow }H^{i}(W^{\circ })\rightarrow H^{i-1}(Y^{\circ
})\rightarrow ...$
\end{center}

Since $\pi $ is an exact functor, for any complex $\pi H^{i}(X^{\circ
})\cong H^{i}(\pi X^{\circ })$ and the exact sequence *) induces an exact
sequence: **)

\begin{center}
...$\rightarrow \pi H^{i+1}(W^{\circ })\rightarrow \pi H^{i}(Y^{\circ })%
\overset{\pi H^{i}(\mu )}{\rightarrow }\pi H^{i}(K^{\circ })\oplus \pi
H^{i}(L^{\circ })\overset{\pi H^{i}(\upsilon )}{\rightarrow }\pi
H^{i}(W^{\circ })\rightarrow \pi H^{i-1}(Y^{\circ })\rightarrow ...$
\end{center}

Which is isomorphic to the complex:

\begin{center}
...$\rightarrow H^{i+1}(\pi W^{\circ })\rightarrow H^{i}(\pi Y^{\circ })%
\overset{H^{i}(\pi \mu )}{\rightarrow }H^{i}(\pi K^{\circ })\oplus H^{i}(\pi
L^{\circ })\overset{H^{i}(\pi \upsilon )}{\rightarrow }H^{i}(\pi W^{\circ
})\rightarrow H^{i-1}(\pi Y^{\circ })\rightarrow ...$
\end{center}

The maps $H^{i}(\pi s),H^{i}(\pi t)$ are isomorphisms. Hence it follows $%
H^{i}(\pi \mu )$ is for each $i$ a splittable monomorphism and for each $i$
there is an exact sequence:

\begin{center}
0$\rightarrow H^{i}(\pi Y^{\circ })\overset{H^{i}(\pi \mu )}{\rightarrow }%
H^{i}(\pi K^{\circ })\oplus H^{i}(\pi L^{\circ })\overset{H^{i}(\pi \upsilon
)}{\rightarrow }H^{i}(\pi W^{\circ })\rightarrow 0$
\end{center}

which can be embedded in a commutative exact diagram:

\begin{center}
$%
\begin{array}{ccccccc}
&  &  & 0 &  & 0 &  \\ 
&  &  & \downarrow &  & \downarrow &  \\ 
& 0 &  & H^{i}(\pi L^{\circ }) & \overset{1}{\rightarrow } & H^{i}(\pi
L^{\circ }) &  \\ 
& \downarrow &  & \downarrow \left[ 
\begin{array}{c}
0 \\ 
1%
\end{array}%
\right] &  & \downarrow H^{i}(\pi s^{\prime }) &  \\ 
0\rightarrow & H^{i}(\pi Y^{\circ }) & \rightarrow & H^{i}(\pi K^{\circ
})\oplus H^{i}(\pi L^{\circ }) & \rightarrow & H^{i}(\pi W^{\circ }) & 
\rightarrow 0 \\ 
& \downarrow H^{i}(\pi s) &  & \downarrow \left[ 
\begin{array}{cc}
1 & 0%
\end{array}%
\right] &  & \downarrow &  \\ 
0\rightarrow & H^{i}(\pi K^{\circ }) & \overset{1}{\rightarrow } & H^{i}(\pi
K^{\circ }) & \rightarrow & 0 &  \\ 
& \downarrow &  & \downarrow &  &  &  \\ 
& 0 &  & 0 &  &  & 
\end{array}%
$
\end{center}

By this and a similar diagram it follows $H^{i}(\pi s^{\prime }),H^{i}(\pi
t^{\prime })$ are isomorphisms.

We have a commutative diagram in $K^{b}(Qgr_{B_{n}}):$

\begin{center}
\bigskip $%
\begin{array}{ccccccccc}
&  &  &  &  &  &  &  &  \\ 
&  &  &  & \pi W^{\circ } &  &  &  &  \\ 
&  &  & \pi t^{\prime }\nearrow &  & \nwarrow \pi s^{\prime } &  &  &  \\ 
&  & \pi K^{\circ } &  &  &  & \pi L^{\circ } &  &  \\ 
& \pi f\nearrow &  & \nwarrow \pi s &  & \pi t\nearrow &  & \nwarrow \pi g & 
\\ 
\pi X^{\circ } &  &  &  & \pi Y^{\circ } &  &  &  & \pi Z^{\circ }%
\end{array}%
$
\end{center}

Then $\theta ($ ($\pi g)^{-1}\pi t(\pi s)^{-1}\pi f)=D^{b}(\psi )((\pi
s^{\prime }\pi g)^{-1}\pi t^{\prime }\pi f))=(s_{Z}^{\prime
}g_{Z})^{-1}t_{Z}^{\prime }f_{Z}=0.$

But $s_{Z}^{\prime }$, $g_{Z}$, $t_{Z}^{\prime }$ are isomorphisms in $%
D^{b}(grB_{z}).$ It follows $f_{Z}=0$ in $D^{b}(grB_{z}).$

Then there is a quasi isomorphism of complexes $\upsilon :\overset{\wedge }{N%
}^{\circ }\rightarrow X_{Z}^{\circ }$ such that $f_{Z}\upsilon $ is
homotopic to zero. By Lemma ?, there is a bounded complex $N^{\circ }$ of $%
B_{n}$-modules and a map $\nu :N^{\circ }\rightarrow X^{\circ }$such that $%
N_{Z}^{\circ }\cong \overset{\wedge }{N}^{\circ }$and $\nu _{Z}$ can be
identified with $\upsilon .$

According to Lemma ??. there is an integer $k\geq 0$ such that the
composition of maps $Z^{k}N^{\circ }\overset{res\nu }{\rightarrow }X^{\circ }%
\overset{f}{\rightarrow }K^{\circ }$ is homotopic to zero and ($res\nu
)_{Z}=\nu _{Z}$ is a quasi isomorphism. This implies $res\nu \in \Psi (%
\mathcal{T}_{2})$ and $\pi f=0$ in $D^{b}(QgrB_{n})/\mathcal{T}.$

Therefore $\pi g)^{-1}\pi t(\pi s)^{-1}\pi f=0$ in $D^{b}(QgrB_{n})/\mathcal{%
T}.$

ii) $\theta $ is full.

Let $%
\begin{array}{ccccc}
&  & K_{Z}^{\circ } &  &  \\ 
& \overset{\wedge }{s}\swarrow &  & \searrow \overset{\wedge }{f} &  \\ 
X_{Z}^{\circ } &  &  &  & Y_{Z}^{\circ }%
\end{array}%
$be a map in $D^{b}(gr_{(B_{n})_{Z}})$. By lemma ?, there exists a complex:

$N^{\circ }:$ $0\rightarrow Z^{k}K^{0}\overset{\overset{\wedge }{d_{0}}}{%
\rightarrow }$ $Z^{k}K^{1}\oplus t_{Z}(Y^{1})\overset{\overset{\wedge }{d_{1}%
}}{\rightarrow }$ $Z^{k}K^{2}\oplus t_{Z}(Y^{2})...$ $Z^{k}K^{n-1}\oplus
t_{Z}(Y^{n-1})\overset{\overset{\wedge }{d_{\ell -1}}}{\rightarrow }%
Z^{k}K^{n}\rightarrow 0,$ where the maps have the following form: $\overset{%
\wedge }{d_{0}}$ = $\left[ 
\begin{array}{c}
d \\ 
s_{0}%
\end{array}%
\right] $, $\overset{\wedge }{d_{i}}$ =$\left[ 
\begin{array}{cc}
d & 0 \\ 
s_{i} & d^{\prime }%
\end{array}%
\right] $, $\overset{\wedge }{d_{\ell -1}}$=$\left[ 
\begin{array}{cc}
d & 0%
\end{array}%
\right] $ and a map $f:N^{\circ }\rightarrow Y^{\circ }$such that $%
N_{Z}^{\circ }\cong K_{Z}^{\circ }$ and $f_{Z}=$ $\overset{\wedge }{f}.$%
Changing $N^{\circ }$ for $K^{\circ }$ we may assume $\overset{\wedge }{f}$
is a localized map $f_{Z}$ and get a roof:

$%
\begin{array}{ccccc}
&  & N_{Z}^{\circ } &  &  \\ 
& \overset{\wedge }{s}\swarrow &  & \searrow f_{Z} &  \\ 
X_{Z}^{\circ } &  &  &  & Y_{Z}^{\circ }%
\end{array}%
.$

We now lift $\overset{\wedge }{s}$ to a map of complexes $s:\overset{\wedge }%
{N}^{\circ }\rightarrow X^{\circ }$:

\begin{center}
$%
\begin{array}{cccccccccc}
\text{0}\rightarrow & \text{Z}^{k}\text{N}^{0} & \overset{\overset{\wedge }{d%
}_{0}}{\rightarrow } & \text{Z}^{k}\text{N}^{1}\oplus \text{t}_{Z}\text{(X}%
^{1}\text{)} & \overset{\overset{\wedge }{d}_{1}}{\rightarrow } & \text{Z}%
^{k}\text{N}^{2}\oplus \text{t}_{Z}\text{(X}^{2}\text{)} & \text{...} & 
\overset{\overset{\wedge }{d_{m-1}}}{\rightarrow } & \text{Z}^{k}\text{N}^{m}
& \rightarrow \text{0} \\ 
s: & \downarrow \text{s}_{0} &  & \downarrow \text{s}_{1} &  & \downarrow 
\text{s}_{2} &  &  & \downarrow \text{s}_{m} &  \\ 
\text{0}\rightarrow & \text{X}^{0} & \overset{d}{\rightarrow } & \text{X}^{1}
& \overset{d}{\rightarrow } & \text{X}^{2} & \text{...} & \overset{d}{%
\rightarrow } & \text{X}^{m} & \rightarrow \text{0}%
\end{array}%
s$
\end{center}

with $s_{z}=\overset{\wedge }{s}.$

We have a commutative diagram:\newline
$%
\begin{array}{cccccccccc}
\text{0}\rightarrow & \text{Z}^{k}\text{N}^{0} & \overset{\overset{\wedge }{d%
}_{0}}{\rightarrow } & \text{Z}^{k}\text{N}^{1}\oplus \text{t}_{Z}\text{(X}%
^{1}\text{)} & \overset{\overset{\wedge }{d}_{1}}{\rightarrow } & \text{Z}%
^{k}\text{N}^{2}\oplus \text{t}_{Z}\text{(X}^{2}\text{)} & \text{...} & 
\overset{\overset{\wedge }{d_{m-1}}}{\rightarrow } & \text{Z}^{k}\text{N}^{m}
& \rightarrow \text{0} \\ 
(10) & \downarrow \text{1} &  & \downarrow \text{(10)} &  & \downarrow \text{%
(10)} &  &  & \downarrow \text{1} &  \\ 
\text{0}\rightarrow & \text{Z}^{k}\text{N}^{0} & \overset{d}{\rightarrow } & 
\text{Z}^{k}\text{N}^{1} & \overset{d}{\rightarrow } & \text{Z}^{k}\text{N}%
^{2} & \text{...} & \overset{d}{\rightarrow } & \text{Z}^{k}\text{N}^{m} & 
\rightarrow \text{0}%
\end{array}%
$

We obtain the following roof:

\begin{center}
$%
\begin{array}{ccccccc}
&  & \overset{\wedge }{N}^{\circ } &  &  &  &  \\ 
& {\small s}\swarrow &  & \searrow {\small (10)} &  &  &  \\ 
X^{\circ } &  &  &  & Z^{k}N^{\circ } &  &  \\ 
&  &  &  &  & \searrow {\small f} &  \\ 
&  &  &  &  &  & Y^{\circ }%
\end{array}%
$
\end{center}

Localizing we obtain:

\begin{center}
$%
\begin{array}{ccccccc}
&  & \overset{\wedge }{N_{Z}}^{\circ } &  &  &  &  \\ 
& {\small s}_{Z}\swarrow &  & \searrow {\small (10)}_{Z} &  &  &  \\ 
X_{Z}^{\circ } &  &  &  & Z^{k}N_{Z}^{\circ } &  &  \\ 
&  &  &  &  & \searrow {\small f}_{Z} &  \\ 
&  &  &  &  &  & Y_{Z}^{\circ }%
\end{array}%
$
\end{center}

with $\overset{\wedge }{N_{Z}}^{\circ }\overset{{\small (10)}_{Z}}{%
\rightarrow }Z^{k}N_{Z}^{\circ }\cong N_{Z}^{\circ }$ isomorphisms, $s_{Z}=%
\overset{\wedge }{s}$, $f_{z}=\overset{\wedge }{f}.$

We have proved $\theta $ is full.

\section{The category of $\mathcal{T}$-local objects.}

Let $\mathcal{F}$ be the full subcategory of $D^{b}(Qgr_{B_{n}})$ consisting
of $\mathcal{T}$-local objects, this is: $\mathcal{F}=\{X^{\circ }\in
D^{b}(Qgr_{B_{n}})\mid Hom_{D^{b}(Qgr_{B_{n}})}(\mathcal{T},X^{\circ })=0\}$

According to $[Mi]$, Prop. 9.8, for each $Y^{\circ }\in D^{b}(Qgr_{B_{n}})$
and $X^{\circ }\in \mathcal{F}$, \linebreak\ $Hom_{D^{b}(Qgr_{B_{n}})}(Y^{%
\circ },X^{\circ })=Hom_{D^{b}(Qgr_{B_{n}})/\mathcal{T}}(QY^{\circ
},QX^{\circ })\cong $

$Hom_{D^{b}(gr_{(B_{n})_{Z}})}(\psi Y^{\circ },\psi X^{\circ })$.

In particular there is a full embedding of $\mathcal{F}$ in $%
D^{b}(gr_{(B_{n})_{Z}}).$

According to $[MM]$ and $[MS]$ there is a duality of triangulated categories 
$\overline{\phi }:\underline{gr}_{_{B_{n}^{!}}}\rightarrow
D^{b}(Qgr_{B_{n}^{op}})$ induced by the duality $\phi
:gr_{_{_{B_{n}^{!}}}}\rightarrow \mathcal{LC}P_{B_{n}}$, with $\mathcal{LC}%
P_{B_{n}}$ the category of linear complexes of graded projective $B_{n}$%
-modules. If $M=\underset{i\geq k_{0}}{\oplus }M_{i}$ is a graded $%
B_{n}^{!}- $module, then $\phi (M)$ is a complex of the form:

$D(M)\otimes B_{n}:\rightarrow ...D(M_{k_{0}+n})\underset{B_{0}}{\otimes }%
B_{n}[-k_{0}-n]\rightarrow D(M_{k_{0}+n-1})\underset{B_{0}}{\otimes }%
B_{n}[-k_{0}-n+1]\rightarrow ...D(M_{k_{0}+1}\underset{B_{0}}{\otimes }%
B_{n})[-k_{0}-1]\rightarrow D(M_{k_{0}})\underset{B_{0}}{\otimes }%
B_{n}[-k_{0}]\rightarrow 0.$

$\overline{\phi }(M)$ is the complex:

$:\rightarrow ...\pi (D(M_{k_{0}+n})\underset{B_{0}}{\otimes }%
B_{n})[-k_{0}-n]\rightarrow \pi (D(M_{k_{0}+n-1})\underset{B_{0}}{\otimes }%
B_{n})[-k_{0}-n+1]\rightarrow ...\pi (D(M_{k_{0}+1})\underset{B_{0}}{\otimes 
}B_{n})[-k_{0}-1]\rightarrow \pi (D(M_{k_{0}})\underset{B_{0}}{\otimes }%
B_{n})[-k_{0}]\rightarrow 0$

If we compose with the usual duality we obtain an equivalence of
triangulated categories: $\overline{\phi }D:\underline{gr}%
_{_{B_{n}^{!}}}\rightarrow D^{b}(Qgr_{B_{n}}).$

Under the duality there is a pair $(\mathcal{F}^{\prime },\mathcal{T}%
^{\prime }\mathcal{)}$ such that $\mathcal{F}^{\prime }\rightarrow \mathcal{F%
}$ and $\mathcal{T}^{\prime }\rightarrow \mathcal{T}$ corresponds to the
pair: $(\mathcal{T}$, $\mathcal{F}).$

We want to characterize the subcategories $\mathcal{F}^{\prime },\mathcal{T}%
^{\prime }$of $\underline{gr}_{_{B_{n}^{!}}}$.

We shall start by recalling some properties of the finitely generated graded 
$B_{n}$-modules.

The algebra $B_{n}$ is a Koszul algebra of finite global dimension, under
such conditions, for any finitely generated graded $B_{n}$-module $M$ there
is a truncation $M_{\geq k}$ such that $M_{\geq k}[k]$ is Koszul $[M]$. But
in $Qgr_{B_{n}}$ the objects $\pi M$ and $\pi M_{\geq k}$ are isomorphic,
hence we can consider only Koszul $B_{n}$-modules and their shifts. Assume $%
M $ is finitely generated but of infinite dimension over $K$. The torsion
part $t(M)$ is finite dimensional over $K$, hence there is a torsion free
truncation $M_{\geq k}$ of $M$, so we may assume $M$ torsion free and Koszul.

Let%
\'{}%
s suppose $M$ is of $Z$-torsion.There exists an integer $n$ such that $%
Z^{n-1}M\neq 0$ and $Z^{n}M=0$. There is a filtration of $M$: $M\supset
ZM\supset Z^{2}M...\supset Z$ $^{n-1}M\supset 0$. Since $Z$ is an element of
degree one $(ZM)_{i}=ZM_{i-1}$, which implies $(Z^{j}M)_{\geq k}=Z^{j}(M$ $%
_{\geq k-j})$.

Truncation of Koszul is Koszul and we can take large enough truncation in
order to have $(Z^{j}M)_{\geq k}$ Koszul for all $j.$Changing $M$ for $%
M_{\geq k}$ we may assume all $Z^{j}M$ are Koszul.

There is a commutative exact diagram:

\begin{center}
$%
\begin{array}{ccccccccc}
&  &  & \overset{0}{\downarrow } &  & \overset{0}{\downarrow } &  &  &  \\ 
&  & 0\rightarrow & \Omega (M) & \rightarrow & \Omega (M/ZM) & \rightarrow & 
ZM & \rightarrow 0 \\ 
&  &  & \downarrow &  & \downarrow &  &  &  \\ 
&  &  & P & \overset{1}{\rightarrow } & P &  &  &  \\ 
&  &  & \downarrow &  & \downarrow &  &  &  \\ 
0\rightarrow & ZM & \rightarrow & M & \rightarrow & M/ZM & \rightarrow 0 & 
&  \\ 
&  &  & \underset{0}{\downarrow } &  & \underset{0}{\downarrow } &  &  & 
\end{array}%
$
\end{center}

the modules $\Omega (M)$, $ZM$ are Koszul generated in the same degree, it
follows $M/ZM$ is Koszul and for any integer $k\geq 1$ there is an exact
sequence:

$%
\begin{array}{ccccccc}
0\rightarrow & \Omega ^{k}(M) & \rightarrow & \Omega ^{k}(M/ZM) & \rightarrow
& \Omega ^{k-1}(ZM) & \rightarrow 0%
\end{array}%
.$By $[$GM1$]$ there is an exact sequence:

$0\rightarrow Hom_{B_{n}}(\Omega ^{k-1}(ZM),B_{_{n}0})\rightarrow
Hom_{B_{n}}(\Omega ^{k}(M/ZM),B_{_{n}0})\rightarrow $

$Hom_{B_{n}}(\Omega ^{k}(M),B_{_{n}0})\rightarrow 0$ or an exact sequence:

*) $0\rightarrow Ext_{B_{n}}^{k-1}(ZM,B_{_{n}0})\rightarrow
Ext_{B_{n}}^{k}(M/ZM,B_{_{n}0})\rightarrow
Ext_{B_{n}}^{k}(M,B_{_{n}0})\rightarrow 0$.

We will denote by $F_{B_{n}}(N)=\underset{k\geq 0}{\oplus }%
Ext_{B_{n}}^{k}(N,B_{_{n}0})$ the Koszul duality functor $%
F_{B_{n}}:K_{B_{n}}\rightarrow K_{B_{n}^{!}}$ .

Adding all sequences *) we obtain an exact sequence:

$0\rightarrow F_{B_{n}}(ZM)[-1]\rightarrow F_{B_{n}}(M/ZM)\rightarrow
F_{B_{n}}(M)\rightarrow 0$

We can apply the same argument to any module $Z^{j}M$ to get an exact
sequence:

$0\rightarrow F_{B_{n}}(Z^{j+1}M)[-j-1]\rightarrow
F_{B_{n}}(Z^{j}M/Z^{j+1}M)[-j]\rightarrow F_{B_{n}}(Z^{j}M)[-j]\rightarrow
0. $

Gluing all short exact sequences we obtain a long exact sequence of Koszul
up to shifting $B_{n}^{!}$-modules:

**) $0\rightarrow F_{B_{n}}(Z^{n-1}M)[-n+1]\rightarrow
F_{B_{n}}(Z^{n-2}M/Z^{n-1}M)[-n+2]...\rightarrow $

$F_{B_{n}}(M/ZM)\rightarrow F_{B_{n}}(M)\rightarrow 0$

It will be enough to study non semismple Koszul $B_{n}$-modules $N$ such
that $ZN=0$. They can be considered as $C_{n}$-modules.

We have the following commutative exact diagram:

\begin{center}
$%
\begin{array}{ccccccc}
& \overset{0}{\downarrow } &  & \overset{0}{\downarrow } &  &  &  \\ 
& ZB_{n}^{n_{0}} & \overset{1}{\rightarrow } & ZB_{n}^{n_{0}} &  &  &  \\ 
& \downarrow &  & \downarrow &  &  &  \\ 
0\rightarrow & \Omega _{B}(N) & \rightarrow & B_{n}^{n_{0}} & \rightarrow & N
& \rightarrow 0 \\ 
& \downarrow &  & \downarrow &  & \downarrow 1 &  \\ 
0\rightarrow & \Omega _{C}(N) & \rightarrow & C_{n}^{n_{0}} & \rightarrow & N
& \rightarrow 0 \\ 
& \underset{0}{\downarrow } &  & \underset{0}{\downarrow } &  &  & 
\end{array}%
$
\end{center}

The algebra $B_{n}$ is an integral domain and in consequence the free $B_{n}$%
-modules are torsion free and $ZB_{n}^{n_{0}}$is isomorphic to $%
B_{n}^{n_{0}}[-1]$.

The exact sequence: $0\rightarrow ZB_{n}^{n_{0}}\rightarrow \Omega
_{B}(N)\rightarrow \Omega _{C}(N)\rightarrow 0$ consists of graded modules
generated in degree one and the first two term are Koszul, by $[GM]$ this
implies $\Omega _{C}(N)$ is Koszul as $B_{n}$-module.

There is a commutative exact diagram:

\begin{center}
$%
\begin{array}{ccccccc}
&  &  & \overset{0}{\downarrow } &  & \overset{0}{\downarrow } &  \\ 
& 0 & \rightarrow & B_{n}^{n_{0}}[-1] & \rightarrow & ZB_{n}^{n_{0}} & 
\rightarrow 0 \\ 
& \downarrow &  & \downarrow &  & \downarrow &  \\ 
0\rightarrow & \Omega _{B}^{2}(N) & \rightarrow & B_{n}^{n_{0}+n_{1}}[-1] & 
\rightarrow & \Omega _{B}(N) & \rightarrow 0 \\ 
& \downarrow &  & \downarrow &  & \downarrow &  \\ 
0\rightarrow & \Omega _{B}\Omega _{C}(N) & \rightarrow & B_{n}^{n_{1}}[-1] & 
\rightarrow & \Omega _{C}(N) & \rightarrow 0 \\ 
& \underset{0}{\downarrow } &  & \underset{0}{\downarrow } &  & \underset{0}{%
\downarrow } & 
\end{array}%
$
\end{center}

In particular $\Omega _{B}^{2}(N)\cong \Omega _{B}\Omega _{C}(N).$

Since $\Omega _{C}(N)\subset C_{n}^{n_{0}}$it is a $C_{n}$-module and we
have the following commutative exact diagram:

\begin{center}
$%
\begin{array}{ccccccc}
& \overset{0}{\downarrow } &  & \overset{0}{\downarrow } &  &  &  \\ 
& ZB_{n}^{n_{1}}[-1] & \overset{1}{\rightarrow } & ZB_{n}^{n_{1}}[-1] &  & 
&  \\ 
& \downarrow &  & \downarrow &  &  &  \\ 
0\rightarrow & \Omega _{B}\Omega _{C}(N) & \rightarrow & B_{n}^{n_{1}}[-1] & 
\rightarrow & \Omega _{C}(N) & \rightarrow 0 \\ 
& \downarrow &  & \downarrow &  & \downarrow 1 &  \\ 
0\rightarrow & \Omega _{C}^{2}(N) & \rightarrow & C_{n}^{n_{1}} & \rightarrow
& \Omega _{C}(N) & \rightarrow 0 \\ 
& \underset{0}{\downarrow } &  & \underset{0}{\downarrow } &  &  & 
\end{array}%
$
\end{center}

and an exact sequence: $0\rightarrow B_{n}^{n_{1}}[-2]\rightarrow \Omega
_{B}^{2}(N)\rightarrow \Omega _{C}^{2}(N)\rightarrow 0$.

\bigskip In general there exist exact sequences:

$0\rightarrow B_{n}^{n_{k-1}}[-k]\rightarrow \Omega _{B}^{k}(N)\rightarrow
\Omega _{C}^{k}(N)\rightarrow 0$.

which induce exact sequences:\newline
$0\rightarrow $Hom$_{B_{n}}(\Omega _{C}^{k}$(N),B$_{_{n}0}$)$\rightarrow $Hom%
$_{B_{n}}$($\Omega _{B}^{k}$(N),B$_{_{n}0}$)$\rightarrow $Hom$_{B_{n}}$(B$%
_{n}^{n_{k-1}}$[-k],B$_{_{n}0}$)$\rightarrow 0.$

The module $\Omega _{C}^{k}(N)$ is annihilated by $Z$ which implies $%
J_{B}\Omega _{C}^{k}(N)=J_{C}\Omega _{C}^{k}(N).$The module $B_{_{n}0}\cong
C_{_{n}0}\cong K$.

There are isomorphisms:\newline
$Hom_{B_{n}}(\Omega _{C}^{k}(N),B_{_{n}0})\cong Hom_{B_{n}0}(\Omega
_{C}^{k}(N)/J_{B}\Omega _{C}^{k}(N),B_{_{n}0})\cong $

$Hom_{C_{n}0}(\Omega _{C}^{k}(N)/J_{c}\Omega _{C}^{k}(N),C_{_{n}0})\cong
Hom_{C_{n}}(\Omega _{C}^{k}(N),C_{_{n}0})\cong Ext_{C_{n}}^{k}(N,C_{_{n}0}).$

We then have an exact sequence: *) $0\rightarrow F_{C_{n}}(N)\overset{\alpha 
}{\rightarrow }$ $F_{B_{n}}(N)\rightarrow \underset{k=1}{\overset{m}{\oplus }%
S^{n_{k-1}}[k]\rightarrow 0}$

\begin{lemma}
The map $\alpha $ is a morphism of $C_{n}^{!}$-modules.
\end{lemma}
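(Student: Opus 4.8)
The plan is to recognise $\alpha$ as the change-of-rings (restriction of scalars) morphism on $\operatorname{Ext}$ attached to the surjection $p\colon B_n\twoheadrightarrow C_n$, and then to exploit the fact that such a morphism is automatically compatible with the Yoneda actions. Recall that $F_{B_n}(N)=\bigoplus_k\operatorname{Ext}^k_{B_n}(N,B_{n0})$ carries its structure as a module over $B_n^!=\bigoplus_k\operatorname{Ext}^k_{B_n}(B_{n0},B_{n0})$ by Yoneda splicing of exact sequences, and likewise $F_{C_n}(N)$ is a $C_n^!$-module; the $C_n^!$-structure on $F_{B_n}(N)$ referred to in the statement is the one obtained by restricting along the subalgebra inclusion $\iota\colon C_n^!\hookrightarrow B_n^!$, and this inclusion is itself nothing but the map $\operatorname{res}_p\colon\operatorname{Ext}^*_{C_n}(K,K)\to\operatorname{Ext}^*_{B_n}(K,K)$ which views a $C_n$-extension of $K$ by $K$ as a sequence of $B_n$-modules (it is injective, with image $C_n^!$, by the decomposition $B_n^!=C_n^!\oplus ZC_n^!$ already established).

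First I would check that, degreewise, $\alpha$ agrees with $\operatorname{res}_p\colon\operatorname{Ext}^k_{C_n}(N,K)\to\operatorname{Ext}^k_{B_n}(N,K)$. Fix the minimal graded projective resolutions $P^B_\bullet\to N$ over $B_n$ and $P^C_\bullet\to N$ over $C_n$, and a lift $\rho_\bullet\colon P^B_\bullet\to P^C_\bullet$ of $\operatorname{id}_N$, where $P^C_\bullet$ is viewed as a complex of $B_n$-modules through $p$; applying $\operatorname{Hom}_{B_n}(-,K)$ and passing to cohomology produces exactly $\operatorname{res}_p$. On the other hand, truncating $\rho_\bullet$ gives for each $k$ a commutative ladder whose rows are the defining sequences of $\Omega^k_B(N)$ and $\Omega^k_C(N)$, and the induced surjections $\Omega^k_B(N)\twoheadrightarrow\Omega^k_C(N)$ are precisely the maps $\pi^k$ occurring (via the snake diagrams) in the construction of $\alpha$. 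Since $B_n$, $C_n$ and $N$ are Koszul (as in the excerpt), the identifications $\operatorname{Ext}^k_{B_n}(N,K)\cong\operatorname{Hom}_{B_n}(\Omega^k_B(N),K)$ and $\operatorname{Ext}^k_{C_n}(N,K)\cong\operatorname{Hom}_{C_n}(\Omega^k_C(N),K)\cong\operatorname{Hom}_{B_n}(\Omega^k_C(N),K)$ used in the text intertwine precomposition with $\pi^k$ with $\operatorname{res}_p$; hence $\alpha=\operatorname{res}_p$.

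Finally, $\operatorname{res}_p$ is a morphism of modules over the graded algebra map $\operatorname{res}_p\colon C_n^!=\operatorname{Ext}^*_{C_n}(K,K)\to\operatorname{Ext}^*_{B_n}(K,K)=B_n^!$: this is immediate from the description of the Yoneda product by splicing, because restricting to $B_n$ the splice of a $C_n$-extension of $N$ by $K$ with a $C_n$-extension of $K$ by $K$ yields the splice of the two restrictions. Therefore, for $\eta\in C_n^!$ and $\xi\in F_{C_n}(N)$, $\alpha(\eta\xi)=\operatorname{res}_p(\eta)\cdot\alpha(\xi)=\iota(\eta)\cdot\alpha(\xi)$, which is exactly the action of $\eta$ on $\alpha(\xi)$ for the restricted $C_n^!$-structure on $F_{B_n}(N)$; so $\alpha$ is $C_n^!$-linear. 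The \emph{main obstacle} is the bookkeeping of the second paragraph: organising the snake-lemma maps $\pi^k$ and the various $\operatorname{Hom}$-identifications around a single comparison chain map $\rho_\bullet$ so that $\alpha$ is visibly $\operatorname{res}_p$; everything after that is formal. (Alternatively one can bypass Yoneda altogether and verify directly that the vertical surjections $\Omega^k_B(N)\to\Omega^k_C(N)$ intertwine the degree-raising actions of $(C_n^!)_1\subseteq(B_n^!)_1$ defined through the syzygy sequences $\Omega^{k+1}\hookrightarrow P_k\twoheadrightarrow\Omega^k$ over the two rings, which again comes down to commutativity of that ladder, i.e.\ to the comparison theorem.)
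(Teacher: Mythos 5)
Your proof is correct, and it takes a genuinely different route from the paper's. The paper works bare-hands with Yoneda extensions: it takes a degree-one element $x$ of $C_n^!$ (an extension $0\to K\to E\to K\to 0$), represents $y\in \operatorname{Ext}^k_{C_n}(N,K)$ by a map $f:\Omega_C^k(N)\to K$, and checks $\alpha(xy)=x\alpha(y)$ by gluing explicit pullback diagrams built from the sequences $0\to B_n^{n_{k-1}}[-k]\to\Omega_B^k(N)\to\Omega_C^k(N)\to 0$; linearity over all of $C_n^!$ then follows because $C_n^!$ is Koszul, hence generated in degree one (a point the paper leaves implicit). You instead identify $\alpha$ with the change-of-rings (restriction-of-scalars) map along $B_n\twoheadrightarrow C_n$ and invoke the general compatibility of restriction with Yoneda splicing, which treats all degrees of $C_n^!$ uniformly and makes the statement conceptually transparent. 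What this costs is exactly the bookkeeping you flag: one must verify that the paper's surjections $\pi_k$ are the syzygy maps induced by some comparison chain map $P_\bullet^B\to P_\bullet^C$ lifting $\operatorname{id}_N$ (or at least induce the same map on $\operatorname{Ext}$, which is automatic once any lift does, by homotopy uniqueness), and that $\operatorname{res}$ on $\operatorname{Ext}^*_{C_n}(K,K)$ coincides with the presentational inclusion $C_n^!\subset B_n^!$ — for the latter, the correct justification is that both are graded algebra maps agreeing on the degree-one part, which generates $C_n^!$, so you should state that rather than lean only on the decomposition $B_n^!=C_n^!\oplus ZC_n^!$. By comparison, the paper's explicit diagrams are not wasted effort, since they are reused verbatim in the following lemma ($B_n^!F_C(M)=F_B(M)$), while your argument is shorter and less error-prone for the present statement.
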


\begin{proof}
Let $x$ be an element of $Ext_{C_{n}}^{k}(K,K)$ and $y\in
Ext_{C_{n}}^{k}(N,K)$ we want to prove $\alpha (xy)=x\alpha (y).$

The element $x$ is an extension: $0\rightarrow K\rightarrow E\rightarrow
K\rightarrow 0$ and $y:$ $0\rightarrow K\rightarrow V\rightarrow \Omega
_{C}^{k-1}(N)\rightarrow 0$, the induced map $f$ given below corresponds to $%
y:$

$%
\begin{array}{ccccccc}
0\rightarrow & \Omega _{C}^{k}(N) & \rightarrow & C_{n}^{n_{k-1}} & 
\rightarrow & \Omega _{C}^{k-1}(N) & \rightarrow 0 \\ 
& \downarrow f &  & \downarrow &  & \downarrow 1 &  \\ 
0\rightarrow & K & \rightarrow & V & \rightarrow & \Omega _{C}^{k-1}(N) & 
\rightarrow 0%
\end{array}%
$

Consider the following pull back:

$%
\begin{array}{ccccccc}
0\rightarrow & K & \rightarrow & L & \rightarrow & \Omega _{C}^{k}(N) & 
\rightarrow 0 \\ 
& \downarrow 1 &  & \downarrow &  & \downarrow f &  \\ 
0\rightarrow & K & \rightarrow & E & \rightarrow & K & 
\end{array}%
$

The exact sequence: $0\rightarrow B_{n}^{n_{k-1}}[-k]\rightarrow \Omega
_{B}^{k}(N)\overset{\pi _{k}}{\rightarrow }\Omega _{C}^{k}(N)\rightarrow 0$
induces a pull back of $B_{n}$-modules:

$%
\begin{array}{ccccccc}
0\rightarrow & K & \rightarrow & W & \rightarrow & \Omega _{B}^{k}(N) & 
\rightarrow 0 \\ 
& \downarrow 1 &  & \downarrow &  & \downarrow \pi _{k} &  \\ 
0\rightarrow & K & \rightarrow & L & \rightarrow & \Omega _{C}^{k}(N) & 
\end{array}%
$

It was proved above the existence of commutative exact diagrams:

$%
\begin{array}{ccccccc}
&  &  & \overset{0}{\downarrow } &  & \overset{0}{\downarrow } &  \\ 
& 0 & \rightarrow & B_{n}^{n_{k-2}} & \rightarrow & B_{n}^{n_{k-2}} & 
\rightarrow 0 \\ 
& \downarrow &  & \downarrow &  & \downarrow &  \\ 
0\rightarrow & \Omega _{B}^{k}(N) & \rightarrow & B_{n}^{n_{k-2}+n_{k-1}} & 
\rightarrow & \Omega _{B}^{k-1}(N) & \rightarrow 0 \\ 
& \downarrow &  & \downarrow &  & \downarrow &  \\ 
0\rightarrow & \Omega _{B}^{k}(N) & \rightarrow & B_{n}^{n_{k-1}} & 
\rightarrow & \Omega _{C}^{k-1}(N) & \rightarrow 0 \\ 
& \underset{0}{\downarrow } &  & \underset{0}{\downarrow } &  & \underset{0}{%
\downarrow } & 
\end{array}%
$

and $%
\begin{array}{ccccccc}
0\rightarrow & \Omega _{B}^{k}(N) & \rightarrow & B_{n}^{n_{k-1}} & 
\rightarrow & \Omega _{C}^{k-1}(N) & \rightarrow 0 \\ 
& \downarrow &  & \downarrow &  & \downarrow 1 &  \\ 
0\rightarrow & \Omega _{C}^{k}(N) & \rightarrow & C_{n}^{n_{k-1}} & 
\rightarrow & \Omega _{C}^{k-1}(N) & \rightarrow 0 \\ 
& \underset{0}{\downarrow } &  & \underset{0}{\downarrow } &  &  & 
\end{array}%
$

Gluing diagrams we obtain a commutative diagram with exact rows:\newline
$%
\begin{array}{cccccccccc}
& \text{0}\rightarrow & \Omega _{B}^{k+1}\text{(N)} & \rightarrow & \text{B}%
_{n}^{n_{k}+n_{k-1}} & \rightarrow & \text{B}_{n}^{n_{k-2}+n_{k-1}} & 
\rightarrow & \Omega _{B}^{k-1}\text{(N)} & \rightarrow \text{0} \\ 
&  & \varphi \downarrow &  & \downarrow & \searrow & \downarrow &  & 
\downarrow &  \\ 
\alpha \text{(xy):} & \text{0}\rightarrow & \text{K} & \rightarrow & \text{W}
& \rightarrow & \text{B}_{n}^{n_{k-1}} & \rightarrow & \Omega _{C}^{k-1}%
\text{(N)} & \rightarrow \text{0} \\ 
&  & \downarrow \text{1} &  & \downarrow &  & \downarrow &  & \downarrow 
\text{1} &  \\ 
\text{xy:} & \text{0}\rightarrow & \text{K} & \rightarrow & \text{L} & 
\rightarrow & \text{C}_{n}^{n_{k-1}} & \rightarrow & \Omega _{C}^{k-1}\text{%
(N)} & \rightarrow \text{0} \\ 
&  & \downarrow \text{1} &  & \downarrow &  & \downarrow &  & \downarrow 
\text{1} &  \\ 
\text{xy:} & \text{0}\rightarrow & \text{K} & \rightarrow & \text{E} & 
\rightarrow & \text{V} & \rightarrow & \Omega _{C}^{k-1}\text{(N)} & 
\rightarrow \text{0}%
\end{array}%
$

$\alpha (xy)=\phi \in Ext_{B_{n}}^{k+1}(N,K)$ .

In the other hand we have the following commutative diagram with exact rows:

$%
\begin{array}{ccccccc}
\text{0}\rightarrow & \Omega _{B}^{k+1}\text{(N)} & \rightarrow & \text{B}%
_{n}^{n_{k}+n_{k-1}} & \rightarrow & \Omega _{B}^{k}\text{(N)} & \rightarrow 
\text{0} \\ 
& \downarrow \text{1} &  & \downarrow &  & \downarrow \pi _{k} &  \\ 
\text{0}\rightarrow & \Omega _{B}^{k+1}\text{(N)} & \rightarrow & \text{B}%
_{n}^{n_{k}} & \rightarrow & \Omega _{C}^{k}\text{(N)} & \rightarrow \text{0}
\\ 
& \downarrow \pi _{k+1} &  & \downarrow &  & \downarrow \text{1} &  \\ 
\text{0}\rightarrow & \Omega _{C}^{k+1}\text{(N)} & \rightarrow & \text{C}%
_{n}^{n_{k}} & \rightarrow & \Omega _{C}^{k}\text{(N)} & \rightarrow \text{0}
\\ 
& \downarrow \varphi ^{\prime } &  & \downarrow &  & \downarrow \text{1} & 
\\ 
\text{0}\rightarrow & \text{K} & \rightarrow & \text{L} & \rightarrow & 
\Omega _{C}^{k}\text{(N)} & \rightarrow \text{0} \\ 
& \downarrow \text{1} &  & \downarrow &  & \downarrow \text{f} &  \\ 
\text{0}\rightarrow & \text{K} & \rightarrow & \text{E} & \rightarrow & 
\text{K} & \rightarrow \text{0}%
\end{array}%
$

Gluing diagrams we obtain the following commutative exact diagrams:

$%
\begin{array}{ccccccc}
\text{0}\rightarrow & \Omega _{B}^{k+1}\text{(N)} & \rightarrow & \text{B}%
_{n}^{n_{k}+n_{k-1}} & \rightarrow & \Omega _{B}^{k}\text{(N)} & \rightarrow 
\text{0} \\ 
& \downarrow \varphi ^{\prime }\pi _{k+1} &  & \downarrow &  & \downarrow 
\text{f}\pi _{k} &  \\ 
\text{0}\rightarrow & \text{K} & \rightarrow & \text{E} & \rightarrow & 
\text{K} & \rightarrow \text{0}%
\end{array}%
$

and $%
\begin{array}{ccccccc}
\text{0}\rightarrow & \Omega _{B}^{k+1}\text{(N)} & \rightarrow & \text{B}%
_{n}^{n_{k}+n_{k-1}} & \rightarrow & \Omega _{B}^{k}\text{(N)} & \rightarrow 
\text{0} \\ 
& \downarrow \varphi &  & \downarrow &  & \downarrow \text{f}\pi _{k} &  \\ 
\text{0}\rightarrow & \text{K} & \rightarrow & \text{E} & \rightarrow & 
\text{K} & \rightarrow \text{0}%
\end{array}%
$

The map $\varphi ^{\prime }$ corresponds with $xf\in Ext_{C_{n}}^{k+1}(N,K)$%
, $\alpha (y)=\pi _{k}f$, $x\alpha (y)=\varphi .$

Then we have: $\alpha (xy)=\alpha (xf)=\varphi ^{\prime }\pi _{k+1}=\Omega
(f\pi _{k})=\varphi =x\alpha (y).$
\end{proof}

\begin{lemma}
There is an isomorphism: $B_{n}^{!}F_{C}(M)=F_{B}(M)$.
\end{lemma}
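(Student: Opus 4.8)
The plan is to read the identity $B_n^!F_C(M) = F_B(M)$ off the short exact sequence of $C_n^!$-modules
\[ 0 \longrightarrow F_C(M) \overset{\alpha}{\longrightarrow} F_B(M) \longrightarrow \bigoplus_{k=1}^{m} S^{\,n_{k-1}}[k] \longrightarrow 0 \]
just obtained, using two facts already at hand: that $\alpha$ is a morphism of $C_n^!$-modules (the preceding lemma), and that $B_n^!$ is free of rank two over $C_n^!$, with $B_n^! = C_n^! \oplus ZC_n^!$ as a $C_n^!$-module.

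First I would reduce the statement to the behaviour of the single degree-one element $Z \in B_n^!$. Identifying $F_C(M)$ with the $C_n^!$-submodule $\alpha(F_C(M)) \subseteq F_B(M)$, the decomposition $B_n^! = C_n^! \oplus ZC_n^!$ together with the $C_n^!$-linearity of $\alpha$ gives
\[ B_n^!\cdot\alpha(F_C(M)) = C_n^!\,\alpha(F_C(M)) + ZC_n^!\,\alpha(F_C(M)) = \alpha(F_C(M)) + Z\cdot\alpha(F_C(M)). \]
So it is enough to show that $Z\cdot\alpha(F_C(M))$ maps onto the cokernel $\bigoplus_{k\ge 1} S^{\,n_{k-1}}[k]$ of $\alpha$, equivalently that $\alpha(F_C(M)) + Z\cdot\alpha(F_C(M)) = F_B(M)$.

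To finish I would invoke Koszulity. In the reduced situation $M$ is a non-semisimple Koszul $B_n$-module, so by the standard Koszul-duality dictionary ([GM1], [GM2], [M]) the graded $B_n^!$-module $F_B(M) = \bigoplus_i \mathrm{Ext}^i_{B_n}(M, B_{n,0})$ is generated in homological degree zero, that is $F_B(M) = B_n^!\cdot(F_B(M))_0$. Since the cokernel of $\alpha$ is concentrated in degrees $\ge 1$, the map $\alpha$ is an isomorphism in degree zero, $(F_C(M))_0 \cong (F_B(M))_0$; concretely both equal $\mathrm{Hom}(M,B_{n,0})$, because $ZM = 0$ forces $J_BM = J_CM$ and hence $\mathrm{Hom}_{B_n}(M,B_{n,0}) = \mathrm{Hom}_{C_n}(M,C_{n,0})$. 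Therefore
\[ F_B(M) = B_n^!\,(F_B(M))_0 = B_n^!\,\alpha\big((F_C(M))_0\big) \subseteq B_n^!\,\alpha(F_C(M)) \subseteq F_B(M), \]
which forces the equality $B_n^!F_C(M) = F_B(M)$.

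The main obstacle would only arise if one wanted to avoid the Koszulity input and argue directly from the syzygy diagrams of the preceding pages: one would then have to check that, for each $k \ge 1$, the composite $F_C(M)_{k-1} \xrightarrow{\alpha} F_B(M)_{k-1} \xrightarrow{\,\cdot Z\,} F_B(M)_k \twoheadrightarrow S^{\,n_{k-1}}[k]$ is an isomorphism. Both sides have dimension $n_{k-1}$ (the $(k-1)$-st Betti number of $M$ over $C_n$, which is also the rank of the free summand split off in $0 \to B_n^{\,n_{k-1}}[-k] \to \Omega_B^k(M) \to \Omega_C^k(M) \to 0$), so only surjectivity is at issue, and this amounts to showing that the free generators appearing in $\Omega_B^k(M)$ but not in $\Omega_C^k(M)$ are precisely those detected by Yoneda multiplication with $Z$. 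Tracking this through the pull-back squares used to construct $\alpha$ is the fiddly part, and it is exactly what the Koszulity argument bypasses, so that is the version I would write up.
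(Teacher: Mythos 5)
Your proof is correct, but it takes a genuinely different route from the paper's. The paper argues directly at the level of Yoneda extensions: given a class $\varphi :\Omega _{B}^{k}(M)\rightarrow K$ in $Ext_{B_{n}}^{k}(M,K)$, it factors $\varphi $ through $JB_{n}^{n_{k-1}}[-k+1]$ using the sequence $0\rightarrow \Omega _{B}\Omega _{C}^{k-1}(M)\rightarrow B_{n}^{n_{k-1}}\rightarrow \Omega _{C}^{k-1}(M)\rightarrow 0$ and glues pullback diagrams to exhibit $\varphi $ as $\sum x_{i}p_{i}$ with $x_{i}\in Ext_{B_{n}}^{1}(K,K)$ and $p_{i}\in Ext_{C_{n}}^{k-1}(M,K)$, i.e. $Ext_{B_{n}}^{k}(M,K)=B_{n}^{!}Ext_{C_{n}}^{k-1}(M,K)$, and then concludes by induction on $k$. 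You instead deduce the generation statement abstractly from two inputs: since $M$ is Koszul over $B_{n}$, its Koszul dual $F_{B_{n}}(M)$ lies in $K_{B_{n}^{!}}$ and is therefore generated in degree zero over $B_{n}^{!}$; and $\alpha $ is an isomorphism in degree zero because the cokernel $\oplus _{k\geq 1}S^{n_{k-1}}[k]$ of the sequence $0\rightarrow F_{C_{n}}(M)\rightarrow F_{B_{n}}(M)\rightarrow \oplus S^{n_{k-1}}[k]\rightarrow 0$ sits in degrees $\geq 1$ (equivalently $ZM=0$ gives $Hom_{B_{n}}(M,K)=Hom_{C_{n}}(M,K)$). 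Both inputs are legitimately available in the paper's framework (it already uses $F_{B_{n}}:K_{B_{n}}\rightarrow K_{B_{n}^{!}}$ as a functor into Koszul $B_{n}^{!}$-modules), so your conclusion $F_{B_{n}}(M)=B_{n}^{!}(F_{B_{n}}(M))_{0}=B_{n}^{!}\alpha ((F_{C_{n}}(M))_{0})\subseteq B_{n}^{!}\alpha (F_{C_{n}}(M))$ is sound; note that your opening reduction via $B_{n}^{!}=C_{n}^{!}\oplus ZC_{n}^{!}$ and the $C_{n}^{!}$-linearity of $\alpha $ becomes superfluous once generation in degree zero is invoked. What each approach buys: yours is much shorter and delivers exactly what the next proposition needs (surjectivity of the multiplication map $B_{n}^{!}\otimes _{C_{n}^{!}}F_{C_{n}}(M)\rightarrow F_{B_{n}}(M)$, which then becomes an isomorphism by the dimension count), while the paper's explicit diagram computation, though heavier, is self-contained at the level of extensions, yields the finer degree-by-degree statement that $Ext_{B_{n}}^{k}(M,K)$ is $(B_{n}^{!})_{1}\cdot Ext_{C_{n}}^{k-1}(M,K)$, and its diagrams are the ones reused in the neighbouring lemmas (the $C_{n}^{!}$-linearity of $\alpha $ and the later compatibility of $\psi $ with restriction).
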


\begin{proof}
Let $x$ be an element of $Ext_{B_{n}}^{k}(M,K)$ and $\varphi $ the
corresponding morphism: $\varphi :\Omega _{B}^{k}(M)\rightarrow K.$As above
there exists the following commutative exact diagram:

$%
\begin{array}{ccccccc}
&  &  & \overset{0}{\downarrow } &  & \overset{0}{\downarrow } &  \\ 
& 0 & \rightarrow & B_{n}^{n_{k-2}}[-k+1] & \rightarrow & 
B_{n}^{n_{k-2}}[-k+1] & \rightarrow 0 \\ 
& \downarrow &  & \downarrow &  & \downarrow &  \\ 
0\rightarrow & \Omega _{B}^{k}(M) & \rightarrow & 
B_{n}^{n_{k-2}+n_{k-1}}[-k+1] & \rightarrow & \Omega _{B}^{k-1}(M) & 
\rightarrow 0 \\ 
& \downarrow &  & \downarrow &  & \downarrow &  \\ 
0\rightarrow & \Omega _{B}\Omega _{C}^{k-1}(M) & \rightarrow & 
B_{n}^{n_{k-1}}[-k+1] & \rightarrow & \Omega _{C}^{k-1}(M) & \rightarrow 0
\\ 
& \underset{0}{\downarrow } &  & \underset{0}{\downarrow } &  & \underset{0}{%
\downarrow } & 
\end{array}%
$

Since the module $\Omega _{B}^{k}(M)$ is generated in degree $k,$there
exists an exact sequence of graded modules generated in degree $k$:

$0\rightarrow \Omega _{B}^{k}(M)\rightarrow
JB_{n}^{n_{k-1}}[-k+1]\rightarrow J\Omega _{C}^{k-1}(M)\rightarrow 0$, which
in turn induces an exact sequence: $0\rightarrow J\Omega
_{B}^{k}(M)\rightarrow J^{2}B_{n}^{n_{k-1}}[-k+1]\rightarrow J^{2}\Omega
_{C}^{k-1}(M)\rightarrow 0$ and there exists a commutative exact diagram:%
\newline
$%
\begin{array}{ccccccc}
& \overset{0}{\downarrow } &  & \overset{0}{\downarrow } &  & \overset{0}{%
\downarrow } &  \\ 
\text{0}\rightarrow & \text{J}\Omega _{B}^{k}\text{(M)} & \rightarrow & 
\text{J}^{2}\text{B}_{n}^{n_{k-1}} & \rightarrow & \text{J}^{2}\Omega
_{C}^{k-1}\text{(M)} & \rightarrow \text{0} \\ 
& \downarrow &  & \downarrow &  & \downarrow &  \\ 
\text{0}\rightarrow & \Omega _{B}^{k}\text{(M)} & \overset{j}{\rightarrow }
& \text{JB}_{n}^{n_{k-1}} & \rightarrow & \text{J}\Omega _{C}^{k-1}\text{(M)}
& \rightarrow \text{0} \\ 
& \downarrow \pi &  & \downarrow \overline{\pi } &  & \downarrow &  \\ 
\text{0}\rightarrow & \Omega _{B}^{k}\text{(M)/J}\Omega _{B}^{k}\text{(M)} & 
\underset{\overline{q}}{\overset{\overline{j}}{\rightleftarrows }} & \text{JB%
}_{n}^{n_{k-1}}\text{/J}^{2}\text{B}_{n}^{n_{k-1}} & \rightarrow & \text{J}%
\Omega _{C}^{k-1}\text{(M)/J}^{2}\Omega _{C}^{k-1}\text{(M)} & \rightarrow 
\text{0} \\ 
& \underset{0}{\downarrow } &  & \underset{0}{\downarrow } &  & \underset{0}{%
\downarrow } & 
\end{array}%
$

Since $\overline{q}\overline{j}=1$, it follows $\overline{q}\overline{\pi }%
j= $ $\overline{q}\overline{j}\pi =\pi $.

Being $K$ semisimple, the map $\varphi $ factors as follows:

$%
\begin{array}{ccccc}
\Omega _{B}^{k}(M) &  & \overset{\varphi }{\rightarrow } &  & K \\ 
& \pi \searrow &  & \nearrow t &  \\ 
&  & \Omega _{B}^{k}(M)/J\Omega _{B}^{k}(M) &  & 
\end{array}%
.$

Set $f=t\overline{q}\overline{\pi }$, $f:$ $JB_{n}^{n_{k-1}}[-k+1]%
\rightarrow K$. Then $fj=t\overline{q}\overline{\pi }j=t\pi =\varphi .$

Consider the commutative diagram with exact rows:

$%
\begin{array}{cccccccc}
& 0\rightarrow & \Omega _{B}\Omega _{C}^{k-1}(M) & \rightarrow & 
B_{n}^{n_{k-1}} & \rightarrow & \Omega _{C}^{k-1}(M) & \rightarrow 0 \\ 
&  & \downarrow j &  & \downarrow 1 &  & \downarrow \rho &  \\ 
& 0\rightarrow & JB^{n_{k-1}}[-k+1] & \rightarrow & B_{n}^{n_{k-1}} & 
\rightarrow & \Omega _{C}^{k-1}(M)/J\Omega _{C}^{k-1}(M) & \rightarrow 0 \\ 
&  & \downarrow f &  & \downarrow &  & \downarrow \cong &  \\ 
x: & 0\rightarrow & K & \rightarrow & E & \rightarrow & \underset{n_{k-1}}{%
\oplus K} & \rightarrow 0%
\end{array}%
.$

The map corresponding to the last column is: $p=\left[ 
\begin{array}{c}
p_{1} \\ 
\overset{.}{\underset{.}{.}} \\ 
p_{n_{k-1}}%
\end{array}%
\right] :\Omega _{C}^{k-1}(M)\rightarrow \underset{n_{k-1}}{\oplus K}$. Each 
$p_{i}:\Omega _{C}^{k-1}(M)\rightarrow K$ corresponds to an element of $%
Ext_{C_{n}}^{k-1}(M,K).$

$x=(x_{1},x_{2}...x_{n_{k-1}})$ and each $x_{i}$ is an extension: $%
0\rightarrow K\rightarrow E_{i}\rightarrow K\rightarrow 0.$Taking pull backs:

$%
\begin{array}{cccccccc}
x_{i}p_{i}: & 0\rightarrow & K & \rightarrow & L_{i} & \rightarrow & \Omega
_{C}^{k-1}(M) & \rightarrow 0 \\ 
&  & \downarrow 1 &  & \downarrow &  & \downarrow p_{i} &  \\ 
x_{i}: & 0\rightarrow & K & \rightarrow & E_{i} & \rightarrow & K & 
\rightarrow 0%
\end{array}%
$

where each $x_{i}p_{i}\in B_{n}^{!}Ext_{C_{n}}^{k-1}(M,K)$ and $xp=\sum
x_{i}p_{i}\in B_{n}^{!}Ext_{C_{n}}^{k-1}(M,K).$

There is also the following induced diagram with exact rows:

$%
\begin{array}{cccccccc}
& 0\rightarrow & \Omega _{B}\Omega _{C}^{k-1}(M) & \rightarrow & 
B_{n}^{n_{k-1}}[k+1] & \rightarrow & \Omega _{C}^{k-1}(M) & \rightarrow 0 \\ 
&  & \downarrow h &  & \downarrow &  & \downarrow 1 &  \\ 
& 0\rightarrow & K & \rightarrow & L & \rightarrow & \Omega _{C}^{k-1}(M) & 
\rightarrow 0 \\ 
&  & \downarrow 1 &  & \downarrow &  & \downarrow p &  \\ 
x: & 0\rightarrow & K & \rightarrow & E & \rightarrow & \underset{n_{k-1}}{%
\oplus K} & \rightarrow 0%
\end{array}%
$

Gluing the diagrams we obtain the following commutative exact diagram:

$%
\begin{array}{cccccccc}
& 0\rightarrow & \Omega _{B}\Omega _{C}^{k-1}(M) & \rightarrow & 
B_{n}^{n_{k-1}}[k+1] & \rightarrow & \Omega _{C}^{k-1}(M) & \rightarrow 0 \\ 
&  & \downarrow h &  & \downarrow &  & \downarrow p &  \\ 
x: & 0\rightarrow & K & \rightarrow & E & \rightarrow & \underset{n_{k-1}}{%
\oplus K} & \rightarrow 0%
\end{array}%
$

It follows $h=fj=\varphi $ up to homotopy.

We have proved $B_{n}^{!}Ext_{C_{n}}^{k-1}(M,K)=Ext_{B_{n}}^{k}(M,K).$

It follows by induction $B_{n}^{!}F_{C}(M)=F_{B}(M)$.
\end{proof}

We can prove now the following:

\begin{proposition}
Let $M$ be a Koszul non semisimple $B_{n}$-module with $ZM=0,$ $%
F_{B}:K_{B}\rightarrow K_{B^{!}}$, $F_{C}:K_{C}\rightarrow K_{C^{!}}$ Koszul
dualities. Then there is an isomorphism of $B_{n}^{!}$-modules: $B_{n}^{!}%
\underset{C^{!}}{\otimes }F_{C}(M)\cong F_{B}(M)$.
\end{proposition}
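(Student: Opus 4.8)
The plan is to promote the map $\alpha$ of the two preceding lemmas to a homomorphism of $B_n^!$-modules defined on the induced module $B_n^!\underset{C^!}{\otimes}F_C(M)$, to read off surjectivity from the identity $B_n^!F_C(M)=F_B(M)$, and to obtain injectivity from a graded Hilbert series computation resting on the rank-two freeness of $B_n^!$ over $C_n^!$ proved above.

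First I would build the comparison map. By the Lemma that $\alpha$ is a morphism of $C_n^!$-modules, we may regard $\alpha\colon F_C(M)\to F_B(M)$ as a homomorphism of graded left $C_n^!$-modules, with $F_B(M)$ restricted along the ring inclusion $C_n^!\hookrightarrow B_n^!$. By the tensor--hom adjunction for this inclusion, $\alpha$ corresponds to a homomorphism of graded left $B_n^!$-modules
\[
\widetilde{\alpha}\colon B_n^!\underset{C^!}{\otimes}F_C(M)\longrightarrow F_B(M),\qquad b\otimes x\longmapsto b\,\alpha(x).
\]
Its image is exactly the $B_n^!$-submodule of $F_B(M)$ generated by $\alpha(F_C(M))$, i.e. $B_n^!F_C(M)$, and the Lemma ``$B_n^!F_C(M)=F_B(M)$'' shows this image is all of $F_B(M)$; hence $\widetilde{\alpha}$ is surjective.

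It then remains to prove $\widetilde{\alpha}$ is injective. Since $M$ is finitely generated, every graded component of $F_C(M)$, $F_B(M)$ and of the tensor product is finite dimensional over $K$, so it suffices to check that $\widetilde{\alpha}$ preserves graded dimensions. On the one hand, because $B_n^!=C_n^!\oplus ZC_n^!$ is $C_n^!$-free of rank two with free generators $1$ and $Z$ of degrees $0$ and $1$, there is an isomorphism of graded $K$-vector spaces $B_n^!\underset{C^!}{\otimes}F_C(M)\cong F_C(M)\oplus F_C(M)[1]$, so $\dim_K\bigl(B_n^!\underset{C^!}{\otimes}F_C(M)\bigr)_j=\dim_K F_C(M)_j+\dim_K F_C(M)_{j-1}$. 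On the other hand, the short exact sequence $0\to F_C(M)\overset{\alpha}{\rightarrow}F_B(M)\rightarrow\bigoplus_{k\ge 1}S^{n_{k-1}}[k]\rightarrow 0$ produced in the proof of the previous Lemma, together with the identification $n_{k-1}=\dim_K Ext_{C_n}^{k-1}(M,K)=\dim_K F_C(M)_{k-1}$ of the multiplicity of the $k$-th semisimple summand, gives $\dim_K F_B(M)_j=\dim_K F_C(M)_j+\dim_K F_C(M)_{j-1}$. The two Hilbert series agree; a surjection between graded vector spaces with equal finite dimensions in every degree is an isomorphism, so $\widetilde{\alpha}$ is an isomorphism of graded $B_n^!$-modules.

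Almost all of the substance is already contained in the two lemmas (that $\alpha$ is $C_n^!$-linear, and that $F_B(M)$ is generated over $B_n^!$ by $F_C(M)$). The only step here requiring genuine care is making the Hilbert series match exactly: one must correctly account for the degree-one shift contributed by the second free generator $Z$ of $B_n^!$ over $C_n^!$, and one must match the multiplicities $n_{k-1}$ of the semisimple cokernel $\bigoplus_k S^{n_{k-1}}[k]$ with $\dim_K F_C(M)_{k-1}$; both are bookkeeping points about the grading conventions rather than real obstacles.
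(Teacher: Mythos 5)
Your proposal is correct and follows essentially the same route as the paper: the multiplication map $B_{n}^{!}\underset{C^{!}}{\otimes }F_{C}(M)\rightarrow F_{B}(M)$ is surjective by the lemma $B_{n}^{!}F_{C}(M)=F_{B}(M)$, and injectivity is a dimension count resting on $B_{n}^{!}=C_{n}^{!}\oplus ZC_{n}^{!}$ and the exact sequence $0\rightarrow F_{C}(M)\rightarrow F_{B}(M)\rightarrow \oplus _{k}S^{n_{k-1}}[k]\rightarrow 0$ with $n_{k-1}=\dim _{K}Ext_{C_{n}}^{k-1}(M,K)$. The only cosmetic difference is that the paper packages the count as a commutative diagram comparing the split sequence $0\rightarrow C_{n}^{!}\otimes F_{C}(M)\rightarrow B_{n}^{!}\otimes F_{C}(M)\rightarrow ZC_{n}^{!}\otimes F_{C}(M)\rightarrow 0$ with the sequence above and concludes that the induced map $\mu ^{\prime \prime }$ on cokernels is an isomorphism "by dimensions," whereas you compare the Hilbert series globally.
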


\begin{proof}
We proved in the previous lemma that the map $\mu :B_{n}^{!}\underset{C^{!}}{%
\otimes }F_{C}(M)\rightarrow F_{B}(M)$ given by multiplication is surjective
and we know that $B_{n}^{!}=C_{n}^{!}\oplus ZC_{n}^{!}$, so there is a
splittable sequence of $C_{n}^{!}$-modules: $0\rightarrow
C_{n}^{!}\rightarrow B_{n}^{!}\rightarrow ZC_{n}^{!}\rightarrow 0$ which
induces a commutative exact diagram:

$%
\begin{array}{ccccccc}
0\rightarrow & C_{n}^{!}\underset{C^{!}}{\otimes }F_{C}(M) & \rightarrow & 
B_{n}^{!}\underset{C^{!}}{\otimes }F_{C}(M) & \rightarrow & ZC_{n}^{!}%
\underset{C^{!}}{\otimes }F_{C}(M) & \rightarrow 0 \\ 
& \downarrow \cong &  & \downarrow \mu &  & \downarrow \mu ^{\prime \prime }
&  \\ 
0\rightarrow & F_{C}(M) & \rightarrow & F_{B}(M) & \rightarrow & \overset{m}{%
\oplus }S^{n_{k-1}}[k] & \rightarrow 0 \\ 
&  &  & \underset{0}{\downarrow } &  & \underset{0}{\downarrow } & 
\end{array}%
$

By dimensions $\mu ^{\prime \prime }$ is an isomorphism, therefore $\mu $ is
an isomorphism.
\end{proof}

It as proved in $[MM]$, $[MS]$ that the duality $\phi
:gr_{_{_{B_{n}^{!}}}}\rightarrow \mathcal{LC}P_{B_{n}}$, with $\mathcal{LC}%
P_{B_{n}}$ the category of linear complexes of graded projective $B_{n}$%
-modules, induces a duality of triangulated categories $\overline{\phi }:%
\underline{gr}_{_{B_{n}^{!}}}\rightarrow D^{b}(Qgr_{B_{n}^{op}})$. In
particular given a complex $\pi X^{\circ }$ in $D^{b}(Qgr_{B_{n}^{op}})$,
there is a totally linear complex (see $[MM]$ for definition) $Y^{\circ }$
such that $\pi Y^{\circ }$ is isomorphic to $\pi X^{\circ }$, Moreover, $%
Y^{\circ }$ is quasi isomorphic to a linear complex of projectives $P^{\circ
}$ and by $[MS]$ $P^{\circ }=\phi (M).$Therefore $\pi X^{\circ }$ is quasi
isomorphic to $\pi \phi (M).$

It was proved in $[MS]$ that $H^{i}(\phi (M))=0$ for all $i\neq 0$ if and
only if $M$ is Koszul and in this case if $G_{B_{n}^{!}}:K_{B_{n}^{!}}%
\rightarrow K_{B_{n}}$ is Koszul duality, then $H^{0}(\phi (M))\cong
G_{B_{n}^{!}}(M)$.

Since $B_{n}^{!}$ is a finite dimensional algebra, it follows by $[MZ]$ that
for any finitely generated $B_{n}^{!}$-module $M$ there exists an integer $%
k\geq 0$ such that $\Omega ^{k}M$ is weakly Koszul.

since $\Omega $ is the shift in the triangulated category $\underline{gr}%
_{_{B_{n}^{!}}}$and $\overline{\phi }$ is a duality it follows $\overline{%
\phi }(\Omega ^{k}M)\cong \overline{\phi }(M)[k].$ Being the category $%
\mathcal{T}$ triangulated, it is invariant under shift and $\pi \phi (M)\in 
\mathcal{T}$ if and only if $\pi \phi (\Omega ^{k}M)\in \mathcal{T}$.

We may assume $M$ is weakly Koszul and $M=\underset{i\geq 0}{\sum }M_{i}$, $%
M_{0}\neq 0$. By $[MZ]$ there exists an exact sequence: $0\rightarrow
K_{M}\rightarrow M\rightarrow L\rightarrow 0$ with $K_{M}<M_{0}>$ generated
by the degree zero part of $M$, $K_{M}$ is Koszul and $J^{j}K_{M}=J^{j}M\cap
K_{M}$ for all $j>0.$

Being $\phi $ an exact functor there is an exact sequence: $0\rightarrow
\phi (L)\rightarrow \phi (M)\rightarrow \phi (K_{M})\rightarrow 0$ of
complexes of $B_{n}$-modules, which induces a long exact sequence:

...$\rightarrow H^{1}(\phi (L))\rightarrow $ $H^{1}(\phi (M))\rightarrow
H^{1}(\phi (K_{M}))\rightarrow H^{0}(\phi (L))\rightarrow H^{0}(\phi
(M))\rightarrow H^{0}(\phi (K_{M}))\rightarrow 0$

where $H^{0}(\phi (M))\cong H^{0}(\phi (K_{M}))$ and $H^{i}(\phi (L))\cong
H^{i}(\phi (M)$ for all $i\neq 0$. Being $K_{M}$ Koszul $H^{0}(\phi
(K_{M}))\cong G_{B_{n}^{!}}(K_{M})$ and $G_{B_{n}^{!}}(K_{M})$ is of $Z$%
-torsion.

According to $[MZ]$ there is a filtration: $M=U_{p}\supset U_{p-1}\supset
...U_{1}\supset U_{0}=K_{M}$ such that $U_{i}/U_{i-1}$is Koszul and $%
J^{k}U_{i}\cap U_{i-1}=J^{k}U_{i-1}.$

The module $L$ is weakly Koszul and it has a filtration: $L=$ $%
U_{p}/U_{0}\supset U_{p-1}/U_{0}\supset ...U_{1}/U_{0}$ with factors Koszul,
it follows by induction each $G_{B_{n}^{!}}(U_{i}/U_{i-1})=V_{i}$ is a
Koszul $B_{n}$-module of $Z$-torsion.

Each $V_{i}$ has a filtration: $V_{i}\supset ZV_{i}\supset Z^{2}V_{i}\supset
...\supset Z^{k_{i}}V_{i}\supset 0$, $Z^{k_{i}}V_{i}\neq 0$, $%
Z^{k_{i}+1}V_{i}=0$. After a truncation $V_{i\geq n_{i}}$we may assume all $%
Z^{j}V_{i}$ Koszul. But $V_{i\geq n_{i}}=J^{n_{i}}V_{i}\cong
G_{B_{n}^{!}}(\Omega ^{n_{i}}(U_{i}/U_{i-1}))$. Taking $n=\max \{n_{i}\}$ we
change $M$ for $\Omega ^{n}(M),$which is weakly Koszul with filtration: $%
\Omega ^{n}M=\Omega ^{n}U_{p}\supset \Omega ^{n}U_{p-1}\supset ...\Omega
^{n}U_{1}\supset \Omega ^{n}U_{0}.$

We may assume all $Z^{j}V_{i}$ are Koszul. There exist exact sequences:

*) $0\rightarrow F_{B_{n}}(Z^{k_{i}}V_{i})[-k_{i}]\rightarrow
F_{B_{n}}(Z^{k_{i}-1}V_{i}/Z^{k_{i}}V_{i})[-k_{i}+1]...\rightarrow $

$F_{B_{n}}(V_{i}/ZV_{i})\rightarrow U_{i}/U_{i-1}\rightarrow 0$

where each $F_{B_{n}}(Z^{j}V_{i}/Z^{j+1}V_{i})$ $\cong B_{n}^{!}\underset{%
C_{n}^{!}}{\otimes }X_{ij}$ is an induced module of a Koszul $C_{n}^{!}$%
-module $X_{ij}.$

\begin{lemma}
Let $R$ be a $Z$-graded $K$-algebra, with $K$ a field, $M$ a graded left $R$%
-module and $N$ a graded right $R$-module. Then $M\underset{R}{\otimes }N$
is a graded $K$-module such that $M\underset{R}{\otimes }N[j]\cong (M%
\underset{R}{\otimes }N)[j]$ as graded $K$-modules.
\end{lemma}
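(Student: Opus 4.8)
The plan is to reduce the statement to the standard fact that the tensor product over a graded ring of graded modules carries a natural grading, and then to bookkeep the degrees through the shift. First I would recall that for graded $K$-modules $M=\bigoplus_{p}M_{p}$ and $N=\bigoplus_{q}N_{q}$ the $K$-module $M\otimes_{K}N$ is $\mathbb{Z}$-graded by
\[
(M\otimes_{K}N)_{n}=\bigoplus_{p+q=n}M_{p}\otimes_{K}N_{q},
\]
and that the same formula equips $M\otimes_{K}R\otimes_{K}N$ with a grading. By definition the $R$-balanced tensor product $M\otimes_{R}N$ is the cokernel of the $K$-linear map
\[
\mu\colon M\otimes_{K}R\otimes_{K}N\longrightarrow M\otimes_{K}N,\qquad m\otimes r\otimes n\longmapsto mr\otimes n-m\otimes rn.
\]
Since $R$ is graded and the module structures are compatible with the grading (if $r\in R_{d}$, $m\in M_{p}$, $n\in N_{q}$, then $mr\in M_{p+d}$ and $rn\in N_{d+q}$, so $mr\otimes n$ and $m\otimes rn$ both lie in total degree $p+d+q$), the map $\mu$ is homogeneous of degree zero. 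Hence $\operatorname{Im}\mu$ is a graded $K$-submodule of $M\otimes_{K}N$, and the quotient $M\otimes_{R}N$ inherits a $\mathbb{Z}$-grading in which $(M\otimes_{R}N)_{n}$ is the image of $(M\otimes_{K}N)_{n}$. This proves the first assertion.

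For the second assertion I would fix the shift convention $N[j]_{q}=N_{q+j}$ used in the paper and observe that $N[j]$ is the \emph{same} right $R$-module as $N$, only with its grading relabelled; in particular multiplication by $r\in R_{d}$ sends $N[j]_{q}=N_{q+j}$ into $N_{q+d+j}=N[j]_{q+d}$, exactly as before. Computing degree by degree, the degree-$n$ part of $M\otimes_{K}N[j]$ is $\bigoplus_{p+q=n}M_{p}\otimes_{K}N_{q+j}$, and the substitution $q'=q+j$ identifies this with $\bigoplus_{p+q'=n+j}M_{p}\otimes_{K}N_{q'}=(M\otimes_{K}N)_{n+j}=\bigl((M\otimes_{K}N)[j]\bigr)_{n}$; thus the identity of the underlying $K$-module is an isomorphism of graded $K$-modules $M\otimes_{K}N[j]\cong(M\otimes_{K}N)[j]$. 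Under this identification the subset $\{mr\otimes n-m\otimes rn\}$ generating $\operatorname{Im}\mu$ for $M\otimes_{R}N[j]$ coincides with the corresponding subset for $M\otimes_{R}N$, because the $R$-action on $N[j]$ equals that on $N$; hence $\operatorname{Im}\mu$ for the shifted module is the shift of $\operatorname{Im}\mu$ for $N$, and passing to quotients gives a degree-preserving $K$-linear isomorphism $M\otimes_{R}N[j]\xrightarrow{\ \cong\ }(M\otimes_{R}N)[j]$, natural in both variables.

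There is no real obstacle here; the only point requiring care is to keep the grading conventions for $[\,-\,]$ and for the $R$-action consistent, so that the defining relations on the two sides match after the index substitution $q'=q+j$. Everything else is the routine observation that the cokernel defining $\otimes_{R}$ may be formed in the category of graded $K$-modules.
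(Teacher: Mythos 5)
Your argument is correct and follows essentially the same route as the paper: both realize $M\otimes_{R}N$ as the cokernel of the map $M\otimes_{K}R\otimes_{K}N\to M\otimes_{K}N$, use the grading $(M\otimes_{K}N)_{k}=\bigoplus_{i+j=k}M_{i}\otimes_{K}N_{j}$ to see this can be done in graded $K$-modules, and then identify the shifted cokernel with the cokernel of the shifted presentation. Your explicit check that the defining map is homogeneous of degree zero is the same observation the paper packages as an isomorphism of exact sequences.
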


\begin{proof}
Recall the definition of the graded tensor product $[Mac]:$

Let $\psi :M\underset{K}{\otimes }R\underset{K}{\otimes }N\rightarrow M%
\underset{K}{\otimes }N$ be the map: $\psi (m\otimes r\otimes n)=mr\otimes
n-m\otimes rn$. Then $Cok\psi =M\underset{R}{\otimes }N.$

The $K$- module $M\underset{K}{\otimes }N$ has grading: ($M\underset{K}{%
\otimes }N)_{k}=\underset{i+j=k}{\sum }$ $M_{i}\underset{K}{\otimes }N_{j}$.
It follows $M\underset{K}{\otimes }N$ $[j]\cong (M\underset{K}{\otimes }N$ )$%
[j]$.

and there is an isomorphism of exact sequences:

$%
\begin{array}{cccccc}
M\underset{K}{\otimes }R\underset{K}{\otimes }(N[j]) & \rightarrow & M%
\underset{K}{\otimes }(N[j]) & \rightarrow & M\underset{R}{\otimes (}N[j]) & 
\rightarrow 0 \\ 
\downarrow \cong &  & \downarrow \cong &  & \downarrow \cong &  \\ 
(M\underset{K}{\otimes }R\underset{K}{\otimes }N)[j] & \rightarrow & (M%
\underset{K}{\otimes }N)[j] & \rightarrow & (M\underset{R}{\otimes }N)[j] & 
\rightarrow 0%
\end{array}%
$
\end{proof}

\begin{lemma}
Let $B_{n}^{!}$ and $C_{n}^{!}$ be the algebras given above, for any
finitely generated graded $C_{n}^{!}$-module $M$ there is an isomorphism: $%
\Omega _{B_{n}^{!}}(B_{n}^{!}\underset{C_{n}^{!}}{\otimes }M)\cong B_{n}^{!}%
\underset{C_{n}^{!}}{\otimes }\Omega _{C_{n}^{!}}(M).$

\begin{proof}
Let $0\rightarrow \Omega _{C_{n}^{!}}(M)\rightarrow F\rightarrow
M\rightarrow 0$ be an exact sequence with $F$ free of rank $r$, the graded
projective cover of $M$. Then $\Omega _{C_{n}^{!}}(M)\subset J_{C_{n}^{!}}F. 
$

We proved $B_{n}^{!}=C_{n}^{!}\oplus ZC_{n}^{!}$, therefore $%
J_{B_{n}^{!}}=J_{C_{n}^{!}}+ZC_{n}^{!}.$ It follows: $B_{n}^{!}\underset{%
C_{n}^{!}}{\otimes }J_{C_{n}^{!}}=C_{n}^{!}\underset{C_{n}^{!}}{\otimes }%
J_{C_{n}^{!}}+ZC_{n}^{!}\underset{C_{n}^{!}}{\otimes }%
J_{C_{n}^{!}}=J_{C_{n}^{!}}+Z\underset{C_{n}^{!}}{\otimes }%
J_{C_{n}^{!}}\subset J_{B_{n}^{!}}.$

Therefore: $B_{n}^{!}\underset{C_{n}^{!}}{\otimes }\Omega
_{C_{n}^{!}}(M)\subset B_{n}^{!}\underset{C_{n}^{!}}{\otimes \underset{r}{%
\oplus }}J_{C_{n}^{!}}\cong \underset{r}{\oplus }B_{n}^{!}\underset{C_{n}^{!}%
}{\otimes }J_{C_{n}^{!}}\subset \underset{r}{\oplus }J_{B_{n}^{!}}\cong
J_{B_{n}^{!}}(B_{n}^{!}\underset{C_{n}^{!}}{\otimes }F).$

It follows: $0\rightarrow B_{n}^{!}\underset{C_{n}^{!}}{\otimes }\Omega
_{C_{n}^{!}}(M)\rightarrow B_{n}^{!}\underset{C_{n}^{!}}{\otimes }%
F\rightarrow B_{n}^{!}\underset{C_{n}^{!}}{\otimes }M\rightarrow 0$ is exact
and $B_{n}^{!}\underset{C_{n}^{!}}{\otimes }F$ is the graded projective
cover of $B_{n}^{!}\underset{C_{n}^{!}}{\otimes }M$. Then $\Omega
_{B_{n}^{!}}(B_{n}^{!}\underset{C_{n}^{!}}{\otimes }M)\cong B_{n}^{!}%
\underset{C_{n}^{!}}{\otimes }\Omega _{C_{n}^{!}}(M).$
\end{proof}
\end{lemma}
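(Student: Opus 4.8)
The plan is to build the isomorphism from a minimal graded projective presentation of $M$ over $C_{n}^{!}$ and then induce up along the inclusion $C_{n}^{!}\hookrightarrow B_{n}^{!}$, checking that minimality is preserved. First I would choose a graded projective cover $0\rightarrow \Omega_{C_{n}^{!}}(M)\rightarrow F\rightarrow M\rightarrow 0$ with $F$ a finitely generated free graded $C_{n}^{!}$-module, so that $\Omega_{C_{n}^{!}}(M)\subseteq J_{C_{n}^{!}}F$, where $J_{C_{n}^{!}}$ denotes the graded Jacobson radical. Since $B_{n}^{!}$ is a free, hence flat, $C_{n}^{!}$-module of rank two — this is precisely the decomposition $B_{n}^{!}=C_{n}^{!}\oplus ZC_{n}^{!}$ established in the Proposition above — applying $B_{n}^{!}\otimes_{C_{n}^{!}}-$ keeps the presentation exact, giving a short exact sequence $0\rightarrow B_{n}^{!}\otimes_{C_{n}^{!}}\Omega_{C_{n}^{!}}(M)\rightarrow B_{n}^{!}\otimes_{C_{n}^{!}}F\rightarrow B_{n}^{!}\otimes_{C_{n}^{!}}M\rightarrow 0$, in which $B_{n}^{!}\otimes_{C_{n}^{!}}F$ is a finitely generated free graded $B_{n}^{!}$-module (a direct sum of graded shifts of $B_{n}^{!}$, using the shift-compatibility of the graded tensor product recorded in the lemma just above).

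The key step is to verify that this is again a \emph{graded projective cover}, i.e.\ that $B_{n}^{!}\otimes_{C_{n}^{!}}\Omega_{C_{n}^{!}}(M)$ lies inside $J_{B_{n}^{!}}(B_{n}^{!}\otimes_{C_{n}^{!}}F)$. For this I would first record that $J_{B_{n}^{!}}=J_{C_{n}^{!}}+ZC_{n}^{!}$: the algebra $B_{n}^{!}$ is connected graded with $(B_{n}^{!})_{0}=(C_{n}^{!})_{0}=K$, and from $B_{n}^{!}=C_{n}^{!}\oplus ZC_{n}^{!}$ together with $\deg Z=1$ one reads off $(B_{n}^{!})_{\geq 1}=(C_{n}^{!})_{\geq 1}\oplus ZC_{n}^{!}=J_{C_{n}^{!}}+ZC_{n}^{!}$, and the positive part of a connected graded algebra is its graded radical. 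Then flatness shows the inclusion $\Omega_{C_{n}^{!}}(M)\hookrightarrow J_{C_{n}^{!}}F$ stays injective after tensoring with $B_{n}^{!}$, so $B_{n}^{!}\otimes_{C_{n}^{!}}\Omega_{C_{n}^{!}}(M)$ is identified with a submodule of $B_{n}^{!}\otimes_{C_{n}^{!}}(J_{C_{n}^{!}}F)$; and inside $B_{n}^{!}\otimes_{C_{n}^{!}}F$ a typical generator $b\otimes jf$ with $j\in J_{C_{n}^{!}}$ equals $bj\otimes f$ with $bj\in B_{n}^{!}J_{C_{n}^{!}}\subseteq J_{B_{n}^{!}}$ (because $J_{C_{n}^{!}}\subseteq J_{B_{n}^{!}}$ and $J_{B_{n}^{!}}$ is a two-sided ideal). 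Hence $B_{n}^{!}\otimes_{C_{n}^{!}}(J_{C_{n}^{!}}F)\subseteq J_{B_{n}^{!}}(B_{n}^{!}\otimes_{C_{n}^{!}}F)$, and therefore the kernel of $B_{n}^{!}\otimes_{C_{n}^{!}}F\rightarrow B_{n}^{!}\otimes_{C_{n}^{!}}M$ is contained in the radical times the cover.

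With this in hand, $B_{n}^{!}\otimes_{C_{n}^{!}}F$ with the induced surjection is the graded projective cover of $B_{n}^{!}\otimes_{C_{n}^{!}}M$, so uniqueness of syzygies over the connected finite-dimensional graded algebra $B_{n}^{!}$ gives $\Omega_{B_{n}^{!}}(B_{n}^{!}\otimes_{C_{n}^{!}}M)\cong B_{n}^{!}\otimes_{C_{n}^{!}}\Omega_{C_{n}^{!}}(M)$, which is the assertion. I expect the only genuine obstacle to be this minimality check, which boils down to the radical identification $J_{B_{n}^{!}}=J_{C_{n}^{!}}+ZC_{n}^{!}$ and the containment $B_{n}^{!}\otimes_{C_{n}^{!}}J_{C_{n}^{!}}F\subseteq J_{B_{n}^{!}}(B_{n}^{!}\otimes_{C_{n}^{!}}F)$; everything else — exactness of the induction functor, freeness of $B_{n}^{!}\otimes_{C_{n}^{!}}F$, and its behaviour under shifts — is formal once one has the decomposition $B_{n}^{!}=C_{n}^{!}\oplus ZC_{n}^{!}$ from the earlier Proposition and the graded-tensor-product lemma above.
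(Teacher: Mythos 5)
Your proposal is correct and follows essentially the same route as the paper's own proof: induce the minimal graded presentation $0\rightarrow \Omega_{C_{n}^{!}}(M)\rightarrow F\rightarrow M\rightarrow 0$ up along the free rank-two extension $C_{n}^{!}\subset B_{n}^{!}$ and verify minimality via $J_{B_{n}^{!}}=J_{C_{n}^{!}}+ZC_{n}^{!}$ together with the containment $B_{n}^{!}\otimes_{C_{n}^{!}}J_{C_{n}^{!}}F\subseteq J_{B_{n}^{!}}(B_{n}^{!}\otimes_{C_{n}^{!}}F)$. The only differences are cosmetic: you make explicit the flatness/exactness of induction and the connected-graded identification of the radical, which the paper uses implicitly.
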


\begin{lemma}
Let $B_{n}^{!}$ and $C_{n}^{!}$ be the algebras given above and let $M$ be a
Koszul $C_{n}^{!}$-module. Then $B_{n}^{!}\underset{C_{n}^{!}}{\otimes }M$
is Koszul and $G_{B_{n}^{!}}(B_{n}^{!}\underset{C_{n}^{!}}{\otimes }M)\cong
G_{C_{n}^{!}}(M)..$
\end{lemma}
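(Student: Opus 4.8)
The plan is to deduce the Lemma from the Proposition already proved, namely $B_{n}^{!}\underset{C_{n}^{!}}{\otimes }F_{C_n}(N)\cong F_{B_n}(N)$ for a Koszul non-semisimple $B_{n}$-module $N$ with $ZN=0$, together with the facts that such $N$ are exactly the Koszul $C_{n}$-modules and that the Koszul dualities $F_{C_n}:K_{C_n}\to K_{C_{n}^{!}}$ and $G_{C_{n}^{!}}:K_{C_{n}^{!}}\to K_{C_n}$ are mutually inverse. So, given a finitely generated Koszul $C_{n}^{!}$-module $M$, I would set $N=G_{C_{n}^{!}}(M)$. Then $N$ is a finitely generated Koszul $C_{n}$-module (finite generation because $C_{n}^{!}$ is finite dimensional), with $F_{C_n}(N)\cong M$. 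Regarding $N$ as a $B_{n}$-module through the surjection $B_{n}\to C_{n}=B_{n}/ZB_{n}$, it is annihilated by $Z$, and it is Koszul as a $B_{n}$-module as well: the exact sequences $0\to B_{n}^{n_{k-1}}[-k]\to \Omega_{B}^{k}(N)\to \Omega_{C}^{k}(N)\to 0$ obtained earlier show that $\Omega_{B}^{k}(N)$ is generated in degree $k$ whenever $\Omega_{C}^{k}(N)$ is.

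Applying the earlier Proposition to $N$ now gives an isomorphism of graded $B_{n}^{!}$-modules
\[
B_{n}^{!}\underset{C_{n}^{!}}{\otimes }M\ \cong\ B_{n}^{!}\underset{C_{n}^{!}}{\otimes }F_{C_n}(N)\ \cong\ F_{B_n}(N).
\]
Since $F_{B_n}(N)$ lies in $K_{B_{n}^{!}}$ by construction, this already proves that $B_{n}^{!}\underset{C_{n}^{!}}{\otimes }M$ is Koszul. (Koszulity can also be seen directly: iterating the preceding lemma gives $\Omega_{B_{n}^{!}}^{k}\!\big(B_{n}^{!}\underset{C_{n}^{!}}{\otimes }M\big)\cong B_{n}^{!}\underset{C_{n}^{!}}{\otimes }\Omega_{C_{n}^{!}}^{k}(M)$ for every $k$; as $M$ is Koszul each $\Omega_{C_{n}^{!}}^{k}(M)$ is generated in the single degree $k$, and because $B_{n}^{!}=C_{n}^{!}\oplus ZC_{n}^{!}$ the functor $B_{n}^{!}\underset{C_{n}^{!}}{\otimes }-$ sends a module generated in degree $k$ to a $B_{n}^{!}$-module generated in degree $k$, so every syzygy of $B_{n}^{!}\underset{C_{n}^{!}}{\otimes }M$ is again generated in a single degree.)

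For the second assertion, apply the Koszul duality $G_{B_{n}^{!}}$ to the displayed isomorphism, using that $G_{B_{n}^{!}}F_{B_n}$ is isomorphic to the identity on $K_{B_n}$:
\[
G_{B_{n}^{!}}\!\Big(B_{n}^{!}\underset{C_{n}^{!}}{\otimes }M\Big)\ \cong\ G_{B_{n}^{!}}\big(F_{B_n}(N)\big)\ \cong\ N\ =\ G_{C_{n}^{!}}(M),
\]
where the last step identifies the $B_{n}$-module $N$ with the $C_{n}$-module $G_{C_{n}^{!}}(M)$ restricted along $B_{n}\to C_{n}$ (legitimate because $ZN=0$). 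This is exactly the asserted isomorphism.

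The main point requiring care is bookkeeping rather than a new idea. One must check that $N=G_{C_{n}^{!}}(M)$ is genuinely Koszul \emph{as a $B_{n}$-module} (done above via the exact sequences), so that the earlier Proposition applies word for word, and one must keep the grading shifts straight, for which the lemma that $\underset{R}{\otimes}$ commutes with the shift $[\,\cdot\,]$ is precisely what is needed. The degenerate cases are immediate: if $M=0$ the statement is vacuous, and if $M$ is free over $C_{n}^{!}$ (equivalently $N=G_{C_{n}^{!}}(M)$ is semisimple), then $B_{n}^{!}\underset{C_{n}^{!}}{\otimes }M$ is free over $B_{n}^{!}$, hence Koszul, while both $G_{B_{n}^{!}}\!\big(B_{n}^{!}\underset{C_{n}^{!}}{\otimes }M\big)$ and $G_{C_{n}^{!}}(M)$ are direct sums of shifts of the simple module.
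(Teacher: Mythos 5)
Your argument is correct, but it takes a genuinely different route from the paper's. The paper never leaves the shriek side: it tensors a linear free $C_{n}^{!}$-resolution of $M$ with the free (hence flat) module $B_{n}^{!}$ to obtain a linear free $B_{n}^{!}$-resolution of $B_{n}^{!}\otimes_{C_{n}^{!}}M$ (this is exactly your parenthetical argument via $\Omega_{B_{n}^{!}}(B_{n}^{!}\otimes_{C_{n}^{!}}M)\cong B_{n}^{!}\otimes_{C_{n}^{!}}\Omega_{C_{n}^{!}}(M)$), and then gets the second assertion in one line from the adjunction applied to syzygies: $Ext_{B_{n}^{!}}^{k}(B_{n}^{!}\otimes_{C_{n}^{!}}M,K)\cong Hom_{B_{n}^{!}}(B_{n}^{!}\otimes_{C_{n}^{!}}\Omega^{k}M,K)\cong Hom_{C_{n}^{!}}(\Omega^{k}M,K)\cong Ext_{C_{n}^{!}}^{k}(M,K)$. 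You instead cross to the $B_{n}/C_{n}$ side, set $N=G_{C_{n}^{!}}(M)$, and combine the earlier Proposition $B_{n}^{!}\otimes_{C_{n}^{!}}F_{C_{n}}(N)\cong F_{B_{n}}(N)$ with the full Koszul duality statements $F_{C_{n}}G_{C_{n}^{!}}\cong 1$ and $G_{B_{n}^{!}}F_{B_{n}}\cong 1$ of [GM1], [GM2]. That is heavier machinery than the paper needs at this point, but it buys something: granting the naturality of these dualities, your isomorphism $G_{B_{n}^{!}}(B_{n}^{!}\otimes_{C_{n}^{!}}M)\cong N$ is an isomorphism of $B_{n}$-modules on which $Z$ acts as zero, a fact the paper only obtains later (the Remark together with the Proposition that $Z\,G_{B_{n}^{!}}(B_{n}^{!}\otimes_{C_{n}^{!}}M)=0$, proved via $res(z)=0$).

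One step should be stated more carefully: the exact sequences $0\rightarrow B_{n}^{n_{k-1}}[-k]\rightarrow \Omega_{B}^{k}(N)\rightarrow \Omega_{C}^{k}(N)\rightarrow 0$ were derived in the paper under the hypothesis that $N$ is Koszul as a $B_{n}$-module, which is exactly what you are trying to establish at that moment, so you cannot simply cite them as ``obtained earlier.'' The repair is easy and uses the same diagrams: induct on $k$, observing that the comparison of minimal covers over $B_{n}$ and over $C_{n}$ of a module killed by $Z$ and generated in degree $k$ needs only $ZB_{n}^{m}\cong B_{n}^{m}[-1]$ and the fact that $\Omega_{C}^{k}(N)$ is generated in degree $k$ (i.e. $C_{n}$-Koszulity); the sequence at level $k$ then shows $\Omega_{B}^{k}(N)$ is generated in degree $k$ and provides the cover needed at level $k+1$. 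With that induction rerun, your transfer of Koszulity from $C_{n}$ to $B_{n}$, and hence the whole argument, is sound; the degenerate case where $M$ has a free summand is, as you note, handled by additivity.
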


\begin{proof}
Let ...$\rightarrow F_{n}[-n]\rightarrow F_{n-1}[-n+1]\rightarrow
...F_{1}[-1]\rightarrow F_{0}\rightarrow M\rightarrow 0$ be a graded
projective resolution of $M,$with each $F_{i}$ free of rank $r_{i}.$%
Tensoring with $B_{n}^{!}\underset{C_{n}^{!}}{\otimes }$ we obtain a graded
projective resolution of $B_{n}^{!}\underset{C_{n}^{!}}{\otimes }M:$ $%
\rightarrow (B_{n}^{!}\underset{C_{n}^{!}}{\otimes }F_{n})[-n]\rightarrow
(B_{n}^{!}\underset{C_{n}^{!}}{\otimes }F_{n-1})[-n+1]\rightarrow
...(B_{n}^{!}\underset{C_{n}^{!}}{\otimes }F_{1})[-1]\rightarrow B_{n}^{!}%
\underset{C_{n}^{!}}{\otimes }F_{0}\rightarrow B_{n}^{!}\underset{C_{n}^{!}}{%
\otimes }M\rightarrow 0$ with each $B_{n}^{!}\underset{C_{n}^{!}}{\otimes }%
F_{i}$ free $B_{n}^{!}$-modules of rank $r_{i}.$

Moreover, $Ext_{B_{n}^{!}}^{n}(B_{n}^{!}\underset{}{\underset{C_{n}^{!}}{%
\otimes }M,K)\cong Hom_{B_{n}^{!}}(\Omega ^{n}}(B_{n}^{!}\underset{C_{n}^{!}}%
{\otimes }M),K)\cong \underset{}{Hom_{B_{n}^{!}}(}B_{n}^{!}\underset{%
C_{n}^{!}}{\otimes }\Omega ^{n}M,K)\cong \underset{}{Hom_{C_{n}^{!}}(}\Omega
^{n}M,K)\cong Ext_{C_{n}^{!}}^{n}(M,K).$

Therefore: $G_{B_{n}^{!}}(B_{n}^{!}\underset{C_{n}^{!}}{\otimes }M)\cong
G_{C_{n}^{!}}(M)$ .
\end{proof}

\begin{remark}
\bigskip To be $G_{B_{n}^{!}}(B_{n}^{!}\underset{C_{n}^{!}}{\otimes }M)$ a $%
C_{n}$-module means: $ZG_{B_{n}^{!}}(B_{n}^{!}\underset{C_{n}^{!}}{\otimes }%
M)=0$.
\end{remark}

We know $B_{n}\cong \underset{m\geq 0}{\oplus }Ext_{B_{n}^{!}}^{m}(K,K)$, $%
C_{n}\underset{m\geq 0}{\oplus }Ext_{C_{n}^{!}}^{m}(K,K)$, and $%
B_{n}/ZB_{n}\cong C_{n}$. Since $C_{n}^{!}$ is a sub algebra of $B_{n}^{!}$,
given an extension $x:0\rightarrow K\rightarrow E_{1}\rightarrow
E_{2}\rightarrow ...E_{n}\rightarrow K$ $\rightarrow 0$ of $B_{n}^{!}$, we
obtain by restriction of scalars an extension $resx:$ $0\rightarrow
resK\rightarrow resE_{1}\rightarrow resE_{2}\rightarrow
...resE_{n}\rightarrow resK$ $\rightarrow 0$ of $C_{n}^{!}$-modules, where $%
resM$ is the module $M$ with multiplication of scalars restricted to $%
C_{n}^{!}.$It is clear $res(xy)=res(x)res(y)$ and restriction gives an
homomorphism of graded $k$-algebras: $res:\underset{m\geq 0}{\oplus }%
Ext_{B_{n}^{!}}^{m}(K,K)\rightarrow \underset{m\geq 0}{\oplus }%
Ext_{C_{n}^{!}}^{m}(K,K)$.

\begin{lemma}
There is an homomorphism: $\rho :Ext_{B_{n}^{!}}^{1}(K,K)\rightarrow
Ext_{B_{n}^{!}}^{1}((B_{n}^{!}\underset{C_{n}^{!}}{\otimes }K,K)$ , given by
the Yoneda product $\rho (x)=x\mu $ (pull back) of the exact sequence $x$
with the multiplication map $\mu :$ $B_{n}^{!}\underset{C_{n}^{!}}{\otimes }%
K\rightarrow K$, such that the composition of the map, $\psi
_{1}:Ext_{B_{n}^{!}}^{1}((B_{n}^{!}\underset{C_{n}^{!}}{\otimes }%
K,K)\rightarrow Ext_{C_{n}^{!}}^{1}(K,K)$ in the previous lemma with $\rho $%
, is the restriction: $\psi \rho $=$res$.
\end{lemma}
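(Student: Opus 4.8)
The plan is to recognize both maps as instances of a single adjunction, and then to reduce the claimed identity to a triangle identity of that adjunction. First I would observe that $B_{n}^{!}\otimes_{C_{n}^{!}}-$ is left adjoint to the restriction functor $res=res_{C_{n}^{!}}^{B_{n}^{!}}\colon gr_{B_{n}^{!}}\to gr_{C_{n}^{!}}$, and that the multiplication map $\mu\colon B_{n}^{!}\otimes_{C_{n}^{!}}K\to K$ is precisely the counit $\varepsilon_{K}$ of this adjunction at the simple module $K$ (note $res\,K=K$). Since we have already shown that $B_{n}^{!}$ is a free, hence flat, $C_{n}^{!}$-module of rank two, the functor $B_{n}^{!}\otimes_{C_{n}^{!}}-$ is exact; $res$ is exact as well, so the adjunction passes to bounded derived categories verbatim and produces a natural isomorphism $Ext^{i}_{B_{n}^{!}}(B_{n}^{!}\otimes_{C_{n}^{!}}X,Y)\cong Ext^{i}_{C_{n}^{!}}(X,res\,Y)$ for all $i$, all graded $C_{n}^{!}$-modules $X$ and all graded $B_{n}^{!}$-modules $Y$. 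Taking $X=K$ and $Y=K$, this is exactly the isomorphism $\psi_{i}$ of the previous lemma: the chain of isomorphisms used there, namely $Ext^{i}_{B_{n}^{!}}(B_{n}^{!}\otimes_{C_{n}^{!}}K,K)\cong Hom_{B_{n}^{!}}(B_{n}^{!}\otimes_{C_{n}^{!}}\Omega^{i}_{C_{n}^{!}}K,K)\cong Hom_{C_{n}^{!}}(\Omega^{i}_{C_{n}^{!}}K,K)\cong Ext^{i}_{C_{n}^{!}}(K,K)$, is — via the syzygy identity $\Omega^{i}_{B_{n}^{!}}(B_{n}^{!}\otimes_{C_{n}^{!}}M)\cong B_{n}^{!}\otimes_{C_{n}^{!}}\Omega^{i}_{C_{n}^{!}}M$ and tensor--hom adjunction term by term — the same map.

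With this identification in hand, $\rho$ itself needs little comment: it is Yoneda post-composition with the fixed degree-zero morphism $\mu$ of graded $B_{n}^{!}$-modules, hence $K$-linear, and in fact a homomorphism of graded modules over $\bigoplus_{i}Ext^{i}_{B_{n}^{!}}(K,K)\cong B_{n}$ by biadditivity and associativity of the Yoneda product (indeed $\rho(y\circ x)=(y\circ x)\circ\mu=y\circ(x\circ\mu)=y\cdot\rho(x)$). To compute $\psi_{1}\rho$ I would write the adjunction bijection as $\mathrm{adj}(f)=res(f)\circ\eta$, where $\eta\colon X\to res(B_{n}^{!}\otimes_{C_{n}^{!}}X)$, $\eta(x)=1\otimes x$, is the unit. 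Then for $x\in Ext^{1}_{B_{n}^{!}}(K,K)=Hom_{D^{b}(gr_{B_{n}^{!}})}(K,K[1])$,
\[
\psi_{1}(\rho(x))=\mathrm{adj}(x\circ\mu)=res(x\circ\mu)\circ\eta_{K}=res(x)\circ\bigl(res(\mu)\circ\eta_{K}\bigr)=res(x)\circ\mathrm{adj}(\mu).
\]
Since $\mu=\varepsilon_{K}$, the triangle identity $res(\varepsilon_{K})\circ\eta_{res\,K}=\mathrm{id}_{res\,K}$ gives $\mathrm{adj}(\mu)=\mathrm{id}_{K}$, whence $\psi_{1}(\rho(x))=res(x)$; that is, $\psi\rho=res$.

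I expect the only delicate point to be the identification made in the first paragraph: one must check that the map $\psi_{1}$ as constructed in the previous lemma — through the concrete syzygy isomorphism rather than abstractly — really is the adjunction isomorphism, so that its naturality and its compatibility with the Yoneda product may legitimately be invoked. Should one prefer to avoid derived-category language altogether, the same conclusion can be reached by an explicit diagram chase: represent $x$ by a one-fold extension $0\to K\to E\to K\to 0$, pull back along $\mu$, then lift to a morphism of short exact sequences involving $\Omega^{1}_{B_{n}^{!}}(B_{n}^{!}\otimes_{C_{n}^{!}}K)\cong B_{n}^{!}\otimes_{C_{n}^{!}}\Omega^{1}_{C_{n}^{!}}(K)$ and $\Omega^{1}_{C_{n}^{!}}(K)$, and read off that the classifying $C_{n}^{!}$-extension of the resulting $B_{n}^{!}$-extension is exactly $res(x)$. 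This is routine but notationally heavier, so I would only fall back on it if the categorical argument is judged insufficiently explicit.
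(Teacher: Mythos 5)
Your proposal is correct, and at bottom it runs on the same identity as the paper: the triangle identity $res(\mu)\circ\eta_{K}=\mathrm{id}_{K}$ is exactly the relation $\mu j=1$ that the paper uses in its final step. The difference is one of packaging. The paper never leaves the level of explicit extensions: it represents $x$ by $0\rightarrow K\rightarrow E\rightarrow K\rightarrow 0$, forms the pullback $W$ of $x$ along $\mu$ (equivalently the pushout of $B_{n}^{!}\otimes x$ along $\mu$), observes that applying $\psi$ amounts to pulling back further along $j(m)=1\otimes m$ (this is its explicit description $\alpha(f)=fj$ of the adjunction), and then glues the two diagrams, so that $\mu j=1$ collapses the composite to $res\,x$. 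You instead pass to $D^{b}$, identify $\mu$ with the counit of the induction--restriction adjunction, use exactness of $B_{n}^{!}\otimes_{C_{n}^{!}}-$ (freeness of $B_{n}^{!}$ over $C_{n}^{!}$) to get the Eckmann--Shapiro isomorphism on all $Ext$ groups, and read off $\psi_{1}\rho=res$ from naturality plus the triangle identity. Your version is shorter and makes the structural reason transparent, but it carries the burden you yourself flag: one must check that the $\psi_{1}$ of the previous lemma, built through $\Omega^{i}_{B_{n}^{!}}(B_{n}^{!}\otimes M)\cong B_{n}^{!}\otimes\Omega^{i}_{C_{n}^{!}}M$ and the Hom-level adjunction, coincides with the derived adjunction isomorphism; this is true (the previous lemma's construction is precisely Eckmann--Shapiro, with $\alpha(f)=fj$), and your fallback diagram chase, which would discharge it concretely, is essentially the paper's own proof. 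So the argument is sound; the paper simply chooses to do the concrete chase directly rather than quote the adjunction formalism.
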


\begin{proof}
Let $x$ be the extension: $x:$ $0\rightarrow K\rightarrow E\rightarrow
K\rightarrow 0.$ Since $B_{n}^{!}$ is a free $C_{n}^{!}$-module, there is a
commutative exact diagram

$%
\begin{array}{ccccccc}
0\rightarrow & B_{n}^{!}\underset{C_{n}^{!}}{\otimes K} & \rightarrow & 
B_{n}^{!}\underset{C_{n}^{!}}{\otimes }E & \rightarrow & B_{n}^{!}\underset{%
C_{n}^{!}}{\otimes }K & \rightarrow 0 \\ 
& \downarrow \mu &  & \downarrow \mu &  & \downarrow \mu &  \\ 
0\rightarrow & K & \rightarrow & E & \rightarrow & K & \rightarrow 0%
\end{array}%
$

with $\mu $ multiplication.

This diagram splits in two diagrams:

$%
\begin{array}{ccccccc}
0\rightarrow & B_{n}^{!}\underset{C_{n}^{!}}{\otimes K} & \rightarrow & 
B_{n}^{!}\underset{C_{n}^{!}}{\otimes }E & \rightarrow & B_{n}^{!}\underset{%
C_{n}^{!}}{\otimes }K & \rightarrow 0 \\ 
& \downarrow \mu &  & \downarrow &  & \downarrow 1 &  \\ 
0\rightarrow & K & \rightarrow & W & \rightarrow & B_{n}^{!}\underset{%
C_{n}^{!}}{\otimes }K & \rightarrow 0 \\ 
& \downarrow 1 &  & \downarrow &  & \downarrow \mu &  \\ 
0\rightarrow & K & \rightarrow & E & \rightarrow & K & \rightarrow 0%
\end{array}%
$

Then $\rho (x)=x\mu =\mu (B_{n}^{!}\otimes x).$

For any finitely generated $C_{n}^{!}$-module $M$ there is an isomorphism $%
\alpha $ obtained as the composition of the natural isomorphisms:\newline
$Hom_{B_{n}^{!}}(B_{n}^{!}\underset{C_{n}^{!}}{\otimes }M,K)\cong
Hom_{C_{n}^{!}}(M,Hom_{B_{n}^{!}}(B_{n}^{!},K))\cong Hom_{C_{n}^{!}}(M,K).$

If $j:M\rightarrow B_{n}^{!}\underset{_{C_{n}^{!}}}{\otimes }M$ be the map $%
j(m)=1\otimes m$ and $f:B_{n}^{!}\underset{C_{n}^{!}}{\otimes }M\rightarrow
K $ is any map, then $\alpha (f)=fj.$

Then $\psi \rho (x)=\psi (x\mu )$ is the top sequence in the commutative
exact diagram:

$%
\begin{array}{ccccccc}
0\rightarrow & K & \rightarrow & L & \rightarrow & K & \rightarrow 0 \\ 
& \downarrow 1 &  & \downarrow &  & \downarrow j &  \\ 
0\rightarrow & K & \rightarrow & W & \rightarrow & B_{n}^{!}\underset{%
C_{n}^{!}}{\otimes }K & \rightarrow 0 \\ 
& \downarrow 1 &  & \downarrow &  & \downarrow \mu &  \\ 
0\rightarrow & K & \rightarrow & E & \rightarrow & K & \rightarrow 0%
\end{array}%
$

Since $\mu j=1,$gluing both diagrams we obtain $\psi \rho (x)=resx.$
\end{proof}

\begin{lemma}
Under the conditions of the previous lemma the map $\rho $ is surjective.
\end{lemma}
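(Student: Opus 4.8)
The plan is to deduce surjectivity of $\rho$ from the two preceding lemmas together with an elementary computation of the restriction map in cohomological degree one. Recall that the previous lemma produces, for every Koszul $C_{n}^{!}$-module $M$, a natural isomorphism $Ext_{B_{n}^{!}}^{k}(B_{n}^{!}\underset{C_{n}^{!}}{\otimes }M,K)\cong Ext_{C_{n}^{!}}^{k}(M,K)$; taking $M=K$ and $k=1$ this says precisely that the map $\psi _{1}$ occurring in the identity $\psi _{1}\rho =res$ is an isomorphism. Hence $\rho $ is surjective if and only if $res:Ext_{B_{n}^{!}}^{1}(K,K)\rightarrow Ext_{C_{n}^{!}}^{1}(K,K)$ is surjective, and the whole problem reduces to this last statement.

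To settle it I would identify $res$ in degree one with a manifestly surjective map. Since $B_{n}^{!}$ and $C_{n}^{!}$ are Koszul with $B_{n}\cong \underset{m\geq 0}{\oplus }Ext_{B_{n}^{!}}^{m}(K,K)$ and $C_{n}\cong \underset{m\geq 0}{\oplus }Ext_{C_{n}^{!}}^{m}(K,K)$, the graded algebra homomorphism $res$ is, in degree one, the $K$-linear dual of the inclusion of degree-one parts $(C_{n}^{!})_{1}\hookrightarrow (B_{n}^{!})_{1}$, i.e. of the inclusion of the span of $X_{1},\dots ,X_{n},\delta _{1},\dots ,\delta _{n}$ into the span of $X_{1},\dots ,X_{n},\delta _{1},\dots ,\delta _{n},Z$. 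This is a split monomorphism of finite dimensional $K$-vector spaces with one-dimensional cokernel $KZ$, so its transpose $res_{1}$ is a split epimorphism, with kernel the line spanned by the extension class of $Z$. Equivalently, one may observe that $res$ is the degree-one component of the surjection of graded algebras $B_{n}\rightarrow B_{n}/ZB_{n}\cong C_{n}$, which is onto in every degree. Either way, $res$ is surjective on $Ext^{1}$.

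Combining the two steps, $\psi _{1}$ is an isomorphism and $\psi _{1}\rho =res$ is surjective, whence $\rho $ is surjective. The only point needing care is the bookkeeping in the first step: one must check that the isomorphism $Ext_{B_{n}^{!}}^{1}(B_{n}^{!}\underset{C_{n}^{!}}{\otimes }K,K)\cong Ext_{C_{n}^{!}}^{1}(K,K)$ built in the previous lemma is the \emph{same} map $\psi _{1}$ that appears in $\psi _{1}\rho =res$. This is immediate from that lemma's proof, where both incarnations are given by the adjunction isomorphism $Hom_{B_{n}^{!}}(B_{n}^{!}\underset{C_{n}^{!}}{\otimes }N,K)\cong Hom_{C_{n}^{!}}(N,K)$, $f\mapsto f\circ j$ with $j(m)=1\otimes m$; no further computation is required.
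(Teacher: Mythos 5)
Your argument is correct, but it proves the lemma by a genuinely different route than the paper. The paper's proof is a direct extension-theoretic construction: starting from an arbitrary class $y:0\rightarrow K[-1]\rightarrow E\rightarrow B_{n}^{!}\underset{C_{n}^{!}}{\otimes }K\rightarrow 0$, it uses $B_{n}^{!}=C_{n}^{!}\oplus ZC_{n}^{!}$ to see that $B_{n}^{!}\underset{C_{n}^{!}}{\otimes }K$ is two dimensional with $\ker \mu \cong K[-1]$, pulls $y$ back along $u:K[-1]\rightarrow B_{n}^{!}\underset{C_{n}^{!}}{\otimes }K$, observes that the resulting self-extension of $K[-1]$ splits because both ends are generated in the same degree, and uses the resulting lifting $v:K[-1]\rightarrow E$ to produce an extension $x:0\rightarrow K[-1]\rightarrow L\rightarrow K\rightarrow 0$ with $\rho (x)=x\mu =y$. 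You instead invert the logical direction the paper uses later: you take the identity $\psi _{1}\rho =res$ from the previous lemma, note that $\psi _{1}$ is the adjunction (Eckmann--Shapiro) isomorphism $Ext_{B_{n}^{!}}^{1}(B_{n}^{!}\underset{C_{n}^{!}}{\otimes }K,K)\cong Ext_{C_{n}^{!}}^{1}(K,K)$ coming from freeness of $B_{n}^{!}$ over $C_{n}^{!}$ (and your check that this is the same map $f\mapsto fj$ as in the identity $\psi _{1}\rho =res$ is the right point to verify, and it does hold), and then prove directly that $res$ is surjective in degree one by identifying $Ext^{1}(K,K)$ with the dual of $J/J^{2}$, so that $res_{1}$ is the transpose of the split inclusion $(C_{n}^{!})_{1}\subset (B_{n}^{!})_{1}$. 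This buys something: it establishes the degree-one surjectivity of $res$ independently, which is exactly the input the subsequent corollary needs, whereas the paper obtains that surjectivity as a consequence of this lemma; your route therefore streamlines the corollary at the cost of invoking the standard identification $Ext^{1}\cong D(J/J^{2})$, while the paper's proof stays entirely at the level of explicit extensions. One caution: your ``equivalently'' remark identifying $res$ with the degree-one component of $B_{n}\rightarrow B_{n}/ZB_{n}\cong C_{n}$ should be dropped here, since that identification is only proved in the corollary that depends on this lemma and would make the argument circular; your primary linear-algebra argument does not use it and is sound.
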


\begin{proof}
Since $B_{n}^{!}=C_{n}^{!}\oplus ZC_{n}^{!}$ , $B_{n}^{!}\underset{C_{n}^{!}}%
{\otimes }K$ is a graded vector space of dimension two with one copy of $K$
in degree zero and one copy of $K$ in degree one. Hence the multiplication
map $\mu :B_{n}^{!}\underset{C_{n}^{!}}{\otimes }K\rightarrow K$ is an
epimorphsi with kernel $u:K[-1]\rightarrow $ $B_{n}^{!}\underset{C_{n}^{!}}{%
\otimes }K.$

Let $y:$ $0\rightarrow K[-1]\rightarrow E\rightarrow B_{n}^{!}\underset{%
C_{n}^{!}}{\otimes }K\rightarrow 0$ be an element of $%
Ext_{B_{n}^{!}}^{1}(K,B_{n}^{!}\underset{C_{n}^{!}}{\otimes }K)$ and take
the pullback:

$%
\begin{array}{ccccccc}
0\rightarrow & K[-1] & \rightarrow & N & \rightarrow & K[-1] & \rightarrow 0
\\ 
& \downarrow 1 &  & \downarrow &  & \downarrow u &  \\ 
0\rightarrow & K[-1] & \rightarrow & E & \rightarrow & B_{n}^{!}\underset{%
C_{n}^{!}}{\otimes }K & \rightarrow 0%
\end{array}%
$

But the top exact sequence split because the ends are generated in the same
degree and the algebra is Koszul or equivalently there is a lifting $%
v:K[-1]\rightarrow E$ of $u$ and we get a commutaive exact diagram:

$%
\begin{array}{ccccccc}
& 0 & \rightarrow & K[-1] & \rightarrow & K[-1] & \rightarrow 0 \\ 
& \downarrow 1 &  & \downarrow v &  & \downarrow u &  \\ 
0\rightarrow & K[-1] & \rightarrow & E & \rightarrow & B_{n}^{!}\underset{%
C_{n}^{!}}{\otimes }K & \rightarrow 0 \\ 
& \downarrow 1 &  & \downarrow &  & \downarrow \mu &  \\ 
0\rightarrow & K[-1] & \rightarrow & L & \rightarrow & K & \rightarrow 0%
\end{array}%
$

Proving $\rho $ is surjective.
\end{proof}

\begin{corollary}
The map $res:\underset{m\geq 0}{\oplus }Ext_{B_{n}^{!}}^{m}(K,K)\rightarrow 
\underset{m\geq 0}{\oplus }Ext_{C_{n}^{!}}^{m}(K,K)$ is a surjective
homomorphism of algebras and the kernel of $res$ is the ideal $%
ZB_{n}=B_{n}Z. $
\end{corollary}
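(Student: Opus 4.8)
The plan is to obtain both halves of the statement from the structural results already in hand: the graded algebra homomorphism $res$, the Koszul–duality identifications $\bigoplus_{m\ge 0}Ext_{B_n^!}^{m}(K,K)\cong B_n$ and $\bigoplus_{m\ge 0}Ext_{C_n^!}^{m}(K,K)\cong C_n$, the decomposition $B_n^!=C_n^!\oplus ZC_n^!$, and the isomorphism $B_n/ZB_n\cong C_n$.

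First I would prove surjectivity. Since $res$ is a morphism of graded algebras and $C_n$ is Koszul, hence generated in degree one, it suffices to check that $res$ is onto in degree one, i.e. that $res_1\colon Ext_{B_n^!}^{1}(K,K)\to Ext_{C_n^!}^{1}(K,K)$ is surjective. This is immediate from the two preceding lemmas: there one shows $res_1=\psi_1\rho$, where $\rho\colon Ext_{B_n^!}^{1}(K,K)\to Ext_{B_n^!}^{1}(B_n^!\otimes_{C_n^!}K,\,K)$ is the pull-back map — proved surjective in the last lemma — and $\psi_1$ is an isomorphism, being the degree-one component of the natural isomorphism $Ext_{B_n^!}^{n}(B_n^!\otimes_{C_n^!}K,\,K)\cong Ext_{C_n^!}^{n}(K,K)$ underlying $G_{B_n^!}(B_n^!\otimes_{C_n^!}K)\cong G_{C_n^!}(K)$. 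Hence $res$ is surjective.

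Next I would identify $res$ on degree-one generators, which is the heart of the matter. Under $B_n\cong\bigoplus_m Ext_{B_n^!}^{m}(K,K)$ the element $Z\in(B_n)_1$ corresponds to the functional on $J_{B_n^!}/J_{B_n^!}^{2}$ dual to the generator $Z$ of $B_n^!$; as an extension class it is represented by the two-dimensional $B_n^!$-module $E$ in $0\to K[-1]\to E\to K\to 0$ on which every algebra generator except $Z$ acts by zero. Since $C_n^!$ is the subalgebra of $B_n^!$ generated by the $X_i$ and the $\delta_j$ — it does not contain $Z$ among its generators, as $B_n^!=C_n^!\oplus ZC_n^!$ makes plain — restriction of scalars turns $E$ into the semisimple module $K\oplus K[-1]$, so $res(E)$ splits and $res(Z)=0$. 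Equivalently, $res_1$ is the transpose of the inclusion $J_{C_n^!}/J_{C_n^!}^{2}\hookrightarrow J_{B_n^!}/J_{B_n^!}^{2}$, so on generators $res$ sends $X_i\mapsto X_i$, $\delta_j\mapsto\delta_j$, $Z\mapsto 0$; that is, $res$ is the canonical projection $B_n\to B_n/ZB_n$.

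Finally, since $Z$ is central in $B_n$ and $\ker res$ is a two-sided graded ideal, $res(Z)=0$ forces $ZB_n=B_nZ\subseteq\ker res$, so $res$ induces a surjection $\overline{res}\colon B_n/ZB_n\to C_n$. Because $B_n/ZB_n\cong C_n$ and all graded components here are finite dimensional, $\dim_K(B_n/ZB_n)_m=\dim_K(C_n)_m<\infty$ for every $m$, so the surjection $\overline{res}$ is bijective in each degree, hence an isomorphism, which forces $\ker res=ZB_n$. (One may instead verify the Hilbert series directly: $H_{B_n^!}(t)=(1+t)H_{C_n^!}(t)=(1+t)^{2n+1}$, so by Koszulity $H_{B_n}(t)=(1-t)^{-(2n+1)}$, and since $B_n$ is a domain $Z$ is a non-zero-divisor, whence $H_{B_n/ZB_n}(t)=(1-t)H_{B_n}(t)=(1-t)^{-2n}=H_{C_n}(t)$.) The step demanding the most care is the degree-one computation — checking that the Yoneda-algebra element $Z$ restricts over $C_n^!$ to a split self-extension of $K$, equivalently that $res_1$ is dual to the inclusion of generator spaces; once that is settled, surjectivity is just the two preceding lemmas and the kernel statement is a dimension count against $B_n/ZB_n\cong C_n$.
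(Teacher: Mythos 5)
Your proposal is correct, and its overall skeleton (surjectivity in degree one via $res_1=\psi_1\rho$ with $\rho$ surjective and $\psi_1$ an isomorphism, then $ZB_n\subseteq\ker(res)$, then the dimension count against $B_n/ZB_n\cong C_n$) coincides with the paper's. The one genuinely different step is how you get $Z$ into the kernel. The paper never computes $res(Z)$ from the definition of $res$: it observes that \emph{any} algebra homomorphism $f:B_n\rightarrow C_n$ must kill $Z$, because $Z^2=X_1\delta_1-\delta_1X_1$ in $B_n$, so $f(Z)^2$ is a commutator in the commutative ring $C_n$, hence zero, and $C_n$ being an integral domain forces $f(Z)=0$. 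You instead identify the degree-one Yoneda class of $Z$ explicitly as the self-extension $0\rightarrow K[-1]\rightarrow E\rightarrow K\rightarrow 0$ on which only the generator $Z$ of $B_n^!$ acts nontrivially, and note that it splits upon restriction to $C_n^!$, so that $res_1$ is the transpose of the inclusion $(C_n^!)_1\subset(B_n^!)_1$. Your route gives strictly more information (it pins down $res$ on all degree-one generators, exhibiting it as the canonical projection $B_n\rightarrow B_n/ZB_n$), but it leans on the standard identification of $Ext^1$ classes with functionals on $J/J^2$ and on the convention that $Z\in(B_n)_1$ corresponds to the dual functional of $Z\in(B_n^!)_1$; the paper's trick is slicker and convention-free, since it works for an arbitrary homomorphism $B_n\rightarrow C_n$ without unwinding Koszul duality. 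Your closing Hilbert-series verification is a harmless redundancy, and the final step, deducing $\ker(res)=ZB_n$ from the surjection $B_n/ZB_n\rightarrow C_n$ between graded algebras with equal finite-dimensional components, is exactly the paper's concluding dimension argument.
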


\begin{proof}
Since both $B_{n}^{!}$ and $C_{n}^{!}$ are Koszul algebras they are graded
algebras generated in degree one, and it follows from the lemma that for any 
$m>0$ the map $res:Ext_{B_{n}^{!}}^{m}(K,K)\rightarrow
Ext_{C_{n}^{!}}^{m}(K,K)$ is surjective.

Observe that for any homomorphism $f:B_{n}\rightarrow C_{n}$ $Z$ is in the
kernel.

We have in $B_{n}$ the equality $X_{1}\delta _{1}-\delta _{1}X_{1}=Z^{2}$.
Since $C_{n}$ is commutative, $f($ $X_{1}\delta _{1}-\delta
_{1}X_{1})=f(X_{1})f(\delta _{1})-f(\delta _{1})f(X_{1})=f(Z)^{2}=0,$

But since $C_{n}$ is an integral domain, it follows $f(Z)=0$.

In particular $ZB_{n}\subseteq Ker(res)$ and there is a factorization: $%
\begin{array}{ccc}
B_{n} & \rightarrow & C_{n} \\ 
\searrow &  & \nearrow \alpha \\ 
& B_{n}/ZB_{n} & 
\end{array}%
$ and since $B_{n}/ZB_{n}\cong C_{n}$ it follows by dimension, that $\alpha $
is an isomorphism.
\end{proof}

\begin{lemma}
With the same notation as in the previous lemma, let $M$ be a Koszul $%
C_{n}^{!}$-module and $\psi :G_{B_{n}^{!}}(B_{n}^{!}\underset{C_{n}^{!}}{%
\otimes }M)\rightarrow G_{C_{n}^{!}}(M)$, the isomorphism in the previous
lemma.

Then given $y\in Ext_{B_{n}^{!}}^{m}(B_{n}^{!}\underset{C_{n}^{!}}{\otimes }%
M,K)$ and $c\in Ext_{B_{n}^{!}}^{1}(K,K),$ we have $\psi (cy)=res(c)\psi
(y). $
\end{lemma}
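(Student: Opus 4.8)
The plan is to read off both sides of the identity from one and the same projective resolution, exploiting that $B_{n}^{!}\underset{C_{n}^{!}}{\otimes }-$ sends a projective resolution over $C_{n}^{!}$ to one over $B_{n}^{!}$ — this is exactly the mechanism used in the proof that $B_{n}^{!}\underset{C_{n}^{!}}{\otimes }M$ is Koszul — and recalling from that proof what the isomorphism $\psi $ actually is. Fix a minimal graded projective resolution $P_{\bullet }\rightarrow M$ over $C_{n}^{!}$ and put $Q_{\bullet }=B_{n}^{!}\underset{C_{n}^{!}}{\otimes }P_{\bullet }$, a graded projective resolution of $B_{n}^{!}\underset{C_{n}^{!}}{\otimes }M$ over $B_{n}^{!}$ with differential $1\otimes d_{P}$. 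The adjunction isomorphism $Hom_{B_{n}^{!}}(B_{n}^{!}\underset{C_{n}^{!}}{\otimes }-,K)\cong Hom_{C_{n}^{!}}(-,K)$ identifies the complexes $Hom_{B_{n}^{!}}(Q_{\bullet },K)$ and $Hom_{C_{n}^{!}}(P_{\bullet },K)$; concretely, a cocycle $\overline{y}\colon Q_{m}\rightarrow K$ is carried to $\overline{y}\circ j_{m}$, where $j_{i}\colon P_{i}\rightarrow Q_{i}=B_{n}^{!}\underset{C_{n}^{!}}{\otimes }P_{i}$ is the canonical $C_{n}^{!}$-linear map $p\mapsto 1\otimes p$. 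Passing to cohomology this is the isomorphism $\psi $, so for $y\in Ext_{B_{n}^{!}}^{m}(B_{n}^{!}\underset{C_{n}^{!}}{\otimes }M,K)$ represented by $\overline{y}$ one has $\psi (y)=[\,\overline{y}\circ j_{m}\,]$.

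Next I would unwind the two Yoneda products by the chain-map recipe. Choose a minimal graded projective resolution $R_{\bullet }\rightarrow K$ over $B_{n}^{!}$ and a cocycle $\overline{c}\colon R_{1}\rightarrow K$ representing $c$, and choose a chain map $\varphi _{\bullet }\colon Q_{m+\bullet }\rightarrow R_{\bullet }$ lifting $\overline{y}$ (so $\varepsilon _{R}\varphi _{0}=\overline{y}$); then $cy$ is represented by the cocycle $\overline{c}\circ \varphi _{1}\colon Q_{m+1}\rightarrow R_{1}\rightarrow K$, whence $\psi (cy)=[\,\overline{c}\circ \varphi _{1}\circ j_{m+1}\,]$. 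On the $C_{n}^{!}$-side, since $B_{n}^{!}$ is free of rank two over $C_{n}^{!}$ the restriction $res\,R_{\bullet }\rightarrow K$ is again a (free) resolution of $K$ over $C_{n}^{!}$, and $res\,\overline{c}$ is a cocycle representing $res(c)$ (this $res$ is the algebra map $Ext_{B_{n}^{!}}^{\bullet }(K,K)\rightarrow Ext_{C_{n}^{!}}^{\bullet }(K,K)$ of the earlier corollary; restricting extensions and restricting cocycles give the same class). The maps $j_{\bullet }$ form a chain map $P_{\bullet }\rightarrow res\,Q_{\bullet }$, so $res\,\varphi _{\bullet }\circ j_{m+\bullet }\colon P_{m+\bullet }\rightarrow res\,R_{\bullet }$ is a chain map lifting the cocycle $res\,\overline{y}\circ j_{m}$ that represents $\psi (y)$; hence $res(c)\,\psi (y)$ is represented by $res\,\overline{c}\circ res\,\varphi _{1}\circ j_{m+1}$. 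Since restriction of scalars does not alter an underlying map, $res\,\overline{c}\circ res\,\varphi _{1}\circ j_{m+1}=\overline{c}\circ \varphi _{1}\circ j_{m+1}$, and comparing the two representatives gives $\psi (cy)=res(c)\,\psi (y)$, as required.

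The one point that needs genuine care is the claim made in the first paragraph: that the isomorphism $\psi $ produced in the proof of the preceding lemma — the chain $Ext_{B_{n}^{!}}^{m}(B_{n}^{!}\underset{C_{n}^{!}}{\otimes }M,K)\cong Hom_{B_{n}^{!}}(\Omega _{B_{n}^{!}}^{m}(B_{n}^{!}\underset{C_{n}^{!}}{\otimes }M),K)\cong Hom_{B_{n}^{!}}(B_{n}^{!}\underset{C_{n}^{!}}{\otimes }\Omega _{C_{n}^{!}}^{m}M,K)\cong Hom_{C_{n}^{!}}(\Omega _{C_{n}^{!}}^{m}M,K)\cong Ext_{C_{n}^{!}}^{m}(M,K)$ — is, under the syzygy identification $\Omega _{B_{n}^{!}}^{m}(B_{n}^{!}\underset{C_{n}^{!}}{\otimes }M)\cong B_{n}^{!}\underset{C_{n}^{!}}{\otimes }\Omega _{C_{n}^{!}}^{m}M$ used there, exactly the map ``apply $res$, then precompose with $j$''. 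This is a naturality check, which I would settle by comparing the two descriptions of $\psi $ on a single cocycle; once it is in place, everything else is the routine bookkeeping of lifting cocycles to chain maps, which is visibly compatible with the exact functor $res$ and with the chain map $j_{\bullet }$, so I anticipate no further obstruction.
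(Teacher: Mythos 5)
Your proposal is correct and is essentially the paper's own argument translated from the Yoneda-extension picture into cocycle/chain-map language: the paper represents $y$ by a map $f$ on the $m$-th syzygy of the induced resolution, represents $c$ by $g:JB_{n}^{!}\rightarrow K$, notes that $\psi(y)$ corresponds to $fj$ and that $g$ restricted to the $C_{n}^{!}$-summand represents $res(c)$, and concludes via $\Omega(fj)=\Omega(f)j$ — precisely your identity $res(\overline{c})\circ res(\varphi_{1})\circ j_{m+1}=\overline{c}\circ\varphi_{1}\circ j_{m+1}$. The naturality point you flag (that $\psi$ is ``apply the adjunction, i.e.\ precompose with $j$'') is exactly what the paper asserts at the start of its proof, so your treatment matches it in both substance and level of detail.
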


\begin{proof}
The map $f:B_{n}^{!}\underset{C_{n}^{!}}{\otimes }\Omega ^{m}(M)\rightarrow
K $ corresponding to the extension $y$ is the map in the commutative exact
diagram:

$%
\begin{array}{ccccccccc}
0\rightarrow & B_{n}^{!}\underset{C_{n}^{!}}{\otimes }\Omega ^{m}\text{(M)}
& \rightarrow & B_{n}^{!}\underset{C_{n}^{!}}{\otimes }\text{C}%
_{n}^{!k_{m-1}} & \text{...}\rightarrow & B_{n}^{!}\underset{C_{n}^{!}}{%
\otimes }\text{C}_{n}^{!k_{0}} & \rightarrow & B_{n}^{!}\underset{C_{n}^{!}}{%
\otimes }\text{M} & \rightarrow 0 \\ 
& \downarrow f &  & \downarrow &  & \downarrow &  & \downarrow 1 &  \\ 
0\rightarrow & K & \rightarrow & E_{1} & \text{...}\rightarrow & E_{m} & 
\rightarrow & B_{n}^{!}\underset{C_{n}^{!}}{\otimes }\text{M} & \rightarrow 0%
\end{array}%
$

where $y$ is the bottom raw.

If $j:\Omega ^{m}(M)\rightarrow B_{n}^{!}\underset{C_{n}^{!}}{\otimes }%
\Omega ^{m}(M)$ is the map $j(m)=1\otimes m$, then $\psi (y)$ is the
extension corresponding to the map $fj.$

Consider the commutative diagram with exact raws:

$%
\begin{array}{ccccccc}
0\rightarrow & \Omega ^{m+1}(M) & \rightarrow & C_{n}^{!k_{m}} & \rightarrow
& \Omega ^{m}(M) & \rightarrow 0 \\ 
& j\downarrow &  & j\downarrow &  & j\downarrow &  \\ 
0\rightarrow & B_{n}^{!}\underset{C_{n}^{!}}{\otimes }B\Omega ^{m+1}(M) & 
\rightarrow & B_{n}^{!}\underset{C_{n}^{!}}{\otimes }C_{n}^{!k_{m}} & 
\rightarrow & B_{n}^{!}\underset{C_{n}^{!}}{\otimes }\Omega ^{m}(M) & 
\rightarrow 0 \\ 
& \Omega f\downarrow &  & \downarrow &  & f\downarrow &  \\ 
0\rightarrow & JB_{n}^{!} & \rightarrow & B_{n}^{!} & \rightarrow & K & 
\rightarrow 0 \\ 
& g\downarrow &  & \downarrow &  & 1\downarrow &  \\ 
0\rightarrow & K & \rightarrow & L & \rightarrow & K & \rightarrow 0%
\end{array}%
$

where $c$ is the bottom sequence.

Since $B_{n}^{!}=C_{n}^{!}\oplus C_{n}^{!}Z$ as $C_{n}^{!}$-module, the map $%
g$ restricted to $C_{n}^{!}$ represents the extension $res(c).$

Taking the pullback we obtain a commutative diagram with exact raws:

$%
\begin{array}{ccccccc}
0\rightarrow & \Omega ^{m+1}(M) & \rightarrow & C_{n}^{!k_{m}} & \rightarrow
& \Omega ^{m}(M) & \rightarrow 0 \\ 
& g\Omega (f)j\downarrow &  & \downarrow &  & 1\downarrow &  \\ 
0\rightarrow & K & \rightarrow & W & \rightarrow & \Omega ^{m}(M) & 
\rightarrow 0 \\ 
& 1\downarrow &  & \downarrow &  & fj\downarrow &  \\ 
0\rightarrow & K & \rightarrow & L & \rightarrow & K & \rightarrow 0%
\end{array}%
$

and $\Omega (fj)=\Omega (f)j.$

It follows $\psi (cy)=res(c)\psi (y)$.
\end{proof}

As a corollary we obtain the following:

\begin{proposition}
Let $M$ be a Koszul $C_{n}^{!}$-module and $G_{B_{n}^{!}}=\underset{m\geq 0}{%
\oplus }Ext_{C_{n}^{!}}^{m}(-,K)$ Koszul duality. Then $%
Z(G_{B_{n}^{!}}(B_{n}^{!}\underset{C_{n}^{!}}{\otimes }M))=0$.
\end{proposition}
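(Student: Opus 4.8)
The plan is to deduce the statement directly from the two preceding lemmas and the corollary identifying $\ker(res)$. First I recall that, by Koszul duality for the Koszul algebra $B_{n}^{!}$, the graded vector space $G_{B_{n}^{!}}(B_{n}^{!}\underset{C_{n}^{!}}{\otimes }M)=\underset{m\geq 0}{\oplus }Ext_{B_{n}^{!}}^{m}(B_{n}^{!}\underset{C_{n}^{!}}{\otimes }M,K)$ carries a natural structure of graded module over the Yoneda algebra $\underset{m\geq 0}{\oplus }Ext_{B_{n}^{!}}^{m}(K,K)\cong B_{n}$, the action being the Yoneda (pull\nobreakdash-back) product. Under this identification the generator $Z$ sits in the degree\nobreakdash-one part $Ext_{B_{n}^{!}}^{1}(K,K)\subset B_{n}$, and the assertion $Z\cdot G_{B_{n}^{!}}(B_{n}^{!}\underset{C_{n}^{!}}{\otimes }M)=0$ is precisely the statement that this degree\nobreakdash-one element annihilates the module.

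Next I apply the lemma immediately above, which furnishes an isomorphism $\psi :G_{B_{n}^{!}}(B_{n}^{!}\underset{C_{n}^{!}}{\otimes }M)\rightarrow G_{C_{n}^{!}}(M)$ satisfying $\psi (cy)=res(c)\,\psi (y)$ for every $y$ in the module and every $c\in Ext_{B_{n}^{!}}^{1}(K,K)$, where $res:\underset{m\geq 0}{\oplus }Ext_{B_{n}^{!}}^{m}(K,K)\rightarrow \underset{m\geq 0}{\oplus }Ext_{C_{n}^{!}}^{m}(K,K)$ is the restriction homomorphism. By the Corollary on $res$, the kernel of $res$ is the two\nobreakdash-sided ideal $ZB_{n}=B_{n}Z$; in particular $res(Z)=0$. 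Taking $c=Z$ in the displayed identity gives $\psi (Zy)=res(Z)\,\psi (y)=0$ for every $y$, and since $\psi$ is an isomorphism we conclude $Zy=0$ for all $y$, that is, $Z\cdot G_{B_{n}^{!}}(B_{n}^{!}\underset{C_{n}^{!}}{\otimes }M)=0$. Once this is known, associativity of the Yoneda action shows that the entire ideal $ZB_{n}$ annihilates $G_{B_{n}^{!}}(B_{n}^{!}\underset{C_{n}^{!}}{\otimes }M)$, so it is genuinely a $C_{n}=B_{n}/ZB_{n}$\nobreakdash-module, in agreement with the Remark preceding the proposition.

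The only point that needs care, and which I regard as the main obstacle, is to make sure that the $B_{n}$\nobreakdash-module structure appearing in the lemma (through the left Yoneda product by the degree\nobreakdash-one classes $Ext_{B_{n}^{!}}^{1}(K,K)$) really is the $B_{n}$\nobreakdash-module structure implicit in the notation $Z\,G_{B_{n}^{!}}(\cdots)$. This is immediate: since $B_{n}$ is Koszul it is generated in degree one, so its module structure on any Koszul\nobreakdash-dual module is determined by, and in fact coincides with, the action of $Ext_{B_{n}^{!}}^{1}(K,K)$; hence it suffices to kill the degree\nobreakdash-one element $Z$, which is exactly what the computation above does. Everything else is a direct invocation of the lemma $\psi (cy)=res(c)\psi (y)$ and of the corollary identifying $\ker(res)=ZB_{n}$.
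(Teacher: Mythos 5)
Your argument is correct and is essentially the paper's own proof: both identify $Z$ with its class in $Ext_{B_{n}^{!}}^{1}(K,K)$, invoke the lemma $\psi (cy)=res(c)\psi (y)$ together with $res(Z)=0$ from the corollary computing $\ker (res)=ZB_{n}$, and conclude $Zy=0$ from the fact that $\psi$ is an isomorphism. Your additional remarks on the Yoneda module structure and on the full ideal $ZB_{n}$ acting trivially are consistent elaborations, not a different route.
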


\begin{proof}
Denote by $z$ the extension corresponding to $Z$ under the isomorphism $%
B_{n}\cong $ $\underset{m\geq 0}{\oplus }Ext_{B_{n}^{!}}^{m}(K,K).$ By the
previous lemma, for any extension $y\in Ext_{B_{n}^{!}}^{m}(B_{n}^{!}%
\underset{C_{n}^{!}}{\otimes }M),K)$ , $\psi (zy)=res(z)\psi (y)$ and by
lemma ?, $res(z)=0.$ Since $\psi $ is an isomorphism, it follows $zy=0$,
hence $Z(G_{B_{n}^{!}}(B_{n}^{!}\underset{C_{n}^{!}}{\otimes }M))=0$.
\end{proof}

\begin{proposition}
Let $B_{n}^{!}$ and $C_{n}^{!}$ be the algebras given above. Then for any
induced module $B_{n}^{!}\underset{C_{n}^{!}}{\otimes }M$ when we apply the
duality $\overline{\phi }$ to $B_{n}^{!}\underset{C_{n}^{!}}{\otimes }M$ we
obtain an element of $\mathcal{T}$.
\end{proposition}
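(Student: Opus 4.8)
The plan is to reduce the statement to the case in which $M$ is a Koszul $C_n^!$-module, where it follows at once from the structure results already established, and then to perform the reduction using the syzygy identity for induced modules together with the fact that high enough syzygies over the finite dimensional algebra $C_n^!$ are weakly Koszul.

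First I would dispose of the Koszul case. If $M$ is a Koszul $C_n^!$-module then, by the Lemma above, $B_n^!\otimes_{C_n^!}M$ is a Koszul $B_n^!$-module, so its image under $\phi$ is a genuine resolution: $H^i(\phi(B_n^!\otimes_{C_n^!}M))=0$ for $i\neq 0$ and $H^0(\phi(B_n^!\otimes_{C_n^!}M))\cong G_{B_n^!}(B_n^!\otimes_{C_n^!}M)$. By the Proposition above, $Z\cdot G_{B_n^!}(B_n^!\otimes_{C_n^!}M)=0$, so this module is annihilated by $Z$ and in particular is of $Z$-torsion. Hence $\overline{\phi}(B_n^!\otimes_{C_n^!}M)=\pi\phi(B_n^!\otimes_{C_n^!}M)$ is $\pi$ applied to a complex all of whose cohomology is of $Z$-torsion, and so it lies in $\mathcal{T}$ by the description $\mathcal{T}=Ker D^b(\psi)$ obtained above. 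Since $Z$ is central in $B_n$ the notion of $Z$-torsion is unaffected by passing between left and right modules, so the distinction between $Qgr_{B_n}$ and $Qgr_{B_n^{op}}$ in the target of $\overline{\phi}$ is immaterial here.

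Next I would reduce the general case to the Koszul one. The syzygy lemma for induced modules gives $\Omega_{B_n^!}^k(B_n^!\otimes_{C_n^!}M)\cong B_n^!\otimes_{C_n^!}\Omega_{C_n^!}^k(M)$ for every $k\geq 0$. As $\overline{\phi}$ is a duality of triangulated categories it intertwines $\Omega$ with the shift, while $\mathcal{T}$, being thick, is invariant under shift; hence $\overline{\phi}(B_n^!\otimes_{C_n^!}M)\in\mathcal{T}$ if and only if $\overline{\phi}(B_n^!\otimes_{C_n^!}\Omega_{C_n^!}^k(M))\in\mathcal{T}$, so we may replace $M$ by any of its syzygies. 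Since $C_n^!$ is finite dimensional, $[MZ]$ provides a $k$ for which $\Omega_{C_n^!}^k(M)$ is weakly Koszul, so we may assume $M$ weakly Koszul; and then, again by $[MZ]$, $M$ carries a finite filtration $M=U_p\supset U_{p-1}\supset\cdots\supset U_0=K_M$ whose subquotients $U_i/U_{i-1}$ are Koszul up to a shift. Because $B_n^!$ is a free $C_n^!$-module, $B_n^!\otimes_{C_n^!}-$ is exact, so this filtration lifts to a filtration of $B_n^!\otimes_{C_n^!}M$ with subquotients $B_n^!\otimes_{C_n^!}(U_i/U_{i-1})$; applying the exact functor $\phi$ then exhibits $\overline{\phi}(B_n^!\otimes_{C_n^!}M)$ as an iterated extension, in $D^b$, of the objects $\overline{\phi}(B_n^!\otimes_{C_n^!}(U_i/U_{i-1}))$.

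By the shift identity $B_n^!\otimes_{C_n^!}(M[j])\cong(B_n^!\otimes_{C_n^!}M)[j]$ from the Lemma above, each $B_n^!\otimes_{C_n^!}(U_i/U_{i-1})$ is Koszul up to a shift, so the Koszul case already treated gives $\overline{\phi}(B_n^!\otimes_{C_n^!}(U_i/U_{i-1}))\in\mathcal{T}$; since $\mathcal{T}$ is closed under shifts and extensions, the iterated extension $\overline{\phi}(B_n^!\otimes_{C_n^!}M)$ lies in $\mathcal{T}$ as well, which finishes the argument. The step requiring the most care is precisely this last one: one must check that the module filtration genuinely produces a tower of distinguished triangles in $D^b(Qgr_{B_n})$ and that its composition factors are, up to shift, quasi-isomorphic to the $Z$-annihilated modules $G_{C_n^!}(U_i/U_{i-1})$ concentrated in a single degree. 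All the genuinely homological input for this — the syzygy identity for induced modules, the identification $G_{B_n^!}(B_n^!\otimes_{C_n^!}M)\cong G_{C_n^!}(M)$, and the vanishing $Z\cdot G_{B_n^!}(B_n^!\otimes_{C_n^!}M)=0$ — has, however, already been secured in the lemmas and propositions above.
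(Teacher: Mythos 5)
Your argument is correct and follows essentially the same route as the paper: reduce to the weakly Koszul case via the syzygy identity for induced modules and shift-invariance of $\mathcal{T}$, then use the [MZ] filtration with Koszul subquotients together with the facts that $G_{B_n^!}(B_n^!\otimes_{C_n^!}X)\cong G_{C_n^!}(X)$ and that this module is annihilated by $Z$. The only cosmetic difference is in the final step: where you invoke thickness of $\mathcal{T}$ (closure under shifts and triangles) to handle the filtration, the paper runs the same induction explicitly through the long exact cohomology sequences coming from $0\rightarrow \phi(B_n^!\otimes_{C_n^!}U_i/U_{i-1})\rightarrow \phi(B_n^!\otimes_{C_n^!}U_i)\rightarrow \phi(B_n^!\otimes_{C_n^!}U_{i-1})\rightarrow 0$, concluding directly that every $H^i$ is of $Z$-torsion.
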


\begin{proof}
There exists some integer $n\geq 0$ such that $\Omega ^{n}M$ and $\Omega
^{n}(B_{n}^{!}\underset{C_{n}^{!}}{\otimes }M)$ are weakly Koszul. Since $%
\overline{\phi }(\Omega ^{n}(B_{n}^{!}\underset{C_{n}^{!}}{\otimes }M))\cong 
\overline{\phi }(B_{n}^{!}\underset{C_{n}^{!}}{\otimes }M)[n]$.The object $%
\overline{\phi }(\Omega ^{n}(B_{n}^{!}\underset{C_{n}^{!}}{\otimes }M))$ is
in $\mathcal{T}$ if and only if $\overline{\phi }(B_{n}^{!}\underset{%
C_{n}^{!}}{\otimes }M)$ is in $\mathcal{T}$. We may assume $M$ and $B_{n}^{!}%
\underset{C_{n}^{!}}{\otimes }M$ are weakly Koszul.

The module $M$ has a filtration: $M=U_{p}\supset U_{p-1}\supset
...U_{1}\supset U_{0}$ such that $U_{i}/U_{i-1}$ is Koszul, hence; $B_{n}^{!}%
\underset{C_{n}^{!}}{\otimes }M$ has a filtration: $B_{n}^{!}\underset{%
C_{n}^{!}}{\otimes }M=B_{n}^{!}\underset{C_{n}^{!}}{\otimes }U_{p}\supset
B_{n}^{!}\underset{C_{n}^{!}}{\otimes }U_{p-1}\supset ...B_{n}^{!}\underset{%
C_{n}^{!}}{\otimes }U_{1}\supset B_{n}^{!}\underset{C_{n}^{!}}{\otimes }%
U_{0} $ such that $B_{n}^{!}\underset{C_{n}^{!}}{\otimes }U_{i}/B_{n}^{!}%
\underset{C_{n}^{!}}{\otimes }U_{i-1}\cong B_{n}^{!}\underset{C_{n}^{!}}{%
\otimes }$ $U_{i}/U_{i-1}$ is Koszul.

The exact sequence: $0\rightarrow B_{n}^{!}\underset{C_{n}^{!}}{\otimes }%
U_{0}\rightarrow B_{n}^{!}\underset{C_{n}^{!}}{\otimes }U_{1}\rightarrow
B_{n}^{!}\underset{C_{n}^{!}}{\otimes }U_{1}/U_{0}\rightarrow 0$ induces an
exact sequence of complexes: $0\rightarrow \phi ($ $B_{n}^{!}\underset{%
C_{n}^{!}}{\otimes }U_{1}/U_{0})\rightarrow \phi (B_{n}^{!}\underset{%
C_{n}^{!}}{\otimes }U_{1})\rightarrow \phi (B_{n}^{!}\underset{C_{n}^{!}}{%
\otimes }U_{0})\rightarrow 0$which induces a long exact sequence:

...$\rightarrow H^{1}(\phi ($ $B_{n}^{!}\underset{C_{n}^{!}}{\otimes }%
U_{1}/U_{0}))\rightarrow H^{1}(\phi (B_{n}^{!}\underset{C_{n}^{!}}{\otimes }%
U_{1}))\rightarrow H^{1}(\phi (B_{n}^{!}\underset{C_{n}^{!}}{\otimes }%
U_{0}))\rightarrow H^{0}(\phi ($ $B_{n}^{!}\underset{C_{n}^{!}}{\otimes }%
U_{1}/U_{0}))\rightarrow H^{0}(\phi (B_{n}^{!}\underset{C_{n}^{!}}{\otimes }%
U_{1}))\rightarrow H^{0}(\phi (B_{n}^{!}\underset{C_{n}^{!}}{\otimes }%
U_{0}))\rightarrow 0$

where $H^{i}(\phi (B_{n}^{!}\underset{C_{n}^{!}}{\otimes }U_{0}))=0$ for $%
i\neq 0$ and $H^{0}(\phi (B_{n}^{!}\underset{C_{n}^{!}}{\otimes }%
U_{0}))=G_{B_{n}^{!}}(B_{n}^{!}\underset{C_{n}^{!}}{\otimes }U_{0})\cong
G_{C_{n}^{!}}(U_{0})$ of $Z$-torsion, $H^{0}(\phi (B_{n}^{!}\underset{%
C_{n}^{!}}{\otimes }U_{1}))\cong H^{0}(\phi (B_{n}^{!}\underset{C_{n}^{!}}{%
\otimes }U_{0}))$ and $H^{i}(\phi ($ $B_{n}^{!}\underset{C_{n}^{!}}{\otimes }%
U_{1}/U_{0}))\cong H^{i}(\phi (B_{n}^{!}\underset{C_{n}^{!}}{\otimes }%
U_{1})) $ for $i\neq 0.$It follows $H^{i}(\phi (B_{n}^{!}\underset{C_{n}^{!}}%
{\otimes }U_{1}))$ is of $Z$-torsion for all $i$. By induction $H^{i}(\phi
(B_{n}^{!}\underset{C_{n}^{!}}{\otimes }M))$ is of $Z$-torsion for all $i$.

We have proved $\phi (B_{n}^{!}\underset{C_{n}^{!}}{\otimes }M)$ $\in 
\mathcal{T}$.
\end{proof}

\begin{lemma}
Let $M$ be a $B_{n}^{!}$-module and assume there is an integer $n\geq 0$
such that $\Omega ^{n}M$ $=N$ has the following properties:

The module $N$ is weakly Koszul, it has a filtration: $N=U_{p}\supset
U_{p-1}\supset ...U_{1}\supset U_{0}$ such that $U_{i}/U_{i-1}$ is Koszul,
and for all $k\geq 0$, $J^{k}U_{i}\cap U_{i-1}=J^{k}U_{i-1}.$

The Koszul modules $G_{B_{n}^{!}}(U_{i}/U_{i-1})=V_{i}$ are of $Z$-.torsion.

Then $\phi (M)$ is in $\mathcal{T}$.
\end{lemma}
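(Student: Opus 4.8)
The plan is to repeat, with $M$ in place of the induced module $B_{n}^{!}\underset{C_{n}^{!}}{\otimes}M$, the argument used in the proof of the preceding proposition. First I reduce to the shifted module: since $\Omega$ is the shift of the stable category $\underline{gr}_{B_{n}^{!}}$ and $\overline{\phi}$ is a triangle duality, $\overline{\phi}(N)=\overline{\phi}(\Omega^{n}M)\cong\overline{\phi}(M)[n]$, and as $\mathcal{T}$ is a triangulated subcategory it is stable under the shift; hence $\overline{\phi}(M)=\pi\phi(M)\in\mathcal{T}$ if and only if $\pi\phi(N)\in\mathcal{T}$. By the earlier description of $\mathcal{T}$ this is equivalent to asking that $H^{j}(\phi(N))$ be of $Z$-torsion for every $j$. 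Throughout I use that $\phi$ is an exact contravariant functor, that for a Koszul module $W$ one has $H^{j}(\phi(W))=0$ for $j\neq0$ and $H^{0}(\phi(W))\cong G_{B_{n}^{!}}(W)$ (this is $[MS]$), and that the $Z$-torsion modules form a Serre subcategory of $gr_{B_{n}}$, i.e.\ are closed under submodules, quotients and extensions (the radical $t_{Z}$ being idempotent).

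Next I argue by induction on the length $p$ of the filtration $N=U_{p}\supset U_{p-1}\supset\cdots\supset U_{1}\supset U_{0}$. In the base case $N=U_{0}$ is Koszul (the instance $i=0$, $U_{-1}=0$, of the hypotheses), so $H^{j}(\phi(N))=0$ for $j\neq0$ while $H^{0}(\phi(N))\cong G_{B_{n}^{!}}(U_{0})=V_{0}$ is of $Z$-torsion; thus every cohomology module of $\phi(N)$ is of $Z$-torsion. For the inductive step assume that $H^{j}(\phi(U_{i-1}))$ is of $Z$-torsion for all $j$. The short exact sequence $0\rightarrow U_{i-1}\rightarrow U_{i}\rightarrow U_{i}/U_{i-1}\rightarrow0$ — whose quotients $U_{i}/U_{i-1}$ are genuinely Koszul thanks to the condition $J^{k}U_{i}\cap U_{i-1}=J^{k}U_{i-1}$ — is carried by the exact duality $\phi$ to a short exact sequence of complexes $0\rightarrow\phi(U_{i}/U_{i-1})\rightarrow\phi(U_{i})\rightarrow\phi(U_{i-1})\rightarrow0$, which produces the long exact cohomology sequence
\[
\cdots\rightarrow H^{j}(\phi(U_{i}/U_{i-1}))\rightarrow H^{j}(\phi(U_{i}))\rightarrow H^{j}(\phi(U_{i-1}))\rightarrow H^{j-1}(\phi(U_{i}/U_{i-1}))\rightarrow\cdots.
\]
Since $U_{i}/U_{i-1}$ is Koszul, $H^{j}(\phi(U_{i}/U_{i-1}))$ vanishes for $j\neq0$ and equals $G_{B_{n}^{!}}(U_{i}/U_{i-1})=V_{i}$, which is of $Z$-torsion, for $j=0$. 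Combined with the inductive hypothesis on $H^{j}(\phi(U_{i-1}))$, the long exact sequence displays each $H^{j}(\phi(U_{i}))$ as an extension of a submodule of $H^{j}(\phi(U_{i-1}))$ by a quotient of $H^{j}(\phi(U_{i}/U_{i-1}))$, hence $H^{j}(\phi(U_{i}))$ is of $Z$-torsion by the Serre-subcategory property. Taking $i=p$ gives that $H^{j}(\phi(N))$ is of $Z$-torsion for all $j$, so $\pi\phi(N)\in\mathcal{T}$, and therefore $\phi(M)$ (i.e.\ $\pi\phi(M)=\overline{\phi}(M)$) lies in $\mathcal{T}$.

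The exactness of $\phi$ and the long exact sequence bookkeeping are routine; the point that needs the most attention is the translation of the hypothesis — that the \emph{modules} $V_{i}=G_{B_{n}^{!}}(U_{i}/U_{i-1})$ are of $Z$-torsion — into a statement about the $Z$-torsion of the \emph{cohomology of the complex} $\phi(N)$. This rests on the Koszul-duality identification $H^{0}(\phi(W))\cong G_{B_{n}^{!}}(W)$ for Koszul $W$ (and $H^{j}(\phi(W))=0$ otherwise) together with the initial reduction to the weakly Koszul module $N$ with its given filtration; once those are in place, the $Z$-torsion property propagates up the filtration through the Serre property exactly as in the proof of the previous proposition.
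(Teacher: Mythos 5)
Your proof is correct and follows essentially the same route as the paper: reduce to $N=\Omega^{n}M$ using that $\mathcal{T}$ is triangulated (so shift-invariant), apply the exact functor $\phi$ to the filtration's short exact sequences, and use the long exact cohomology sequence together with $H^{j}(\phi(W))=0$ for $j\neq0$ and $H^{0}(\phi(W))\cong G_{B_{n}^{!}}(W)$ for Koszul $W$ to propagate the $Z$-torsion of the $V_{i}$ up the filtration. Your bookkeeping at degree $0$ (extension of a submodule by a quotient, using that $Z$-torsion modules form a Serre class) is only a cosmetic variation of the paper's induction.
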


\begin{proof}
As above, $\phi (M)$ is in $\mathcal{T}$ if and only if $\phi (N)$ is in $%
\mathcal{T}$.

The exact sequence: $0\rightarrow U_{0}$ $\rightarrow U_{1}\rightarrow $ $%
U_{1}/$ $U_{0}\rightarrow 0$ induces an exact sequence: $0\rightarrow \phi ($
$U_{1}/$ $U_{0})\rightarrow \phi (U_{1})\rightarrow \phi (U_{0})\rightarrow
0 $ such that $H^{0}(\phi (U_{1}))\cong H^{0}(\phi (U_{0}))$ $\cong
G_{B_{n}^{!}}(U_{0})$ is of $Z$-torsion and $H^{i}(\phi ($ $U_{1}/$ $%
U_{0}))\cong H^{i}(\phi (U_{1}))$ is of $Z$-torsion for all $i\neq 0.$ By
induction, $H^{i}(\phi (N))$ is of $Z$-torsion for all $i,$hence $\phi (N)$
is in $\mathcal{T}$.
\end{proof}

\begin{theorem}
Let $\mathcal{T}$ $^{\prime }$ be the subcategory of \underline{$gr$}$%
_{B_{n}^{!}}$ corresponding to \emph{T }under the duality: $\overline{\phi }:%
\underline{gr}_{_{B_{n}^{!}}}\rightarrow D^{b}(Qgr_{B_{n}^{op}})$. This is: $%
\overline{\phi }(\mathcal{T}$ $^{\prime })=$\emph{T }$\emph{.}$Then $%
\mathcal{T}$ $^{\prime }$ is the smallest triangulated subcategory of 
\underline{$gr$}$_{B_{n}^{!}}$containing the induced modules and closed
under the Nakayama automorphism.
\end{theorem}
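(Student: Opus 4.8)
The plan is to establish the two inclusions $\mathcal{S}\subseteq\mathcal{T}^{\prime}$ and $\mathcal{T}^{\prime}\subseteq\mathcal{S}$, where $\mathcal{S}$ denotes the smallest triangulated subcategory of $\underline{gr}_{B_{n}^{!}}$ that contains every induced module $B_{n}^{!}\underset{C_{n}^{!}}{\otimes}M$ and is closed under the Nakayama automorphism $\sigma$. For the first inclusion, note that $\mathcal{T}^{\prime}=\overline{\phi}^{-1}(\mathcal{T})$ is a triangulated subcategory because $\overline{\phi}$ is a duality of triangulated categories and $\mathcal{T}$ is thick; it contains every induced module by the Proposition above (which says that $\overline{\phi}$ carries $B_{n}^{!}\underset{C_{n}^{!}}{\otimes}M$ into $\mathcal{T}$); and it is stable under $\sigma$ because $\mathcal{T}$ is invariant under $D^{b}(\sigma)$ by the Corollary, while the Nakayama automorphisms of $B_{n}^{!}$ and of $B_{n}$ are intertwined by $\overline{\phi}$ via the construction recalled in Section~1. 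Hence $\mathcal{S}\subseteq\mathcal{T}^{\prime}$.

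For the reverse inclusion I would transport the problem through $\overline{\phi}$ and show that $\mathcal{T}$ is generated, as a triangulated subcategory, by the objects $\overline{\phi}(B_{n}^{!}\underset{C_{n}^{!}}{\otimes}M)$. Every bounded complex in $D^{b}(QgrB_{n})$ is built from shifts of its cohomology objects by iterated triangles, and, by the description of $\mathcal{T}$ obtained above, $\pi N^{\circ}\in\mathcal{T}$ precisely when every $H^{i}(N^{\circ})$ is $Z$-torsion; so $\mathcal{T}$ is the triangulated subcategory generated by $\{\pi N\mid N\in gr_{B_{n}}\text{ is }Z\text{-torsion}\}$. Given a finitely generated $Z$-torsion $B_{n}$-module $N$ we have $\pi N\cong\pi N_{\geq k}$ and $N_{\geq k}[k]$ Koszul for $k\gg0$; since $\overline{\phi}$ exchanges degree shifts with syzygies and $\mathcal{S}$ is triangulated, and since $\overline{\phi}^{-1}(\pi N)\cong F_{B_{n}}(N)$ when $N$ is Koszul, it suffices to prove $F_{B_{n}}(N)\in\mathcal{S}$ for every Koszul $Z$-torsion $B_{n}$-module $N$.

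To do this, first replace $N$ by a suitable truncation --- equivalently $F_{B_{n}}(N)$ by a syzygy, which is harmless in $\mathcal{S}$ --- so that every $Z^{j}N$ is Koszul; then each subquotient $Z^{j}N/Z^{j+1}N$ is a Koszul $B_{n}$-module killed by $Z$, that is, a Koszul $C_{n}$-module, and by the Proposition $B_{n}^{!}\underset{C^{!}}{\otimes}F_{C}(-)\cong F_{B}(-)$ each $F_{B_{n}}(Z^{j}N/Z^{j+1}N)$ and $F_{B_{n}}(Z^{m-1}N)$ is, up to a shift, an induced module. The long exact sequence $(\ast\ast)$ established above exhibits $F_{B_{n}}(N)$ as the last term of a long exact sequence of $B_{n}^{!}$-modules all of whose other terms are shifts of induced modules; splitting it into short exact sequences, each a triangle in $\underline{gr}_{B_{n}^{!}}$, a finite induction places $F_{B_{n}}(N)$ in the triangulated subcategory generated by the induced modules, hence in $\mathcal{S}$. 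Together with the first inclusion this yields $\mathcal{S}=\mathcal{T}^{\prime}$.

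The step I expect to be the main obstacle is the interface between the two sides: making precise how truncation of a finitely generated $B_{n}$-module corresponds, under $\overline{\phi}$, to taking syzygies in $\underline{gr}_{B_{n}^{!}}$, and checking that after all these reductions one is genuinely in the setting --- a Koszul module whose entire $Z$-adic filtration has Koszul subquotients --- in which the identification $F_{B}\cong B_{n}^{!}\underset{C^{!}}{\otimes}F_{C}$ and the exact sequence $(\ast\ast)$ are available. Once this bookkeeping is in place the triangulated-closure argument is purely formal, and the substantive ingredients (the behaviour of induced modules under $\overline{\phi}$, the $\sigma$-invariance of $\mathcal{T}$, and the relevant exact sequences) have all been proved in the preceding sections.
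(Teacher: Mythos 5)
Your proposal is correct in substance, and it differs from the paper's argument mainly in the d\'evissage used for the minimality inclusion. The paper never leaves $\underline{gr}_{B_{n}^{!}}$: it fixes $M\in \mathcal{T}^{\prime }$, replaces it by a weakly Koszul syzygy $\Omega ^{n}M=N$ and uses the filtration $N=U_{p}\supset \dots \supset U_{0}$ from [MZ] with Koszul quotients; the long homology sequence for $\phi $ shows the Koszul duals $V_{i}=G_{B_{n}^{!}}(U_{i}/U_{i-1})$ are of $Z$-torsion, and then the exact sequence built from the $Z$-adic filtration of $V_{i}$ (your sequence $(\ast \ast )$, rewritten using $F_{B_{n}}(V_{i})\cong U_{i}/U_{i-1}$) together with $F_{B}\cong B_{n}^{!}\underset{C_{n}^{!}}{\otimes }F_{C}$ exhibits each $U_{i}/U_{i-1}$, hence $N$ and $M$, as an iterated extension, triangle by triangle, of induced modules inside any admissible subcategory $\mathcal{B}$. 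You instead work on the $D^{b}(QgrB_{n})$ side: reduce $\mathcal{T}$ to stalk complexes of $Z$-torsion modules by truncation triangles, pass to a Koszul truncation with all $Z^{j}N$ Koszul, and transport back through $\overline{\phi }^{-1}(\pi N)\cong F_{B_{n}}(N)$, after which the same sequence $(\ast \ast )$ and the same induced-module identification finish the argument; this trades the [MZ] weakly Koszul filtration for standard derived-category d\'evissage, at the price of the interface bookkeeping you flag. On that point, what you actually need is not that ``$\overline{\phi }$ exchanges degree shifts with syzygies'' (the syzygy corresponds to the homological shift), but only that $\pi N\cong \pi N_{\geq k}$ in $QgrB_{n}$ and that the resulting grading shifts are harmless because the induced modules, and hence the smallest subcategory $\mathcal{S}$, are closed under grading shift, since $B_{n}^{!}\underset{C_{n}^{!}}{\otimes }(M[j])\cong (B_{n}^{!}\underset{C_{n}^{!}}{\otimes }M)[j]$; with that correction the reduction goes through. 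Finally, you prove explicitly the inclusion $\mathcal{S}\subseteq \mathcal{T}^{\prime }$, which the paper does not repeat in the proof of the theorem, relying instead on the earlier Proposition (induced modules are sent into $\mathcal{T}$) and the Corollary ($\mathcal{T}$ is invariant under $D^{b}(\sigma )$); your treatment of that half coincides with those results, and, like the paper's proof, your minimality argument does not in fact use closure under the Nakayama automorphism.
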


\begin{proof}
Let $\mathcal{B}$ be a triangulated subcategory of \underline{$gr$}$%
_{B_{n}^{!}}$containing the induced modules and closed under the Nakayama
automorphism. Let $M\in $ $\mathcal{T}$ $^{\prime }$ and $\Omega ^{n}M=N$
weakly Koszul with a filtration $N=U_{p}\supset U_{p-1}\supset
...U_{1}\supset U_{0}$ such that $U_{i}/U_{i-1}$ is Koszul, and for all $%
k\geq 0$, $J^{k}U_{i}\cap U_{i-1}=J^{k}U_{i-1}$.

Since $\mathcal{T}$ $^{\prime }$ is closed under the shift the module $N$ is
also in $\mathcal{T}$ $^{\prime }$. We prove by induction on the length of
the filtration that for each $i$ the modules $U_{i}$, $U_{i}/U_{i-1}$are in $%
\mathcal{T}$ $^{\prime }$.

We have an exact sequence of complexes: $0\rightarrow \phi ($ $N/$ $%
U_{0})\rightarrow \phi (N)\rightarrow \phi (U_{0})\rightarrow 0$

By the long homology sequence there is an exact sequence:

...$H_{i+1}(\phi (U_{0})\rightarrow H_{i}(\phi ($ $N/$ $U_{0}))\rightarrow
H_{i}(\phi (N))\rightarrow H_{i}(\phi (U_{0}))\rightarrow H_{i-1}(\phi ($ $%
N/ $ $U_{0}))$...$\rightarrow H_{0}(\phi ($ $N/$ $U_{0}))\rightarrow
H_{0}(\phi (N))\rightarrow H_{0}(\phi (U_{0}))\rightarrow 0$

By $[MZ]$, $H_{0}(\phi (N))=H_{0}(\phi (U_{0}))$, $H_{0}(\phi ($ $N/$ $%
U_{0}))=0$ and $H_{i}(\phi (U_{0}))=0$ for all $i\neq 0$, . Then $H_{i}(\phi
($ $N/$ $U_{0}))=H_{i}(\phi (N))$ for all $i\neq 0$ and $H_{0}(\phi (U_{0}))$
is of $Z$-torsion and $H_{i}(\phi ($ $N/$ $U_{0}))$ is of Z -torsion for all 
$i.$

It follows by induction, $U_{i}$, $U_{i}/U_{i-1}$are in $\mathcal{T}$ $%
^{\prime }$ for all $i$.

The Koszul modules $G_{B_{n}^{!}}(U_{i}/U_{i-1})=V_{i}$ are of $Z$-torsion
and each $Z^{j}V_{i}$ is Koszul.

There exists an exact sequence:

$0\rightarrow F_{B_{n}}(Z^{k_{i}}V_{i})[-k_{i}]\rightarrow
F_{B_{n}}(Z^{k_{i}-1}V_{i}/Z^{k_{i}}V_{i})[-k_{i}+1]...$

$\rightarrow F_{B_{n}}(V_{i}/ZV_{i})\rightarrow U_{i}/U_{i-1}\rightarrow 0$
where each $F_{B_{n}}(Z^{j}V_{i}/Z^{j+1}V_{i})$ $\cong B_{n}^{!}\underset{%
C_{n}^{!}}{\otimes }X_{ij}$ is an induced module of a Koszul $C_{n}^{!}$%
-module $X_{ij}$.

Then each $F_{B_{n}}(Z^{j}V_{i}/Z^{j+1}V_{i})$ $\cong B_{n}^{!}\underset{%
C_{n}^{!}}{\otimes }X_{ij}$ is in $\mathcal{B}$ .

Moreover, the exact sequences: 0$\rightarrow $B$_{n}^{!}\underset{C_{n}^{!}}{%
\otimes }$X$_{ik_{i}}\rightarrow $B$_{n}^{!}\underset{C_{n}^{!}}{\otimes }$X$%
_{ik_{i}-1}\rightarrow $K$_{k_{i}-2}\rightarrow $0 gives rise to triangles: $%
B_{n}^{!}\underset{C_{n}^{!}}{\otimes }X_{ik_{i}}\rightarrow B_{n}^{!}%
\underset{C_{n}^{!}}{\otimes }X_{ik_{i}-1}\rightarrow K_{k_{i}-2}\rightarrow
\Omega ^{-1}(B_{n}^{!}\underset{C_{n}^{!}}{\otimes }X_{ik_{i}})$. Therefore $%
K_{k_{i}-2}\in \mathcal{B}$. It follows by induction, $U_{i}/U_{i-1}\in 
\mathcal{B}$.

The filtration $N=U_{p}\supset U_{p-1}\supset ...U_{1}\supset U_{0}$ induces
triangles: $U_{0}\rightarrow U_{1}\rightarrow U_{1}/U_{0}\rightarrow \Omega
^{-1}(U_{0})$ with $U_{0}$,$U_{1}/U_{0}\in \mathcal{B}$. It follows $%
U_{1}\in \mathcal{B}$.

By induction, $N\in \mathcal{B}$.

We have proved $\mathcal{T}^{\prime }\subset \mathcal{B}$.
\end{proof}

\begin{theorem}
\bigskip Let $\mathcal{T}$ $^{\prime }$ be the subcategory of \underline{$gr$%
}$_{B_{n}^{!}}$ corresponding to \emph{T }under the duality: $\overline{\phi 
}:\underline{gr}_{_{B_{n}^{!}}}\rightarrow D^{b}(Qgr_{B_{n}^{op}})$. This
is: $\overline{\phi }(\mathcal{T}$ $^{\prime })=$\emph{T }. Then $\mathcal{T}
$ $^{\prime }$has Auslander Reiten triangles and they are of type $%
ZA_{\infty }$.
\end{theorem}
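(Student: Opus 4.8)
The plan is to derive the statement from the general Auslander--Reiten theory of stable categories of selfinjective algebras, together with the two properties of $\mathcal{T}^{\prime}$ already established: that it is a thick triangulated subcategory of $\underline{gr}_{B_{n}^{!}}$, and that it is closed under the Nakayama automorphism $\sigma$. Assume $n\geq 1$ (for $n=0$ one has $\mathcal{T}^{\prime}=0$). Since $B_{n}^{!}$ is a finite dimensional graded selfinjective $K$-algebra, $\underline{gr}_{B_{n}^{!}}$ is a Krull--Schmidt triangulated category with suspension $\Omega^{-1}$ which, by the graded form of Happel's theorem, has Auslander--Reiten triangles, with Auslander--Reiten translation $\tau\cong\Omega^{2}\nu$, where $\nu=D\operatorname{Hom}_{B_{n}^{!}}(-,B_{n}^{!})$ is the graded Nakayama functor. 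By the computation recalled in Section~1 (the graded version of the isomorphism $D(M^{\ast})\cong{}_{\sigma}M$), $\nu$ is isomorphic, up to an internal degree shift, to the autoequivalence $M\mapsto{}_{\sigma}M$ twisting the module structure through $\sigma$; thus $\tau$ is, up to an internal shift, $\Omega^{2}$ followed by the twist by $\sigma$.

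Now $\mathcal{T}^{\prime}$, being triangulated, is closed under $\Omega^{\pm 1}$ and under the internal grading shift; being thick, it is closed under direct summands; and by the preceding theorem it is closed under $\sigma$. Hence it is closed under $\tau$ and $\tau^{-1}$. A thick subcategory $\mathcal{D}$ of a Krull--Schmidt triangulated category $\mathcal{C}$ with Auslander--Reiten triangles that is stable under $\tau^{\pm 1}$ inherits Auslander--Reiten triangles: for indecomposable $X\in\mathcal{D}$, the Auslander--Reiten triangle $\tau X\rightarrow Y\rightarrow X\rightarrow\Omega^{-1}\tau X$ of $\mathcal{C}$ has $Y\in\mathcal{D}$, since $\mathcal{D}$ is triangulated and contains $\tau X$ and $X$, and it stays almost split in the full subcategory $\mathcal{D}$. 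Applying this with $\mathcal{C}=\underline{gr}_{B_{n}^{!}}$ and $\mathcal{D}=\mathcal{T}^{\prime}$ shows that $\mathcal{T}^{\prime}$ has Auslander--Reiten triangles, with translation the restriction of $\tau$.

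For the shape, by Riedtmann's structure theorem every connected component $\Theta$ of the stable Auslander--Reiten quiver of $\mathcal{T}^{\prime}$ is of the form $\mathbb{Z}\Delta/G$ for a tree $\Delta$ and an admissible group $G$. I first claim $\mathcal{T}^{\prime}$ has no nonzero $\tau$-periodic object; this excludes the tubes $\mathbb{Z}A_{\infty}/\langle\tau^{r}\rangle$ and, together with closedness of $\mathcal{T}^{\prime}$ under the internal grading shift, forces $G$ trivial and $\Delta$ infinite. Composing $\overline{\phi}$ with the usual $K$-duality gives a triangle equivalence $\underline{gr}_{B_{n}^{!}}\xrightarrow{\sim}D^{b}(Qgr_{B_{n}})$ carrying $\mathcal{T}^{\prime}$ to $\mathcal{T}$ and $\tau$ to the Auslander--Reiten translation of $D^{b}(Qgr_{B_{n}})$. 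Because $B_{n}$ is Artin--Schelter regular of global dimension $2n+1$, Serre duality for noncommutative projective schemes provides a Serre functor $\mathbb{S}\cong\mathsf{s}\circ[2n]$ on $D^{b}(Qgr_{B_{n}})$, with $\mathsf{s}$ an exact autoequivalence of $Qgr_{B_{n}}$; hence its Auslander--Reiten translation is $\mathsf{s}\circ[2n-1]$. If some nonzero $Y^{\circ}\in\mathcal{T}^{\prime}$ satisfied $\tau^{m}Y^{\circ}\cong Y^{\circ}$, the corresponding $X^{\circ}\in\mathcal{T}\subseteq D^{b}(Qgr_{B_{n}})$ would satisfy $\mathsf{s}^{m}X^{\circ}\cong X^{\circ}[-m(2n-1)]$; taking cohomology and using exactness of $\mathsf{s}$ gives $\mathsf{s}^{m}H^{i+m(2n-1)}(X^{\circ})\cong H^{i}(X^{\circ})$ for all $i$, which for a bounded complex and $2n-1\geq 1$ forces $X^{\circ}=0$. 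Thus $\Theta\cong\mathbb{Z}\Delta$ with $\Delta$ an infinite tree, so by Happel--Preiser--Ringel (and, taking $K$ algebraically closed, discarding $B_{\infty}$ and $C_{\infty}$) $\Delta\in\{A_{\infty},A_{\infty}^{\infty},D_{\infty}\}$.

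It remains to rule out $A_{\infty}^{\infty}$ and $D_{\infty}$, and this is the \emph{main obstacle}. As a thick triangulated subcategory, $\mathcal{T}^{\prime}$ is generated by the module $\mathcal{A}=B_{n}^{!}\underset{C_{n}^{!}}{\otimes}K$ together with its internal shifts and $\sigma$-twists: every induced module $B_{n}^{!}\otimes_{C_{n}^{!}}M$ is an iterated extension of shifts of $\mathcal{A}$, obtained by applying the exact functor $B_{n}^{!}\otimes_{C_{n}^{!}}-$ to a composition series of $M$. Since $\Omega_{B_{n}^{!}}^{k}\mathcal{A}\cong B_{n}^{!}\otimes_{C_{n}^{!}}\Omega_{C_{n}^{!}}^{k}K$ by the lemma above, the minimal graded projective resolution of $\mathcal{A}$ is read off the (explicit) resolution of $K$ over the exterior algebra $C_{n}^{!}$; from it one computes the Auslander--Reiten triangle ending at $\mathcal{A}$ and verifies that its middle term is indecomposable (this excludes $A_{\infty}^{\infty}$ and $\mathbb{N}A_{\infty}$) and, more generally, that every Auslander--Reiten triangle in $\mathcal{T}^{\prime}$ has middle term a direct sum of at most two non-isomorphic indecomposables, so that the Auslander--Reiten quiver has no vertex of valency $\geq 3$ (this excludes $D_{\infty}$). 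Hence $\Delta=A_{\infty}$ and every component of the Auslander--Reiten quiver of $\mathcal{T}^{\prime}$ is of type $\mathbb{Z}A_{\infty}$. The existence part and the absence of periodicity are essentially formal, using only thickness and $\sigma$-stability of $\mathcal{T}^{\prime}$, Happel's theorem, and the Serre-functor bookkeeping; the genuine work is the bound on the number of summands of the middle terms, which rests on the concrete description of the indecomposables of $\mathcal{T}^{\prime}$ through the induced modules and on the syzygies of $K$ over $C_{n}^{!}$.
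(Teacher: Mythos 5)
Your first half is sound and runs parallel to the paper: existence of Auslander--Reiten triangles in $\mathcal{T}^{\prime}$ follows because the ambient stable category $\underline{gr}_{B_{n}^{!}}$ has them with translate $\sigma \Omega ^{2}$ (up to shift), $\mathcal{T}^{\prime}$ is closed under $\sigma$ and under $\Omega ^{\pm 2}$, and the middle term lies in $\mathcal{T}^{\prime}$; your cone argument (two vertices of a triangle in a triangulated subcategory force the third in) is in fact a cleaner justification than the paper's, which instead applies $\phi$ to the short exact sequences and uses the long homology sequence to see that the middle terms have $Z$-torsion homology. So up to ``$\mathcal{T}^{\prime}$ has AR triangles, and they are AR triangles of $\underline{gr}_{B_{n}^{!}}$'' you and the paper agree.

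The gap is in the shape statement. The paper gets $ZA_{\infty}$ for free: it cites the earlier result (from [MZ]) that \emph{all} Auslander--Reiten components of $\underline{gr}_{B_{n}^{!}}$ are of type $ZA_{\infty}$, and since the AR triangles of $\mathcal{T}^{\prime}$ coincide with those of the ambient category and $\mathcal{T}^{\prime}$ is closed under the translate and the middle terms, its components are full ambient components, hence of type $ZA_{\infty}$. You instead attempt an intrinsic Riedtmann/Happel--Preiser--Ringel analysis inside $\mathcal{T}^{\prime}$, and the decisive step --- excluding $A_{\infty}^{\infty}$ and $D_{\infty}$ by bounding the number of indecomposable summands of middle terms --- is only announced, not proved: you say it ``rests on the syzygies of $K$ over $C_{n}^{!}$'' and on computing the AR triangle ending at $B_{n}^{!}\otimes _{C_{n}^{!}}K$, but no such computation is given, and even if it were, an indecomposable middle term at that one object would only control the component containing it, not every component of $\mathcal{T}^{\prime}$. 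In addition, your non-periodicity argument invokes a Serre functor on $D^{b}(QgrB_{n})$ coming from Artin--Schelter regularity, which is not established in the paper and is not needed once the ambient component shape is known. As written, then, the proposal proves existence of AR triangles in $\mathcal{T}^{\prime}$ but leaves the $ZA_{\infty}$ claim unproved at exactly the point you yourself flag as the main obstacle; quoting (or reproving) the [MZ] result on the components of $\underline{gr}_{B_{n}^{!}}$ is what closes it.
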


\begin{proof}
Let $M$ be an indecomposable non projective module in $\mathcal{T}$ $%
^{\prime }$. Then we have almost split sequences: $0\rightarrow \sigma
\Omega ^{2}M\rightarrow E\rightarrow M\rightarrow 0$ and $0\rightarrow
M\rightarrow F\rightarrow \sigma ^{-1}\Omega ^{-2}M\rightarrow 0$, since the
category \emph{T }is closed under the Nakayama automorphism, $\mathcal{T} $ $%
^{\prime }$ is also closed under the Nakayama automorphism and $\sigma
\Omega ^{2}M$, $\sigma ^{-1}\Omega ^{-2}M$ are objects in $\mathcal{T}$ $%
^{\prime }$. From the exact sequences of complexes: $0\rightarrow \phi
(M)\rightarrow \phi ($ $E)\rightarrow \phi (\sigma \Omega ^{2}M)\rightarrow
0 $ and $0\rightarrow \phi (\sigma ^{-1}\Omega ^{-2}M)\rightarrow \phi
(F)\rightarrow \phi (M)\rightarrow 0$ and the long homology sequence we get
that both $\phi ($ $E)$ and $\phi ($ $F)$ are in \emph{T}. Therefore: $E$
and $F$ are in $\mathcal{T}$ $^{\prime }$. We have proved $\mathcal{T}$ $%
^{\prime }$has almost split sequences and they are almost split sequences in 
$gr_{_{B_{n}^{!}}}$. We proved in [MZ] that the Auslander Reiten components
of \underline{$gr$}$_{B_{n}^{!}}$ are of type $ZA_{\infty }$. It follows $%
\sigma \Omega ^{2}M\rightarrow E\rightarrow M\rightarrow \sigma \Omega
^{2}M[-1]$ and $M\rightarrow F\rightarrow \sigma ^{-1}\Omega
^{-2}M\rightarrow M[-1]$ are Auslander Reiten triangles and that the
Auslander Reiten components are of type $ZA_{\infty }$.
\end{proof}

We will characterize now the full subcategory $\mathcal{F}$ $^{\prime }$ of 
\underline{$gr$}$_{B_{n}^{!}}$such that $\overline{\phi }(\mathcal{F}$ $%
^{\prime })=\mathcal{F}$\emph{\ .}

\begin{theorem}
The subcategory $\mathcal{F}$ $^{\prime }$ of \underline{$gr$}$_{B_{n}^{!}}$%
such that $\overline{\phi }(\mathcal{F}$ $^{\prime })=\mathcal{F}$ consists
of the graded $B_{n}^{!}$-modules $M$ such that the restriction of $M$ to $%
C_{n}^{!}$ is injective.
\end{theorem}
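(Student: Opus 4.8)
The plan is to realize $\mathcal{F}^{\prime }$ as a left orthogonal inside the triangulated category $\underline{gr}_{B_{n}^{!}}$ and then to compute that orthogonal from the generation statement for $\mathcal{T}^{\prime }$ proved just above, together with the fact that $C_{n}^{!}\subseteq B_{n}^{!}$ is a Frobenius extension. Since $\overline{\phi}$ is a duality carrying $\mathcal{T}^{\prime }$ to $\mathcal{T}$, and $\mathcal{F}$ is by definition the right orthogonal $\mathcal{T}^{\perp }=\{X^{\circ }\mid Hom(\mathcal{T},X^{\circ })=0\}$, the category $\mathcal{F}^{\prime }=\overline{\phi}^{-1}(\mathcal{F})$ consists precisely of the modules $M\in \underline{gr}_{B_{n}^{!}}$ with $\underline{Hom}_{B_{n}^{!}}(M,T^{\prime })=0$ for every $T^{\prime }\in \mathcal{T}^{\prime }$ (a duality interchanges right and left orthogonals). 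So the task is to identify this left orthogonal.

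Next I would reduce the orthogonality condition to the generators. The family of induced modules $\{B_{n}^{!}\otimes _{C_{n}^{!}}N\}$ is stable, up to isomorphism, under the Nakayama automorphism $\sigma $, since $\sigma (B_{n}^{!}\otimes _{C_{n}^{!}}N)\cong B_{n}^{!}\otimes _{C_{n}^{!}}({}_{\sigma _{0}}N)$ with $\sigma _{0}=\sigma |_{C_{n}^{!}}$ the automorphism of $C_{n}^{!}$ obtained in the first section; and it is stable under the shift of $\underline{gr}_{B_{n}^{!}}$ by the lemma $\Omega _{B_{n}^{!}}(B_{n}^{!}\otimes _{C_{n}^{!}}N)\cong B_{n}^{!}\otimes _{C_{n}^{!}}\Omega _{C_{n}^{!}}(N)$ and its cosyzygy analogue (the latter being formal, $\Omega $ being invertible on the stable category). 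Hence the smallest triangulated subcategory containing the induced modules is automatically $\sigma $-closed, so it equals $\mathcal{T}^{\prime }$; and since $\underline{Hom}_{B_{n}^{!}}(M,-)$ is cohomological, the condition $\underline{Hom}_{B_{n}^{!}}(M,\mathcal{T}^{\prime })=0$ is equivalent to $\underline{Hom}_{B_{n}^{!}}(M,B_{n}^{!}\otimes _{C_{n}^{!}}N)=0$ for every finitely generated graded $C_{n}^{!}$-module $N$.

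Now I would bring in restriction. The tensor--hom adjunction gives $Hom_{B_{n}^{!}}(M,Hom_{C_{n}^{!}}(B_{n}^{!},N))\cong Hom_{C_{n}^{!}}(M|_{C_{n}^{!}},N)$, and $C_{n}^{!}\subseteq B_{n}^{!}$ is a Frobenius extension: both algebras are graded Frobenius (the exterior algebra $C_{n}^{!}$, and $B_{n}^{!}$, which is selfinjective by [Sm]), and by $B_{n}^{!}=C_{n}^{!}\oplus ZC_{n}^{!}$ the algebra $B_{n}^{!}$ is graded free of rank two as a left and as a right $C_{n}^{!}$-module. Consequently coinduction $Hom_{C_{n}^{!}}(B_{n}^{!},-)$ and induction $B_{n}^{!}\otimes _{C_{n}^{!}}-$ differ only by a graded shift and a twist by an automorphism of $B_{n}^{!}$, both autoequivalences of $gr_{B_{n}^{!}}$ that preserve projectivity after restriction to $C_{n}^{!}$; and since restriction and induction are exact and carry projectives to projectives ($B_{n}^{!}$ restricts to $C_{n}^{!}\oplus C_{n}^{!}[-1]$, and $C_{n}^{!}$ induces up to $B_{n}^{!}$), the adjunction descends to the stable categories. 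Therefore $\underline{Hom}_{B_{n}^{!}}(M,B_{n}^{!}\otimes _{C_{n}^{!}}N)=0$ for all $N$ if and only if $\underline{Hom}_{C_{n}^{!}}(M|_{C_{n}^{!}},N)=0$ for all $N$, i.e. if and only if $M|_{C_{n}^{!}}$ is zero in $\underline{gr}_{C_{n}^{!}}$; and as $C_{n}^{!}$ is selfinjective this says exactly that $M|_{C_{n}^{!}}$ is projective, equivalently injective, which is the assertion.

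The step needing the most care is the Frobenius-extension input: one must pin down explicitly a Frobenius homomorphism $B_{n}^{!}\rightarrow C_{n}^{!}$ and a pair of dual bases, built from $B_{n}^{!}=C_{n}^{!}\oplus ZC_{n}^{!}$ and the relation $\sum_{i=1}^{n}X_{i}\delta _{i}+Z^{2}=0$, equivalently identify $Hom_{C_{n}^{!}}(B_{n}^{!},C_{n}^{!})$ with a shift-and-twist of $B_{n}^{!}$ as a $B_{n}^{!}$--$C_{n}^{!}$ bimodule, and then check that the resulting natural isomorphism respects the ideal of morphisms factoring through projectives, so that it is genuinely an adjunction between the stable categories. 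Everything else is formal, using only the generation theorem for $\mathcal{T}^{\prime }$ and the syzygy lemma for induced modules already established.
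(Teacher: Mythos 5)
Your proposal is correct in outline, but it follows a genuinely different route from the paper. The paper splits the statement into two implications: for $M\in\mathcal{F}'$ it tests only against the induced modules $B_{n}^{!}\otimes_{C_{n}^{!}}\Omega^{2}L$, converts the vanishing of stable Hom into $Ext_{B_{n}^{!}}^{1}(B_{n}^{!}\otimes_{C_{n}^{!}}L,M)=0$ via the Auslander--Reiten formula, and then uses the ordinary induction adjunction $Ext_{B_{n}^{!}}^{k}(B_{n}^{!}\otimes_{C_{n}^{!}}L,M)\cong Ext_{C_{n}^{!}}^{k}(L,M)$ (obtained by inducing a projective resolution, $B_{n}^{!}$ being free over $C_{n}^{!}$) to conclude that $M|_{C_{n}^{!}}$ is injective; for the converse it does \emph{not} invoke the generation theorem for $\mathcal{T}'$, but re-runs the weakly Koszul filtration analysis of an arbitrary object of $\mathcal{T}'$, resolving the filtration quotients by induced modules and finishing with the AR formula and the replacement of $M$ by $\Omega^{2+m}M$. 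You instead treat both implications symmetrically as a single orthogonality computation: $\mathcal{F}'$ is the stable left orthogonal of $\mathcal{T}'$, the generation theorem (with your correct observation that closure under $\sigma$ is automatic because $\sigma$ preserves $C_{n}^{!}$ and hence permutes the induced modules) together with the syzygy lemma $\Omega_{B_{n}^{!}}(B_{n}^{!}\otimes_{C_{n}^{!}}N)\cong B_{n}^{!}\otimes_{C_{n}^{!}}\Omega_{C_{n}^{!}}(N)$ reduces everything to $\underline{Hom}_{B_{n}^{!}}(M,B_{n}^{!}\otimes_{C_{n}^{!}}N)=0$ for all $N$, and the single identity $\underline{Hom}_{B_{n}^{!}}(M,B_{n}^{!}\otimes_{C_{n}^{!}}N)\cong\underline{Hom}_{C_{n}^{!}}(M|_{C_{n}^{!}},N')$ (induction agreeing with coinduction up to twist and shift, i.e.\ the Frobenius-extension property of $C_{n}^{!}\subseteq B_{n}^{!}$, plus the standard check that the adjunction respects maps factoring through projectives) finishes both directions at once. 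What this buys is a much shorter and more conceptual converse, with no filtration induction and no AR formula; what it costs is the Frobenius-extension lemma, which the paper never needs. That lemma is true here, but be precise about it: the twist should be by an automorphism of $C_{n}^{!}$ acting on the coefficient module, i.e.\ $Hom_{C_{n}^{!}}(B_{n}^{!},C_{n}^{!})\cong B_{n}^{!}$ with its right $C_{n}^{!}$-structure twisted and shifted (your closing paragraph states this correctly, while the phrase ``twist by an automorphism of $B_{n}^{!}$'' earlier is the wrong formulation), and the decisive input making it work is exactly the fact from Section 1 that the Nakayama automorphism $\sigma$ of $B_{n}^{!}$ restricts to an automorphism of $C_{n}^{!}$ --- the same fact you use for $\sigma$-stability of the induced modules --- combined with $D(B_{n}^{!})\cong B_{n}^{!}$ and $D(C_{n}^{!})\cong C_{n}^{!}$ as twisted graded bimodules. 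With that lemma written out, your argument is complete and is a legitimate alternative to the proof in the paper.
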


\begin{proof}
Let $M\in \mathcal{F}$ $^{\prime }.$ There is an isomorphism: $%
Hom_{D^{b}(Qgr_{B_{n}^{op}})}(\mathcal{T}$, $\pi \phi (M))\cong \underline{%
Hom}_{grB_{n}^{!}}(M,\mathcal{T}$ $^{\prime })=0$, which implies $\underline{%
Hom}_{B_{n}^{!}}(M,\mathcal{T}$ $^{\prime })=0$

In particular for any induced module $B_{n}^{!}\underset{C_{n}^{!}}{\otimes }%
\Omega ^{2}L$ we have:

$\underline{Hom}_{B_{n}^{!}}(M,B_{n}^{!}\underset{C_{n}^{!}}{\otimes }\Omega
^{2}L)=0=\underline{Hom}_{B_{n}^{!}}(\Omega ^{-2}M,B_{n}^{!}\underset{%
C_{n}^{!}}{\otimes }L).$

By Auslander-Reiten formula:

$D(\underline{Hom}_{B_{n}^{!}}(\Omega ^{-2}M,B_{n}^{!}\underset{C_{n}^{!}}{%
\otimes }L))=Ext_{B_{n}^{!}}^{1}(B_{n}^{!}\underset{C_{n}^{!}}{\otimes }%
L,M)=0$ for all $L\in gr_{C_{n}^{!}}.$

Consider the exact sequences: $0\rightarrow \Omega
_{C_{n}^{!}}(L)\rightarrow F\rightarrow L\rightarrow 0$, with $F$ the
projective cover of $L.$It induces an exact sequence:\newline
$0\rightarrow B_{n}^{!}\underset{C_{n}^{!}}{\otimes }\Omega
_{C_{n}^{!}}(L)\rightarrow B_{n}^{!}\underset{C_{n}^{!}}{\otimes }%
F\rightarrow B_{n}^{!}\underset{C_{n}^{!}}{\otimes }L\rightarrow 0$

By the long homology sequence, there is an exact sequence:

$0\rightarrow Hom_{B_{n}^{!}}(B_{n}^{!}\underset{C_{n}^{!}}{\otimes }%
L,M)\rightarrow Hom_{B_{n}^{!}}(B_{n}^{!}\underset{C_{n}^{!}}{\otimes }F,M)$ 
$\rightarrow Hom_{B_{n}^{!}}(B_{n}^{!}\underset{C_{n}^{!}}{\otimes }\Omega
_{C_{n}^{!}}(L),M)\rightarrow Ext_{B_{n}^{!}}^{1}(B_{n}^{!}\underset{%
C_{n}^{!}}{\otimes }L,M)\rightarrow 0$ which by the adjunction isomorphism
are isomorphic to the exact sequences: $0\rightarrow
Hom_{C_{n}^{!}}(L,M)\rightarrow Hom_{C_{n}^{!}}(F,M)$ $\rightarrow
Hom_{C_{n}^{!}}(\Omega _{C_{n}^{!}}(L),M)\rightarrow
Ext_{C_{n}^{!}}^{1}(L,M)\rightarrow 0.$

It follows, $Ext_{C_{n}^{!}}^{1}(L,M)\cong Ext_{B_{n}^{!}}^{1}(B_{n}^{!}%
\underset{C_{n}^{!}}{\otimes }L,M)$ and by dimension shift

$Ext_{C_{n}^{!}}^{k}(L,M)\cong Ext_{B_{n}^{!}}^{k}(B_{n}^{!}\underset{%
C_{n}^{!}}{\otimes }L,M)=0$ for all $k\geq 1.$

We have proved the restriction of $M$ to $C_{n}^{!}$ is injective.

Let's assume now the restriction of $M$ to $C_{n}^{!}$ is injective:

Then for any integer $n$ the restriction of $\Omega ^{n}M$ to $C_{n}^{!}$ is
injective.

Let $X\in \mathcal{T}^{\prime }$. There exists an integer $n\geq 0$ such
that $\Omega ^{n}X$ $=Y$, is weakly Koszul and it has a filtration: $%
Y=U_{p}\supset U_{p-1}\supset ...U_{1}\supset U_{0}$ such that $%
U_{i}/U_{i-1} $ is Koszul, and for all $k\geq 0$, $J^{k}U_{i}\cap
U_{i-1}=J^{k}U_{i-1}.$The Koszul modules $G_{B_{n}^{!}}(U_{i}/U_{i-1})=V_{i}$
are of $Z$-.torsion and each $Z^{j}V_{i}$ is Koszul.

Set $N=\Omega ^{n}M$, the restriction of $N$ to $C_{n}^{!}$ is injective.

There exist exact sequences:

$0\rightarrow F_{B_{n}}(Z^{k_{i}}V_{i})[-k_{i}]\rightarrow
F_{B_{n}}(Z^{k_{i}-1}V_{i}/Z^{k_{i}}V_{i})[-k_{i}+1]...$

$\rightarrow F_{B_{n}}(V_{i}/ZV_{i})\rightarrow U_{i}/U_{i-1}\rightarrow 0$
where each $F_{B_{n}}(Z^{j}V_{i}/Z^{j+1}V_{i})$ $\cong B_{n}^{!}\underset{%
C_{n}^{!}}{\otimes }X_{ij}$ is an induced module of a Koszul $C_{n}^{!}$%
-module $X_{ij}$.

The exact sequences: $0\rightarrow B_{n}^{!}\underset{C_{n}^{!}}{\otimes }%
X_{ik_{i}}\rightarrow B_{n}^{!}\underset{C_{n}^{!}}{\otimes }%
X_{ik_{i}-1}\rightarrow K_{k_{i}-2}\rightarrow 0$ induce exact sequences:

$0\rightarrow Hom_{B_{n}^{!}}(K_{k_{i}-2},N)\rightarrow
Hom_{B_{n}^{!}}(B_{n}^{!}\underset{C_{n}^{!}}{\otimes }X_{ik_{i}-1}$ $,N)$ $%
\rightarrow Hom_{B_{n}^{!}}(B_{n}^{!}\underset{C_{n}^{!}}{\otimes }%
X_{ik_{i}},N)$

$\rightarrow Ext_{B_{n}^{!}}^{1}(K_{k_{i}-2},N)\rightarrow
Ext_{B_{n}^{!}}^{1}($ $B_{n}^{!}\underset{C_{n}^{!}}{\otimes }%
X_{ik_{i}-1},N)\rightarrow Ext_{B_{n}^{!}}^{1}(B_{n}^{!}\underset{C_{n}^{!}}{%
\otimes }X_{ik_{i}},N)$

$\rightarrow Ext_{B_{n}^{!}}^{2}(K_{k_{i}-2},N)\rightarrow
Ext_{B_{n}^{!}}^{2}($ $B_{n}^{!}\underset{C_{n}^{!}}{\otimes }%
X_{ik_{i}-1},N)\rightarrow ..$

where $Ext_{B_{n}^{!}}^{j}($ $B_{n}^{!}\underset{C_{n}^{!}}{\otimes }%
X_{ik_{i}-l},N)\cong Ext_{C_{n}^{!}}^{j}(X_{ik_{i}-1},N)=0$ for all $j\geq
1. $

It follows: $Ext_{B_{n}^{!}}^{j}(K_{k_{i}-2},N)=0$ for all $j\geq 2.$But $%
Ext_{B_{n}^{!}}^{j}(K_{k_{i}-2},N)$

$\cong Ext_{B_{n}^{!}}^{j-1}(K_{k_{i}-2},\Omega ^{-1}N)$ and $%
Ext_{B_{n}^{!}}^{j}(K_{k_{i}-2},\Omega ^{-1}N)=0$ for all $j\geq 1$.

The sequences: $0\rightarrow K_{k_{i}-2}\rightarrow B_{n}^{!}\underset{%
C_{n}^{!}}{\otimes }X_{ik_{i}-2}\rightarrow K_{k_{i}-3}\rightarrow 0$ induce
exact sequences:

$Ext_{B_{n}^{!}}^{j}(K_{k_{i}-3},\Omega ^{-1}N)\rightarrow
Ext_{B_{n}^{!}}^{j}($ $B_{n}^{!}\underset{C_{n}^{!}}{\otimes }%
X_{ik_{i}-2},\Omega ^{-1}N)\rightarrow
Ext_{B_{n}^{!}}^{j}(K_{k_{i}-2},\Omega ^{-1}N)$

$\rightarrow Ext_{B_{n}^{!}}^{j+1}(K_{k_{i}-3},\Omega ^{-1}N)\rightarrow
Ext_{B_{n}^{!}}^{j+1}($ $B_{n}^{!}\underset{C_{n}^{!}}{\otimes }%
X_{ik_{i}-2},\Omega ^{-1}N)\rightarrow ..$

Therefore $Ext_{B_{n}^{!}}^{j+1}(K_{k_{i}-3},\Omega ^{-1}N)=0$ for $j\geq 1$
which implies

$Ext_{B_{n}^{!}}^{j}(K_{k_{i}-3},\Omega ^{-2}N)=0$ for $j\geq 1.$

Continuing by induction there exist some $m\geq 0$ such that

$Ext_{B_{n}^{!}}^{j}(U_{i}/U_{i-1},\Omega ^{-m}N)=0$ for $j\geq 1.$

By induction on $p$ we obtain $Ext_{B_{n}^{!}}^{j}(Y,\Omega ^{-m}N)=0$ for $%
j\geq 1,$in particular $Ext_{B_{n}^{!}}^{1}(Y,\Omega ^{-m}N)=0.$

By Auslander-Reiten formula, $Ext_{B_{n}^{!}}^{1}(Y,\Omega ^{-m}N)\cong D(%
\underline{Hom}_{B_{n}^{!}}(\Omega ^{-m}N,\Omega ^{2}Y))\cong D(\underline{%
Hom}_{B_{n}^{!}}(N,\Omega ^{2+m}Y))\cong D(\underline{Hom}%
_{B_{n}^{!}}(\Omega ^{n}M,\Omega ^{2+m}Y)).$

It follows \underline{$Hom$}$_{B_{n}^{!}}(\Omega ^{n}M,\Omega ^{2+m+n}X)=0$
which implies \underline{$Hom$}$_{B_{n}^{!}}(M,\Omega ^{2+m}X)=0.$

Observe $m$ depends only on $X$. Taking $\Omega ^{2+m}M$ instead of $M$ we
obtain\linebreak\ \underline{$Hom$}$_{B_{n}^{!}}(\Omega ^{2+m}M,\Omega
^{2+m}X)=$\underline{$Hom$}$_{B_{n}^{!}}(M,X)=0.$

Therefore \underline{$Hom$}$_{grB_{n}^{!}}(M,X)=0$. It follows $M\in 
\mathcal{F}^{\prime }$.
\end{proof}

\begin{theorem}
\bigskip The category $\mathcal{F}^{\prime }$ is closed under the Nakayama
automorphism, $\mathcal{F}^{\prime }$ has Auslander Reiten sequences and
they are of the form $ZA_{\infty }$. Moreover, $\mathcal{F}^{\prime }$ is a
triangulated category with Auslander-Reiten triangles and they are of type $%
ZA_{\infty }$.
\end{theorem}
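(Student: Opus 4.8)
The plan is to reduce everything to the characterization obtained in the previous theorem, namely that $\mathcal{F}^{\prime}$ consists exactly of those graded $B_{n}^{!}$-modules $M$ whose restriction $res_{C_{n}^{!}}M$ is an injective $C_{n}^{!}$-module, and then to import the Auslander-Reiten theory of $\underline{gr}_{B_{n}^{!}}$ established in $[MZ]$. First I would check closure under the Nakayama automorphism. We saw in Section one that $\sigma$ restricts to a graded automorphism $\sigma|_{C_{n}^{!}}$ of $C_{n}^{!}$; hence for $M\in\mathcal{F}^{\prime}$ the $C_{n}^{!}$-module $res_{C_{n}^{!}}(\sigma M)$ is obtained from the injective module $res_{C_{n}^{!}}M$ by twisting the action through $\sigma|_{C_{n}^{!}}$, and twisting by an automorphism is an exact autoequivalence of $gr_{C_{n}^{!}}$ which preserves injectivity. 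Therefore $\sigma M\in\mathcal{F}^{\prime}$. Note also that, since a direct summand of an injective module is injective, the restriction criterion immediately shows $\mathcal{F}^{\prime}$ is closed under direct summands.

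Next I would record that $\mathcal{F}^{\prime}$ is a triangulated subcategory of $\underline{gr}_{B_{n}^{!}}$, hence in particular closed under the syzygy functor $\Omega^{\pm1}$: indeed $\mathcal{F}=\mathcal{T}^{\perp}$ is a thick subcategory of $D^{b}(Qgr_{B_{n}})$ and the duality $\overline{\phi}$ carries it to $\mathcal{F}^{\prime}$. One can also see closure under $\Omega^{\pm1}$ directly: since $B_{n}^{!}=C_{n}^{!}\oplus ZC_{n}^{!}$ is free as a $C_{n}^{!}$-module, $res_{C_{n}^{!}}$ sends projective $B_{n}^{!}$-modules to projective, hence ($C_{n}^{!}$ being selfinjective) to injective, $C_{n}^{!}$-modules; so restricting a syzygy sequence $0\to\Omega M\to P\to M\to 0$ of a module $M\in\mathcal{F}^{\prime}$ shows $res_{C_{n}^{!}}(\Omega M)$ is a kernel of a surjection of injective modules over the Frobenius algebra $C_{n}^{!}$, hence injective, and dually for $\Omega^{-1}$. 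Combining this with the previous paragraph, the Auslander-Reiten translate $\tau=\sigma\Omega^{2}$ and its inverse $\tau^{-1}=\sigma^{-1}\Omega^{-2}$ preserve $\mathcal{F}^{\prime}$.

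Now I would produce the Auslander-Reiten sequences. Let $M$ be indecomposable and non-projective in $\mathcal{F}^{\prime}$. Since $B_{n}^{!}$ is selfinjective, $gr_{B_{n}^{!}}$ has almost split sequences $0\to\sigma\Omega^{2}M\to E\to M\to 0$ and $0\to M\to F\to\sigma^{-1}\Omega^{-2}M\to 0$. The end terms lie in $\mathcal{F}^{\prime}$ by the previous step, so applying the exact functor $res_{C_{n}^{!}}$ yields short exact sequences of $C_{n}^{!}$-modules with injective end terms; because $C_{n}^{!}$ is selfinjective the class of injective $C_{n}^{!}$-modules is closed under extensions, so $res_{C_{n}^{!}}E$ and $res_{C_{n}^{!}}F$ are injective and $E,F\in\mathcal{F}^{\prime}$. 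Thus both sequences are almost split already in $\mathcal{F}^{\prime}$, and passing to the triangulated structure they give the Auslander-Reiten triangles $\sigma\Omega^{2}M\to E\to M\to\sigma\Omega^{2}M[-1]$ and $M\to F\to\sigma^{-1}\Omega^{-2}M\to M[-1]$ inside $\mathcal{F}^{\prime}$.

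Finally, the shape of the components: since the almost split sequences in $\mathcal{F}^{\prime}$ coincide with those of $gr_{B_{n}^{!}}$ and $\mathcal{F}^{\prime}$ is closed under $\tau^{\pm1}$ and under direct summands, once a vertex lies in $\mathcal{F}^{\prime}$ so does the whole connected component of the Auslander-Reiten quiver of $\underline{gr}_{B_{n}^{!}}$ through it; by $[MZ]$ those components are of type $ZA_{\infty}$, whence the Auslander-Reiten components and triangles of $\mathcal{F}^{\prime}$ are of type $ZA_{\infty}$ as well. I expect the only point requiring genuine care to be the stability step, that is, checking that the middle terms $E$ and $F$ of the almost split sequences remain in $\mathcal{F}^{\prime}$; this rests precisely on the restriction criterion for membership in $\mathcal{F}^{\prime}$ together with the selfinjectivity of $C_{n}^{!}$, which makes the injective $C_{n}^{!}$-modules closed under extensions and under syzygies, and on the fact that $\tau$ preserves $\mathcal{F}^{\prime}$, which in turn uses both the freeness of $B_{n}^{!}$ over $C_{n}^{!}$ and the compatibility of $\sigma$ with the subalgebra $C_{n}^{!}$.
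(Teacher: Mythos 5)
Your proof is correct and follows essentially the same route as the paper: closure of $\mathcal{F}^{\prime }$ under $\Omega ^{\pm 1}$ and $\sigma $ via the restriction-to-$C_{n}^{!}$ criterion (using that $B_{n}^{!}$ is free over the selfinjective $C_{n}^{!}$ and that $\sigma $ restricts to $C_{n}^{!}$), then importing the almost split sequences of $gr_{B_{n}^{!}}$ and citing $[MZ]$ for the $ZA_{\infty }$ shape. The only differences are cosmetic: you spell out the check that the middle terms $E$, $F$ stay in $\mathcal{F}^{\prime }$ (which the paper dismisses as clear) and you get the triangulated structure from thickness of $\mathcal{F}$ under the duality $\overline{\phi }$, whereas the paper exhibits cones directly via the sequence $0\rightarrow M\rightarrow P\oplus N\rightarrow L\rightarrow 0$.
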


\begin{proof}
Let $M$ be an indecomposable non projective object in $\mathcal{F}^{\prime }$
and $0\rightarrow \Omega M\rightarrow P\rightarrow M\rightarrow 0$ exact
with $P$ the projective cover of $M.$Since the restriction of $P$ to $%
C_{n}^{!}$ is projective and restriction is an exact functor, it follows $%
\Omega M$ is in $\mathcal{F}^{\prime }$. Similarly, $\Omega ^{-1}M$ is in $%
\mathcal{F}^{\prime }$.

If $P$ is a projective $B_{n}^{!}$ -module and $\sigma $ the Nakayama
automorphism, then $\sigma P$ is also projective. Therefore: $\mathcal{F}%
^{\prime }$ is closed under the Nakayama automorphism.

It is clear now that $\mathcal{F}^{\prime }$ has Auslander Reiten sequences
and they are of the form $ZA_{\infty }$, by [MZ].

Let $f:M\rightarrow N$ be a homomorphism with $M$, $N$ in $\mathcal{F}%
^{\prime }$ and let $j:M\rightarrow P$ be the injective envelope of $M$.
There is an exact sequence: $0\rightarrow M\rightarrow P\oplus N\rightarrow
L\rightarrow 0$ with $M$ and $P\oplus N$ in $\mathcal{F}^{\prime }$. Then $L$
is also in $\mathcal{F}^{\prime }$ and the triangle $M\rightarrow
N\rightarrow L\rightarrow \Omega ^{-1}M$ is a triangle in $\mathcal{F}%
^{\prime }$.
\end{proof}

We have characterized the pair $(\mathcal{F}^{\prime },\mathcal{T}^{\prime
}) $ corresponding to $(\mathcal{T},\mathcal{F})$ under the duality $%
\overline{\phi }:\underline{gr}_{_{B_{n}^{!}}}\rightarrow
D^{b}(Qgr_{B_{n}^{op}}).$ Applying the usual duality $D:\underline{gr}%
_{_{B_{n}^{!}}}\rightarrow $ \underline{$gr$}$_{_{B_{n}^{!op}}}$we obtain a
pair $(D(\mathcal{T}^{\prime }),D(\mathcal{F}^{\prime }))$ which corresponds
to $(\mathcal{T},\mathcal{F}) $ under the equivalence: $\overline{\phi }D:$ 
\underline{$gr$}$_{_{B_{n}^{!op}}}\rightarrow D^{b}(Qgr_{B_{n}^{op}})$.

Observe the following:

From the bimodule isomorphism $B_{n}^{!}\sigma ^{-1}\cong D(B_{n}^{!})$, for
any induced $B_{n}^{!}$-module $B_{n}^{!}\underset{C_{n}^{!}}{\otimes }X$,
there are natural isomorphisms:

$Hom_{B_{n}^{!}}(B_{n}^{!}\underset{C_{n}^{!}}{\otimes }X,$ $%
D(B_{n}^{!}))\cong D(B_{n}^{!}\underset{C_{n}^{!}}{\otimes }X)\cong
Hom_{B_{n}^{!}}(B_{n}^{!}\underset{C_{n}^{!}}{\otimes }X,$ $B_{n}^{!}\sigma
^{-1})\cong $

$Hom_{C_{n}^{!}}(X,$ $B_{n}^{!}\sigma ^{-1})\cong Hom_{C_{n}^{!}}(X,$ $%
C_{n}^{!})\underset{C_{n}^{!}}{\otimes }B_{n}^{!}\sigma ^{-1}$.

For any finitely generated right $C_{n}^{!}$-module $Y$ there exists a left $%
C_{n}^{!}$-module $X$ such that $Hom_{C_{n}^{!}}(X,$ $C_{n}^{!})\cong Y$,
hence $D(B_{n}^{!}\underset{C_{n}^{!}}{\otimes }X)\sigma \cong Y\underset{%
C_{n}^{!}}{\otimes }B_{n}^{!}$. Since $\mathcal{T}^{\prime }$ is invariant
under $\sigma $, $D(\mathcal{T}^{\prime })$ is also invariant under $\sigma $
and $D(\mathcal{T}^{\prime })$ contains the induced modules.

Let $\emph{B}$ be a triangulated subcategory of $\underline{gr}%
_{B_{n}^{!op}} $ containing the induced modules. A triangle $A$ $\overset{f}{%
\rightarrow }B\overset{g}{\rightarrow }C\overset{h}{\rightarrow }A[1]$ in $%
\underline{gr}_{B_{n}^{!}}$ comes from an exact sequence \linebreak $%
0\rightarrow A$ $\overset{\left( 
\begin{array}{c}
f \\ 
u%
\end{array}%
\right) }{\rightarrow }B\oplus P\overset{(g,v)}{\rightarrow }C\rightarrow 0$
with \ $P$ a projective module, hence $D(C)\overset{D(g)}{\rightarrow }D(B)%
\overset{D(f)}{\rightarrow }D(A)\rightarrow D(C)[1]$ is a triangle in 
\underline{$gr$}$_{_{B_{n}^{!op}}}$. Therefore: $D(\emph{B})$ is a
triangulated category containing the duals of the induced modules $D(Y%
\underset{C_{n}^{!}}{\otimes }B_{n}^{!})\cong D($ $D(B_{n}^{!}\underset{%
C_{n}^{!}}{\otimes }X)\sigma )\cong D(Hom_{C_{n}^{!}}(X,$ $C_{n}^{!})%
\underset{C_{n}^{!}}{\otimes }B_{n}^{!})\cong Hom_{B_{n}^{!}}($ $%
Hom_{C_{n}^{!}}(X,$ $C_{n}^{!})\underset{C_{n}^{!}}{\otimes }%
B_{n}^{!},\sigma B_{n}^{!})\cong \sigma B_{n}^{!}\underset{C_{n}^{!}}{%
\otimes }X.$ Clearly $\sigma D(\emph{B})$ is a triangulated category
containing the induced modules. Therefore: $\mathcal{T}^{\prime }\subset $ $%
\sigma D(\emph{B}).$ Since $\mathcal{T}^{\prime }$ is closed under Nakayama%
\'{}%
s automorphism $\sigma $, $\mathcal{T}^{\prime }\subset $ $D(\emph{B})$. It
follows $D(\mathcal{T}^{\prime })\subset $ $\emph{B}$ and $D(\mathcal{T}%
^{\prime })$ can be described as the smallest triangulated subcategory of 
\underline{$gr$}$_{_{B_{n}^{!op}}}$ that contains the induced modules.

The usual duality $D$ induces an isomorphism:

$D:Ext_{C_{n}^{!}}^{i}(M,N)\rightarrow Ext_{C_{n}^{op!}}^{i}(D(N),D(M)).$

It follows that the restriction of $M$ to $C_{n}^{!}$ is injective if and
only if the restriction of $D(M)$ to $C_{n}^{op!}$ is projective
(injective). It follows $D(\mathcal{F}^{\prime })$ is the category of $%
B_{n}^{op!}$-modules whose restriction to $C_{n}^{op!}$ is injective.

\emph{T=}$D(\mathcal{T}^{\prime })$ is a "epasse" subcategory of \underline{$%
gr$}$_{_{B_{n}^{!op}}}.$The functor $\overline{\phi }D$ induces an
equivalence of categories: \underline{$gr$}$_{_{B_{n}^{!op}}}/\emph{T}\cong
D^{b}(Qgr_{B_{n}^{op}})/\mathcal{T}$ and we proved $D^{b}(Qgr_{B_{n}^{op}})/%
\mathcal{T}\cong D^{b}(gr_{(B_{n})_{Z}}).$

The equivalence $gr_{(B_{n})_{Z}}\cong \func{mod}_{A_{n}}$ induces an
equivalence: $D^{b}(gr_{(B_{n})_{Z}})\cong D^{b}(\func{mod}_{A_{n}})$.

We have proved:

\begin{theorem}
There is an equivalence of triangulated categories:

\underline{$gr$}$_{_{B_{n}^{!}}}/\emph{T}\cong D^{b}(\func{mod}_{A_{n}}).$
\end{theorem}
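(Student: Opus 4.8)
The plan is to deduce the statement by composing the equivalences already established in Sections 2, 3 and 4; at this stage no genuinely new argument is needed, only a careful matching of the thick subcategories and of the algebras in play. First I would recall that the composite $\overline{\phi}D$ is a \emph{covariant} equivalence of triangulated categories $\underline{gr}_{B_n^{!op}}\rightarrow D^b(Qgr_{B_n^{op}})$, and that, by the very construction of $\mathcal{T}'$ (and hence of $\emph{T}=D(\mathcal{T}')$) in the preceding theorems, $\overline{\phi}D$ carries $\emph{T}$ bijectively onto $\mathcal{T}$. Since $\mathcal{T}$ is an \'{e}paisse (thick) subcategory of $D^b(Qgr_{B_n^{op}})$ and $\overline{\phi}D$ is an equivalence, $\emph{T}$ is \'{e}paisse in $\underline{gr}_{B_n^{!op}}$; the Verdier quotient $\underline{gr}_{B_n^{!op}}/\emph{T}$ is therefore defined, and $\overline{\phi}D$ descends to an equivalence of triangulated categories $\underline{gr}_{B_n^{!op}}/\emph{T}\cong D^b(Qgr_{B_n^{op}})/\mathcal{T}$.

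Second, I would feed in the main theorem of Section 3, applied with $B_n$ replaced by $B_n^{op}$: the argument there (density of $D^b(\psi)$ together with the fullness and faithfulness of $\theta$) is left/right symmetric, so it yields $D^b(Qgr_{B_n^{op}})/\mathcal{T}\cong D^b(gr_{(B_n)_Z})$, using that $Z$ is central, so $(B_n^{op})_Z\cong((B_n)_Z)^{op}$, and that the standard anti-automorphism of the Weyl algebra extends to $B_n$, so $B_n^{op}\cong B_n$ and $(B_n^{op})_Z\cong(B_n)_Z$. Third, the graded ring isomorphism $(B_n)_Z\cong A_n\otimes_K K[Z,Z^{-1}]$ of Section 2, combined with Dade's theorem there --- that $res_A$ and $A_n[Z,Z^{-1}]\otimes-$ are mutually inverse exact equivalences, which by its corollary restrict to finitely generated objects --- gives an exact equivalence of abelian categories $gr_{(B_n)_Z}\cong\func{mod}_{A_n}$, hence an equivalence of bounded derived categories $D^b(gr_{(B_n)_Z})\cong D^b(\func{mod}_{A_n})$. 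Finally, from $B_n^{op}\cong B_n$ one gets $B_n^{!op}\cong(B_n^{op})^!\cong B_n^!$, so the left-hand side may be read as $\underline{gr}_{B_n^!}/\emph{T}$; stringing the three equivalences together produces precisely $\underline{gr}_{B_n^!}/\emph{T}\cong D^b(\func{mod}_{A_n})$.

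I do not expect a real obstacle in this last assembly, since each constituent equivalence is already in hand; the only point that must be handled with care is the bookkeeping of $\emph{T}$ across all these identifications --- namely checking that $D(\mathcal{T}')$ is exactly the preimage of $\mathcal{T}$ under $\overline{\phi}D$ (which is how $\mathcal{T}'$, and therefore $D(\mathcal{T}')$, were defined), that it is genuinely \'{e}paisse, and that the various $op$ decorations are reconciled in a compatible way, so that the universal property of Verdier localization applies and the induced functor on quotients is again a triangle equivalence. Everything else is a routine transport of the derived equivalences already constructed.
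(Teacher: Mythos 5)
Your proposal is correct and follows essentially the same route as the paper, which likewise obtains the theorem by stringing together the equivalence $\overline{\phi }D$ (under which $\emph{T}=D(\mathcal{T}^{\prime })$ corresponds to $\mathcal{T}$, giving $\underline{gr}_{B_{n}^{!op}}/\emph{T}\cong D^{b}(Qgr_{B_{n}^{op}})/\mathcal{T}$), the Section 3 theorem $D^{b}(QgrB_{n})/\mathcal{T}\cong D^{b}(gr_{(B_{n})_{Z}})$, and the equivalence $gr_{(B_{n})_{Z}}\cong \func{mod}_{A_{n}}$ coming from $(B_{n})_{Z}\cong A_{n}\otimes _{K}K[Z,Z^{-1}]$ and Dade's theorem. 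Your explicit reconciliation of the $op$-decorations via the anti-automorphism $B_{n}\cong B_{n}^{op}$ (hence $B_{n}^{!op}\cong B_{n}^{!}$) is a point the paper passes over silently, but it is the same argument.
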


The category \emph{F}$=D(\mathcal{F}^{\prime })$ is the category of all 
\emph{T} -local objects it is triangulated. By [Mi], there is a full
embedding: $\emph{F}\rightarrow $\underline{$gr$}$_{_{B_{n}^{!op}}}/\emph{T}%
\cong D^{b}(\func{mod}_{A_{n}})$.

\begin{proposition}
The category $ind_{C_{n}^{!}}$ of all induced $B_{n}^{!}$-modules is
contravariantly finite in $gr_{B_{n}^{!}}.$
\end{proposition}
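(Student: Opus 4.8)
The plan is to produce, for every $N\in gr_{B_n^!}$, an explicit right $ind_{C_n^!}$-approximation built from the induction--restriction adjunction. Since $B_n^!$ is a free $C_n^!$-module of rank two (established above), restriction of scalars $\mathrm{res}\colon gr_{B_n^!}\to gr_{C_n^!}$ is exact and sends finitely generated modules to finitely generated modules; moreover it has the induction functor $B_n^!\underset{C_n^!}{\otimes}-$ as a left adjoint, and $B_n^!\underset{C_n^!}{\otimes}M$ is finitely generated over $B_n^!$ whenever $M$ is finitely generated over $C_n^!$ (if $m_1,\dots,m_r$ generate $M$ over $C_n^!$, then $1\otimes m_1,\dots,1\otimes m_r$ generate the induced module over $B_n^!$). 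Hence, for $N\in gr_{B_n^!}$, the module $X_N:=B_n^!\underset{C_n^!}{\otimes}\mathrm{res}(N)$ lies in $ind_{C_n^!}$, and there is the counit morphism $\varepsilon_N\colon X_N\to N$, $b\otimes n\mapsto bn$, a graded $B_n^!$-homomorphism.

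Next I would check that $\varepsilon_N$ is a right $ind_{C_n^!}$-approximation. Let $g\colon B_n^!\underset{C_n^!}{\otimes}M\to N$ be an arbitrary morphism with $M\in gr_{C_n^!}$, and let $\bar g\colon M\to \mathrm{res}(N)$ be its adjunct under $\mathrm{Hom}_{B_n^!}(B_n^!\underset{C_n^!}{\otimes}M,N)\cong\mathrm{Hom}_{C_n^!}(M,\mathrm{res}(N))$. Applying induction gives a morphism $h:=B_n^!\underset{C_n^!}{\otimes}\bar g\colon B_n^!\underset{C_n^!}{\otimes}M\to X_N$ inside $ind_{C_n^!}$, and the triangle identity of the adjunction yields $\varepsilon_N\circ h=g$. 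Thus every morphism to $N$ from an object of $ind_{C_n^!}$ factors through $\varepsilon_N$, so $\varepsilon_N$ is a right $ind_{C_n^!}$-approximation; since $N$ was arbitrary, $ind_{C_n^!}$ is contravariantly finite in $gr_{B_n^!}$.

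The argument is essentially formal, so I do not expect a serious obstacle, only bookkeeping. The two points that need care are: (i) restriction of a finitely generated graded $B_n^!$-module is again finitely generated over $C_n^!$ --- this uses that $B_n^!$ is module-finite over $C_n^!$, a direct consequence of the rank-two freeness proved earlier, together with the fact that $B_n^!$, being the Yoneda algebra of the algebra $B_n$ of finite global dimension $2n+1$, is finite dimensional over $K$; and (ii) induction and restriction are well defined on the graded module categories and form an adjoint pair there, with counit given by multiplication. The only subtlety is making sure the adjunction isomorphism and the triangle identity are read in the graded sense (degree-zero morphisms), which is harmless because $B_n^!\underset{C_n^!}{\otimes}-$ commutes with grading shifts.
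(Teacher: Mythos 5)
Your argument is correct and is essentially the paper's proof: both take the multiplication (counit) map $B_{n}^{!}\underset{C_{n}^{!}}{\otimes }N\rightarrow N$ as the right $ind_{C_{n}^{!}}$-approximation and factor an arbitrary map $B_{n}^{!}\underset{C_{n}^{!}}{\otimes }M\rightarrow N$ through it by inducing up its adjunct, the paper phrasing this via the naturality square for the adjunction isomorphism rather than the triangle identity. The extra bookkeeping you add (finite generation of the restriction, graded compatibility) is harmless and consistent with what the paper takes for granted.
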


\begin{proof}
Let $M$ be a $B_{n}^{!}$-module and $\mu :$ $B_{n}^{!}\underset{C_{n}^{!}}{%
\otimes }M\rightarrow M$ the map given by multiplication. Let $\alpha
:Hom_{B_{n}^{!}}(B_{n}^{!}\underset{C_{n}^{!}}{\otimes }M,M)\rightarrow
Hom_{C_{n}^{!}}(M,M)$ the morphism giving the adjunction. It is easy to see
that $\alpha (\mu )=1_{M}$.

Let $\varphi :B_{n}^{!}\underset{C_{n}^{!}}{\otimes }N\rightarrow M$ be any
map and $\alpha (\varphi )=f:N\rightarrow M$ the map given by adjunction.
There is a commutative square

$%
\begin{array}{ccc}
Hom_{B_{n}^{!}}(B_{n}^{!}\underset{C_{n}^{!}}{\otimes }M,M) & \overset{%
\alpha _{M}}{\rightarrow } & Hom_{C_{n}^{!}}(M,M) \\ 
\downarrow (1\otimes f,M) &  & \downarrow (f,M) \\ 
Hom_{B_{n}^{!}}(B_{n}^{!}\underset{C_{n}^{!}}{\otimes }N,M) & \overset{%
\alpha _{N}}{\rightarrow } & Hom_{C_{n}^{!}}(N,M)%
\end{array}%
$

from the commutativity of the diagram $f=\alpha _{N}(\varphi )=\alpha
_{N}(\mu 1\otimes f)$ implies $\varphi =\mu 1\otimes f$.

We have proved the triangle: $%
\begin{array}{ccc}
&  & B_{n}^{!}\underset{C_{n}^{!}}{\otimes }N \\ 
& 1\otimes f\swarrow & \downarrow \phi \\ 
B_{n}^{!}\underset{C_{n}^{!}}{\otimes }M & \overset{\mu }{\rightarrow } & M%
\end{array}%
$ commutes.
\end{proof}

\begin{corollary}
add(ind$_{C_{n}^{!}})$ is contravariantly finite.
\end{corollary}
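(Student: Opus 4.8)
The plan is to show that the right $ind_{C_{n}^{!}}$-approximations built in the previous proposition already serve as right $\mathrm{add}(ind_{C_{n}^{!}})$-approximations, so that passing to the additive closure costs nothing and contravariant finiteness is inherited for free.

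First I would recall the essential output of the previous proposition: for every $M\in gr_{B_{n}^{!}}$ the multiplication map $\mu_M\colon B_{n}^{!}\underset{C_{n}^{!}}{\otimes}M\rightarrow M$ has the property that every morphism $\varphi\colon B_{n}^{!}\underset{C_{n}^{!}}{\otimes}N\rightarrow M$ factors as $\varphi=\mu_M(1\otimes f)$ with $f=\alpha_N(\varphi)$; that is, $\mu_M$ is a right $ind_{C_{n}^{!}}$-approximation of $M$. Next I would observe that $ind_{C_{n}^{!}}$ is closed under finite direct sums, since $B_{n}^{!}\underset{C_{n}^{!}}{\otimes}(-)$ commutes with finite biproducts; hence every object of $\mathrm{add}(ind_{C_{n}^{!}})$ is a direct summand of an object of $ind_{C_{n}^{!}}$. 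Also $B_{n}^{!}$ is finite dimensional (indeed $B_{n}^{!}$ is free of rank two over $C_{n}^{!}$), so $B_{n}^{!}\underset{C_{n}^{!}}{\otimes}M$ remains finitely generated and graded, keeping us inside $gr_{B_{n}^{!}}$.

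The core step is then the standard retract argument. Let $Y\in\mathrm{add}(ind_{C_{n}^{!}})$ and choose $X\in ind_{C_{n}^{!}}$ together with graded morphisms $i\colon Y\rightarrow X$ and $p\colon X\rightarrow Y$ with $pi=1_Y$. Given any $g\colon Y\rightarrow M$, the composite $gp\colon X\rightarrow M$ factors through $\mu_M$, say $gp=\mu_M h$ with $h\colon X\rightarrow B_{n}^{!}\underset{C_{n}^{!}}{\otimes}M$; then $g=gpi=\mu_M(hi)$, so $g$ factors through $\mu_M$. Since $B_{n}^{!}\underset{C_{n}^{!}}{\otimes}M$ lies in $ind_{C_{n}^{!}}\subseteq\mathrm{add}(ind_{C_{n}^{!}})$, this shows $\mu_M$ is a right $\mathrm{add}(ind_{C_{n}^{!}})$-approximation of $M$. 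As $M$ was arbitrary, every object of $gr_{B_{n}^{!}}$ admits a right $\mathrm{add}(ind_{C_{n}^{!}})$-approximation, which is exactly the assertion that $\mathrm{add}(ind_{C_{n}^{!}})$ is contravariantly finite.

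I do not expect a genuine obstacle: the argument is entirely formal once the previous proposition is available. The only point requiring a little care is checking that the approximating object and all maps involved stay within the category $gr_{B_{n}^{!}}$ of finitely generated graded modules and are morphisms of graded modules, which follows from the finite dimensionality of $B_{n}^{!}$ and from the fact that $\mu_M$ is graded.
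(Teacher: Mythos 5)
Your argument is correct and is exactly the formal step the paper leaves implicit: the corollary is stated as an immediate consequence of the proposition, since the approximation $\mu_M\colon B_{n}^{!}\otimes_{C_{n}^{!}}M\rightarrow M$ constructed there serves, via the standard retract argument you give, as a right $\mathrm{add}(ind_{C_{n}^{!}})$-approximation as well. No gap; your care about staying in finitely generated graded modules (using that $B_{n}^{!}$ is free of finite rank over $C_{n}^{!}$) is exactly the right check.
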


\begin{corollary}
ind$_{C_{n}^{!}}$is functorialy finite.

\begin{proof}
It is clear from the duality $D($ind$_{C_{n}^{!}})\cong $ind$_{C_{n}^{op!}}$
\end{proof}
\end{corollary}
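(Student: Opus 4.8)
The plan is to obtain this from the preceding Proposition by a duality argument. Recall that a subcategory of $gr_{B_{n}^{!}}$ is functorially finite exactly when it is both contravariantly finite and covariantly finite. The Proposition above (together with its corollary for the additive closure) already gives that $ind_{C_{n}^{!}}$ is contravariantly finite in $gr_{B_{n}^{!}}$, so the only thing that remains is covariant finiteness.

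For this I would pass to the opposite algebra via the usual $K$-duality $D=Hom_{K}(-,K):gr_{B_{n}^{!}}\rightarrow gr_{B_{n}^{op!}}$, which is a contravariant equivalence of categories. A contravariant equivalence interchanges left and right approximations: a morphism $M\rightarrow X$ with $X$ in a subcategory $\mathcal{X}$ is a left $\mathcal{X}$-approximation of $M$ if and only if $D(X)\rightarrow D(M)$ is a right $D(\mathcal{X})$-approximation of $D(M)$. Hence $ind_{C_{n}^{!}}$ is covariantly finite in $gr_{B_{n}^{!}}$ if and only if $D(ind_{C_{n}^{!}})$ is contravariantly finite in $gr_{B_{n}^{op!}}$. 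By the computations preceding the statement --- the bimodule isomorphism $B_{n}^{!}\sigma^{-1}\cong D(B_{n}^{!})$, the hom--tensor adjunction, and the invariance of the class of induced modules under the Nakayama automorphism $\sigma$ --- one has $D(ind_{C_{n}^{!}})\cong ind_{C_{n}^{op!}}$, the category of induced $B_{n}^{op!}$-modules.

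So it suffices to know that $ind_{C_{n}^{op!}}$ is contravariantly finite in $gr_{B_{n}^{op!}}$. But the proof of the preceding Proposition is purely formal: it uses only the restriction/induction adjunction $(B_{n}^{!}\underset{C_{n}^{!}}{\otimes}-,res)$ and the counit identity $\alpha(\mu)=1_{M}$ for the multiplication map $\mu:B_{n}^{!}\underset{C_{n}^{!}}{\otimes}M\rightarrow M$, and these hold for an arbitrary ring extension, in particular for $C_{n}^{op!}\subseteq B_{n}^{op!}$. Transporting the right $ind_{C_{n}^{op!}}$-approximations so obtained back through $D$ yields left $ind_{C_{n}^{!}}$-approximations, so $ind_{C_{n}^{!}}$ is covariantly finite and therefore functorially finite. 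I expect the only delicate point to be the identification $D(ind_{C_{n}^{!}})\cong ind_{C_{n}^{op!}}$, that is, keeping straight the interchange of left and right modules and the Nakayama twist; but this is exactly what the displayed natural isomorphisms just above the statement accomplish, so no new difficulty arises.
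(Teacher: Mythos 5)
Your argument is correct and is exactly the paper's (very terse) argument spelled out: contravariant finiteness comes from the preceding Proposition, and covariant finiteness follows by applying the duality $D$, which exchanges left and right approximations and identifies $D(\mathrm{ind}_{C_{n}^{!}})$ with $\mathrm{ind}_{C_{n}^{op!}}$ (up to the Nakayama twist, which preserves the class of induced modules), where the same formal adjunction argument gives contravariant finiteness. No gap; this is essentially the same approach as the paper.
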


Observe (ind$_{C_{n}^{!}})^{\perp }\cong \mathcal{F}^{\prime }$ however ind$%
_{C_{n}^{!}}$ is not necessary closed under extensions and we can not
conclude $\mathcal{F}^{\prime }$ contravariantly finite.

For the notions of contravariantly finite, covariantly finite and
functorialy finite, we refer to [AS]$.$

\begin{center}
{\LARGE References}
\end{center}

[AS] Auslander, M., Smalo, S. Auslander, M.; Smal\o , Sverre O. Almost split
sequences in subcategories. J. Algebra 69 (1981), no. 2, 426--454.

[Co] Coutinho S.C. A Primer of Algebraic D-modules, London Mathematical
Society, Students Texts 33, 1995

[Da] Dade, Everett C. Group-graded rings and modules. Math. Z. 174 (1980),
no. 3, 241--262.

[Ga] Gabriel, Pierre Des cat\'{e}gories ab\'{e}liennes. Bull. Soc. Math.
France 90 1962 323--448.

[GH]\ Green, E.; Huang, R. Q. Projective resolutions of straightening closed
algebras generated by minors. Adv. Math. 110 (1995), no. 2, 314--333.

[GM1] Green, E. L.; Mart\'{\i}nez Villa, R.; Koszul and Yoneda algebras.
Representation theory of algebras (Cocoyoc, 1994), 247--297, CMS Conf.
Proc., 18, Amer. Math. Soc., Providence, RI, 1996.

[GM2] Green, E. L.; Mart\'{\i}nez-Villa, R.; Koszul and Yoneda algebras. II.
Algebras and modules, II (Geiranger, 1996), 227--244, CMS Conf. Proc., 24,
Amer. Math. Soc., Providence, RI, 1998.

[Li] Li Huishi. Noncommutative Groebner Bases and Filtered-Graded Transfer,
Lecture Notes in Mathematics 1795, Springer, 2002

[Mac] Maclane S. Homology, Band 114, Springer, 1975.

[MM] Mart\'{\i}nez-Villa, R., Martsinkovsky; A. Stable Projective Homotopy
Theory of Modules, Tails, and Koszul Duality, (aceptado 11 de septiembre
2009), Comm. Algebra 38 (2010), no. 10, 3941--3973.

[MS] Mart\'{\i}nez Villa, Roberto; Saor\'{\i}n, M.; Koszul equivalences and
dualities. Pacific J. Math. 214 (2004), no. 2, 359--378.

[MZ] Mart\'{\i}nez-Villa, Roberto; Zacharia, Dan; Approximations with
modules having linear resolutions. J. Algebra 266 (2003), no. 2, 671--697.

[Mil] Mili\v{c}i\'{c}, D. Lectures on Algebraic Theory of D-modules.
University of Utah 1986.

[Mi] Miyachi, Jun-Ichi; Derived Categories with Applications to
Representations of Algebras, Chiba Lectures, 2002.

[Mo] Mondragon J. Sobre el algebra de Weyl homgenizada, tesis doctoral (in
process)

[P] Popescu N.; Abelian Categories with Applications to Rings and Modules;,
L.M.S. Monographs 3, Academic Press 1973.

[Sm] Smith, P.S.; Some finite dimensional algebras related to elliptic
curves, Rep. Theory of Algebras and Related Topics, CMS Conference
Proceedings, Vol. 19, 315-348, Amer. Math. Soc 1996.

[Y] Yamagata, K.; Frobenius algebras. Handbook of algebra, Vol. 1, 841--887,
North-Holland, Amsterdam, 1996.

\end{document}